\documentclass[a4paper,11pt,intlimits,oneside]{amsart}

\usepackage{amssymb,latexsym,amsmath,mathrsfs}
\usepackage{amsfonts,amsbsy,bm}
\usepackage{color}
\usepackage[margin=2cm]{geometry}

\usepackage{amsmath}
\usepackage{amssymb}
\usepackage{amsfonts}
\usepackage{amsthm,amscd}

\newtheorem{theorem}{Theorem}[section]
\newtheorem{proposition}[theorem]{Proposition}
\newtheorem{corollary}[theorem]{Corollary}
\newtheorem{lemma}[theorem]{Lemma}
\newtheorem{definition}[theorem]{Definition}

\theoremstyle{definition}
\newcommand{\comment}[1]{}

\numberwithin{equation}{section}

\def\supp{{\rm supp}}

\theoremstyle{definition}

%%-----------------------MODE MATHEMATIQUE--------------------------%%
\newcommand{\Be}{\begin{equation}}
\newcommand{\Ee}{\end{equation}}
\newcommand{\Bea}{\begin{eqnarray}}
\newcommand{\Eea}{\end{eqnarray}}
\newcommand{\Bes}{\begin{equation*}}
\newcommand{\Ees}{\end{equation*}}
\newcommand{\Beas}{\begin{eqnarray*}}
\newcommand{\Eeas}{\end{eqnarray*}}
\newcommand{\Ba}{\begin{array}}
\newcommand{\Ea}{\end{array}}

\begin{document}
\title[Two-weight inequlities for multilinear maximal functions]{Two-weight inequalities for multilinear maximal functions in Orlicz spaces}
\author{}
\author[J.M. Tanoh Dje and B. F.  Sehba]{Jean-Marcel Tanoh Dje and Beno\^it F. Sehba}
\address{Unit\'e de Recherche et d'Expertise Num\'erique, 
Universit\'e virtuelle de C\^ote d'Ivoire, Abidjan.}
\email{{\tt tanoh.dje@uvci.edu.ci}}
\address{Department of Mathematics, University of Ghana,  P.O. Box L.G 62 Legon, Accra, Ghana.}
\email{{\tt bfsehba@ug.edu.gh}}

\subjclass{42B25,42B35}
\keywords{Orlicz space,   Carleson sequence, Maximal operator,  dyadic grids,  Weight}

\date{}

\begin{abstract}
 In this note, we provide various two-weight norm estimates of the multi-linear fractional maximal function and weighted maximal function between different Orlicz spaces. More precisely, we obtain Sawyer-type characterizations and norm estimates for these operators.

\end{abstract}

\maketitle

\section{Introduction}
Denote by $\mathcal{Q}$ the set of all non-degenerate cubes with sides parallel to the coordinate axes in $\mathbb R^d$. If $Q$ is a cube, then we denote by $|Q|$ its Lebesgue measure. For $\omega$ a weight on $\mathbb R^d$, and $E$ a  measurable subset of $\mathbb R^d$, we use the notation $\omega(E):=\int_E\omega(x)dx$. 
\vskip .2cm
For $1< p<\infty$, and $\omega$ a weight, the weighted Lebesgue space $L_\omega^p(\mathbb R^d)$ is the set of all measurable functions $f$ such that
$$
\|f\|_{p,\omega}=\|f\|_{L^p(\omega)}:=\left(\int_{\mathbb R^d}|f(x)|^p\omega(x)dx\right)^{1/p}<\infty.
$$
\vskip .2cm
For $0\leq \alpha<d$, the fractional maximal function $\mathcal M_\alpha$ is defined by
\Bes
\mathcal{M}_{\alpha}f(x):=\sup_{Q\in \mathcal{Q}}\frac{\chi_Q(x)}{|Q|^{1-\alpha /d}}\int_Q|f(y)|dy.
\Ees
When $\alpha=0$, this is just the Hardy-Littlewood maximal function denoted $\mathcal{M}$.
\medskip

One of the most studied problems in this type of analysis is the problem of finding the conditions on the pair of weights $(\sigma,\omega)$ such that
\Be\label{eq:maxineqpb}\|\mathcal M_\alpha (\sigma f
)\|_{q,\omega}\le C\|f\|_{p,\sigma}.\Ee
Here the constant $C$ doesn't depend on $f$.
\medskip

    Eric Sawyer's answer (in \cite{Sawyer5}) to the above question is that (\ref{eq:maxineqpb}) holds if and only if it holds for characteristic functions of cubes.
    \medskip
    
    Several authors have also considered quantitative analysis of (\ref{eq:maxineqpb}), i.e. an estimate of the constant $C$ in terms of positive quantities involving the weights and the exponents $p$ and $q$ (see for example \cite{CaoXue,HytPerez,Kerman,kokokrbec,Moen1,Pereyra,RahmSpencer, Israel} and the references therein). These quantitative inequalities are usually in terms of Muckenhoupt classes  and their variations.
    \medskip 
    
    There are also formulations of problem (\ref{eq:maxineqpb}) in the setting of Orlicz spaces. Here again, several results have been obtained by various authors (see for example \cite{Bloom,jmtafeuseh,kokokrbec,LQ}). In particular, in \cite{jmtafeuseh}, we formulated and proved Sawyer-type characterization of the above question in the Orlicz setting. In the same paper, we proved several quantitative inequalities that involved classes that generalize Muckenhoupt classes and the $B_p$ class introduced in \cite{HytPerez}.
    \medskip 
    
    The aim of this paper is to consider the above problem for the multilinear maximal functions in the setting of Orlicz spaces. Our results extend those of \cite{ChenDamian,jmtafeuseh,LiSun,sehbaf}. A key argument in our work will be an extension to the multilinear setting of the Carleson Embedding Lemma obtained in \cite{jmtafeuseh}. The difficulties encountered here are essentially the same as in the power functions case; for example, one does not know if the Sawyer-type characterization is necessary and sufficient in general. In \cite{ChenDamian,sehbaf} a Sawyer-type characterization is shown to be sufficient for the boundedness of the maximal functions but this condition is shown to be also necessary only under an additional condition that was called in \cite{ChenDamian} "reverse H\"older inequality". In this paper, we will provide an analogue of this condition and an example of weights for which it is true. 

\section{The setting of interest }

We recall that a growth function is any function 
$\Phi: \mathbb{R}_{+}\longrightarrow \mathbb{R}_{+}$, not identically null, that is continuous, nondecreasing and onto. 
\vskip .2cm
We observe that if $\Phi$ is growth function, then  $\Phi(0)=0$ and $\lim_{t \to +\infty}\Phi(t)=+\infty$.
\vskip .2cm
The function $t\mapsto \exp(t)-1$ is an example of growth function.
\vskip .2cm
Let $\Phi$ be a growth function. We say $\Phi$ satisfies the condition

\begin{itemize}
\item[(a)]   $\Delta_{2}$
(or $\Phi \in \Delta_{2}$), if there is a constant $K>0$ such that 
\begin{equation}\label{eq:delta2}
\Phi(2t) \leq K \Phi(t),~ \forall~ t > 0.\end{equation}
\item[(b)]   $\Delta'$  (or $\Phi \in \Delta^{\prime}$), if there is a constant $C> 0$ such that
 \begin{equation}\label{eq:deprim}
  \Phi(st) \leq C \Phi(s)\Phi(t),~~\forall~s,t > 0. \end{equation}
 \end{itemize}
We have that if $\Phi \in \Delta^{\prime}$, then  $\Phi \in \Delta_{2}$.
\vskip .2cm
Let  $\Phi$ be a convex growth function such that $\Phi \in \Delta_{2}$. We say $\Phi$ satisfies the condition $\nabla_{2}$ (or $\Phi \in \nabla_{2}$), if there exists a constant $C >0$ such that
\begin{equation}\label{eq:conditiondedinis}
\int_0^t\frac{\Phi(s)}{s^2}ds\le C\frac{\Phi(t)}{t}, ~~ \forall~t>0.\end{equation}
Let $\alpha > 1$. Then the function $t\longmapsto t^{\alpha}\ln(e+t)$ is a growth function satisfying the condition $\nabla_{2}$.
\vskip .2cm
Let $q>0$ and  $\Phi$ be a growth function. We say  $\Phi$ is of upper-type $q$, if there exists a constant $C_{q}>0$  such that
\begin{equation}\label{eq:sui8n}
\Phi(st)\leq C_{q}t^{q}\Phi(s),~~ \forall~s>0, ~~ \forall~t\geq 1.  \end{equation}
We denote by $\mathscr{U}^{q}$, the set of all growth functions of upper-type $q\geq 1$ such that the function $t\mapsto \frac{\Phi(t)}{t}$ is nondecreasing on $\mathbb{R}_{+}^{*}=\mathbb{R}_{+}\setminus\{0\}$. We then set
$$   \mathscr{U}:=\bigcup_{q\geq 1}\mathscr{U}^{q}.$$
Any element in  $\mathscr{U}$ is a homeomorphism of $\mathbb{R}_{+}$ onto $\mathbb{R}_{+}$. 
\vskip .2cm
We say two growth function $\Phi_{1}$ and  $\Phi_{2}$  are equivalent, if there exists a constant $c> 0$ such that  
\begin{equation}\label{eq:equivalent}
c^{-1}\Phi_{1}(c^{-1}t) \leq \Phi_{2}(t)\leq c\Phi_{1}(ct), ~~ \forall~ t > 0.\end{equation}
We can assume that any  $\Phi\in \mathscr{U}$ is  a convex function that belongs to the class  $\mathscr{C}^{1}(\mathbb{R}_{+})$, and satisfies
\begin{equation}\label{eq:ealent}
\Phi'(t)\approx \frac{\Phi(t)}{t}, ~~ \forall~ t > 0,\end{equation}
(see for example \cite{djesehb}).

Let  $q\geq 1$ and $\Phi$ a growth function. We say $\Phi\in \widetilde{\mathscr{U}}^{q}$ if the following conditions are satisfied 
\begin{itemize}
\item[(a)]  $\Phi \in \Delta^{\prime}$, 
\item[(b)] $\Phi\in \mathscr{U}^{q}$, 
\item[(c)] there exists a constant  $C>0$  such that    
\begin{equation}\label{eq:ibide2}
\Phi\left(\frac{s}{t}\right)\leq C\frac{\Phi(s)}{t^{q}},  ~~ \forall~ s,t \geq 1.
\end{equation}
\end{itemize}
We set
$$   \widetilde{\mathscr{U}}:=\bigcup_{q\geq 1}\widetilde{\mathscr{U}}^{q}.$$
Let $\alpha \geq 1$ and  $\beta> 0$. Then the function $t\mapsto t^{\alpha}\log^{\beta}(1+t)$ is a growth function that belongs to the class  $\widetilde{\mathscr{U}}$.
\vskip .3cm

Let $\Phi \in \mathscr{U}$ and let $\sigma$ be a weight on $\mathbb{R}^{d}$. The weighted Orlicz space $L^{\Phi}(\sigma)$ is the set of all equivalent classes (in the usual sencse) of measurable functions $f:\mathbb{R}^d\rightarrow\mathbb{C}$ such that 
\begin{equation}\label{eq:in4aq5ale1}	\|f\|_{L_{\sigma}^{\Phi}}^{lux}:=\inf\left\{\lambda>0 : \int_{\mathbb{R}^{d}}\Phi\left(\frac{|f(x)|}{\lambda}\right)\sigma(x)dx \leq 1  \right\}< \infty.
        \end{equation} 
The map $  f\mapsto \|f\|_{L_{\sigma}^{\Phi}}^{lux}$ is a norm on $L^{\Phi}(\sigma)$, and endowed with this norm,
  $L^{\Phi}(\sigma)$ is Banach space (see \cite{raoren}).

%Soit $\beta \in \left\{0; 1/3\right\}^{d}$. Nous appellerons cube $\beta-$dyadique tout cube $Q$ de $\mathbb{R}^{d}$ de la forme: \begin{equation}\label{eq:ibetd1}
% 2^{-j}(\lbrack 0,1\lbrack^{d} +k+(-1)^{j}\beta),\end{equation}
% o\`u
%$j\in \mathbb{Z}$  et $k \in \mathbb{Z}^{d}$. Nous notons $\mathcal{D}_{j}^{\beta}$ l'ensemble des cubes $\beta-$dyadiques $Q$  tels que  $\ell(Q)=2^{-j}$ et nous posons: $ \mathcal{D}^{\beta}:=\bigcup_{j}\mathcal{D}_{j}^{\beta}.$
\section{Presentation of the results}
Let $\Psi,\Phi_1,\cdots,\Phi_n   \in \mathscr{U}$  and let $\sigma_1,\cdots,\sigma_n,\omega$ be weights. We say an operator $$T:L^{\Phi_1}(\sigma_1)\times\cdots\times L^{\Phi_n}(\sigma_n)\longrightarrow L^{\Psi}(\omega)$$ is bounded, if there exists a constant $C>0$ such that for any $$(f_1,\cdots,f_n)\in L^{\Phi_1}(\sigma_1)\times\cdots\times L^{\Phi_n}(\sigma_n),$$
\Be\label{eq:opernormdef}\|T(f_1,\cdots,f_n)\|_{L_{\omega}^{\Psi}}^{lux}\le C\prod_{j=1}^n\|f_j\|_{L_{\sigma_j}^{\Phi_j}}^{lux}.\Ee
In this case, we denote by  $\|T\|_{\left(\prod_{j=1}^nL_{\sigma_j}^{\Phi_j}\right)\longrightarrow L_{\omega}^{\Psi}}$, the infimum of the constants $C$ such that (\ref{eq:opernormdef}) holds.
\vskip .1cm
We will be using the notation $$\mathbb{R}_+^*:=\{x\in \mathbb{R}: x>0\}.$$
\subsection{Carleson embedding results}
In the sequel, $\mathcal{D}^{\beta}$ denotes a dyadic grid. We recall the following definition of a  $(\sigma, \Phi)-$Carleson sequence as introduced in  \cite{jmtafeuseh}.
 
 \begin{definition}
 Let $\{\lambda_{Q}\}_{Q \in \mathcal{D}^{\beta}}$  be a sequence of positive numbers, $\Phi$ an increasing function and $\sigma$ a weight on $\mathbb{R}^{d}$. We say  $\{\lambda_{Q}\}_{Q \in \mathcal{D}^{\beta}}$ is a $(\sigma, \Phi)-$Carleson, if there exists a constant $\Lambda>0$ such that
 \begin{equation}\label{eq:suit564c25son}
 \forall~ R\in \mathcal{D}^{\beta},~~ 
 \sum_{Q \subset R, Q \in \mathcal{D}^{\beta}}\lambda_{Q} \leq \dfrac{\Lambda}{\Phi\left( \frac{1}{\sigma(R)}\right)}.\end{equation}
 \end{definition}
\vskip .1cm
The small constant $\Lambda$ for wich (\ref{eq:suit564c25son}) holds is called the Carleson constant of the sequence and will be denoted $\Lambda_{Carl}$.
\vskip .2cm
Our main result here is the following which extends \cite[Lemma 3.7]{sehbaf}.

\begin{theorem}\label{pro:main16yh6}
Let $\{\lambda_{Q}\}_{Q \in \mathcal{D}^{\beta}}$ be a sequence of positive number indexed over the dyadic grid $\mathcal{D}^{\beta}$. Let $\sigma_{1},...,\sigma_{n}$ be weights on $\mathbb{R}^{d}$, and let $\Phi_{1},...,\Phi_{n} \in \widetilde{\mathscr{U}}\cap \nabla_{2}$. Suppose  $\Psi\in \mathscr{U}$ and $\Phi$ is a one-to-one correspondence from $\mathbb{R}_{+}$ to $\mathbb{R}_{+}$ such that  
 $\Phi^{-1}=\Phi_{1}^{-1}\times...\times \Phi_{n}^{-1}$ and $t\mapsto \frac{\Psi(t)}{\Phi(t)}$ is increasing on $\mathbb{R}_{+}^{*}$. Define  $\nu_{\overrightarrow{\sigma}}:=  \frac{1}{\Phi\left(\prod_{i=1}^{n}\Phi_{i}^{-1}\left(\frac{1}{\sigma_{i}}\right)  \right)}.$ 
 \vskip .2cm
 If there exists a constant $\Lambda>0$ such that
 \begin{equation}\label{eq:s564c25son}
 \forall~ R\in \mathcal{D}^{\beta} ,~~ 
 \sum_{Q \subset R, Q \in \mathcal{D}^{\beta}}\lambda_{Q} \leq \dfrac{\Lambda}{\Psi\circ\Phi^{-1}\left(\frac{1}{\nu_{\overrightarrow{\sigma}}(R)}\right)},\end{equation}
 then for any $  (f_{1},...,f_{n})\in L^{\Phi_{1}}(\sigma_{1})\times...\times L^{\Phi_{n}}(\sigma_{n})$ with $f_{i}\not\equiv 0$,  $i=1,..., n$,
 \begin{equation}\label{eq:suitc5frmaron}
  \sum_{Q\in \mathcal{D}^{\beta}}\lambda_{Q}\Psi\left(\prod_{i=1}^{n}\frac{1}{\sigma_{i}(Q)}\int_{Q}\frac{|f_{i}(x)|}{\|f_{i}\|_{L_{\sigma_{i}}^{\Phi_{i}}}^{lux}}\sigma_{i}(x)dx\right) \leq \Lambda C_{n,\Phi, \Psi,\Phi_{1},...,\Phi_{n}}. 
 \end{equation}
 \end{theorem}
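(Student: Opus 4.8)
The plan is to reduce the multilinear sum to a layer-cake integral of a multilinear dyadic maximal function and to control it through two ingredients: a local (cube-by-cube) Hölder--Jensen inequality linking the product of averages to the single growth function $\Phi$ and the weight $\nu_{\overrightarrow{\sigma}}$, and the Carleson hypothesis (\ref{eq:s564c25son}) together with the monotonicity of $t\mapsto\Psi(t)/\Phi(t)$. Throughout write $\Theta:=\Psi\circ\Phi^{-1}$, $\nu:=\nu_{\overrightarrow{\sigma}}$, and for a cube $Q$ set $A_i(Q):=\frac{1}{\sigma_i(Q)}\int_Q|g_i|\sigma_i$ and $\mathcal{A}(Q):=\prod_{i=1}^nA_i(Q)$, where $g_i:=f_i/\|f_i\|_{L_{\sigma_i}^{\Phi_i}}^{lux}$. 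By the definition (\ref{eq:in4aq5ale1}) of the Luxemburg norm and $\Phi_i\in\Delta_2$, this normalization gives $m_i(Q):=\int_Q\Phi_i(|g_i|)\sigma_i\le\int_{\mathbb{R}^d}\Phi_i(|g_i|)\sigma_i\le1$, so it suffices to bound $\sum_{Q\in\mathcal{D}^\beta}\lambda_Q\Psi(\mathcal{A}(Q))$ by $\Lambda$ times a constant depending only on the data.

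First I would establish the key local inequality
\[
\mathcal{A}(Q)\le C\Big(\prod_{i=1}^nm_i(Q)^{1/q_i}\Big)\,\Phi^{-1}\!\Big(\tfrac{1}{\nu(Q)}\Big),\qquad\text{hence}\qquad\nu(Q)\Phi(\mathcal{A}(Q))\le C\prod_{i=1}^nm_i(Q)^{\theta_i},
\]
for suitable exponents with $\sum_i\theta_i=1$. Its proof rests on Jensen's inequality $\Phi_i(A_i(Q))\sigma_i(Q)\le m_i(Q)$ (convexity of $\Phi_i$), which yields $A_i(Q)\le\Phi_i^{-1}(m_i(Q)/\sigma_i(Q))$ and, after using the lower type of $\Phi_i^{-1}$ coming from $\Phi_i\in\mathscr{U}^{q_i}$, the factor $m_i(Q)^{1/q_i}\Phi_i^{-1}(1/\sigma_i(Q))$. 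The heart of the matter is the reverse-Jensen inequality $\prod_{i=1}^n\Phi_i^{-1}(1/\sigma_i(Q))\le C\,\Phi^{-1}(1/\nu(Q))$ for the averaged weights, which is the Orlicz substitute for the elementary Hölder bound $\int_Q\prod_i\sigma_i^{p/p_i}\le\prod_i\sigma_i(Q)^{p/p_i}$; here I would invoke $\Phi^{-1}=\prod_i\Phi_i^{-1}$, the relation (\ref{eq:ealent}), the condition $\Phi\in\Delta'$ and especially (\ref{eq:ibide2}) to transfer the pointwise identity $\prod_i\Phi_i^{-1}(1/\sigma_i(x))=\Phi^{-1}(1/\nu(x))$ to the level of averages over $Q$. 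Passing $\Phi$ across and using $\Phi\in\Delta_2$ together with (\ref{eq:ibide2}) then produces the displayed bound on $\nu(Q)\Phi(\mathcal{A}(Q))$.

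Next I would run a layer-cake decomposition over the super-level sets of the multilinear dyadic maximal function $\mathcal{M}(\vec g)(x):=\sup_{Q\in\mathcal{D}^\beta:\,x\in Q}\mathcal{A}(Q)$. Writing $\Omega_t:=\{\mathcal{M}(\vec g)>t\}$ and letting $\{Q_j^t\}_j$ be the maximal cubes of $\mathcal{D}^\beta$ with $\mathcal{A}(Q_j^t)>t$ (so $\Omega_t=\bigcup_jQ_j^t$ with the $Q_j^t$ pairwise disjoint), the Carleson hypothesis (\ref{eq:s564c25son}) gives $\sum_{Q:\mathcal{A}(Q)>t}\lambda_Q\le\Lambda\sum_j\Theta(1/\nu(Q_j^t))^{-1}$. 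The local inequality forces $1/\nu(Q_j^t)\ge c\,\Phi(t)$, so the monotonicity of $s\mapsto\Theta(s)/s$ (equivalent to $t\mapsto\Psi(t)/\Phi(t)$ nondecreasing) yields $\Theta(1/\nu(Q_j^t))^{-1}\le C\,\nu(Q_j^t)\,\Phi(t)/\Psi(t)$, whence $\sum_{Q:\mathcal{A}(Q)>t}\lambda_Q\le C\Lambda\,\tfrac{\Phi(t)}{\Psi(t)}\nu(\Omega_t)$. Integrating $\Psi'(t)$ against this and using $\Psi'(t)\approx\Psi(t)/t$ and $\Phi'(t)\approx\Phi(t)/t$ from (\ref{eq:ealent}) collapses the estimate to
\[
\sum_{Q\in\mathcal{D}^\beta}\lambda_Q\Psi(\mathcal{A}(Q))=\int_0^\infty\Psi'(t)\Big(\sum_{Q:\mathcal{A}(Q)>t}\lambda_Q\Big)dt\le C\Lambda\int_{\mathbb{R}^d}\Phi\big(\mathcal{M}(\vec g)\big)\,d\nu.
\]

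The hard part will be the final estimate $\int_{\mathbb{R}^d}\Phi(\mathcal{M}(\vec g))\,d\nu\le C$, i.e. the weighted Orlicz boundedness of the multilinear maximal function from $\prod_iL^{\Phi_i}(\sigma_i)$ into $L^\Phi(\nu)$ on the normalized data. The per-scale bounds above only give $\Phi(t)\nu(\Omega_t)\le C$ for each $t$, which is not summable over dyadic scales, so a genuine gain is needed; this is exactly where $\Phi_i\in\nabla_2$ enters. I would obtain it through a Calderón--Zygmund truncation: splitting each $g_i$ at a height $\gamma_i(t)$ chosen so that $\prod_i\Phi_i^{-1}(\gamma_i(t))\le t/2$, the small parts contribute at most $t/2$ to $\mathcal{A}(Q_j^t)$, so the large parts dominate and the local inequality upgrades to a truncated bound $\nu(\Omega_t)\le C\,\Phi(t)^{-1}\prod_i\big(\int_{\{\Phi_i(|g_i|)>\gamma_i(t)\}}\Phi_i(|g_i|)\sigma_i\big)^{\theta_i}$ whose right-hand side now decays as $t\to\infty$. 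Integrating this truncated estimate against $\Phi'(t)$ and invoking the $\nabla_2$ inequality (\ref{eq:conditiondedinis}) to sum the resulting geometric series (the Orlicz analogue of the factor $\tfrac{1}{p-1}$ in the classical maximal theorem) yields $\int\Phi(\mathcal{M}(\vec g))\,d\nu\le C$, completing the proof; the constant produced this way depends only on $n,\Phi,\Psi,\Phi_1,\dots,\Phi_n$, as claimed.
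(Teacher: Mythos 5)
Your first two steps are correct and, modulo presentation, coincide with the paper's own argument. Your ``local inequality'' is exactly what the paper proves in Proposition \ref{pro:main0aqk5}(i) together with the estimate (\ref{eq:dr55im}) (the direction of the reverse-Jensen inequality you need, $\prod_{i}\Phi_{i}^{-1}(1/\sigma_{i}(Q))\lesssim\Phi^{-1}(1/\nu_{\overrightarrow{\sigma}}(Q))$, is indeed the one that holds unconditionally for $\Phi_i\in\widetilde{\mathscr{U}}$); and your layer-cake reduction — Carleson hypothesis summed over the maximal cubes, then the monotonicity of $s\mapsto\Psi\circ\Phi^{-1}(s)/s$ and $\Psi'(t)\approx\Psi(t)/t$, $\Phi'(t)\approx\Phi(t)/t$ — is Proposition \ref{pro:main0aqk5}(ii) followed by the same integration the paper performs. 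Both routes reduce the theorem to the strong-type bound $\int_{\mathbb{R}^{d}}\Phi\left(\mathcal{M}_{\overrightarrow{\sigma}}^{\mathcal{D}^{\beta}}(\overrightarrow{g})(x)\right)\nu_{\overrightarrow{\sigma}}(x)\,dx\leq C$ for normalized data.

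The gap is in your final step, which you correctly identify as the hard part. The truncated weak-type inequality you propose,
\begin{equation*}
\nu_{\overrightarrow{\sigma}}(\Omega_t)\le C\,\Phi(t)^{-1}\prod_{i=1}^n\left(\int_{\{\Phi_i(|g_i|)>\gamma_i(t)\}}\Phi_i(|g_i|)\sigma_i\right)^{\theta_i},
\end{equation*}
is false, because $\mathcal{A}(Q)$ is a \emph{product}: it can exceed $t$ while one factor $g_i$ lies entirely below its truncation height, in which case the corresponding truncated integral vanishes and the right-hand side is $0$ while $\nu_{\overrightarrow{\sigma}}(\Omega_t)>0$. Concretely, take $d=1$, $n=2$, $\sigma_1=\sigma_2\equiv1$, $\Phi_1(t)=\Phi_2(t)=t^{2}$ (so $\Phi(t)=t$ and $\nu_{\overrightarrow{\sigma}}\equiv1$), $t=1/2$, $\gamma_1=\gamma_2=1/16$, $g_1=\frac18\chi_{[0,64]}$, $g_2=8\chi_{[0,1/64]}$: both are normalized, $g_1<1/4=\Phi_1^{-1}(\gamma_1)$ everywhere so its truncated integral is $0$, yet on $Q=[0,1/64]$ one has $A_1(Q)A_2(Q)=\frac18\cdot8=1>t$. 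A correct multilinear truncation must handle $2^{n}$ mixed large/small terms, control the small slots in $L^{\infty}$, and then deal with the sub-products $\prod_{i\in S}\Phi_i^{-1}$ for proper subsets $S$, whose relation to the fixed weight $\nu_{\overrightarrow{\sigma}}$ is not governed by your local inequality; nothing in your sketch supplies this. (A minor further point: your normalization $\sum_i\theta_i=1$ is not justified — the lower-type argument only yields exponents like $\theta_i=1/(nq_i)$ — though this is inessential.) The paper avoids the truncation entirely: it uses the pointwise domination $\mathcal{M}_{\overrightarrow{\sigma}}^{\mathcal{D}^{\beta}}(\overrightarrow{f})\le\prod_i\mathcal{M}_{\sigma_i}^{\mathcal{D}^{\beta}}(f_i)$, inserts the factors $\Phi_i^{-1}(1/\sigma_i(x))$, applies $\Delta'$ and Lemma \ref{pro:main80jdf} plus the generalized Young inequality $\Phi\left(\prod_iu_i\right)\le\sum_i\Phi_i(u_i)$ to split the integral, and then invokes the \emph{linear} weighted maximal theorem (Theorem \ref{pro:main88}, where $\nabla_2$ enters) once for each $i$; this is Proposition \ref{pro:main0aqk5}(iii). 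Replacing your truncation step by that argument closes the gap.
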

It is natural to ask if condition (\ref{eq:suitc5frmaron}) implies (\ref{eq:s564c25son}). It can be proved that this is the case when $\Psi(t)=t^{q}$, $\Phi(t)=t^{p}$ and $\Phi_{i}(t)=t^{p_{i}}$, provided the following condition holds
\Be\label{eq:multireverseholder}
\prod_{i=1}^n\sigma_i(Q)^{p/{p_i}}\lesssim \nu_{\overrightarrow{\sigma}}(Q), \textrm{for any}\quad Q\in\mathcal{Q}
\Ee
with in this case, $\nu_{\overrightarrow{\sigma}}=\prod_{i=1}^n\sigma_i^{p/{p_i}}$.
\vskip .2cm
Condition (\ref{eq:multireverseholder}) has shown to be useful in the study of multilinear weighted inequalities (see for example \cite{ChenDamian,sehbaf}). 
%Pour $n\geq 1$ and $1< p_{1},...,p_{n}<\infty$  telles que $\frac{1}{p}
% =\frac{1}{p_{1}}+...+\frac{1}{p_{n}}$ et $p \leq q$.
% Si $\Psi(t)=t^{q}$, $\Phi(t)=t^{p}$ et $\Phi_{i}(t)=t^{p_{i}}$ alors le th\'eor\`eme \ref{pro:main16yh6} correspond au \cite[lemme 3.7]{sehbaf} obtenu par Sehba. 

We can make the same observations in this setting. We will prove the following which provides the reverse in the above theorem.
\begin{proposition}\label{pro:main16aqyh6}
 Let $\{\lambda_{Q}\}_{Q \in \mathcal{D}^{\beta}}$ be a sequence of positive numbers, and $\sigma_{1},...,\sigma_{n}$ be weights on $\mathbb{R}^{d}$. Assume that $\Phi_{1},...,\Phi_{n},\Psi\in \mathscr{U}$ and $\Phi$ is a one-to-one correspondence from $\mathbb{R}_{+}$ to $\mathbb{R}_{+}$ such that 
  $\Phi^{-1}=\Phi_{1}^{-1}\times...\times \Phi_{n}^{-1}$ and $t\mapsto \frac{\Psi(t)}{\Phi(t)}$ is increasing on $\mathbb{R}_{+}^{*}$. Define $\nu_{\overrightarrow{\sigma}}:=  \frac{1}{\Phi\left(\prod_{i=1}^{n}\Phi_{i}^{-1}\left(\frac{1}{\sigma_{i}}\right)  \right)}.$ If there exist constants  $C_{1}$ and $C_{2}>0$ such that
  \begin{equation}\label{eq:surmaron}
  \frac{1}{\nu_{\overrightarrow{\sigma}}(Q)} \leq C_{1}\Phi\left(\prod_{i=1}^{n}\Phi_{i}^{-1}\left(\frac{1}{\sigma_{i}(Q)}\right)  \right),~~\forall~Q \in \mathcal{D}^{\beta}, 
  \end{equation}
 and  \begin{equation}\label{eq:suitc5faqrmaron}
  \sum_{Q\in \mathcal{D}^{\beta}}\lambda_{Q}\Psi\left(\prod_{i=1}^{n}m_{\sigma_{i}}\left(\frac{f_{i}}{\|f_{i}\|_{L_{\sigma_{i}}^{\Phi_{i}}}^{lux}},Q\right)\right) \leq C_{2}, ~~\forall~ 0\not\equiv f_{i}\in L^{\Phi_{i}}(\sigma_{i}), 
 \end{equation}
 then $\{\lambda_{Q}\}_{Q \in \mathcal{D}^{\beta}}$ is a $(\nu_{\overrightarrow{\sigma}}, \Psi \circ \Phi^{-1})-$Carleson.
  \end{proposition}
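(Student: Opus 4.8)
The plan is to establish the Carleson condition by testing the hypothesis (\ref{eq:suitc5faqrmaron}) against characteristic functions of cubes, and then to convert the resulting pointwise-average estimate into the required sum bound using the regularity of the growth functions. Fix $R\in\mathcal{D}^{\beta}$ and take $f_{i}=\chi_{R}$ for each $i=1,\dots,n$. Since $\int_{\mathbb{R}^d}\Phi_{i}(\chi_{R}/\lambda)\sigma_{i}\,dx=\Phi_{i}(1/\lambda)\sigma_{i}(R)$, the Luxemburg norm is the unique $\lambda$ solving $\Phi_{i}(1/\lambda)\sigma_{i}(R)=1$, whence $\|\chi_{R}\|_{L_{\sigma_{i}}^{\Phi_{i}}}^{lux}=1/\Phi_{i}^{-1}(1/\sigma_{i}(R))$. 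Consequently, for every $Q\subset R$ the function $\chi_R$ equals $1$ on $Q$, so $m_{\sigma_{i}}(\chi_{R}/\|\chi_{R}\|_{L_{\sigma_{i}}^{\Phi_{i}}}^{lux},Q)=\Phi_{i}^{-1}(1/\sigma_{i}(R))$, and the product appearing inside $\Psi$ equals $A_{R}:=\prod_{i=1}^{n}\Phi_{i}^{-1}(1/\sigma_{i}(R))$, which crucially does not depend on $Q$.

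Next I would discard every cube not contained in $R$: all terms of the sum in (\ref{eq:suitc5faqrmaron}) are nonnegative, so restricting to $Q\subset R$ and factoring out the constant value $\Psi(A_{R})$ gives $\Psi(A_{R})\sum_{Q\subset R}\lambda_{Q}\le C_{2}$, that is, $\sum_{Q\subset R}\lambda_{Q}\le C_{2}/\Psi(A_{R})$.

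It then remains to compare $\Psi(A_{R})$ with $(\Psi\circ\Phi^{-1})(1/\nu_{\overrightarrow{\sigma}}(R))$, and this growth-function comparison is the heart of the argument. Writing $G=\Psi\circ\Phi^{-1}$ and $u=\Phi(A_{R})$, one has $\Psi(A_{R})=G(u)$. Hypothesis (\ref{eq:surmaron}) gives $1/\nu_{\overrightarrow{\sigma}}(R)\le C_{1}u$, and since $G$ is increasing, $G(1/\nu_{\overrightarrow{\sigma}}(R))\le G(C_{1}u)$; thus it suffices to prove the doubling estimate $G(C_{1}u)\lesssim G(u)$. If $C_{1}\le 1$ this is immediate. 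For $C_{1}\ge 1$, I would use that each $\Phi_{i}\in\mathscr{U}$ has $t\mapsto\Phi_{i}(t)/t$ nondecreasing, which yields $\Phi_{i}(C_{1}t)\ge C_{1}\Phi_{i}(t)$ and hence $\Phi_{i}^{-1}(C_{1}s)\le C_{1}\Phi_{i}^{-1}(s)$; multiplying over $i$ gives $\Phi^{-1}(C_{1}u)/\Phi^{-1}(u)=\prod_{i=1}^{n}\Phi_{i}^{-1}(C_{1}u)/\Phi_{i}^{-1}(u)\le C_{1}^{n}$. Feeding this ratio into the upper-type property of $\Psi$ (say of order $q$) produces $G(C_{1}u)=\Psi(\Phi^{-1}(C_{1}u))\le C_{q}C_{1}^{nq}\Psi(\Phi^{-1}(u))=C_{q}C_{1}^{nq}G(u)$.

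Combining the two bounds gives $\sum_{Q\subset R}\lambda_{Q}\le C_{2}/\Psi(A_{R})\le C_{2}C_{q}\max(1,C_{1})^{nq}/(\Psi\circ\Phi^{-1})(1/\nu_{\overrightarrow{\sigma}}(R))$, which is precisely the asserted $(\nu_{\overrightarrow{\sigma}},\Psi\circ\Phi^{-1})$-Carleson bound with $\Lambda_{Carl}=C_{2}C_{q}\max(1,C_{1})^{nq}$. I expect the main obstacle to be the third step: the reverse-Hölder-type hypothesis (\ref{eq:surmaron}) enters only to replace the genuinely intertwined quantity $1/\nu_{\overrightarrow{\sigma}}(R)$ by the separated product $\Phi(A_{R})=\Phi(\prod_i\Phi_i^{-1}(1/\sigma_i(R)))$, after which the monotonicity $t\mapsto\Psi(t)/\Phi(t)$, the lower regularity of the $\Phi_{i}$, and the upper type of $\Psi$ are exactly what is needed to absorb the constant $C_{1}$ and close the estimate.
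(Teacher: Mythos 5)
Your proof is correct and follows essentially the same route as the paper: test the hypothesis (\ref{eq:suitc5faqrmaron}) with $f_{i}=\chi_{R}$, identify $\|\chi_{R}\|_{L_{\sigma_{i}}^{\Phi_{i}}}^{lux}=1/\Phi_{i}^{-1}(1/\sigma_{i}(R))$, and then use (\ref{eq:surmaron}) together with the upper-type property of $\Psi\circ\Phi^{-1}$ to absorb the constant $C_{1}$. The only cosmetic difference is that you re-derive inline the fact that $\Psi\circ\Phi^{-1}$ has upper type $nq$ (via $\Phi^{-1}(C_{1}u)\le C_{1}^{n}\Phi^{-1}(u)$ and the upper type of $\Psi$), whereas the paper quotes this as Lemma \ref{pro:main40}.
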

It is easy to see that when $\Phi(t)=t^{p}$ and $\Phi_{i}(t)=t^{p_{i}}$, $i=1,\cdots,n$, condition (\ref{eq:surmaron}) is just (\ref{eq:multireverseholder}).
\vskip .2cm
Let us recall that for $1<p<\infty$, the weight $\omega$ is said to belong to the Muckenhoupt class $A_p$, if \begin{equation}\label{eq:emukodou}
[\omega]_{A_{p}}= \sup_{Q \subset \mathcal{Q}}\left( \frac{1}{|Q|}\int_{Q}\omega(x)dx\right)\left( \frac{1}{|Q|}\int_{Q}(\omega(x))^{1-p'}dx\right)^{p-1}< \infty.\end{equation}
We say the weight $\omega$ belongs to the class $A_1$ if \begin{equation}\label{eq:em1kodou}
[\omega]_{A_{1}}= \sup_{Q \subset \mathcal{Q}}\left( \frac{1}{|Q|}\int_{Q}\omega(x)dx\right)\left(\inf ess_{x \in Q} \omega(x)\right)^{-1}< \infty.\end{equation}
The class $A_\infty$ is then defined as $$A_\infty:=\bigcup_{p\ge 1}A_p.$$
\vskip .2cm
It is easy to see that condition (\ref{eq:multireverseholder}) holds if $\sigma_i=\sigma, \forall i\in \{1,\cdots,n\}$. It is also proved in  \cite{CruzMoen} that (\ref{eq:multireverseholder}) holds when
$$\sigma_i\in A_\infty, \forall i\in \{1,\cdots,n\}.$$

One easily checks that condition (\ref{eq:surmaron}) also holds when $\sigma_i=\sigma, \forall i\in \{1,\cdots,n\}$. We will prove later on that (\ref{eq:surmaron}) holds if  $$\sigma_i\in A_1, \forall i\in \{1,\cdots,n\}.$$
%%%%%%%%%%%%%%%%%%%%%%%%%%%%%%%%%%%%%%%%%%%%%%%%%%%%%%%%%%%%

\subsection{Weighted inequalities for the weighted multilinear maximal function.}

 Let $\sigma_{1},...,\sigma_{n}$ be weights on $\mathbb{R}^{d}$ and
$f_{1},...,f_{n}$ be complex-valued measurable functions on $\mathbb{R}^{d}$. Put $\overrightarrow{\sigma}:=(\sigma_{1},...,\sigma_{n})$ and $\overrightarrow{f}:=(f_{1},...,f_{n})$.   
 The weighted multilinear Hardy-Littlewood maximal function  $\mathcal{M}_{\overrightarrow{\sigma}}(\overrightarrow{f})$ of $\overrightarrow{f}$ is defined by 
 
\begin{equation}\label{eq:ega5li6nlf}
\mathcal{M}_{\overrightarrow{\sigma}}(\overrightarrow{f})(x):= \sup_{Q \in \mathcal{Q}}\prod_{i=1}^{n}\frac{\chi_{Q}(x)}{\sigma_{i}(Q)}\int_{Q}|f_{i}(y)|\sigma_{i}(y) dy,
~~\forall~ x \in \mathbb{R}^{d}.\end{equation}
If $n=1$, then we recover the weighted Hardy-Littlewood maximal function $\mathcal{M}_{\sigma}$ and if moreover,  $\sigma \equiv 1$, we recover the usual Hardy-Littlewood maximal function.

A weight  $\sigma$ on $\mathbb{R}^{d}$ is said to be doubling, if there exists a constant $C>1$ such that
\begin{equation}\label{eq:cdubdedinis}
\ell(Q^{\prime})\leq 2\ell(Q)\Rightarrow\sigma(Q^{\prime}) \leq C\sigma(Q),~~ \forall~Q,Q' \in \mathcal{Q}~~ \text{with}~~ Q \subset Q'
.\end{equation}
\vskip .2cm

Let  $\sigma_{1},...,\sigma_{n},\omega$ be weights on $\mathbb{R}^{d}$ and let  $\Psi,\Phi,\Phi_{1},...,\Phi_{n}$ be one-to-one correspondences from $\mathbb{R}_{+}$ to itself.  Let us set $\overrightarrow{\Phi}:=(\Phi_{1},...,\Phi_{n})$,  $\overrightarrow{\sigma}:=(\sigma_{1},...,\sigma_{n})$ and $\nu_{\overrightarrow{\sigma}}:= \frac{1}{\Phi\left(\prod_{i=1}^{n}\Phi_{i}^{-1}\left(\frac{1}{\sigma_{i}}\right)  \right)}$.
\vskip .1cm
We say the pair of weights $(\overrightarrow{\sigma},\omega)$  belongs to the class
$M_{\overrightarrow{\Phi},\Psi}$ (or $(\overrightarrow{\sigma}, \omega)\in M_{\overrightarrow{\Phi},\Psi}$), if  
\begin{equation}\label{eq:suqsfi5t35an1}
[\overrightarrow{\sigma},\omega]_{M_{\overrightarrow{\Phi},\Psi}}:=\sup_{Q\in \mathcal{Q} }\omega(Q)\Psi \circ \Phi^{-1}\left( \frac{1}{\nu_{\overrightarrow{\sigma}}(Q)}\right)<\infty. \end{equation}
When $n=1$, we recover the definition of the class $M_{\Phi,\Psi}$ introduced in \cite{jmtafeuseh}.
\vskip .2cm
We will prove the following result.

\begin{theorem}\label{pro:maing1}
Let $\sigma_{1},...,\sigma_{n},\omega$ be weights on $\mathbb{R}^{d}$,  $\Phi_{1},...,\Phi_{n} \in \widetilde{\mathscr{U}}\cap \nabla_{2}$, and $ \Psi \in \widetilde{\mathscr{U}}$. Assume that $\Phi:\mathbb{R}_{+}\longrightarrow\mathbb{R}_{+}$ is a one-to-one correspondence such that 
 $\Phi^{-1}=\Phi_{1}^{-1}\times...\times \Phi_{n}^{-1}$ and $t\mapsto \frac{\Psi(t)}{\Phi(t)}$ is increasing on $\mathbb{R}_{+}^{*}$.  If for any $1 \leq i \leq n$, $\sigma_{i}$ is doubling, and $(\overrightarrow{\sigma}, \omega)\in M_{\overrightarrow{\Phi},\Psi}$, then there exists a constant $C:=C_{n,\sigma_{1},...,\sigma_{n},\Psi,\Phi_{1},...,\Phi_{n}}>0$ such that for any  $(f_{1},...,f_{n}) \in L^{\Phi_{1}}(\sigma_{1})\times...\times L^{\Phi_{n}}(\sigma_{n})$, 
 \begin{equation}\label{eq:eaqoicz}
 \|\mathcal{M}_{\overrightarrow{\sigma}}(f_{1},...,f_{n}) \|_{L_{\omega}^{\Psi}}^{lux} \leq C \Psi^{-1}\left([\overrightarrow{\sigma},\omega]_{M_{\overrightarrow{\Phi},\Psi}}\right)\prod_{i=1}^{n}\|f_{i}\|_{L_{\sigma_{i}}^{\Phi_{i}}}^{lux}. \end{equation}
\end{theorem}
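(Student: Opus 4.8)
The plan is to combine a Calderón--Zygmund / principal-cube decomposition of the dyadic maximal function with the multilinear Carleson embedding of Theorem \ref{pro:main16yh6}, the class $M_{\overrightarrow{\Phi},\Psi}$ entering only to certify the Carleson constant. First I would reduce matters: by homogeneity of the Luxemburg norm I may normalize $\|f_i\|_{L_{\sigma_i}^{\Phi_i}}^{lux}=1$, so it suffices to exhibit a $\lambda>0$ with $\lambda\lesssim\Psi^{-1}([\overrightarrow{\sigma},\omega]_{M_{\overrightarrow{\Phi},\Psi}})$ for which $\int_{\mathbb{R}^d}\Psi(\mathcal{M}_{\overrightarrow{\sigma}}(\overrightarrow{f})/\lambda)\,\omega\le 1$. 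Since each $\sigma_i$ is doubling, the usual adjacent (shifted) dyadic grids allow me to dominate $\mathcal{M}_{\overrightarrow{\sigma}}$ pointwise by a finite sum of dyadic operators $\mathcal{M}^{\mathcal{D}^{\beta}}_{\overrightarrow{\sigma}}$ (doubling is exactly what lets one pass from an arbitrary cube to the slightly larger dyadic cube containing it), so I may work on a single grid $\mathcal{D}^{\beta}$.

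On $\mathcal{D}^{\beta}$ I would run the principal-cube decomposition. Fixing $a>1$, set $\Omega_k:=\{\mathcal{M}^{\mathcal{D}^{\beta}}_{\overrightarrow{\sigma}}(\overrightarrow{f})>a^k\}$, let $\{Q_j^k\}_j$ be the maximal dyadic cubes with $\prod_i m_{\sigma_i}(f_i,Q)>a^k$ (so $\Omega_k=\bigsqcup_j Q_j^k$), and introduce the pairwise disjoint shells $E_{Q_j^k}:=Q_j^k\setminus\Omega_{k+1}$. On $E_{Q_j^k}$ one has $\mathcal{M}^{\mathcal{D}^{\beta}}_{\overrightarrow{\sigma}}(\overrightarrow{f})\le a^{k+1}<a\prod_i m_{\sigma_i}(f_i,Q_j^k)$. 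Writing $s_Q:=\prod_i m_{\sigma_i}(f_i,Q)$, this gives the pointwise bound $\mathcal{M}^{\mathcal{D}^{\beta}}_{\overrightarrow{\sigma}}(\overrightarrow{f})\le a\sum_Q s_Q\,\chi_{E_Q}$ and hence, by disjointness of the shells, $\int\Psi(\mathcal{M}^{\mathcal{D}^{\beta}}_{\overrightarrow{\sigma}}(\overrightarrow{f})/\lambda)\,\omega\le\sum_Q \omega(E_Q)\,\Psi(a s_Q/\lambda)$, the sum running over the principal cubes.

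The clean step is the Carleson verification. I would take $\lambda_Q:=\omega(E_Q)$ for principal $Q$ and $\lambda_Q:=0$ otherwise. For any $R\in\mathcal{D}^{\beta}$, disjointness of the shells gives $\sum_{Q\subseteq R}\lambda_Q=\omega\big(\bigsqcup_{Q\subseteq R}E_Q\big)\le\omega(R)$, and the hypothesis $(\overrightarrow{\sigma},\omega)\in M_{\overrightarrow{\Phi},\Psi}$, i.e.\ (\ref{eq:suqsfi5t35an1}), yields $\omega(R)\le[\overrightarrow{\sigma},\omega]_{M_{\overrightarrow{\Phi},\Psi}}/\Psi\!\circ\!\Phi^{-1}(1/\nu_{\overrightarrow{\sigma}}(R))$. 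Thus $\{\lambda_Q\}$ is $(\nu_{\overrightarrow{\sigma}},\Psi\circ\Phi^{-1})$-Carleson with $\Lambda_{Carl}\le[\overrightarrow{\sigma},\omega]_{M_{\overrightarrow{\Phi},\Psi}}$, and Theorem \ref{pro:main16yh6} (whose hypotheses hold since $\Phi_i\in\widetilde{\mathscr{U}}\cap\nabla_2$, $\Psi\in\widetilde{\mathscr{U}}\subset\mathscr{U}$, and $t\mapsto\Psi(t)/\Phi(t)$ is increasing) gives, for the normalized $f_i$, the bound $\sum_Q\omega(E_Q)\,\Psi(s_Q)\le C_{n,\Phi,\Psi,\overrightarrow{\Phi}}\,[\overrightarrow{\sigma},\omega]_{M_{\overrightarrow{\Phi},\Psi}}$.

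The main obstacle is the final scaling, which must convert the linear dependence on $[\overrightarrow{\sigma},\omega]_{M_{\overrightarrow{\Phi},\Psi}}$ into $\Psi^{-1}([\overrightarrow{\sigma},\omega]_{M_{\overrightarrow{\Phi},\Psi}})$. Here I would exploit the two-sided control built into $\widetilde{\mathscr{U}}^q$: the upper-type bound (\ref{eq:sui8n}) together with the scaling inequality (\ref{eq:ibide2}) yield, for $\lambda\ge 1$, an estimate of the shape $\Psi(a s_Q/\lambda)\lesssim a^q\,\Psi(s_Q)/\Psi(\lambda)$ on the cubes with $s_Q\gtrsim 1$ (those with small $s_Q$ contribute a harmless error summed directly). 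Combined with the embedding bound this gives $\int\Psi(\mathcal{M}^{\mathcal{D}^{\beta}}_{\overrightarrow{\sigma}}(\overrightarrow{f})/\lambda)\,\omega\lesssim a^q[\overrightarrow{\sigma},\omega]_{M_{\overrightarrow{\Phi},\Psi}}/\Psi(\lambda)$, so choosing $\lambda=\Psi^{-1}(\tilde C[\overrightarrow{\sigma},\omega]_{M_{\overrightarrow{\Phi},\Psi}})$ forces the right-hand side $\le 1$; finally $\Psi\in\Delta'$ gives $\Psi^{-1}(\tilde C\,t)\lesssim\Psi^{-1}(t)$, whence $\lambda\lesssim\Psi^{-1}([\overrightarrow{\sigma},\omega]_{M_{\overrightarrow{\Phi},\Psi}})$. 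Summing the finitely many grids and undoing the normalization completes the argument. The delicate point is that the exponent $q$ appearing in the upper-type bound and in (\ref{eq:ibide2}) must coincide so that the factor $\Psi(\lambda)$ balances cleanly, which is precisely why the hypothesis is $\Psi\in\widetilde{\mathscr{U}}$ rather than merely $\Psi\in\mathscr{U}$.
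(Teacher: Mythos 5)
Your proposal is correct and takes essentially the same route as the paper: normalization, dyadic reduction via doubling and the shifted grids, the principal-cube decomposition with $\lambda_Q=\omega(E_Q)$, verification of the Carleson condition directly from $(\overrightarrow{\sigma},\omega)\in M_{\overrightarrow{\Phi},\Psi}$, and then Theorem \ref{pro:main16yh6} to get the modular bound $\lesssim[\overrightarrow{\sigma},\omega]_{M_{\overrightarrow{\Phi},\Psi}}$. Your hand-rolled final scaling step is exactly what the paper packages as Proposition \ref{pro:main17jo}, which rests on Lemma \ref{pro:main80jdf}: since $\Psi\in\widetilde{\mathscr{U}}$ one has $\Psi\left(\frac{s}{t}\right)\leq C\frac{\Psi(s)}{\Psi(t)}$ for \emph{all} $s,t>0$, so the case-splitting you introduce for cubes with small $s_Q$ is unnecessary.
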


When $\sigma_{1}=\sigma_2=\cdots=\sigma_n=\sigma$ Theorem  \ref{pro:maing1} reduces to the following.

\begin{corollary}\label{pro:maingp1}
Let $\sigma,\omega$ be two weights on $\mathbb{R}^{d}$,  $\Phi_{1},...,\Phi_{n} \in \mathscr{U}\cap \nabla_{2}$,  $\Psi \in \widetilde{\mathscr{U}}$. Assume that $\Phi:\mathbb{R}_{+} \longrightarrow \mathbb{R}_{+}$ is a one-to-one correspondence such that 
 $\Phi^{-1}=\Phi_{1}^{-1}\times...\times \Phi_{n}^{-1}$ and $t\mapsto \frac{\Psi(t)}{\Phi(t)}$ is increasing on $\mathbb{R}_{+}^{*}$. If $\sigma$ is doubling, then the following are equivalent.
\begin{itemize}
\item[(i)] $(\overrightarrow{\sigma}, \omega)\in M_{\overrightarrow{\Phi},\Psi}$,  
\item[(ii)]  $\mathcal{M}_{\overrightarrow{\sigma}}: L^{\Phi_{1}}(\sigma) \times...\times L^{\Phi_{n}}(\sigma)\longrightarrow  L^{\Psi}(\omega)$ boundedly. 
\end{itemize}
Moreover,  
\begin{equation}\label{eq:esqicz}
\|\mathcal{M}_{\overrightarrow{\sigma}}\|_{\prod_{i=1}^{n}L_{\sigma}^{\Phi_{i}}\longrightarrow L_{\omega}^{\Psi}} \approx  \Psi^{-1}\left( [\overrightarrow{\sigma},\omega]_{M_{\overrightarrow{\Phi},\Psi}}\right).    \end{equation}
\end{corollary}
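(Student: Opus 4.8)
The plan is to prove the two implications together with the quantitative estimate (\ref{eq:esqicz}), exploiting the drastic simplification that occurs when $\sigma_{1}=\cdots=\sigma_{n}=\sigma$. Since $\Phi^{-1}=\Phi_{1}^{-1}\times\cdots\times\Phi_{n}^{-1}$, one has the pointwise identity $\Phi\left(\prod_{i=1}^{n}\Phi_{i}^{-1}(1/\sigma(x))\right)=\Phi(\Phi^{-1}(1/\sigma(x)))=1/\sigma(x)$, so that $\nu_{\overrightarrow{\sigma}}=\sigma$ and condition (\ref{eq:surmaron}) holds with $C_{1}=1$. In particular $[\overrightarrow{\sigma},\omega]_{M_{\overrightarrow{\Phi},\Psi}}=\sup_{Q}\omega(Q)\,\Psi\circ\Phi^{-1}(1/\sigma(Q))$. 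Throughout I abbreviate the operator norm by $\|\mathcal{M}_{\overrightarrow{\sigma}}\|$.

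The implication $(i)\Rightarrow(ii)$, together with the upper bound $\|\mathcal{M}_{\overrightarrow{\sigma}}\|\lesssim\Psi^{-1}([\overrightarrow{\sigma},\omega]_{M_{\overrightarrow{\Phi},\Psi}})$, is essentially the content of Theorem \ref{pro:maing1}. The only delicate point is that Theorem \ref{pro:maing1} is stated under $\Phi_{i}\in\widetilde{\mathscr{U}}\cap\nabla_{2}$, whereas here I only assume $\Phi_{i}\in\mathscr{U}\cap\nabla_{2}$. I would therefore revisit the proof of Theorem \ref{pro:maing1} and check that the properties of $\widetilde{\mathscr{U}}$ beyond $\mathscr{U}\cap\nabla_{2}$ (namely $\Phi_{i}\in\Delta'$ and (\ref{eq:ibide2})) are invoked only to pass, for distinct weights, from $\Phi\left(\prod_{i}\Phi_{i}^{-1}(1/\sigma_{i}(Q))\right)$ to $1/\nu_{\overrightarrow{\sigma}}(Q)$ through the reverse H\"older-type condition (\ref{eq:surmaron}). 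When all weights coincide this passage is an exact identity, so $\Phi_{i}\in\mathscr{U}\cap\nabla_{2}$ should suffice to run the Carleson embedding of Theorem \ref{pro:main16yh6} in this specialized form and conclude.

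For the converse $(ii)\Rightarrow(i)$ and the matching lower bound I would argue by testing on indicators; note that doubling is not needed here. Fix $Q\in\mathcal{Q}$ and take $f_{i}=\chi_{Q}$ for all $i$. Since $\frac{1}{\sigma(Q)}\int_{Q}\chi_{Q}\,\sigma=1$, choosing the cube $Q$ in the supremum gives the pointwise bound $\mathcal{M}_{\overrightarrow{\sigma}}(\chi_{Q},\dots,\chi_{Q})\ge\chi_{Q}$. Computing Luxemburg norms of indicators, $\|\chi_{Q}\|_{L_{\omega}^{\Psi}}^{lux}=1/\Psi^{-1}(1/\omega(Q))$ and $\|\chi_{Q}\|_{L_{\sigma}^{\Phi_{i}}}^{lux}=1/\Phi_{i}^{-1}(1/\sigma(Q))$, whence $\prod_{i}\|\chi_{Q}\|_{L_{\sigma}^{\Phi_{i}}}^{lux}=1/\Phi^{-1}(1/\sigma(Q))$. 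Feeding these into the boundedness assumption $(ii)$ and using monotonicity of the norm yields, for every cube $Q$,
\begin{equation*}
\frac{\Phi^{-1}(1/\sigma(Q))}{\Psi^{-1}(1/\omega(Q))}\le\|\mathcal{M}_{\overrightarrow{\sigma}}\|.
\end{equation*}

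It remains to convert this into a bound on $\Psi^{-1}([\overrightarrow{\sigma},\omega]_{M_{\overrightarrow{\Phi},\Psi}})$. Writing $a=\Phi^{-1}(1/\sigma(Q))$ and $c=\Psi^{-1}(1/\omega(Q))$, so $\omega(Q)=1/\Psi(c)$, the quantity to estimate is $\Psi^{-1}(\omega(Q)\,\Psi(a))=\Psi^{-1}(\Psi(a)/\Psi(c))$. Using $\Psi\in\Delta'$ gives $\Psi(a)=\Psi(c\cdot a/c)\le C\Psi(c)\Psi(a/c)$, hence $\Psi(a)/\Psi(c)\le C\Psi(a/c)$; and the monotonicity of $t\mapsto\Psi(t)/t$ yields the elementary inequality $\Psi^{-1}(C\Psi(v))\le Cv$ for $C\ge1$ (if $w=\Psi^{-1}(C\Psi(v))$ then $w\ge v$, so $\Psi(w)/w\ge\Psi(v)/v$ forces $w\le Cv$). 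Combining, $\Psi^{-1}(\Psi(a)/\Psi(c))\le C\,a/c\le C\|\mathcal{M}_{\overrightarrow{\sigma}}\|$; taking the supremum over $Q$ and using that $\Psi^{-1}$ is increasing and continuous gives $\Psi^{-1}([\overrightarrow{\sigma},\omega]_{M_{\overrightarrow{\Phi},\Psi}})\lesssim\|\mathcal{M}_{\overrightarrow{\sigma}}\|$, so $(ii)$ forces $[\overrightarrow{\sigma},\omega]_{M_{\overrightarrow{\Phi},\Psi}}<\infty$, which is $(i)$, and the two bounds together give (\ref{eq:esqicz}). The main obstacle I anticipate is the bookkeeping in the sufficiency step: verifying rigorously that the relaxation from $\widetilde{\mathscr{U}}$ to $\mathscr{U}$ is legitimate for equal weights, and tracking the constant so that the upper bound is genuinely $\Psi^{-1}([\overrightarrow{\sigma},\omega]_{M_{\overrightarrow{\Phi},\Psi}})$ and not a larger power of it. The testing direction, by contrast, is routine once the $\Delta'$ property and the monotonicity of $t\mapsto\Psi(t)/t$ are exploited.
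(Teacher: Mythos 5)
Your proposal is correct and follows essentially the same route as the paper: sufficiency by rerunning the proof of Theorem \ref{pro:maing1} in the equal-weight case, where $\nu_{\overrightarrow{\sigma}}=\sigma$ and the Carleson embedding can be invoked under $\mathscr{U}\cap\nabla_{2}$ alone (the paper records exactly this relaxation in Corollary \ref{pro:main16aaqq6}), and necessity by testing on characteristic functions of cubes, which is the content of Proposition \ref{pro:mainaaqwqyh6}. Your explicit conversion step, using $\Psi\in\Delta'$ together with the elementary bound $\Psi^{-1}(C\Psi(v))\le Cv$, makes the quantitative lower bound $\Psi^{-1}\left([\overrightarrow{\sigma},\omega]_{M_{\overrightarrow{\Phi},\Psi}}\right)\lesssim\|\mathcal{M}_{\overrightarrow{\sigma}}\|$ somewhat more explicit than the paper's modular formulation, but the underlying argument is the same.
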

We also obtain the following result.
\begin{theorem}\label{thm:LiSuaqn} Let   $\sigma_1,\ldots,\sigma_n, \omega$ be weights on $\mathbb{R}^{d}$,  let $\Phi_1,...,\Phi_{n} \in \mathscr U \cap \nabla_{2}$ and $\Psi\in \widetilde{\mathscr U}$. Define
$$[ {\vec{\sigma}},\omega]_{K_{\vec {\Phi},\Psi}}:=\sup_{Q\in \mathcal Q}\omega(Q)\left(\prod_{i=1}^n\Psi\circ\Phi_i^{-1}\left(\frac{1}{\sigma_i(Q)}\right)\right).$$
If for each $1 \leq i \leq n$, the function $t\mapsto\frac{\Psi(t)}{\Phi_i(t)}$ is nondecreasing on  $\mathbb{R}^{*}_{+}$ and  $\sigma_{i}$ is doubling, then    $\mathcal{M}_{\overrightarrow{\sigma}}$ is bounded from $L^{\Phi_1}(\sigma_1)\times\cdots\times L^{\Phi_n}(\sigma_n)$ to $L^\Psi(\omega)$ if and only if $[ {\vec{\sigma}},\omega]_{K_{\vec {\Phi},\Psi}}$ is finite. 
Moreover,   
\begin{equation}\label{eq:esqicz}
\|\mathcal{M}_{\overrightarrow{\sigma}}\|_{\prod_{i=1}^{n}L_{\sigma}^{\Phi_{i}}\longrightarrow L_{\omega}^{\Psi}} \approx  \Psi^{-1}\left( [\overrightarrow{\sigma},\omega]_{K_{\overrightarrow{\Phi},\Psi}}\right).    \end{equation}
\end{theorem}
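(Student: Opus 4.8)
The plan is to establish the two implications of the equivalence separately, together with the quantitative bound, the heart of the matter being a multilinear sparse/stopping-time reduction followed by a generalized Young-type inequality adapted to the triple $(\Psi,\Phi_1,\dots,\Phi_n)$.

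\emph{Necessity and the lower bound.} First I would show that boundedness forces $[\vec{\sigma},\omega]_{K_{\vec\Phi,\Psi}}<\infty$, with $\Psi^{-1}([\vec{\sigma},\omega]_{K_{\vec\Phi,\Psi}})\lesssim\|\mathcal{M}_{\vec\sigma}\|$. Fix $Q\in\mathcal{Q}$ and test with $f_i=\chi_Q$. Since $m_{\sigma_i}(\chi_Q,Q)=1$, one has $\mathcal{M}_{\vec\sigma}(\chi_Q,\dots,\chi_Q)\ge\chi_Q$ on $Q$, and the Luxemburg norms of characteristic functions compute as $\|\chi_Q\|_{L^\Psi_\omega}^{lux}=1/\Psi^{-1}(1/\omega(Q))$ and $\|\chi_Q\|_{L^{\Phi_i}_{\sigma_i}}^{lux}=1/\Phi_i^{-1}(1/\sigma_i(Q))$. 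Feeding these into the boundedness inequality and applying the increasing function $\Psi$ gives $\omega(Q)\,\Psi\big(\prod_i\Phi_i^{-1}(1/\sigma_i(Q))\big)\lesssim\|\mathcal{M}_{\vec\sigma}\|^{q}$, where $q$ is an upper type of $\Psi$. The remaining step is to pass from $\Psi$ of the product to the product of the $\Psi\circ\Phi_i^{-1}$; here I would exploit that $\Psi\in\widetilde{\mathscr{U}}\subset\Delta'$ together with its upper type and the homogeneity-type bound (\ref{eq:ibide2}), which forces $\Psi$ to be essentially power-like and hence near-multiplicative ($\prod_i\Psi(t_i)\approx\Psi(\prod_i t_i)$) for the relevant range of arguments. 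Thus $\prod_i\Psi\circ\Phi_i^{-1}(1/\sigma_i(Q))$ and $\Psi\big(\prod_i\Phi_i^{-1}(1/\sigma_i(Q))\big)$ are comparable, whence $[\vec{\sigma},\omega]_{K_{\vec\Phi,\Psi}}\lesssim\|\mathcal{M}_{\vec\sigma}\|^{q}$.

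\emph{Sufficiency: reduction to a sparse sum.} For the converse I would reduce, via the one-third (finitely-many-grids) trick and $\Psi\in\Delta_2$, to the dyadic operator $\mathcal{M}^{\mathcal{D}^{\beta}}_{\vec\sigma}$. Normalizing $\|f_i\|_{L^{\Phi_i}_{\sigma_i}}^{lux}=1$, it suffices to produce $\lambda\approx\Psi^{-1}([\vec{\sigma},\omega]_{K_{\vec\Phi,\Psi}})$ with $\int_{\mathbb{R}^d}\Psi\big(\mathcal{M}^{\mathcal{D}^{\beta}}_{\vec\sigma}(\vec f)/\lambda\big)\,\omega\le 1$. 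Constructing the principal cubes $\mathcal{S}$, where the product average $\prod_i m_{\sigma_i}(|f_i|,Q)$ roughly doubles, and applying the upper-type of $\Psi$ on the corresponding level sets, I would dominate the left-hand side by $\sum_{Q\in\mathcal{S}}\omega(Q)\,\Psi\big(\prod_i m_{\sigma_i}(|f_i|,Q)/\lambda\big)$. The doubling of each $\sigma_i$ enters precisely here, to select pairwise disjoint $E_Q\subset Q$ with $\sigma_i(E_Q)\gtrsim\sigma_i(Q)$ \emph{simultaneously} for all $i$, so that $\mathcal{S}$ is sparse with respect to every $\sigma_i$ at once.

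\emph{The key factorization (main obstacle) and conclusion.} Inserting the testing bound $\omega(Q)\le[\vec{\sigma},\omega]_{K_{\vec\Phi,\Psi}}/\prod_i\Psi\circ\Phi_i^{-1}(1/\sigma_i(Q))$, the crux is to control, for each $Q\in\mathcal{S}$, the ratio $\Psi\big(\prod_i m_{\sigma_i}(|f_i|,Q)/\lambda\big)\big/\prod_i\Psi\circ\Phi_i^{-1}(1/\sigma_i(Q))$ by a product of single-index localized integrals $\int_{E_Q}\Phi_i\big(\mathcal{M}_{\sigma_i}(|f_i|/\lambda^{1/n})\big)\sigma_i$, raised to exponents dictated by the relative types of $\Psi$ and $\Phi_i$. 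For this I would use the monotonicity of $t\mapsto\Psi(t)/\Phi_i(t)$ to trade each factor $\Psi(\Phi_i^{-1}(1/\sigma_i(Q)))$ against $\Phi_i$ of the corresponding average, then Jensen's inequality to pass to $\int_Q\Phi_i(|f_i|/\lambda^{1/n})\sigma_i$, then doubling to replace $\sigma_i(Q)$ by $\sigma_i(E_Q)$ and dominate by the single-weight maximal function. Summing over $Q\in\mathcal{S}$ by Hölder for series on the disjoint sets $E_Q$, with exponents chosen exactly as in the power model $\Psi=t^{q}$, $\Phi_i=t^{p_i}$, collapses the sum into $\prod_i\int_{\mathbb{R}^d}\Phi_i\big(\mathcal{M}_{\sigma_i}(|f_i|/\lambda^{1/n})\big)\sigma_i$; the boundedness of $\mathcal{M}_{\sigma_i}$ on $L^{\Phi_i}(\sigma_i)$, ensured by $\Phi_i\in\nabla_2$, together with the lower-type decay of $\Phi_i$, then lets me fix $\lambda\approx\Psi^{-1}([\vec{\sigma},\omega]_{K_{\vec\Phi,\Psi}})$ so that the whole expression is $\le1$. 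I expect the genuinely delicate point to be the exponent bookkeeping in this factorization: a symmetric $\Delta'$-splitting of $\Psi$ across the factors is too lossy to recover the correct power of $\lambda$, so the argument must follow the power-function model faithfully, first linearizing $\Psi$ through the $\widetilde{\mathscr{U}}$-structure and only then distributing it over the $n$ factors.
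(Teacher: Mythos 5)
Your necessity argument is sound and essentially matches the paper's: testing on $f_i=\chi_Q$, computing Luxemburg norms via $\|\chi_Q\|_{L^{\Phi_i}_{\sigma_i}}^{lux}=1/\Phi_i^{-1}(1/\sigma_i(Q))$, and passing between $\Psi\bigl(\prod_i\Phi_i^{-1}(1/\sigma_i(Q))\bigr)$ and $\prod_i\Psi\circ\Phi_i^{-1}(1/\sigma_i(Q))$ using the near-multiplicativity of $\Psi$, which indeed follows from $\Psi\in\Delta'$ together with Lemma \ref{pro:main80jdf} (take $s=ab$, $t=b$ in $\Psi(s/t)\leq C\Psi(s)/\Psi(t)$ to get $\Psi(a)\Psi(b)\leq C\Psi(ab)$).

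The sufficiency direction, however, has a genuine gap, and it sits exactly where you locate the role of doubling. You claim that doubling of each $\sigma_i$ lets you choose pairwise disjoint $E_Q\subset Q$ with $\sigma_i(E_Q)\gtrsim\sigma_i(Q)$ simultaneously for all $i$, and your Hölder-for-series collapse of $\sum_{Q\in\mathcal S}\prod_i(\cdots)$ into $\prod_i\int\Phi_i(\mathcal M_{\sigma_i}(\cdots))\sigma_i$ depends on this. That claim is false: the sparse sets $E_Q$ satisfy $|E_Q|\geq\frac12|Q|$ with respect to Lebesgue measure, and doubling of $\sigma_i$ gives no lower bound on $\sigma_i(E_Q)$; the property $|E|\geq\frac12|Q|\Rightarrow\sigma_i(E)\gtrsim\sigma_i(Q)$ is an $A_\infty$-type absolute-continuity condition, strictly stronger than doubling. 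This is not a repairable detail within your scheme: even in the power model $\Psi=t^q$, $\Phi_i=t^{p_i}$, your factorization reduces to $\sum_{Q\in\mathcal S,\,Q\subset R}\sigma_i(Q)\,m_{\sigma_i}(f_i,Q)^{p_i}\lesssim\|f_i\|^{p_i}_{L^{p_i}(\sigma_i)}$, which requires the $\sigma_i$-Carleson property of the stopping cubes, i.e. $\sigma_i\in\mathcal A_\infty$ — precisely the hypothesis the paper imposes in Theorems \ref{pro:main1aq6}(ii) and \ref{pro:mainineq4}, where such sums genuinely occur, but which this theorem avoids. (A second, related soft spot is the "Hölder with power-model exponents" step itself: in the Orlicz setting there are no conjugate exponents to invoke, and near-multiplicativity of $\Psi$ does not produce the splitting $\sum_Q\prod_i a_{i,Q}\lesssim\prod_i\bigl(\sum_Q a_{i,Q}^{r_i}\bigr)^{1/r_i}$.) The paper's proof bypasses both obstacles by an iterated scalar Carleson embedding: after the dyadic reduction (this is the only place doubling is used) and the $\Delta'$-splitting $\Psi(\prod_i m_{\sigma_i}(g_i,Q))\lesssim\prod_i\Psi(m_{\sigma_i}(g_i,Q))$, it freezes all factors but one into a sequence $\lambda_Q=\omega(E_Q)\prod_{i\geq2}\Psi(m_{\sigma_i}(g_i,Q))$, shows via the testing condition and the one-weight Orlicz Carleson embedding (Corollary \ref{pro:main16aaqaqq6}, valid for arbitrary weights since it rests on Theorem \ref{pro:main88}) that $\{\lambda_Q\}$ is a $(\sigma_1,\Psi\circ\Phi_1^{-1})$-Carleson sequence with constant $\lesssim[\vec\sigma,\omega]_{K_{\vec\Phi,\Psi}}$, applies the embedding once more, and concludes with Proposition \ref{pro:main17jo}. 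You would need to replace your Hölder step by this nested-Carleson mechanism (or add $\sigma_i\in\mathcal A_\infty$ to your hypotheses, proving a weaker theorem).
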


%%%%%%%%%%%%%%%%%%%%%%%%%%%%%%%%%%%%%%%%%%%%%%%%%%%%%%%%%%%%
\subsection{Sawyer-type characterizations for the multilinear fractional maximal function.}
Recall that the multilinear fractional maximal function is defined by $$\mathcal{M}_\alpha(\vec{f})(x):=\sup_{Q\in \mathcal{Q}}|Q|^{\alpha /d}\prod_{i=1}^n\frac{\chi_Q(x)}{|Q|}\int_Q|f_i(y)|dy$$
provided $0\le \alpha<nd$. Here $\vec{f}=(f_1,\cdots,f_n)$ with each $f_i$ being a measurable function. When $\alpha=0$, $\mathcal{M}_0=\mathcal{M}$ is the multilinear Hardy-Littlewood maximal function. These operators appear in the study of multilinear fractional integral operators and are also related to multilinear Calder\'on-Zygmund theory (see for example \cite{Grafakos1,Grafakos2,Grafakos3,KS,LOPTT,Moen2}).
\vskip .2cm
Sawyer-type inequalities for the above operator were obtained in \cite{ChenDamian,LiSun,sehbaf}. In the sequel, when $\sigma_1,\cdots,\sigma_n$ are weights on $\mathbb{R}^d$, we write $\vec{\sigma}=(\sigma_1,\cdots,\sigma_n)$, and $\vec{\sigma}\cdot\vec{f}=(\sigma_1f_1,\cdots,\sigma_nf_n)$.
\vskip .1cm
The following is an extension of the main result in \cite{LiSun} that was reproved in \cite[Theorem 2.1]{sehbaf}.
\begin{theorem}\label{thm:LiSun} Let $n$ be a nonnegative integer. Given $\Phi_i\in \mathscr U \cap \nabla_{2}$, $i=1,2,\cdots,n$, and $\Psi\in \widetilde{\mathscr U}$, suppose that $0\le \alpha<nd$, and $t\mapsto\frac{\Psi(t)}{\Phi_i(t)}$ is nondecreasing for any $i=1,\cdots,n$. Let $\sigma_1,\ldots,\sigma_n$ and $v$ be weights. Define
$$[ {\vec{\sigma}},v]_{L_{\vec {\Phi},\Psi}}:=\sup_{Q\in \mathcal Q}\left(\prod_{i=1}^n\Psi\circ\Phi_i^{-1}\left(\frac{1}{\sigma_i(Q)}\right)\right)\left(\int_Q\Psi\left(\mathcal {M}_\alpha(\sigma_1\chi_Q,\ldots,\sigma_n\chi_Q)(x)\right)v(x)dx\right).$$
Then $\mathcal{M}_\alpha(\vec{\sigma}.)$ is bounded from $L^{\Phi_1}(\sigma_1)\times\cdots\times L^{\Phi_n}(\sigma_n)$ to $L^\Psi(v)$ if $[ {\vec{\sigma}},v]_{L_{\vec {\Phi},\Psi}}$ is finite. Moreover,
$$\|\mathcal M_\alpha(\vec{\sigma}.)\|_{\left(\prod_{i=1}^nL^{\Phi_i}(\sigma_i)\right)\rightarrow L^\Psi(v)}\lesssim \Psi^{-1}\left([ {\vec{\sigma}},v]_{L_{\vec {\Phi},\Psi}}\right).$$

\end{theorem}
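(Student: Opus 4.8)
The plan is to follow the classical Sawyer testing scheme, reduced to a single dyadic grid and then organised around a principal-cube (stopping-time) decomposition, with the testing functional $[\vec{\sigma},v]_{L_{\vec{\Phi},\Psi}}$ inserted cube-by-cube and the resulting multilinear sum closed by an Orlicz-space Hölder/embedding argument. First I would reduce to the dyadic setting: by the standard three-grid (adjacent dyadic systems) lemma there are finitely many grids $\mathcal D^\beta$ such that every cube is contained in a comparable cube of some $\mathcal D^\beta$, so $\mathcal M_\alpha(\vec{\sigma}\cdot\vec{f})\lesssim\sum_\beta\mathcal M_\alpha^{\mathcal D^\beta}(\vec{\sigma}\cdot\vec{f})$ and it suffices to bound each dyadic operator. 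By the homogeneity of (\ref{eq:opernormdef}) I normalise $\|f_i\|_{L_{\sigma_i}^{\Phi_i}}^{lux}=1$, so that the goal becomes $\int_{\mathbb R^d}\Psi\big(\mathcal M_\alpha^{\mathcal D^\beta}(\vec{\sigma}\cdot\vec{f})/\lambda\big)\,v\le 1$ for a suitable $\lambda\approx\Psi^{-1}\big([\vec{\sigma},v]_{L_{\vec{\Phi},\Psi}}\big)$.

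Fix $\mathcal D:=\mathcal D^\beta$ and write $a_Q:=|Q|^{\alpha/d-n}\prod_{i=1}^n\int_Q|f_i|\sigma_i=b_Q\prod_{i=1}^n m_{\sigma_i}(f_i,Q)$, where $b_Q:=|Q|^{\alpha/d-n}\prod_i\sigma_i(Q)$. Running the stopping time on the values $a_Q$ produces a sparse family $\mathcal P$ of principal cubes with pairwise disjoint sets $E_Q\subset Q$ on which $\mathcal M_\alpha^{\mathcal D}(\vec{\sigma}\cdot\vec{f})\lesssim a_Q$. Since $\mathcal M_\alpha(\sigma_1\chi_Q,\dots,\sigma_n\chi_Q)(x)\ge b_Q$ for $x\in Q$, the factorisation of $a_Q$ yields the key pointwise domination
\[
\mathcal M_\alpha^{\mathcal D}(\vec{\sigma}\cdot\vec{f})(x)\ \lesssim\ \Big(\prod_{i=1}^n m_{\sigma_i}(f_i,Q)\Big)\,\mathcal M_\alpha(\sigma_1\chi_Q,\dots,\sigma_n\chi_Q)(x),\qquad x\in E_Q.
\]
Hence $\int\Psi(\mathcal M_\alpha^{\mathcal D}(\vec{\sigma}\cdot\vec{f})/\lambda)\,v\lesssim\sum_{Q\in\mathcal P}\int_{E_Q}\Psi\big(\lambda^{-1}(\prod_i m_{\sigma_i}(f_i,Q))\,\mathcal M_\alpha(\vec{\sigma}\chi_Q)\big)\,v$. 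Using that $\Psi\in\widetilde{\mathscr U}$ (so $\Psi(t)/t$ is nondecreasing and $\Psi\in\Delta'$) together with Jensen's inequality $m_{\sigma_i}(f_i,Q)\le\Phi_i^{-1}(1/\sigma_i(Q))$, I can peel the averages off $\Psi$ and invoke the testing hypothesis $\int_Q\Psi(\mathcal M_\alpha(\vec{\sigma}\chi_Q))\,v\le[\vec{\sigma},v]_{L_{\vec{\Phi},\Psi}}\big/\prod_i\Psi\circ\Phi_i^{-1}(1/\sigma_i(Q))$, reducing everything to a multilinear sum of the shape $\sum_{Q\in\mathcal P}\prod_{i=1}^n\big(m_{\sigma_i}(f_i,Q)\big/\Phi_i^{-1}(1/\sigma_i(Q))\big)$ weighted by the testing constant.

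The main obstacle is closing this last sum with the correct $\Psi^{-1}$ scaling and without a doubling hypothesis. In the model case $\Psi(t)=t^q$, $\Phi_i(t)=t^{p_i}$, $\Phi(t)=t^p$ one argues by the embedding $\ell^p\hookrightarrow\ell^q$ (which uses $q\ge p$, i.e.\ the monotonicity of $t\mapsto\Psi(t)/\Phi(t)$), then Hölder for series with exponents $p_i/p$ (which uses $\sum_i p/p_i=1$, i.e.\ $\Phi^{-1}=\prod_i\Phi_i^{-1}$), and finally the single-variable Carleson embedding $\sum_{Q\in\mathcal P}\sigma_i(Q)\,m_{\sigma_i}(f_i,Q)^{p_i}\lesssim\|f_i\|_{L^{p_i}(\sigma_i)}^{p_i}$, valid because the sparseness of $\mathcal P$ makes $\{\sigma_i(Q)\}_{Q\in\mathcal P}$ a $\sigma_i$-Carleson sequence.

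I would reproduce this chain in the Orlicz scale: the monotonicity of each $t\mapsto\Psi(t)/\Phi_i(t)$ together with $\Psi\in\widetilde{\mathscr U}$ replaces $\ell^p\hookrightarrow\ell^q$; the relation $\Phi^{-1}=\prod_i\Phi_i^{-1}$ yields the generalized (O'Neil) Hölder inequality; and $\Phi_i\in\mathscr U\cap\nabla_2$ feeds the single-variable Carleson embedding of \cite{jmtafeuseh}. Note that the multilinear embedding of Theorem \ref{pro:main16yh6} cannot be quoted verbatim here, since it requires $\Phi_i\in\widetilde{\mathscr U}$ whereas now only $\Phi_i\in\mathscr U\cap\nabla_2$ is available; the delicate points are therefore the exponent/growth bookkeeping that emulates $\ell^\Phi\hookrightarrow\ell^\Psi$ in the Orlicz setting, and verifying that the principal cubes are sparse with respect to each individual weight $\sigma_i$ so that the single-variable embeddings apply.
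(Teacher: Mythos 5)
Your reduction to dyadic grids, the normalization, the sparse/stopping decomposition, the pointwise domination on $E_Q$, and the cube-by-cube insertion of the testing constant are all fine, but the closing step contains a genuine error: you assert that Lebesgue-sparseness of $\mathcal{P}$ makes $\{\sigma_i(Q)\}_{Q\in\mathcal{P}}$ a $\sigma_i$-Carleson sequence, so that the single-variable Carleson embedding finishes the argument. That implication is false for general weights: the bound $\sum_{Q\in\mathcal{P},\,Q\subseteq R}\sigma_i(Q)\lesssim\sigma_i(R)$ over sparse families is exactly the Fujii--Wilson condition $\sigma_i\in\mathcal{A}_\infty$, which is why the paper explicitly assumes $\sigma_i\in\mathcal{A}_\infty$ in Theorem \ref{pro:main1aq6}(ii) and Theorem \ref{pro:mainineq4}, where sums of precisely this type occur. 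Theorem \ref{thm:LiSun} assumes nothing about the $\sigma_i$; dispensing with $A_\infty$ is the whole point of a Sawyer-type testing theorem. Concretely, in $d=1$ take $\sigma=\epsilon^{-1}\chi_{[0,\epsilon)}$ and the sparse family $\{[0,2^{-k})\}_{k\ge 0}$: then $\sum_{k}\sigma([0,2^{-k}))\approx\log(1/\epsilon)$ while $\sigma([0,1))=1$. The same failure already occurs in your model power case, so the chain ``$\ell^p\hookrightarrow\ell^q$ plus H\"older plus Carleson-from-sparseness'' is not how Li--Sun or \cite{sehbaf} close their arguments either.

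The structural reason is that you spend the testing hypothesis too early. Replacing each $\int_{E_Q}\Psi(\mathcal{M}_\alpha(\vec{\sigma}\chi_Q))v\,dx$, an integral over the small set $E_Q$, by $[\vec{\sigma},v]_{L_{\vec{\Phi},\Psi}}/\prod_i\Psi\circ\Phi_i^{-1}(1/\sigma_i(Q))$, which controls the integral over all of $Q$, throws away the disjointness of the $E_Q$, and the purely weight-dependent sum that remains cannot be summed for general $\sigma_i$. The paper keeps these integrals: using $\Psi\in\Delta'$ it peels off only one factor $\Psi(m_{\sigma_1}(|f_1|,Q))$, and the remaining coefficients $\lambda_Q=\Psi\bigl(\sigma_1(Q)\int_Q|f_2|\sigma_2/|Q|^{2-\alpha/d}\bigr)\omega(E_Q)$ are shown to form a $(\sigma_1,\Psi\circ\Phi_1^{-1})$-Carleson sequence precisely because the disjoint sets $E_Q$ inside a fixed $R$ can be reassembled into the single integral $\int_R\Psi(\mathcal{M}^{\mathcal{D}^\beta}_\alpha(\chi_R\sigma_1,\chi_R|f_2|\sigma_2))\omega$, which a local testing lemma (Lemma \ref{lem:LiSun1}) bounds by $[\vec{\sigma},\omega]_{L_{\vec{\Phi},\Psi}}/\Psi\circ\Phi_1^{-1}(1/\sigma_1(R))$; that lemma is proved by the same scheme one level down, with the top cube $Q=R$ handled by Orlicz duality, namely $\int_R|g|\sigma_2\lesssim\|g\|^{lux}_{\Phi_2,\sigma_2}\,\sigma_2(R)\,\Phi_2^{-1}(1/\sigma_2(R))$ via the complementary function of $\Phi_2$. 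Thus the Carleson property always comes from the testing hypothesis, never from sparseness, and every stage uses only the single-variable embedding (Corollary \ref{pro:main16aaqaqq6}), which is why $\Phi_i\in\mathscr{U}\cap\nabla_2$ suffices: your observation that Theorem \ref{pro:main16yh6} is unavailable is correct, but the remedy is this iteration, not a product-sum closure. A further obstacle to your closure, even granting the Carleson property: recombining $\prod_i\Psi(m_{\sigma_i}(f_i,Q))$ into a single $\Psi$ or $\Psi\circ\Phi^{-1}$ so that a superadditivity or embedding argument can apply requires a reverse $\Delta'$ (i.e.\ $\Delta''$) inequality for $\Psi$, which is not assumed, and the statement provides no $\Phi$ with $\Phi^{-1}=\prod_i\Phi_i^{-1}$.
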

If we choose $\Psi$ to be a power function, then the above condition is also necessary.
\begin{theorem}\label{thm:LiSunphiq} Let $n$ be a nonnegative integer. Given $\Phi_i\in \mathscr U$, $i=1,2,\cdots,n$, and $1<q<\infty$, suppose that $0\le \alpha<nd$, and that $t\mapsto\frac{t^q}{\Phi_i(t)}$ is nondecreasing for any $i=1,\cdots,n$. Let $\sigma_1,\ldots,\sigma_n$ and $v$ be weights. Define
$$[{\vec{\sigma}},v]_{L_{\vec {\Phi},q}}:=\sup_{Q\in \mathcal Q}\left(\prod_{i=1}^m\left(\Phi_i^{-1}\left(\frac{1}{\sigma_i(Q)}\right)\right)^q\right)\left(\int_Q\left(\mathcal {M}_\alpha(\sigma_1\chi_Q,\ldots,\sigma_n\chi_Q)(x)\right)^qv(x)dx\right).$$
Then $\mathcal M_\alpha(\vec{\sigma}.)$ is bounded from $L^{\Phi_1}(\sigma_1)\times\cdots\times L^{\Phi_n}(\sigma_n)$ to $L^q(v)$ if and only if $[ {\vec{\sigma}},v]_{L_{\vec {\Phi},\Psi}}$ is finite. Moreover,
$$\|\mathcal M_\alpha(\vec{\sigma}.)\|_{\left(\prod_{i=1}^nL^{\Phi_i}(\sigma_i)\right)\rightarrow L^\Psi(v)}\approx [ {\vec{\sigma}},v]_{L_{\vec {\Phi},q}}^{1/q}.$$

\end{theorem}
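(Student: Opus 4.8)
The plan is to prove the two implications separately and then combine them for the norm equivalence. The sufficiency (finiteness of $[\vec{\sigma},v]_{L_{\vec{\Phi},q}}$ implies boundedness) will be obtained by specializing Theorem \ref{thm:LiSun} to the power growth function $\Psi(t)=t^{q}$, whereas the necessity (boundedness implies finiteness of the testing constant), which is the genuinely new content available only for power $\Psi$, will be established by testing the inequality on characteristic functions of cubes.

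For the sufficiency direction, first I would observe that $\Psi(t)=t^{q}$ with $q>1$ belongs to $\widetilde{\mathscr{U}}$: it is convex, lies in $\Delta'$ with constant $1$ since $(st)^{q}=s^{q}t^{q}$, is of upper type $q$, has $t\mapsto \Psi(t)/t=t^{q-1}$ nondecreasing, and satisfies $\Psi(s/t)=s^{q}/t^{q}$ so that (\ref{eq:ibide2}) holds. Next I would note that for this choice $\Psi\circ\Phi_{i}^{-1}(t)=(\Phi_{i}^{-1}(t))^{q}$ and $\Psi(\mathcal{M}_\alpha(\ldots))=(\mathcal{M}_\alpha(\ldots))^{q}$, so that the two testing functionals literally coincide, $[\vec{\sigma},v]_{L_{\vec{\Phi},\Psi}}=[\vec{\sigma},v]_{L_{\vec{\Phi},q}}$. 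Since $\Psi^{-1}(s)=s^{1/q}$, Theorem \ref{thm:LiSun} then yields both the boundedness of $\mathcal{M}_\alpha(\vec{\sigma}.)$ and the estimate $\|\mathcal{M}_\alpha(\vec{\sigma}.)\|\lesssim [\vec{\sigma},v]_{L_{\vec{\Phi},q}}^{1/q}$. The delicate point is that Theorem \ref{thm:LiSun} is stated under the extra hypothesis $\Phi_{i}\in\nabla_{2}$, whereas here only $\Phi_{i}\in\mathscr{U}$ is assumed; I would check that in the power case the summation underlying that theorem converges by virtue of the exponent $q>1$ alone, so that $\nabla_{2}$ may be dispensed with (alternatively, re-run its proof with $\Psi=t^{q}$).

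For the necessity direction, fix a cube $Q\in\mathcal{Q}$ and test the assumed inequality with $f_{i}=\chi_{Q}$ for each $i$. A direct computation from (\ref{eq:in4aq5ale1}) gives $\|\chi_{Q}\|_{L_{\sigma_{i}}^{\Phi_{i}}}^{lux}=1/\Phi_{i}^{-1}(1/\sigma_{i}(Q))$, the defining infimum being attained where $\Phi_{i}(1/\lambda)\sigma_{i}(Q)=1$. With these test functions $\vec{\sigma}\cdot\vec{f}=(\sigma_{1}\chi_{Q},\ldots,\sigma_{n}\chi_{Q})$, and restricting the $L^{q}(v)$ integral to $Q$ I obtain
\begin{equation*}
\left(\int_{Q}\big(\mathcal{M}_\alpha(\sigma_{1}\chi_{Q},\ldots,\sigma_{n}\chi_{Q})(x)\big)^{q}v(x)\,dx\right)^{1/q}\le \|\mathcal{M}_\alpha(\vec{\sigma}.)\|\prod_{i=1}^{n}\frac{1}{\Phi_{i}^{-1}(1/\sigma_{i}(Q))}.
\end{equation*}
Raising to the power $q$ and multiplying through by $\prod_{i=1}^{n}(\Phi_{i}^{-1}(1/\sigma_{i}(Q)))^{q}$ isolates exactly the $Q$-term of the testing functional, and taking the supremum over $Q\in\mathcal{Q}$ yields $[\vec{\sigma},v]_{L_{\vec{\Phi},q}}\le \|\mathcal{M}_\alpha(\vec{\sigma}.)\|^{q}$, that is $[\vec{\sigma},v]_{L_{\vec{\Phi},q}}^{1/q}\le \|\mathcal{M}_\alpha(\vec{\sigma}.)\|$. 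This is legitimate because each $\chi_{Q}\in L^{\Phi_{i}}(\sigma_{i})$ is not identically zero, as $0<\sigma_{i}(Q)<\infty$.

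Combining the two bounds gives the asserted equivalence $\|\mathcal{M}_\alpha(\vec{\sigma}.)\|\approx [\vec{\sigma},v]_{L_{\vec{\Phi},q}}^{1/q}$. I expect the main obstacle to lie not in the necessity, which is the clean testing computation above, but in the sufficiency: namely, confirming that the argument of Theorem \ref{thm:LiSun} survives under the weaker assumption $\Phi_{i}\in\mathscr{U}$ (without $\nabla_{2}$), where the power structure of $\Psi$ must supply the summability that $\nabla_{2}$ provided in the general case.
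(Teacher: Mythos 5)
Your plan coincides with the paper's own (implicit) proof: the paper never writes a separate argument for Theorem \ref{thm:LiSunphiq}; it is meant to follow from Theorem \ref{thm:LiSun} applied with $\Psi(t)=t^{q}$ (sufficiency) together with the testing computation on characteristic functions of cubes (necessity). Your necessity paragraph is correct and complete: $\|\chi_{Q}\|_{L_{\sigma_{i}}^{\Phi_{i}}}^{lux}=1/\Phi_{i}^{-1}\left(1/\sigma_{i}(Q)\right)$ is Proposition \ref{pro:main92}, the Luxembourg norm of $L^{\Psi}(v)$ with $\Psi(t)=t^{q}$ is exactly the $L^{q}(v)$ norm, and restricting the integral to $Q$ gives $[\vec{\sigma},v]_{L_{\vec{\Phi},q}}^{1/q}\le\|\mathcal M_{\alpha}(\vec{\sigma}.)\|$.

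The gap is in the sufficiency, precisely at the point you flagged and then waved away. The hypothesis $\Phi_{i}\in\nabla_{2}$ in Theorem \ref{thm:LiSun} cannot be removed by making $\Psi$ a power: $\nabla_{2}$ is a condition on the \emph{inner} functions $\Phi_{i}$, used through Theorem \ref{pro:main88} (whose validity is \emph{equivalent} to $\Phi_{i}\in\nabla_{2}$, as the paper notes after that theorem) to pass from $\int_{0}^{\infty}\Phi_{i}'(t)\,\sigma_{i}\left(\{\mathcal M_{\sigma_{i}}^{\mathcal{D}^{\beta}}g_{i}>t\}\right)dt$ to $\int\Phi_{i}(|g_{i}|)\sigma_{i}$ inside the Carleson embedding (Theorem \ref{pro:main16yh6}, Corollary \ref{pro:main16aaqaqq6}), which is the engine of Lemma \ref{lem:LiSun1} and Proposition \ref{prop:LiSunsuffdyabili}. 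Choosing $\Psi(t)=t^{q}$ does not touch this step, and the step genuinely fails without $\nabla_{2}$: take $d=1$, $\Phi(t)=t$ (so $\Phi\in\mathscr U$ and $t^{q}/\Phi(t)=t^{q-1}$ is nondecreasing, but $\Phi\notin\nabla_{2}$), $\sigma=\chi_{[0,1]}$, and $\lambda_{Q}=|Q|^{q}$ for dyadic $Q\subset[0,1]$, $\lambda_{Q}=0$ otherwise. Since $\sum_{Q\subset R}|Q|^{q}\le C_{q}|R|^{q}$ and $\Psi\circ\Phi^{-1}\left(1/\sigma(R)\right)=\sigma(R)^{-q}$, this sequence satisfies the Carleson condition (\ref{eq:s564c25son}); yet for $g_{N}=N\chi_{[0,1/N]}$, which has $\|g_{N}\|_{L^{1}(\sigma)}=1$, one has $m_{\sigma}\left(g_{N},[0,2^{-k}]\right)=2^{k}$ for $0\le k\le\log_{2}N$, hence $\sum_{Q}\lambda_{Q}\left(m_{\sigma}(g_{N},Q)\right)^{q}\ge\log_{2}N$, and (\ref{eq:suitc5frmaron}) fails. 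So neither quoting Theorem \ref{thm:LiSun} nor re-running its proof with $\Psi=t^{q}$ can establish the sufficiency under the bare assumption $\Phi_{i}\in\mathscr U$; the exponent $q>1$ supplies no summability here because the divergence sits in the $\Phi_{i}$-variable, not in the $\Psi$-variable.

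The repair is simply to retain the hypothesis $\Phi_{i}\in\mathscr U\cap\nabla_{2}$ of Theorem \ref{thm:LiSun} (its omission in the statement of Theorem \ref{thm:LiSunphiq} should be read as an oversight; it is automatic in the model case $\Phi_{i}(t)=t^{p_{i}}$ with $p_{i}>1$ treated by Li and Sun). With that hypothesis restored, your two halves assemble into a complete and correct proof, and the stated norm equivalence follows.
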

\vskip .2cm
Let  $0\leq \alpha < nd$, and let $\sigma_{1},...,\sigma_{n},\omega$ be weights on $\mathbb{R}^{d}$. Let  $\Psi,\Phi,\Phi_{1},...,\Phi_{n}$ be one-to-one correspondences from $\mathbb{R}_{+}$ to itself. Set  $\overrightarrow{\Phi}:=(\Phi_{1},...,\Phi_{n})$,  $\overrightarrow{\sigma}:=(\sigma_{1},...,\sigma_{n})$ and
$\nu_{\overrightarrow{\sigma}}:= \frac{1}{\Phi\left(\prod_{i=1}^{n}\Phi_{i}^{-1}\left(\frac{1}{\sigma_{i}}\right)  \right)}$.  

We say the pair of weights $(\overrightarrow{\sigma},\omega)$  belongs to the class 
 $S_{\overrightarrow{\Phi},\Psi}^{\alpha}$ (or $(\overrightarrow{\sigma}, \omega)\in S_{\overrightarrow{\Phi},\Psi}^{\alpha}$), if  
 \begin{equation}\label{eq:deprmim}
   [\overrightarrow{\sigma},\omega]_{S_{\overrightarrow{\Phi},\Psi}^{\alpha}}:=\sup_{Q\in \mathcal{Q}}\Psi \circ \Phi^{-1}\left( \frac{1}{\nu_{\overrightarrow{\sigma}}(Q)}\right)\int_Q\Psi\left(\mathcal{M}_\alpha(\sigma_{1}\chi_Q,...,\sigma_{n}\chi_Q)(x)\right)\omega(x)dx<\infty.
      \end{equation}
When $n=1$, one recovers the definition of the class $S_{\Phi,\Psi}^{\alpha}$ introduced in \cite{jmtafeuseh}.
 Our next result is the following.
\begin{theorem}\label{pro:main126aq}
Let $\sigma_{1},...,\sigma_{n},\omega$ be weights on $\mathbb{R}^{d}$,  and let $\Phi_{1},...,\Phi_{n}\in \widetilde{\mathscr{U}}\cap \nabla_{2}$. Let $\Psi \in \widetilde{\mathscr{U}}$, and let  $\Phi:\mathbb{R}_{+}\longrightarrow\mathbb{R}_{+}$ be a bijection such that 
 $\Phi^{-1}=\Phi_{1}^{-1}\times...\times \Phi_{n}^{-1}$ and $t\mapsto \frac{\Psi(t)}{\Phi(t)}$ is increasing on $\mathbb{R}_{+}^{*}$. If $(\overrightarrow{\sigma}, \omega)\in S_{\overrightarrow{\Phi},\Psi}^{\alpha}$, then there exists a constant $C:=C_{n,\alpha,d,\Psi,\Phi_{1},...,\Phi_{n}}>0$ such that for any  $(f_{1},...,f_{n}) \in L^{\Phi_{1}}(\sigma_{1})\times...\times L^{\Phi_{n}}(\sigma_{n})$, 
 \begin{equation}\label{eq:eaqoaqicz}
 \|\mathcal{M}_{\alpha}(\sigma_{1}f_{1},...,\sigma_{n}f_{n}) \|_{L_{\omega}^{\Psi}}^{lux} \leq C \Psi^{-1}\left([\overrightarrow{\sigma},\omega]_{S_{\overrightarrow{\Phi},\Psi}^{\alpha}}\right)\prod_{i=1}^{n}\|f_{i}\|_{L_{\sigma_{i}}^{\Phi_{i}}}^{lux}. \end{equation}
\end{theorem}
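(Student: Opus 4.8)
The plan is to reduce the theorem to a single Luxemburg-level estimate and then feed the multilinear Carleson embedding (Theorem \ref{pro:main16yh6}) a carefully chosen sequence. By homogeneity I may normalize $\|f_i\|_{L_{\sigma_i}^{\Phi_i}}^{lux}=1$ for every $i$ and set $g_i:=\sigma_i f_i$ and $\overrightarrow{g}:=(g_1,\dots,g_n)$. Put $\lambda:=C\,\Psi^{-1}([\overrightarrow{\sigma},\omega]_{S_{\overrightarrow{\Phi},\Psi}^{\alpha}})$, with $C$ to be fixed at the end. By the definition of the Luxemburg norm it suffices to prove that $\int_{\mathbb{R}^{d}}\Psi(\mathcal{M}_{\alpha}(\overrightarrow{g})(x)/\lambda)\,\omega(x)\,dx\le 1$. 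First I would pass to dyadic grids: with the standard family of $3^{d}$ adjacent grids $\mathcal{D}^{\beta}$ one has the pointwise domination $\mathcal{M}_{\alpha}(\overrightarrow{g})\lesssim\sum_{\beta}\mathcal{M}_{\alpha}^{\mathcal{D}^{\beta}}(\overrightarrow{g})$ (the exponent $\alpha/d-n<0$ makes the passage from a cube to its dyadic envelope harmless), so since $\Psi\in\Delta_{2}$ it is enough to estimate $\int\Psi(\mathcal{M}_{\alpha}^{\mathcal{D}}(\overrightarrow{g})/\lambda)\,\omega$ for one fixed grid $\mathcal{D}=\mathcal{D}^{\beta}$.

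Second, I would linearize $\mathcal{M}_{\alpha}^{\mathcal{D}}$ through the principal (stopping) cube construction. Writing $A_{Q}:=|Q|^{\alpha/d}\prod_{i=1}^{n}|Q|^{-1}\int_{Q}g_{i}$, one selects principal cubes $\mathcal{F}\subset\mathcal{D}$ by a stopping rule in which the children of $F$ are the maximal subcubes $F'$ with $A_{F'}>2A_{F}$; setting $E_{F}:=F\setminus\bigcup_{F'\in\mathrm{ch}(F)}F'$ produces pairwise disjoint sets on which $\mathcal{M}_{\alpha}^{\mathcal{D}}(\overrightarrow{g})\le 2A_{F}$. Consequently, using $\Psi\in\Delta_{2}$, $\int\Psi(\mathcal{M}_{\alpha}^{\mathcal{D}}(\overrightarrow{g})/\lambda)\,\omega\lesssim\sum_{F\in\mathcal{F}}\Psi(A_{F}/\lambda)\,\omega(E_{F})$. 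Next I factor $A_{F}=B_{F}\prod_{i=1}^{n}m_{\sigma_{i}}(f_{i},F)$ with $B_{F}:=|F|^{\alpha/d}\prod_{i=1}^{n}\sigma_{i}(F)/|F|$, and exploit $\Psi\in\Delta'$ to split the factors, $\Psi(A_{F}/\lambda)\lesssim\Psi(B_{F}/\lambda)\,\Psi(\prod_{i=1}^{n}m_{\sigma_{i}}(f_{i},F))$. This brings the sum into the exact shape handled by Theorem \ref{pro:main16yh6}, with the sequence $\lambda_{F}:=\Psi(B_{F}/\lambda)\,\omega(E_{F})$ for $F\in\mathcal{F}$ and $\lambda_{Q}:=0$ otherwise.

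Third comes the heart of the argument: checking that $\{\lambda_{Q}\}$ satisfies the Carleson condition \eqref{eq:s564c25son} relative to $\nu_{\overrightarrow{\sigma}}$ and $\Psi\circ\Phi^{-1}$, with constant $\Lambda\lesssim 1$. If $F\subset R$ then $E_{F}\subset F\subset R$, and since $\int_{F}\sigma_{i}\chi_{R}=\sigma_{i}(F)$ the admissible cube $F$ gives $\mathcal{M}_{\alpha}(\sigma_{1}\chi_{R},\dots,\sigma_{n}\chi_{R})(x)\ge B_{F}$ for every $x\in F$. As $\Psi$ is increasing and the $E_{F}$ are disjoint, summation yields $\sum_{F\subset R}\lambda_{F}\le\int_{R}\Psi(\mathcal{M}_{\alpha}(\sigma_{1}\chi_{R},\dots,\sigma_{n}\chi_{R})/\lambda)\,\omega$. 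I would then remove the dilation $1/\lambda$ via the homogeneity bound \eqref{eq:ibide2}, namely $\Psi(s/\lambda)\lesssim\lambda^{-q}\Psi(s)$, and invoke $(\overrightarrow{\sigma},\omega)\in S_{\overrightarrow{\Phi},\Psi}^{\alpha}$ to obtain $\sum_{F\subset R}\lambda_{F}\lesssim\lambda^{-q}[\overrightarrow{\sigma},\omega]_{S_{\overrightarrow{\Phi},\Psi}^{\alpha}}/\Psi\circ\Phi^{-1}(1/\nu_{\overrightarrow{\sigma}}(R))$. Because $\lambda=C\,\Psi^{-1}([\overrightarrow{\sigma},\omega]_{S_{\overrightarrow{\Phi},\Psi}^{\alpha}})$ and $\Psi$ is of upper type $q$, one has $\lambda^{-q}[\overrightarrow{\sigma},\omega]_{S_{\overrightarrow{\Phi},\Psi}^{\alpha}}=\lambda^{-q}\Psi(\lambda/C)\lesssim C^{-q}$, so $\Lambda\lesssim C^{-q}$ becomes as small as we wish by taking $C$ large. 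Hence $\{\lambda_{Q}\}$ is $(\nu_{\overrightarrow{\sigma}},\Psi\circ\Phi^{-1})$-Carleson with small constant, and Theorem \ref{pro:main16yh6} yields $\sum_{F}\lambda_{F}\Psi(\prod_{i=1}^{n}m_{\sigma_{i}}(f_{i},F))\lesssim\Lambda\,C_{n,\Phi,\Psi,\Phi_{1},\dots,\Phi_{n}}\le 1$. Putting the three steps together and summing over the $3^{d}$ grids gives $\int\Psi(\mathcal{M}_{\alpha}(\overrightarrow{g})/\lambda)\,\omega\le 1$, which is exactly \eqref{eq:eaqoaqicz}.

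The principal obstacle I anticipate is twofold. The conceptual point is the linearization in the genuinely multilinear and fractional regime: the stopping cubes must be built for the product average $A_{Q}$, and one has to verify that the stopping geometry still yields $\mathcal{M}_{\alpha}^{\mathcal{D}}(\overrightarrow{g})\lesssim A_{F(x)}$ pointwise and that the splitting $A_{F}=B_{F}\prod_{i}m_{\sigma_{i}}(f_{i},F)$ is compatible both with $\Psi\in\Delta'$ and with the hypotheses of the Carleson embedding. The technical point is the scaling bookkeeping that produces precisely the factor $\Psi^{-1}([\overrightarrow{\sigma},\omega]_{S_{\overrightarrow{\Phi},\Psi}^{\alpha}})$: the homogeneity estimate \eqref{eq:ibide2} holds only for arguments at least $1$, so the set where $\mathcal{M}_{\alpha}(\sigma_{1}\chi_{R},\dots,\sigma_{n}\chi_{R})$ is small must be handled separately (by a truncation, or by using that $t\mapsto\Psi(t)/t$ is nondecreasing), and it is exactly the joint use of the upper-type-$q$ and the homogeneity conditions defining $\widetilde{\mathscr{U}}^{q}$ that turns the testing constant into the stated $\Psi^{-1}$ factor.
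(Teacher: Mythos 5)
Your proposal is correct and follows essentially the same route as the paper's proof: reduction to dyadic grids, a sparse/stopping-cube decomposition, splitting the product average via the $\Delta'$ property of $\Psi$, verifying that the resulting coefficient sequence is $(\nu_{\overrightarrow{\sigma}},\Psi\circ\Phi^{-1})$-Carleson by testing against $\mathcal{M}_{\alpha}(\sigma_1\chi_R,\dots,\sigma_n\chi_R)$ and using the $S_{\overrightarrow{\Phi},\Psi}^{\alpha}$ condition, then invoking Theorem \ref{pro:main16yh6}. The only genuine difference is bookkeeping: the paper proves the modular estimate for normalized $f_i$ and converts it to the norm bound with the factor $\Psi^{-1}\left([\overrightarrow{\sigma},\omega]_{S_{\overrightarrow{\Phi},\Psi}^{\alpha}}\right)$ via Proposition \ref{pro:main17jo}, whereas you carry $\lambda=C\Psi^{-1}\left([\overrightarrow{\sigma},\omega]_{S_{\overrightarrow{\Phi},\Psi}^{\alpha}}\right)$ through the Carleson constant inline; moreover, the small-argument obstacle you flag for \eqref{eq:ibide2} is precisely what Lemma \ref{pro:main80jdf} resolves (it extends the quotient bound $\Psi(s/t)\lesssim\Psi(s)/\Psi(t)$ to all $s,t>0$ for $\Psi\in\widetilde{\mathscr{U}}$), so no truncation is needed.
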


If $\sigma_{1}=\sigma_2=\cdots=\sigma_n=\sigma$, then Theorem \ref{pro:main126aq} becomes the following.

\begin{corollary}\label{pro:main12sasd6}
Let $\omega,\sigma$ be weights on  $\mathbb{R}^{d}$,and let $\Phi_{1},...,\Phi_{n}\in \mathscr{U}\cap \nabla_{2}$. Let $\Psi \in \widetilde{\mathscr{U}}$, and let  $\Phi:\mathbb{R}_{+}\longrightarrow\mathbb{R}_{+}$ be a bijection such that 
 $\Phi^{-1}=\Phi_{1}^{-1}\times...\times \Phi_{n}^{-1}$ and $t\mapsto \frac{\Psi(t)}{\Phi(t)}$ is increasing on $\mathbb{R}_{+}^{*}$. If $(\overrightarrow{\sigma}, \omega)\in S_{\overrightarrow{\Phi},\Psi}^{\alpha}$, Then the following are equivalent. 
\begin{itemize}
\item[(i)]  $(\overrightarrow{\sigma},\omega)\in S_{\overrightarrow{\Phi},\Psi}^{\alpha}$.
\item[(ii)] $\mathcal{M}_{\alpha}(\overrightarrow{\sigma}. ) : L^{\Phi_{1}}(\sigma)\times...\times L^{\Phi_{n}}(\sigma)\longrightarrow L^{\Psi}(\omega)$ boundedly. 
\end{itemize}
Moreover,
\begin{equation}\label{eq:eazeqaqadsoicz}
\|\mathcal{M}_{\alpha}(\overrightarrow{\sigma}. )\|_{\prod_{i=1}^{n}L_{\sigma}^{\Phi_{i}}\rightarrow L_{\omega}^{\Psi}} \approx \Psi^{-1}\left([\overrightarrow{\sigma}, \omega]_{S_{\overrightarrow{\Phi},\Psi}^{\alpha}}\right). \end{equation}
\end{corollary}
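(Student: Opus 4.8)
The plan is to obtain the corollary by combining Theorem~\ref{pro:main126aq} with a testing argument against indicators of cubes. The observation that drives every simplification is that, when $\sigma_1=\cdots=\sigma_n=\sigma$, the weight $\nu_{\overrightarrow{\sigma}}$ collapses to $\sigma$: since $\Phi^{-1}=\Phi_1^{-1}\times\cdots\times\Phi_n^{-1}$ one has $\Phi\left(\prod_{i=1}^{n}\Phi_i^{-1}\left(\tfrac1\sigma\right)\right)=\Phi\left(\Phi^{-1}\left(\tfrac1\sigma\right)\right)=\tfrac1\sigma$, so that $\nu_{\overrightarrow{\sigma}}=\sigma$ and the functional defining $S_{\overrightarrow{\Phi},\Psi}^{\alpha}$ reduces to $\sup_{Q\in\mathcal Q}\Psi\circ\Phi^{-1}\left(\tfrac{1}{\sigma(Q)}\right)\int_Q\Psi\left(\mathcal M_\alpha(\sigma\chi_Q,\dots,\sigma\chi_Q)\right)\omega$. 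With this in hand, the implication (i)$\Rightarrow$(ii) together with the upper estimate $\|\mathcal M_\alpha(\overrightarrow{\sigma}.)\|\le C\,\Psi^{-1}\!\left([\overrightarrow{\sigma},\omega]_{S_{\overrightarrow{\Phi},\Psi}^{\alpha}}\right)$ is Theorem~\ref{pro:main126aq} read with all the $\sigma_i$ equal (the collapse $\nu_{\overrightarrow{\sigma}}=\sigma$ is what should let one relax $\Phi_i\in\widetilde{\mathscr U}$ to $\Phi_i\in\mathscr U$ in this special case). Hence the genuinely new content is the reverse implication (ii)$\Rightarrow$(i) and the matching lower bound $\|\mathcal M_\alpha(\overrightarrow{\sigma}.)\|\gtrsim\Psi^{-1}\!\left([\overrightarrow{\sigma},\omega]_{S_{\overrightarrow{\Phi},\Psi}^{\alpha}}\right)$.

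For the necessity I would test boundedness on $f_i=\chi_Q$. A direct computation of the Luxemburg norm of an indicator gives $\|\chi_Q\|_{L_\sigma^{\Phi_i}}^{lux}=\frac{1}{\Phi_i^{-1}(1/\sigma(Q))}$, whence $\prod_{i=1}^{n}\|\chi_Q\|_{L_\sigma^{\Phi_i}}^{lux}=\frac{1}{\prod_{i=1}^{n}\Phi_i^{-1}(1/\sigma(Q))}=\frac{1}{\Phi^{-1}(1/\sigma(Q))}=:\frac{1}{s_Q}$. Writing $N:=\|\mathcal M_\alpha(\overrightarrow{\sigma}.)\|$ and $g_Q:=\mathcal M_\alpha(\sigma\chi_Q,\dots,\sigma\chi_Q)$, the definition of the operator norm yields $\|g_Q\|_{L_\omega^{\Psi}}^{lux}\le N/s_Q$, that is $\int_{\mathbb{R}^d}\Psi\!\left(\frac{s_Q\,g_Q}{N}\right)\omega\le1$.

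It then remains to convert this modular inequality into the pointwise-in-$Q$ bound $\Psi(s_Q)\int_Q\Psi(g_Q)\omega\lesssim\Psi(N)$; taking the supremum over $Q$ gives $[\overrightarrow{\sigma},\omega]_{S_{\overrightarrow{\Phi},\Psi}^{\alpha}}\lesssim\Psi(N)$, and applying $\Psi^{-1}$ (using the doubling of $\Psi$ coming from $\Delta'$) produces the desired lower bound on $N$ and hence the equivalence~(\ref{eq:eazeqaqadsoicz}). I expect this conversion to be the main obstacle. When $\Psi(t)=t^{q}$ it is an exact identity, $\Psi(s_Q)\int_Q\Psi(g_Q)\omega=\int_Q\Psi(s_Q g_Q)\omega=N^{q}\int_Q\Psi\!\left(\tfrac{s_Qg_Q}{N}\right)\omega\le N^{q}=\Psi(N)$, using $\Psi(ab)=\Psi(a)\Psi(b)$ and $\Psi(Nx)=N^{q}\Psi(x)$; neither identity survives for a general $\Psi\in\widetilde{\mathscr U}^{q}$.

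The way to recover it is that the two defining conditions of $\widetilde{\mathscr U}^{q}$ are designed to replace these identities up to constants: upper type $q$ and condition~(\ref{eq:ibide2}) together force $\tfrac1C\lambda^{q}\Psi(t)\le\Psi(\lambda t)\le C\lambda^{q}\Psi(t)$ whenever $\lambda\ge1$ and $\lambda t\ge1$, i.e. $\Psi(t)\approx t^{q}$ for $t\ge1$. On the set where $s_Q g_Q/N\ge1$ the power-function computation therefore goes through with constants; the complementary set, where $g_Q$ is small (equivalently the regime of cubes with $\sigma(Q)$ small and $s_Q$ large), must be treated separately by splitting the integral and invoking that $t\mapsto\Psi(t)/t$ is nondecreasing together with the $\Delta'$ and $\nabla_2$ properties. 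This small-value regime is the delicate point, and it is exactly there that the full membership $\Psi\in\widetilde{\mathscr U}$, rather than a bare upper/lower type estimate, is needed.
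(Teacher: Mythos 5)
Your overall architecture coincides with the paper's: the collapse $\nu_{\overrightarrow{\sigma}}=\sigma$ when all weights are equal, sufficiency by re-running the proof of Theorem \ref{pro:main126aq} in this special case (the paper makes your parenthetical remark precise by substituting Corollary \ref{pro:main16aaqq6} for Theorem \ref{pro:main16yh6}, which is exactly why $\Phi_i\in\mathscr{U}\cap\nabla_2$ suffices), and necessity by testing the boundedness on $f_i=\chi_Q$ with $\|\chi_Q\|_{L^{\Phi_i}_{\sigma}}^{lux}=1/\Phi_i^{-1}(1/\sigma(Q))$ (Proposition \ref{pro:main92}). Up to the modular inequality $\int_{\mathbb{R}^d}\Psi\left(s_Qg_Q/N\right)\omega\le1$, your necessity argument agrees with the paper's Proposition \ref{pro:maiaqwqyh6}.

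The genuine gap is in the conversion step, which you correctly identify as the crux but then leave unfinished. You treat the multiplicativity of $\Psi$ as an asymptotic property ($\Psi(t)\approx t^q$ only for $t\ge1$), propose a case split, and defer the ``small-value regime'' to an argument you never carry out; moreover the tools you cite for that regime are not available, since $\Psi$ is only assumed to lie in $\widetilde{\mathscr{U}}$ and not in $\nabla_2$. In fact no splitting is needed: condition (a) in the definition of $\widetilde{\mathscr{U}}$ is the $\Delta'$ inequality (\ref{eq:deprim}), $\Psi(st)\le C\Psi(s)\Psi(t)$ valid for \emph{all} $s,t>0$, and Lemma \ref{pro:main80jdf} gives the quotient analogue $\Psi(s/t)\le C'\Psi(s)/\Psi(t)$, again for all $s,t>0$. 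Hence pointwise
\begin{equation*}
\Psi(g_Q)=\Psi\left(\frac{N}{s_Q}\cdot\frac{s_Qg_Q}{N}\right)\le C\,\Psi\left(\frac{N}{s_Q}\right)\Psi\left(\frac{s_Qg_Q}{N}\right)\le CC'\,\frac{\Psi(N)}{\Psi(s_Q)}\,\Psi\left(\frac{s_Qg_Q}{N}\right),
\end{equation*}
and integrating over $Q$ against $\omega$ and using your modular bound gives $\Psi(s_Q)\int_Q\Psi(g_Q)\,\omega\le CC'\Psi(N)$ on every cube, with no restriction on the size of $g_Q$. Taking the supremum over $Q$ and using $\Psi\in\Delta'\subset\Delta_2$ (so that $\Psi^{-1}(Cs)\lesssim\Psi^{-1}(s)$) yields $\Psi^{-1}\left([\overrightarrow{\sigma},\omega]_{S^{\alpha}_{\overrightarrow{\Phi},\Psi}}\right)\lesssim N$. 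This global use of $\Delta'$ together with Lemma \ref{pro:main80jdf} is precisely how the paper's Proposition \ref{pro:maiaqwqyh6} closes the argument (there one also invokes (\ref{eq:surmaron}), which in your setting holds with equality since all the $\sigma_i$ coincide); until that step is supplied, your proof of the lower bound in (\ref{eq:eazeqaqadsoicz}) is incomplete.
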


%%%%%%%%%%%%%%%%%%%%%%%%%%%%%%%%%%%%%%%%%%%%%%%%%%%%%%%%%%%%

\subsection{Some weighted norm estimates for $\mathcal{M}_\alpha$.}

Let  $0\leq \alpha < nd$, and let $\Psi, \Phi_{1},...,\Phi_{n}$  be one-to-one correspondences from $\mathbb{R}_{+}$ to itself. Let $\sigma_{1},...,\sigma_{n},\omega$ be weights on $\mathbb{R}^{d}$. Set $\overrightarrow{\Phi}:=(\Phi_{1},...,\Phi_{n})$ and $\overrightarrow{\sigma}:=(\sigma_{1},...,\sigma_{1})$. 
We say the pair of weights $(\overrightarrow{\sigma}, \omega)$  belongs to the class 
\begin{itemize}
\item[\textbullet] 
$A_{\overrightarrow{\Phi},\Psi}^{\alpha}$ (or $(\overrightarrow{\sigma}, \omega) \in A_{\overrightarrow{\Phi},\Psi}^{\alpha}$), if  
\begin{equation}\label{eq:de5m45im}
 [\overrightarrow{\sigma},\omega]_{A_{\overrightarrow{\Phi},\Psi}^{\alpha}}:=\sup_{Q\in \mathcal{Q}}\omega(Q)\Psi\left(\prod_{i=1}^{n}\frac{\sigma_{i}(Q)}{|Q|^{1-\frac{\alpha}{nd}}}\right)\Psi \left(\prod_{i=1}^{n}\Phi_{i}^{-1}\left(\frac{1}{\sigma_{i}(Q)}\right)  \right)  <\infty,   \end{equation}
\item[\textbullet] 
$\widetilde{A}_{\overrightarrow{\Phi},\Psi}^{\alpha}$ (or $(\overrightarrow{\sigma}, \omega) \in \widetilde{A}_{\overrightarrow{\Phi},\Psi}^{\alpha}$), if  
\begin{equation}\label{eq:de45im}
 [\overrightarrow{\sigma},\omega]_{\widetilde{A}_{\overrightarrow{\Phi},\Psi}^{\alpha}}:=\sup_{Q\in \mathcal{Q}}\frac{\omega(Q)}{|Q|}\Psi\left(|Q|^{\frac{\alpha}{d}}\prod_{i=1}^{n}\Phi_{i}^{-1}\left(\frac{\sigma_{i}(Q)}{|Q|}\right)\right)<\infty,   \end{equation}
\item[\textbullet] 
$B_{\overrightarrow{\Phi},\Psi}^{\alpha}$ (or $(\overrightarrow{\sigma}, \omega) \in B_{\overrightarrow{\Phi},\Psi}^{\alpha}$), if
\begin{equation}\label{eq:de65665m}
 [\overrightarrow{\sigma},\omega]_{B_{\overrightarrow{\Phi},\Psi}^{\alpha}}:=
 \sup_{Q\in \mathcal{Q}}\omega(Q)\Psi\left(\prod_{i=1}^{n}  \frac{\sigma_{i}(Q)}{|Q|^{1-\frac{\alpha}{nd}}}\right) \Psi\left(\prod_{i=1}^{n}\Phi_{i}^{-1}\left(\frac{1}{|Q|}\exp\left(\frac{1}{|Q|}\int_{Q}\log\frac{1}{\sigma_{i}}\right)\right)\right) < \infty.    \end{equation}
\end{itemize}

When $n=1$, one recovers the definitions of the classes $A_{\Phi,\Psi}^{\alpha}$, $\widetilde{A}_{\Phi,\Psi}^{\alpha}$ and $B_{\Phi,\Psi}^{\alpha}$ introduced in \cite{jmtafeuseh}.
\vskip .2cm
We also observe that if we take $\Psi(t)=t^q$, $\Phi_i(t)=t^{p_i}$, and define $\frac 1p=\frac 1{p_1}+\cdots+\frac 1{p_n}$ and $\vec {P}=(p_1,\cdots,p_n)$, then $A_{\overrightarrow{\Phi},\Psi}^{\alpha}$ and $B_{\overrightarrow{\Phi},\Psi}^{\alpha}$ respectively coincide with the classes $A_{\vec{P},q}$ and $B_{\vec{P},q}$ introduced in \cite{sehbaf}.
\vskip .2cm
We recall that a weight $\omega$ is said to belongs to the class $\mathcal{A}_\infty^{exp}$ of S. V. Hurscev (see \cite{Hurscev}), if $$[\omega]_{\mathcal{A}_\infty^{exp}}:=\sup_{Q\in \mathcal{Q}}\frac{\omega(Q)}{|Q|}\exp\left(\frac{1}{|Q|}\int_Q\log\left(\frac{1}{\omega(x)}\right)dx\right)<\infty.$$
We make the following observations.
\begin{itemize}
\item[(a)] If for $1\leq i \leq n$, $\Phi_{i} \in \widetilde{\mathscr{U}}$, then

\begin{equation}\label{eq:dr56455im}
 [\overrightarrow{\sigma},\omega]_{A_{\overrightarrow{\Phi},\Psi}^{\alpha}} \lesssim [\overrightarrow{\sigma},\omega]_{S_{\overrightarrow{\Phi},\Psi}^{\alpha}}.
 \end{equation}
 
 \item[(b)] If for $1\leq i \leq n$,  $\Phi_{i}$ is convex, and $\sigma_{i} \in \mathcal{A}_\infty^{exp}$ and if  $\Psi \in \mathscr{U}^{q}$ with $q\geq 1$, then 
  
 \begin{equation}\label{eq:dep1r56455im}
  [\overrightarrow{\sigma},\omega]_{A_{\overrightarrow{\Phi},\Psi}^{\alpha}} \leq [\overrightarrow{\sigma},\omega]_{B_{\overrightarrow{\Phi},\Psi}^{\alpha}} \leq C_{q} \left(\prod_{i=1}^{n}[\sigma_{i}]_{\mathcal{A}_\infty^{exp}}\right)^{q} [\overrightarrow{\sigma},\omega]_{A_{\overrightarrow{\Phi},\Psi}^{\alpha}}.
  \end{equation}
 \end{itemize} 
 We say a weight $\omega$ belongs to the Fuji-Wilson class $\mathcal{A}_\infty$ (see \cite{Fujii}), if 
$$
[\omega]_{\mathcal{A}_\infty}:=\sup_{Q\in \mathcal{Q}}\frac{1}{\omega(Q)}\int_Q\mathcal{M}(\sigma\chi_Q(x)))dx<\infty.
$$
Let  $\sigma_{1},...,\sigma_{n}$ be weights on $\mathbb{R}^{d}$ and let   $\Psi,\Phi,\Phi_{1},...,\Phi_{n}$ be one-to-one correspondences from $\mathbb{R}_{+}$ to itself. Put  $\nu_{\overrightarrow{\sigma}}:= \frac{1}{\Phi\left(\prod_{i=1}^{n}\Phi_{i}^{-1}\left(\frac{1}{\sigma_{i}}\right)  \right)}$ and $\overrightarrow{\Phi}:=(\Phi_{1},...,\Phi_{n})$.  We say  $\overrightarrow{\sigma}$ belongs to the class
$W_{\overrightarrow{\Phi},\Psi}$ (or $\overrightarrow{\sigma} \in W_{\overrightarrow{\Phi},\Psi}$), if  
\begin{equation}\label{eq:deaqwx45im}
 [\overrightarrow{\sigma}]_{W_{\overrightarrow{\Phi},\Psi}}:=\sup_{Q\in \mathcal{Q}}\Psi \circ \Phi^{-1}\left( \frac{1}{\nu_{\overrightarrow{\sigma}}(Q)}\right)\int_{Q}\Psi\left(\prod_{i=1}^{n} \Phi_{i}^{-1}\left(\mathcal{M}(\sigma_{i}\chi_{Q})(x)\right)\right)dx<\infty.   \end{equation}
 
Recall that if $\Phi$ is an increasing convex function, then the complementary function of $\Phi$ is the function  ${\Psi}$ defined on $\mathbb{R}_{+}$  by 
  \begin{equation}\label{eq:in45ale1}	{\Psi}(s)=\sup_{t\geq 0}\{st-\Phi(t) \}, ~ \forall~  s \geq 0.   \end{equation}
We note that ${\Psi}$ is also an increasing convex function whose complementary function is $\Phi$ (see for example \cite{raoren,rao68ren}).

We obtain the following estimates.
\begin{theorem}\label{pro:main1aq6}
 Let $\sigma_{1},...,\sigma_{n},\omega$ be weights on $\mathbb{R}^{d}$. Let $\Phi_{1},...,\Phi_{n} \in \widetilde{\mathscr{U}}\cap \nabla_{2}$, $\Psi \in \widetilde{\mathscr{U}}$, and let  $\Phi$ a one-to-one correspondence from $\mathbb{R}_{+}$ to itself such that
 $\Phi^{-1}=\Phi_{1}^{-1}\times...\times \Phi_{n}^{-1}$ and $t\mapsto \frac{\Psi(t)}{\Phi(t)}$ is increasing on $\mathbb{R}_{+}^{*}$. Then the following are satisfied.
\begin{itemize}
\item[(i)] If  $(\overrightarrow{\sigma},\omega) \in B_{\overrightarrow{\Phi},\Psi}^{\alpha}$, then there exists a constant $C_{1}:=C_{n,\Psi,\Phi_{1},...,\Phi_{n}}>0$ such that for any  $(f_{1},...,f_{n}) \in L^{\Phi_{1}}(\sigma_{1})\times...\times L^{\Phi_{n}}(\sigma_{n})$, 
\begin{equation}\label{eq:eazaaqoicz}
\|\mathcal{M}_{\alpha}(\sigma_{1}f_{1},...,\sigma_{n}f_{n}) \|_{L_{\omega}^{\Psi}}^{lux} \leq C_{1}\Psi^{-1}\left([\overrightarrow{\sigma},\omega]_{B_{\overrightarrow{\Phi},\Psi}^{\alpha}}\right)\prod_{i=1}^{n}\|f_{i}\|_{L_{\sigma_{i}}^{\Phi_{i}}}^{lux}. \end{equation}
\item[(ii)] If  $(\overrightarrow{\sigma},\omega) \in A_{\overrightarrow{\Phi},\Psi}^{\alpha}$ and for $1\leq i \leq n$, $\sigma_{i} \in \mathcal{A}_\infty$, then there exists a constant $C_{2}:=C_{n,\Psi,\Phi_{1},...,\Phi_{n}}>0$ such that for any  $(f_{1},...,f_{n}) \in L^{\Phi_{1}}(\sigma_{1})\times...\times L^{\Phi_{n}}(\sigma_{n})$, 
\begin{equation}\label{eq:eazesaqaqoicz}
\|\mathcal{M}_{\alpha}(\sigma_{1}f_{1},...,\sigma_{n}f_{n}) \|_{L_{\omega}^{\Psi}}^{lux} \leq C_{2} \Psi^{-1}\left([\overrightarrow{\sigma},\omega]_{A_{\overrightarrow{\Phi},\Psi}^{\alpha}}\sum_{i=1}^{n}\Psi\circ\Phi^{-1}\left([\sigma_{i}]_{\mathcal{A}_\infty}\right)\right)\prod_{i=1}^{n}\|f_{i}\|_{L_{\sigma_{i}}^{\Phi_{i}}}^{lux}. \end{equation}
\item[(iii)] If $(\overrightarrow{\sigma},\omega) \in \widetilde{A}_{\overrightarrow{\varphi},\Psi}^{\alpha}$ and $\overrightarrow{\sigma} \in W_{\overrightarrow{\Phi},\Psi}$, then there exists a constant  $C_{3}:=C_{n,\Psi,\Phi_{1},...,\Phi_{n}}>0$ such that for any  $(f_{1},...,f_{n}) \in L^{\Phi_{1}}(\sigma_{1})\times...\times L^{\Phi_{n}}(\sigma_{n})$,
\begin{equation}\label{eq:eaiaqcz}
 \|\mathcal{M}_{\alpha}(\sigma_{1}f_{1},...,\sigma_{n}f_{n}) \|_{L_{\omega}^{\Psi}}^{lux} \leq C_{3} \Psi^{-1}\left([\overrightarrow{\sigma}]_{W_{\overrightarrow{\Phi},\Psi}}[\overrightarrow{\sigma},\omega]_{\widetilde{A}_{\overrightarrow{\varphi},\Psi}^{\alpha}}\right)\prod_{i=1}^{n}\|f_{i}\|_{L_{\sigma_{i}}^{\Phi_{i}}}^{lux}, \end{equation} 
 where $\overrightarrow{\varphi}:=(\varphi_{1},...,\varphi_{n})$ with  $\varphi_{i}$ the complementary function of $\Phi_{i}$.
\end{itemize}
\end{theorem}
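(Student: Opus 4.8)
The three inequalities have a common backbone: Theorem~\ref{pro:main126aq}, which converts a bound on the Sawyer functional $[\overrightarrow{\sigma},\omega]_{S_{\overrightarrow{\Phi},\Psi}^{\alpha}}$ into exactly an operator estimate of the form $\Psi^{-1}(\cdots)\prod_{i}\|f_i\|_{L^{\Phi_i}_{\sigma_i}}^{lux}$. Since $\Psi\in\widetilde{\mathscr U}$ makes $\Psi^{-1}$ an increasing homeomorphism of $\mathbb R_+$, and the right-hand sides of (\ref{eq:eazaaqoicz}), (\ref{eq:eazesaqaqoicz}) and (\ref{eq:eaiaqcz}) are $\Psi^{-1}$ applied to the respective functionals, the plan is, in each part, to fix a cube $Q$ and dominate the local Sawyer quantity
\[
\Psi\circ\Phi^{-1}\!\Big(\tfrac{1}{\nu_{\overrightarrow{\sigma}}(Q)}\Big)\int_Q\Psi\big(\mathcal M_\alpha(\sigma_1\chi_Q,\dots,\sigma_n\chi_Q)\big)\,\omega
\]
by the $Q$-term of the functional at hand (the $B$-term; the $A$-term together with $\sum_i\Psi\circ\Phi^{-1}([\sigma_i]_{\mathcal A_\infty})$; or the $[\overrightarrow{\sigma}]_{W}[\overrightarrow{\sigma},\omega]_{\widetilde A^\alpha}$ term). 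Passing to the supremum over $Q$ then bounds $[\overrightarrow{\sigma},\omega]_{S_{\overrightarrow{\Phi},\Psi}^{\alpha}}$ and Theorem~\ref{pro:main126aq} closes each case.

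To carry out the domination, fix $Q$ and note that $\sigma_i\chi_Q$ is supported in $Q$, so for $x\in Q$ only cubes $R$ meeting $Q$ contribute to $\mathcal M_\alpha(\overrightarrow{\sigma}\chi_Q)(x)$. I split these into the cubes $R\supseteq Q$ and the cubes $R\subsetneq Q$. On the first family $|R|^{\alpha/d}\prod_i|R|^{-1}\sigma_i(Q)=|R|^{\alpha/d-n}\prod_i\sigma_i(Q)$ is nonincreasing in $|R|$ because $0\le\alpha<nd$, hence is at most $\prod_i\sigma_i(Q)|Q|^{-(1-\alpha/(nd))}$; the second family produces the localized maximal function, which for $x\in Q$ satisfies $\mathcal M_\alpha^{Q}(\overrightarrow{\sigma}\chi_Q)(x)\le |Q|^{\alpha/d}\prod_i\mathcal M(\sigma_i\chi_Q)(x)$ since $|R|\le|Q|$ and $\alpha\ge0$. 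Using that $\Psi$ is of upper type and satisfies $\Delta_2$ (so $\Psi$ is quasi-subadditive), this yields
\[
\int_Q\Psi\big(\mathcal M_\alpha(\overrightarrow{\sigma}\chi_Q)\big)\omega
\ \lesssim\ \omega(Q)\,\Psi\Big(\prod_i\tfrac{\sigma_i(Q)}{|Q|^{1-\alpha/(nd)}}\Big)
+\int_Q\Psi\big(|Q|^{\alpha/d}\textstyle\prod_i\mathcal M(\sigma_i\chi_Q)\big)\omega .
\]
Once multiplied by the prefactor, the first term has the right shape provided $\Psi\circ\Phi^{-1}(1/\nu_{\overrightarrow{\sigma}}(Q))$ is compared with the class-specific second factor; the whole difficulty is concentrated in the second, local, term and in that comparison.

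This is where the three hypotheses enter, all through a reverse-Jensen (geometric-mean) mechanism in the Orlicz scale, with $\Delta'$ used to pass the $n$-fold product through $\Psi$ and through the functions $\Phi_i^{-1}$. In part~(i), the class $B$ is tailored to this: a reverse-Jensen estimate bounds the local integral and the prefactor by the geometric-mean expression $\frac{1}{|Q|}\exp(\frac{1}{|Q|}\int_Q\log\frac{1}{\sigma_i})$ appearing in (\ref{eq:de65665m}), reproducing exactly the $B$-term. In part~(ii) the same scheme is run with the Fujii--Wilson self-improving property $\int_Q\mathcal M(\sigma_i\chi_Q)\le[\sigma_i]_{\mathcal A_\infty}\sigma_i(Q)$ in place of the geometric means; treating the $n$ weights one index at a time turns the product correction into the additive factor $\sum_i\Psi\circ\Phi^{-1}([\sigma_i]_{\mathcal A_\infty})$ and upgrades the $A$-term (which by (\ref{eq:dr56455im}) is otherwise only dominated \emph{by} the Sawyer functional) into a genuine upper bound. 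In part~(iii) the two ingredients decouple via the generalized H\"older inequality in Orlicz spaces: using the complementary pairs $(\Phi_i,\varphi_i)$ and the relation $s\le\Phi_i^{-1}(s)\varphi_i^{-1}(s)\le 2s$, the factors $\Phi_i^{-1}(\mathcal M(\sigma_i\chi_Q))$ are absorbed by the Wilson functional $[\overrightarrow{\sigma}]_{W_{\overrightarrow{\Phi},\Psi}}$, which carries the very same $\nu_{\overrightarrow{\sigma}}$-prefactor (so no extra comparison is needed here), while the remaining $\omega$-mass and the $\varphi_i^{-1}(\sigma_i(Q)/|Q|)$ factors are controlled by $[\overrightarrow{\sigma},\omega]_{\widetilde A^\alpha_{\overrightarrow{\varphi},\Psi}}$, producing the product $[\overrightarrow{\sigma}]_{W}[\overrightarrow{\sigma},\omega]_{\widetilde A^\alpha}$.

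I expect the reverse-Jensen step to be the main obstacle: one must convert the $\int_Q\Psi(\cdots)$-type integrals and the quantity $\nu_{\overrightarrow{\sigma}}(Q)=\int_Q\Phi(\prod_i\Phi_i^{-1}(1/\sigma_i))$ into geometric-mean (exponential) expressions, and to do so compatibly with the multilinear product one needs the full strength of $\Phi_i\in\widetilde{\mathscr U}\cap\nabla_{2}$ (submultiplicativity $\Delta'$, upper type $q$, and the $\nabla_{2}$ integrability (\ref{eq:conditiondedinis})) together with $\Psi\in\widetilde{\mathscr U}$. The fractional parameter $\alpha>0$ requires only the harmless extra bookkeeping of the factor $|Q|^{\alpha/d}$, already isolated above, so the essential content is the same as in the Hardy--Littlewood case $\alpha=0$.
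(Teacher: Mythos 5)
Your plan reduces all three parts to a per-cube verification of the Sawyer condition — dominate $\Psi\circ\Phi^{-1}\bigl(\tfrac{1}{\nu_{\overrightarrow{\sigma}}(Q)}\bigr)\int_Q\Psi\bigl(\mathcal{M}_\alpha(\sigma_1\chi_Q,\dots,\sigma_n\chi_Q)\bigr)\omega$ by the $B$-, $A$-, or $\widetilde A$--$W$-functional — and then invoke Theorem \ref{pro:main126aq}. The gap is that this domination, which you yourself identify as "where the whole difficulty is concentrated," is never actually proved: the "reverse-Jensen mechanism" is named but not carried out, and it cannot be carried out by pointwise or cube-average comparisons. The local term $\int_Q\Psi\bigl(|Q|^{\alpha/d}\prod_i\mathcal{M}(\sigma_i\chi_Q)\bigr)\omega$ involves the interaction of $\omega$ with the level sets of the maximal functions inside $Q$, and none of the class functionals (which see $\omega$ only through $\omega(Q)$, or see the maximal functions only against Lebesgue measure, as in $W$) controls this interaction directly; controlling it requires a stopping-time/sparse decomposition inside $Q$ followed by a Carleson-embedding argument, i.e.\ exactly the machinery of a direct proof, so the detour through the Sawyer condition gains nothing. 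Moreover, in this Orlicz setting the implications you need are not even known to hold unconditionally: the paper only establishes the easy direction $[\overrightarrow{\sigma},\omega]_{A^{\alpha}_{\overrightarrow{\Phi},\Psi}}\lesssim[\overrightarrow{\sigma},\omega]_{S^{\alpha}_{\overrightarrow{\Phi},\Psi}}$ (see (\ref{eq:dr56455im})), and the indirect route "class $\Rightarrow$ boundedness $\Rightarrow$ Sawyer" is circular and in any case requires the reverse-H\"older-type hypothesis (\ref{eq:surmaron}) (Proposition \ref{pro:maiaqwqyh6}), which Theorem \ref{pro:main1aq6} does not assume.

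For contrast, the paper never verifies the Sawyer condition. It sparse-dominates the full operator, $\int\Psi\bigl(\mathcal{M}^{\mathcal{D}^\beta}_\alpha(\overrightarrow{\sigma}.\overrightarrow{g})\bigr)\omega\lesssim\sum_{Q\in\mathcal{S}^\beta}\Psi\bigl(\prod_i m_{\sigma_i}(g_i,Q)\bigr)\Psi\bigl(\prod_i\sigma_i(Q)/|Q|^{1-\alpha/(nd)}\bigr)\omega(E_Q)$, applies the class condition once per sparse cube to the second factor times $\omega(Q)$, splits the remaining product via $\Psi_i:=\Psi\circ\Phi^{-1}\circ\Phi_i$ and superadditivity of $\Psi\circ\Phi^{-1}$, and then uses the hypotheses as \emph{packing conditions summed over the sparse family}: Fujii--Wilson gives $\sum_{Q\subset R,\,Q\in\mathcal{S}^\beta}\sigma_i(Q)\lesssim[\sigma_i]_{\mathcal{A}_\infty}\sigma_i(R)$ in part (ii); the H\"ytonen--P\'erez logarithmic maximal bound (Lemma \ref{pro:main063}) gives the analogous packing for $\lambda_Q^i=|Q|\exp\bigl(\tfrac1{|Q|}\int_Q\log\sigma_i\bigr)$ in part (i); and in part (iii) the $W$ condition is itself read as a $(\nu_{\overrightarrow{\sigma}},\Psi\circ\Phi^{-1})$-Carleson condition. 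These Carleson sequences are fed into Corollary \ref{pro:main2aq09} and Theorem \ref{pro:main16yh6}, and Proposition \ref{pro:main17jo} converts the modular bound into the Luxemburg-norm estimate. In particular, $\mathcal{A}_\infty$ is exploited as a summed condition over sparse cubes, not as the per-cube inequality $\int_Q\mathcal{M}(\sigma_i\chi_Q)\le[\sigma_i]_{\mathcal{A}_\infty}\sigma_i(Q)$ inserted into a local comparison. To rescue your proposal you would have to prove your local testing bounds by running precisely this sparse-plus-Carleson argument inside each cube, at which point Theorem \ref{pro:main126aq} becomes superfluous.
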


We also have the following result.

\begin{theorem}\label{pro:mainineq4}
 Let $\sigma_{1},...,\sigma_{n},\omega$ be weights on $\mathbb{R}^{d}$. Let $\Phi_{1},...,\Phi_{n} \in \widetilde{\mathscr{U}}\cap \nabla_{2}$, $\Psi \in \widetilde{\mathscr{U}}$, and let  $\Phi$ a one-to-one correspondence from $\mathbb{R}_{+}$ to itself such that
 $\Phi^{-1}=\Phi_{1}^{-1}\times...\times \Phi_{n}^{-1}$ and $t\mapsto \frac{\Psi(t)}{\Phi_i(t)}$  is increasing on $\mathbb{R}_{+}^{*}$ for each $i\in\{1,2,\cdots,n\}$.
If  $(\overrightarrow{\sigma},\omega) \in A_{\overrightarrow{\Phi},\Psi}^{\alpha}$ and for $1\leq i \leq n$, $\sigma_{i} \in \mathcal{A}_\infty$, then there exists a constant $C:=C_{n,\Psi,\Phi_{1},...,\Phi_{n}}>0$ such that for any  $(f_{1},...,f_{n}) \in L^{\Phi_{1}}(\sigma_{1})\times...\times L^{\Phi_{n}}(\sigma_{n})$, 
\begin{equation}\label{eq:eazesaqaqoicz1}
\|\mathcal{M}_{\alpha}(\sigma_{1}f_{1},...,\sigma_{n}f_{n}) \|_{L_{\omega}^{\Psi}}^{lux} \leq C \Psi^{-1}\left([\overrightarrow{\sigma},\omega]_{A_{\overrightarrow{\Phi},\Psi}^{\alpha}}\prod_{i=1}^{n}[\sigma_{i}]_{\mathcal{A}_\infty}^{q_i}\right)\prod_{i=1}^{n}\|f_{i}\|_{L_{\sigma_{i}}^{\Phi_{i}}}^{lux}, \end{equation}
where $q_i$ is the upper-type of $\Psi\circ\Phi_i^{-1}$.
\end{theorem}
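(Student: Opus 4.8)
The plan is to deduce Theorem \ref{pro:mainineq4} from the Sawyer-type sufficiency already established in Theorem \ref{pro:main126aq}. Concretely, I would prove the scale-by-scale comparison of testing constants
\[
[\overrightarrow{\sigma},\omega]_{S_{\overrightarrow{\Phi},\Psi}^{\alpha}}\lesssim [\overrightarrow{\sigma},\omega]_{A_{\overrightarrow{\Phi},\Psi}^{\alpha}}\prod_{i=1}^n[\sigma_i]_{\mathcal A_\infty}^{q_i},
\]
and then invoke Theorem \ref{pro:main126aq}, which gives $\|\mathcal M_\alpha(\sigma_1f_1,\dots,\sigma_nf_n)\|_{L^\Psi_\omega}^{lux}\lesssim \Psi^{-1}([\overrightarrow\sigma,\omega]_{S^\alpha_{\overrightarrow\Phi,\Psi}})\prod_i\|f_i\|^{lux}_{L^{\Phi_i}_{\sigma_i}}$. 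Since $\Psi\in\widetilde{\mathscr U}$ has $\Psi(t)/t$ nondecreasing (lower type $1$), one has $\Psi^{-1}(C\lambda)\le C\,\Psi^{-1}(\lambda)$, so the auxiliary constant is absorbed and \eqref{eq:eazesaqaqoicz1} follows. Note first that $t\mapsto \Psi(t)/\Phi(t)$ is automatically increasing here, as required by Theorem \ref{pro:main126aq}: writing $G=\Psi\circ\Phi^{-1}$ and using the relation \eqref{eq:ealent} for each $\Phi_i$ together with $\Phi^{-1}=\Phi_1^{-1}\times\cdots\times\Phi_n^{-1}$, a short computation shows $G(s)/s$ is nondecreasing.

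To prove the comparison, fix $Q\in\mathcal Q$ and pass to a dyadic model by the one-third trick, so that for $x\in Q$ the competing cubes $R\ni x$ are nested with $Q$. The cubes $R\supset Q$ contribute at most $a_Q:=\prod_i \sigma_i(Q)/|Q|^{1-\alpha/(nd)}$ (because $0\le\alpha<nd$ makes $|R|^{\alpha/d-n}$ decreasing in $|R|$), while the subcubes $R\subset Q$ produce the localized maximal function $\mathcal M^{\mathcal D(Q)}_\alpha(\overrightarrow\sigma)$. Linearizing the latter over its principal (stopping) cubes $\mathcal P(Q)$ and using $\Psi\in\Delta_2$, I would obtain
\[
\int_Q\Psi\big(\mathcal M_\alpha(\sigma_1\chi_Q,\dots,\sigma_n\chi_Q)\big)\omega\;\lesssim\; \sum_{P\in\mathcal P(Q)\cup\{Q\}}\Psi(a_P)\,\omega(P),
\]
where $a_P:=\prod_i\sigma_i(P)/|P|^{1-\alpha/(nd)}$ and, crucially, the averages $m_{\sigma_i}(\chi_Q,P)=1$ on the subcubes $P\subset Q$. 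Definition \eqref{eq:de5m45im} then bounds each summand by $\omega(P)\Psi(a_P)\le [\overrightarrow\sigma,\omega]_{A^\alpha}/D_P$ with $D_P:=\Psi(\prod_i\Phi_i^{-1}(1/\sigma_i(P)))$, so the whole matter reduces to the Carleson-type summation $\sum_{P\in\mathcal P(Q)\cup\{Q\}}1/D_P$.

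For this summation I would rewrite $1/D_P=1/G(1/\nu^{\flat}_{\overrightarrow\sigma}(P))$, where $\nu^{\flat}_{\overrightarrow\sigma}(P):=1/\Phi(\prod_i\Phi_i^{-1}(1/\sigma_i(P)))$, and then use the reverse-Hölder inequality \eqref{eq:surmaron} — valid since each $\sigma_i\in\mathcal A_\infty$, as recorded after \eqref{eq:em1kodou} — to replace $\nu^\flat_{\overrightarrow\sigma}(P)$ by the genuine average $\nu_{\overrightarrow\sigma}(P)=\int_P\nu_{\overrightarrow\sigma}$. This turns the multilinear sum into a single-weight Carleson sum for the weight $\nu_{\overrightarrow\sigma}$ and the function $G$: since $G(t)/t$ is nondecreasing (so $u\mapsto G(1/u)^{-1}/u$ is nondecreasing) and $\mathcal P(Q)$ is sparse, the Fujii–Wilson summation $\sum_{P\in\mathcal P(Q)}\nu_{\overrightarrow\sigma}(P)\lesssim[\nu_{\overrightarrow\sigma}]_{\mathcal A_\infty}\,\nu_{\overrightarrow\sigma}(Q)$ yields $\sum_{P}1/D_P\lesssim [\nu_{\overrightarrow\sigma}]_{\mathcal A_\infty}/G(1/\nu_{\overrightarrow\sigma}(Q))$. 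Multiplying by $G(1/\nu_{\overrightarrow\sigma}(Q))$ and taking the supremum over $Q$ gives $[\overrightarrow\sigma,\omega]_{S^\alpha}\lesssim[\overrightarrow\sigma,\omega]_{A^\alpha}[\nu_{\overrightarrow\sigma}]_{\mathcal A_\infty}$.

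It remains to replace the single constant $[\nu_{\overrightarrow\sigma}]_{\mathcal A_\infty}$ by the product $\prod_i[\sigma_i]_{\mathcal A_\infty}^{q_i}$, and this is the step I expect to be the main obstacle. Here I would establish a weight lemma asserting that $\nu_{\overrightarrow\sigma}\in\mathcal A_\infty$ with $[\nu_{\overrightarrow\sigma}]_{\mathcal A_\infty}\lesssim\prod_i[\sigma_i]_{\mathcal A_\infty}^{q_i}$; this is precisely where the hypothesis that each $t\mapsto\Psi(t)/\Phi_i(t)$ is increasing enters, through the upper type $q_i$ of $\Psi\circ\Phi_i^{-1}$, allowing one to distribute the combined $\mathcal A_\infty$ behaviour of $\nu_{\overrightarrow\sigma}$ across the individual factors (in the model case $\Psi(t)=t^q$, $\Phi_i(t)=t^{p_i}$ one has $\nu_{\overrightarrow\sigma}=\prod_i\sigma_i^{p/p_i}$ and $q_i=q/p_i$, and the statement is the multilinear $A_\infty$ estimate of \cite{CruzMoen}). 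The genuinely delicate points are the reverse-Hölder passage and this factorization of the $\mathcal A_\infty$ constant with the exact exponents $q_i$; the remainder is the standard dyadic/stopping-cube machinery together with the growth-function calculus ($\Delta'$, $\Delta_2$, lower type $1$) already developed earlier in the paper.
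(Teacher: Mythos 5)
Your overall scheme (prove $[\overrightarrow{\sigma},\omega]_{S^{\alpha}_{\overrightarrow{\Phi},\Psi}}\lesssim[\overrightarrow{\sigma},\omega]_{A^{\alpha}_{\overrightarrow{\Phi},\Psi}}\prod_{i=1}^n[\sigma_i]_{\mathcal{A}_\infty}^{q_i}$, then quote Theorem \ref{pro:main126aq}) is coherent in outline, but it rests on two lemmas that are nowhere proved, one of which you misattribute to the paper. First, you invoke the reverse H\"older condition \eqref{eq:surmaron} as ``valid since each $\sigma_i\in\mathcal{A}_\infty$, as recorded after \eqref{eq:em1kodou}.'' What the paper records there is that the \emph{power-function} condition \eqref{eq:multireverseholder} holds for Muckenhoupt $A_\infty$ weights, by \cite{CruzMoen}; for the Orlicz condition \eqref{eq:surmaron} the paper only establishes validity when all the $\sigma_i$ coincide, or when each $\sigma_i\in A_1$. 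No version under the Fujii--Wilson class $\mathcal{A}_\infty$ (the actual hypothesis of Theorem \ref{pro:mainineq4}) is available, so your passage from $\Phi\bigl(\prod_i\Phi_i^{-1}(1/\sigma_i(P))\bigr)$ to $1/\nu_{\overrightarrow{\sigma}}(P)$ is unjustified. Second, the factorization $[\nu_{\overrightarrow{\sigma}}]_{\mathcal{A}_\infty}\lesssim\prod_i[\sigma_i]_{\mathcal{A}_\infty}^{q_i}$ --- which you yourself flag as ``the main obstacle'' --- is left as a conjecture; in the Orlicz setting it is not even clear that $\nu_{\overrightarrow{\sigma}}\in\mathcal{A}_\infty$. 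Since these two steps carry the whole weight of the argument, the proposal is not a proof. A further, smaller, problem: Theorem \ref{pro:main126aq} requires $t\mapsto\Psi(t)/\Phi(t)$ to be increasing, which is \emph{not} a hypothesis of Theorem \ref{pro:mainineq4}; your derivation of it from the per-index conditions uses \eqref{eq:ealent}, which is only an equivalence up to constants, so it yields monotonicity of $s\mapsto\Psi\circ\Phi^{-1}(s)/s$ only up to multiplicative constants, not the genuine monotonicity demanded by that theorem's hypothesis.

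For comparison, the paper's proof never forms the combined weight $\nu_{\overrightarrow{\sigma}}$ and therefore needs neither missing lemma. It starts from the estimate already derived in the proof of Theorem \ref{pro:main1aq6}(ii), namely $L_2\lesssim[\overrightarrow{\sigma},\omega]_{A^{\alpha}_{\overrightarrow{\Phi},\Psi}}\sum_{Q\in\mathcal{S}^\beta}\Psi\bigl(\prod_i m_{\sigma_i}(g_i,Q)/\Phi_i^{-1}(1/\sigma_i(Q))\bigr)$, and then uses $\Psi\in\widetilde{\mathscr{U}}$ (the $\Delta'$ condition together with Lemma \ref{pro:main80jdf}) to dominate this by the product $\prod_{i=1}^n\sum_{Q\in\mathcal{S}^\beta}\Psi\left(m_{\sigma_i}(g_i,Q)\right)/\Psi\circ\Phi_i^{-1}\left(1/\sigma_i(Q)\right)$. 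For each $i$ separately, the sequence $\lambda_Q^i=1/\Psi\circ\Phi_i^{-1}(1/\sigma_i(Q))$, $Q\in\mathcal{S}^\beta$, is shown to be a $(\sigma_i,\Psi\circ\Phi_i^{-1})$-Carleson sequence with constant $\lesssim[\sigma_i]_{\mathcal{A}_\infty}^{q_i}$ --- this is exactly where $\sigma_i\in\mathcal{A}_\infty$ and the upper type $q_i$ of $\Psi\circ\Phi_i^{-1}$ enter, via \cite{jmtafeuseh} --- so the scalar Carleson embedding (Corollary \ref{pro:main16aaqaqq6}) bounds each factor by $[\sigma_i]_{\mathcal{A}_\infty}^{q_i}$, and Proposition \ref{pro:main17jo} converts the resulting modular bound into \eqref{eq:eazesaqaqoicz1}. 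This splitting into $n$ scalar Carleson sums is precisely the device that replaces your two missing lemmas, and it also explains why the theorem needs only the monotonicity of $\Psi/\Phi_i$ for each $i$ rather than of $\Psi/\Phi$. If you want to salvage your route, you would have to actually prove the Orlicz reverse H\"older inequality under $\mathcal{A}_\infty$ and the $\mathcal{A}_\infty$-factorization of $\nu_{\overrightarrow{\sigma}}$, both of which are open as far as this paper is concerned.
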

%%%%%%%%%%%%%%%%%%%%%%%%%%%%%%%%%%%%%%%%%%%%%%%%%%%%%%%%%%%%
\section{Some definitions and useful properties}

\subsection{Some properties of growth functions.}
Let $\Phi$ be a growth function. We said $\Phi$ satisfies the condition $ \Delta^{\prime\prime}$ (or $\Phi \in \Delta^{\prime\prime}$), if there exists a constant $C > 0$ such that
 \begin{equation}\label{eq:deprprimim}
  \Phi(t)\Phi(s) \leq  \Phi(Cst),~~\forall~s,t > 0. \end{equation} 
Let $\Phi$ be a growth function. If $\Phi$ is strictly increasing on $\mathbb{R}_{+}$, then  $\Phi \in \Delta^{\prime}$ if and only if $\Phi^{-1} \in \Delta^{\prime\prime}$.

\begin{proposition}\label{pro:main55}
Let $\Phi_{1},...,\Phi_{n}$ be strictly increasing growth functions on $\mathbb{R}_{+}$. Let $\Phi$ be a one-to-one correspondence from $\mathbb{R}_{+}$ to itself such that 
$\Phi^{-1}=\Phi_{1}^{-1}\times...\times \Phi_{n}^{-1}.$ If for all $1\leq i \leq n$,  $\Phi_{i}\in \Delta^{\prime}$, then $\Phi \in \Delta^{\prime}$.
\end{proposition}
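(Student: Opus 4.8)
The plan is to prove that $\Phi \in \Delta'$ by directly estimating $\Phi(st)$ using the defining relation $\Phi^{-1} = \Phi_1^{-1} \times \cdots \times \Phi_n^{-1}$, which says that for every $u > 0$,
\begin{equation}\label{eq:planinverse}
\Phi^{-1}(u) = \prod_{i=1}^{n} \Phi_i^{-1}(u).
\end{equation}
The natural strategy is to translate the $\Delta'$ condition on each $\Phi_i$ into the equivalent $\Delta''$ condition on each inverse $\Phi_i^{-1}$, using the equivalence stated just before the proposition (valid since each $\Phi_i$ is strictly increasing). This is convenient because the product structure lives on the level of the inverses, so a multiplicative submultiplicativity statement about the $\Phi_i^{-1}$ should combine cleanly across the product in~\eqref{eq:planinverse}.

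First I would record that $\Phi_i \in \Delta'$ gives $\Phi_i^{-1} \in \Delta''$, i.e. for each $i$ there is $C_i > 0$ with $\Phi_i^{-1}(u)\,\Phi_i^{-1}(v) \le \Phi_i^{-1}(C_i\, u v)$ for all $u,v>0$. The goal is to show $\Phi^{-1} \in \Delta''$ and then invoke the same equivalence in reverse to conclude $\Phi \in \Delta'$. So I would compute, for $u,v>0$,
\begin{equation}\label{eq:planproduct}
\Phi^{-1}(u)\,\Phi^{-1}(v) = \prod_{i=1}^{n} \Phi_i^{-1}(u)\,\Phi_i^{-1}(v) \le \prod_{i=1}^{n} \Phi_i^{-1}(C_i\, uv),
\end{equation}
where the inequality applies the $\Delta''$ bound in each factor. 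The remaining task is to dominate $\prod_i \Phi_i^{-1}(C_i uv)$ by $\Phi^{-1}(C\, uv) = \prod_i \Phi_i^{-1}(C\, uv)$ for a single constant $C$. Taking $C = \max_i C_i$ and using that each $\Phi_i^{-1}$ is nondecreasing handles this factor by factor, yielding $\Phi^{-1}(u)\Phi^{-1}(v) \le \Phi^{-1}(C uv)$, which is exactly $\Phi^{-1} \in \Delta''$.

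I expect the main subtlety to be bookkeeping about the hypotheses of the $\Delta' \leftrightarrow \Delta''$ equivalence rather than any deep analytic obstacle. To apply that equivalence to $\Phi$ itself at the end, I need to know $\Phi$ (not just the $\Phi_i$) is a strictly increasing growth function; this requires checking that the pointwise product $\prod_i \Phi_i^{-1}$ is a strictly increasing bijection of $\mathbb{R}_+$ onto $\mathbb{R}_+$ (each factor is a strictly increasing homeomorphism of $\mathbb{R}_+$, so the product is too), whence its inverse $\Phi$ is a strictly increasing growth function, and the equivalence then applies in both directions. The only genuine care needed is ensuring the constants $C_i$ are uniform so that a single $C$ works simultaneously in all $n$ factors, which is handled by the finite maximum above; the monotonicity of each $\Phi_i^{-1}$ is what lets that maximum pass through the product.
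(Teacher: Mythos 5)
Your proposal is correct and follows essentially the same route as the paper's proof: pass to the inverses via the $\Delta'\leftrightarrow\Delta''$ equivalence, multiply the $n$ inequalities $\Phi_i^{-1}(u)\Phi_i^{-1}(v)\le\Phi_i^{-1}(C_i uv)$, absorb the constants with $C=\max_i C_i$ and monotonicity, and convert back. The paper writes this out only for $n=2$ (noting the general case is identical), so your version is just the same argument for general $n$, with some additional (sound) bookkeeping about $\Phi$ itself being a strictly increasing growth function.
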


\begin{proof}
We prove the result only in the case $n=2$ as the general case follows the same way. We have 
\begin{align*}
\Phi_{1},\Phi_{2} \in \Delta^{\prime} &\Rightarrow
\Phi_{1}^{-1},\Phi_{2}^{-1} \in \Delta^{\prime\prime}\\ &\Rightarrow \Phi_{1}^{-1}(s)\Phi_{1}^{-1}(t)\leq \Phi_{1}^{-1}(C_{1}st) \hspace*{0.25cm} \text{and} \hspace*{0.25cm}  \Phi_{2}^{-1}(s)\Phi_{2}^{-1}(t) \leq \Phi_{2}^{-1}(C_{2}st),~~ \forall~ s,t > 0\\
&\Rightarrow \Phi_{1}^{-1}(s)\Phi_{2}^{-1}(s)\Phi_{1}^{-1}(t)\Phi_{2}^{-1}(t) \leq \Phi_{1}^{-1}(Cst)\Phi_{2}^{-1}(Cst),~~ \forall~ s,t > 0~ \text{with}~ C=\max\{C_{1},C_{2}\} \\
&\Rightarrow \Phi^{-1}(s)\Phi^{-1}(t) \leq \Phi^{-1}(Cst),~~ \forall~ s,t > 0\\
&\Rightarrow \Phi^{-1} \in \Delta^{\prime\prime}\\
&\Rightarrow \Phi \in \Delta^{\prime}.
\end{align*}
\end{proof}

We note that the converse of Proposition \ref{pro:main55} does not always hold. Indeed, if we consider the functions  $\Phi$ and $\Psi$ defined on $\mathbb{R}_{+}$ by  $$  \Phi(t)=\exp(t)-t-1 ~\hspace*{0.25cm} \text{and} \hspace*{0.25cm} ~\Psi(t)=(1+t)\ln(1+t)-t,~ \forall~ t \geq 0,                       $$
then    $$  \Phi^{-1}(t)\Psi^{-1}(t) \approx t,~~ \forall~ t>0.            $$
The function $t\mapsto t$  satisfies the condition $\Delta'$ while $\Phi \not\in \Delta'$. Indeed, 
$$  \lim_{t\to +\infty}\frac{\Phi(2t)}{\Phi(t)} =    \lim_{t\to +\infty}\frac{\exp(2t)-2t-1}{\exp(t)-t-1} =\lim_{t\to +\infty} \frac{\left(\exp(t)-\frac{2t}{\exp(t)}-\frac{1}{\exp(t)} \right)}{\left(1-\frac{t}{\exp(t)}-\frac{1}{\exp(t)} \right)}=\infty.             $$

\begin{lemma}\label{pro:main40}
Let $\Psi,\Phi_{1},...,\Phi_{n} \in \mathscr{U}$, and let $\Phi$ be a one-to-one correspondence from $\mathbb{R}_{+}$ to itself such that
$\Phi^{-1}=\Phi_{1}^{-1}\times...\times \Phi_{n}^{-1}$ and $t\mapsto\frac{\Psi(t)}{\Phi(t)}$ is incresing on $\mathbb{R}_{+}^{*}$. Let $\widetilde{\Omega}_{3}$ be the function defined on $\mathbb{R}_{+}$ by
\begin{equation}\label{eq:su556i8n}
\widetilde{\Omega}_{3}(t)=\frac{1}{\Psi\circ \Phi^{-1}\left(\frac{1}{t}\right)}, ~\forall~t>0 ~\hspace*{0.25cm}  ~\text{and}~\hspace*{0.25cm} ~  \widetilde{\Omega}_{3}(0)=0.\end{equation}
If  $\Psi \in \mathscr{U}^{q}$ with $q\geq 1$, then
 $\Psi\circ \Phi^{-1}$ and $\widetilde{\Omega}_{3}$ belong to $\mathscr{U}^{nq}$.
\end{lemma}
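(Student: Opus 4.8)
The plan is to reduce everything to the single function $H:=\Psi\circ\Phi^{-1}$: I would first prove $H\in\mathscr{U}^{nq}$, and then deduce $\widetilde{\Omega}_{3}\in\mathscr{U}^{nq}$ from the observation that the transformation $H\mapsto 1/H(1/\cdot)$ leaves both the upper-type and the normalizing monotonicity invariant. The whole argument is a matter of tracking types through an inverse, a product, and a composition, and then through the double reflection defining $\widetilde{\Omega}_{3}$.

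The upper-type of $H$ I would obtain as follows. The membership $\Phi_i\in\mathscr{U}$ forces $t\mapsto\Phi_i(t)/t$ to be nondecreasing, whence $\Phi_i(\lambda s)\geq\lambda\Phi_i(s)$ for all $\lambda\geq 1$ and $s>0$; since $\Phi_i^{-1}$ is increasing, writing $u=\Phi_i(s)$ gives $\Phi_i^{-1}(\lambda u)\leq\lambda s=\lambda\Phi_i^{-1}(u)$, so each $\Phi_i^{-1}$ is of upper-type one with constant one. Multiplying over $i$, for $\lambda\geq 1$,
$$\Phi^{-1}(\lambda u)=\prod_{i=1}^n\Phi_i^{-1}(\lambda u)\leq\lambda^n\prod_{i=1}^n\Phi_i^{-1}(u)=\lambda^n\Phi^{-1}(u),$$
so $\Phi^{-1}$ is of upper-type $n$. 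As $\Psi\in\mathscr{U}^{q}$ and $\lambda^n\geq 1$, this yields $H(\lambda u)=\Psi(\Phi^{-1}(\lambda u))\leq\Psi(\lambda^n\Phi^{-1}(u))\leq C_q\lambda^{nq}H(u)$, i.e. $H$ is of upper-type $nq$.

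For the normalizing monotonicity I would write $H(t)/t=\Psi(\Phi^{-1}(t))/\Phi(\Phi^{-1}(t))=g(\Phi^{-1}(t))$ with $g:=\Psi/\Phi$; by hypothesis $g$ is increasing and $\Phi^{-1}$ is increasing, so $t\mapsto H(t)/t$ is increasing. Since $H$ is continuous, nondecreasing and onto (being a composition of such maps) and $nq\geq 1$, this gives $H\in\mathscr{U}^{nq}$. Turning to $\widetilde{\Omega}_{3}(t)=1/H(1/t)$, it is a growth function: it is continuous and increasing, with limits $0$ at $0$ and $\infty$ at $\infty$. Its upper-type bound $\widetilde{\Omega}_{3}(\lambda t)\leq C_q\lambda^{nq}\widetilde{\Omega}_{3}(t)$ for $\lambda\geq 1$ is, after clearing denominators, exactly $H\!\left(\lambda\cdot\tfrac{1}{\lambda t}\right)\leq C_q\lambda^{nq}H\!\left(\tfrac{1}{\lambda t}\right)$, which is the upper-type $nq$ estimate for $H$ at the point $\tfrac{1}{\lambda t}$. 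For the normalization, $\widetilde{\Omega}_{3}(t)/t=\tfrac{1/t}{H(1/t)}$, and since $s\mapsto s/H(s)$ is nonincreasing (as the reciprocal of the nondecreasing $s\mapsto H(s)/s$), composing with the decreasing map $t\mapsto 1/t$ produces a nondecreasing function. Hence $\widetilde{\Omega}_{3}\in\mathscr{U}^{nq}$.

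The only delicate point — and the one I expect to be the main obstacle — is keeping the directions of the inequalities straight under the two inversions involved: passing from $\Phi_i$ to $\Phi_i^{-1}$, and passing from $H$ to $1/H(1/\cdot)$. The guiding principle, worth isolating, is that $H\mapsto 1/H(1/\cdot)$ applies the reflection $t\mapsto 1/t$ simultaneously in the argument and in the value, so the two interchanges of upper- and lower-type cancel; this is precisely why $\widetilde{\Omega}_{3}$ inherits the same upper-type $nq$ (and the same lower-type one) as $H$, rather than inverted exponents.
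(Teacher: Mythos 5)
Your proof is correct and follows essentially the same route as the paper: the paper establishes that $t\mapsto\Phi^{-1}(t)/t^{n}$ is decreasing (equivalently, your statement that each $\Phi_i^{-1}$ has upper-type one with constant one), deduces $\Phi^{-1}(st)\le t^{n}\Phi^{-1}(s)$ for $t\ge 1$, composes with the upper-type $q$ of $\Psi$, and then obtains the bound for $\widetilde{\Omega}_{3}$ by exactly the same reflection/inversion of the upper-type inequality for $\Psi\circ\Phi^{-1}$ that you use. The only difference is cosmetic: you spell out the monotonicity of $t\mapsto\Psi\circ\Phi^{-1}(t)/t$ and $t\mapsto\widetilde{\Omega}_{3}(t)/t$, which the paper merely asserts.
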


 \begin{proof}
As  $t\mapsto\frac{\Psi(t)}{\Phi(t)}$ is increasing on $\mathbb{R}_{+}^{*}$, we deduce that the functions $t\mapsto\frac{\Psi\circ \Phi^{-1}(t)}{t}$ and $t\mapsto\frac{\widetilde{\Omega}_{3}(t)}{t}$ are also increasing on $\mathbb{R}_{+}^{*}$.
\vskip .2cm
Let $1\leq i \leq n$. As  $t\mapsto\frac{ \Phi_{i}(t)}{t}$ is increasing on $\mathbb{R}_{+}^{*}$, it follows that $t\mapsto \frac{\Phi_{i}^{-1}(t)}{t}$ is decreasing on $\mathbb{R}_{+}^{*}$. Consequently, $t\mapsto \frac{\Phi^{-1}(t)}{t^{n}}$ is decreasing on $\mathbb{R}_{+}^{*}$. Indeed, given $0<t_{1}\leq t_{2}$, we have $$ \frac{\Phi^{-1}(t_{2})}{t_{2}^{n}}=\frac{\Phi_{1}^{-1}(t_{2})}{t_{2}}\times...\times \frac{\Phi_{n}^{-1}(t_{2})}{t_{2}} \leq  \frac{\Phi_{1}^{-1}(t_{1})}{t_{1}}\times...\times \frac{\Phi_{n}^{-1}(t_{1})}{t_{1}} = \frac{\Phi^{-1}(t_{1})}{t_{1}^{n}}.     $$
Let $s> 0$ and $t \geq 1$. We have $$  \frac{\Phi^{-1}(st)}{(st)^{n}} \leq \frac{\Phi^{-1}(s)}{s^{n}}\Rightarrow \Phi^{-1}(st) \leq t^{n}\Phi^{-1}(s). $$
Since $\Psi$ is of upper-type $q$, we obtain 
$$ \Psi\left(\Phi^{-1}(st)\right) \leq \Psi\left(t^{n}\Phi^{-1}(s)\right)\leq C_{q}t^{nq}\Psi\left(\Phi^{-1}(s)\right).$$
We then deduce that $\Psi\circ \Phi^{-1} \in \mathscr{U}^{nq}$. Let us check that $\widetilde{\Omega}_{3} \in \mathscr{U}^{nq}$. We have 
\begin{align*}
\Psi\circ\Phi^{-1}\left(\frac{1}{s}\right)\leq C_{q}t^{nq}\Psi\circ \Phi^{-1}\left(\frac{1}{st}\right) &\Rightarrow
\frac{1}{C_{q}t^{nq}\Psi\circ \Phi^{-1}\left(\frac{1}{st}\right)} \leq  \frac{1}{\Psi\circ \Phi^{-1}\left(\frac{1}{s}\right)} \\
&\Rightarrow  \widetilde{\Omega}_{3}(st) \leq C_{q}t^{nq}\widetilde{\Omega}_{3}(s).
\end{align*}
\end{proof}

\begin{lemma}\label{pro:main80jdf}
Let $\Phi\in \widetilde{\mathscr{U}}$.There exists a constant $C:=C_{\Phi}>0$ such that  \begin{equation}\label{eq:ibetdfide1}
\Phi\left(\frac{s}{t}\right)\leq C\frac{\Phi(s)}{\Phi(t)}, ~~ \forall~s,t >0 
.\end{equation}
\end{lemma}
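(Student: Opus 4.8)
The plan is to recast the claim in the equivalent multiplicative form $\Phi(s/t)\,\Phi(t)\le C\,\Phi(s)$ (legitimate since $\Phi(t)>0$ for $t>0$, $\Phi\in\mathscr U$ being a homeomorphism of $\mathbb R_+$), and to deduce it from a two-sided power comparison for $\Phi$. Fix $q\ge 1$ with $\Phi\in\widetilde{\mathscr U}^q$, so that $\Phi$ is of upper type $q$, i.e.\ $\Phi(st)\le C_q t^q\Phi(s)$ for $t\ge 1$, and satisfies $\Phi(s/t)\le C\,\Phi(s)/t^q$ for all $s,t\ge1$ by condition (c). Only these two properties will be used; the $\Delta'$ hypothesis is not needed here.

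The heart of the argument is to extract from these two inequalities the estimates $c_1 x^q\le\Phi(x)\le C_1 x^q$ valid for every $x>0$. For the upper bound I would take $s=1,\ t=x$ in the upper-type inequality to get $\Phi(x)\le C_q\Phi(1)\,x^q$ when $x\ge1$, and take $s=1,\ t=1/x$ in (c) — so that $s/t=x$ and $1/x\ge1$ — to get $\Phi(x)\le C\Phi(1)\,x^q$ when $0<x\le1$. For the matching lower bound I would put $s=t=x$ in (c), giving $\Phi(1)\le C\,\Phi(x)/x^q$ and hence $\Phi(x)\gtrsim x^q$ for $x\ge1$, and apply the upper-type inequality with $s=x,\ t=1/x\ge1$, giving $\Phi(1)\le C_q x^{-q}\Phi(x)$ and hence $\Phi(x)\gtrsim x^q$ for $0<x\le1$. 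Collecting these yields the desired comparison with $C_1=\max\{C_q,C\}\,\Phi(1)$ and $c_1=\Phi(1)/\max\{C_q,C\}$.

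Granting the comparison, the conclusion follows by a one-line substitution: for arbitrary $s,t>0$,
$$\Phi\Big(\frac st\Big)\,\Phi(t)\le C_1\Big(\frac st\Big)^{q}\cdot C_1\,t^{q}=C_1^{2}\,s^{q}\le\frac{C_1^{2}}{c_1}\,\Phi(s),$$
which is exactly the assertion with $C=C_1^{2}/c_1$.

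The one genuinely delicate point is the upper estimate $\Phi(x)\lesssim x^q$ as $x\to0^+$: the upper-type condition only controls dilations by factors $\ge1$ and is silent near the origin, so it is condition (c), evaluated at the boundary value $s=1$, that does the work there. Once the near-origin bound is in hand, everything reduces to the elementary split $x\ge1$ versus $0<x\le1$ in the two monomial estimates and a direct substitution, with no further case analysis.
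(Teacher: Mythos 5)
Your proof is correct, but it takes a genuinely different route from the paper's. The paper splits into the same two cases $t\ge 1$ and $0<t<1$, but only in order to establish the weaker one-variable bound $\Phi(t)\,\Phi(1/t)\le C_3$ for all $t>0$ (using, exactly as you do, the upper-type inequality together with condition (\ref{eq:ibide2}) evaluated at $s=1$); it then invokes the $\Delta'$ hypothesis to split $\Phi(s/t)\le C'\,\Phi(s)\,\Phi(1/t)$ and concludes. You instead extract from conditions (b) and (c) alone the two-sided power comparison $c_1x^q\le\Phi(x)\le C_1x^q$ for all $x>0$ and finish by substitution, so $\Delta'$ is never used --- a genuine economy of hypotheses; each of your four case-checks is a legitimate application of the stated inequalities (note that both your argument and the paper's use crucially that (b) and (c) carry the \emph{same} exponent $q$). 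One consequence of your stronger intermediate step deserves emphasis: it shows that every $\Phi\in\widetilde{\mathscr{U}}^{q}$ is equivalent, in the sense of (\ref{eq:equivalent}), to the power function $t^{q}$, so under the paper's literal definition the class $\widetilde{\mathscr{U}}$ consists only of power-like functions; in particular the example $t^{\alpha}\log^{\beta}(1+t)$ with $\beta>0$, claimed in Section 2 to belong to $\widetilde{\mathscr{U}}$, cannot actually lie in it (on $[1,\infty)$ it is not comparable to any $t^{q}$). This is not a defect of your proof --- it is a faithful consequence of the definitions, and indeed the conclusion of the lemma itself, read as $\Phi(u)\Phi(t)\le C\Phi(ut)$, combined with $\Delta'$ already forces such a collapse --- but it is worth flagging, since it shows your argument proves strictly more than the lemma and exposes how restrictive the class $\widetilde{\mathscr{U}}$ is as defined, whereas the paper's proof deliberately passes through the weaker fact $\Phi(t)\Phi(1/t)\lesssim 1$ and leaves the heavy lifting to $\Delta'$.
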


\begin{proof}
Let $q\geq 1$ be such that $\Phi\in \widetilde{\mathscr{U}}^{q}$. Let $t >0$. We consider the two cases $0<t<1$ and $t\geq 1$.
\vskip .2cm
$ \textit{Case 1}:$ Assume $0<t<1$. Then using inequality (\ref{eq:ibide2}), we obtain  
$$  \Phi(t)= \Phi\left(\frac{1}{\frac{1}{t} } \right) \leq C_{1} \frac{\Phi(1)}{\left(\frac{1}{t} \right)^{q}} = C_{1}\Phi(1)t^{q}\Rightarrow \frac{1}{t^{q}} \leq \frac{C_{1}\Phi(1)}{\Phi(t)}.            $$
As $\Phi$ is of upper-type $q$, we have  
$$  \Phi\left(\frac{1}{t}\right)=\Phi\left(\frac{1}{t}\times 1\right) \leq C_{2} \frac{1}{t^{q}}\Phi(1)\Rightarrow \Phi\left(\frac{1}{t}\right) \leq  \frac{C_{1}C_{2}\Phi(1)^{2}}{\Phi(t)}.               $$
$\textit{Cas 2}:$ Assume that $t \geq 1$. Then as $\Phi$ is of upper-type $q$, we obtain
$$  \Phi(t)=\Phi(t\times 1) \leq C_{2}t^{q}\Phi(1) \Rightarrow \frac{1}{t^{q}} \leq \frac{C_{2}\Phi(1)}{\Phi(t)}.         $$
Using  (\ref{eq:ibide2}), we easily obtain
$$  \Phi\left(\frac{1}{t}\right) \leq C_{1} \frac{\Phi(1)}{t^{q}}\Rightarrow  \Phi\left(\frac{1}{t}\right) \leq  \frac{C_{1}C_{2}\Phi(1)^{2}}{\Phi(t)}.       $$
Taking $C_{3}:=C_{1}C_{2}\Phi(1)^{2}$, we deduce from the above analysis that 
 $  \Phi\left(\frac{1}{t}\right) \leq  \frac{C_{3}}{\Phi(t)},$ for any $t >0$. 
 \vskip .1cm
As $\Phi\in \Delta'$, we finally obtain $$ \Phi\left(\frac{s}{t}\right)\leq  C\Phi(s) \Phi\left(\frac{1}{t}\right) \leq CC_{3} \frac{\Phi(s)}{\Phi(t)},~~ \forall~s,t >0.       $$ 
 \end{proof}
\subsection{Dyadic grids and sparse families}
Recall that the standard dyadic grid $\mathcal{D}$ in $\mathbb{R}^d$ is the collection of all cubes of the form $$2^{-k}\left([0,1)^d+m\right),\quad k\in \mathbb{Z}, m\in \mathbb{Z}^d.$$
We also recall the following definition of a general dyadic grid.
\begin{definition}
In general, a dyadic grid $\mathcal{D}^\beta$ in $\mathbb{R}^d$ is any collection of cubes such that
\begin{itemize}
\item[(i)] the sidelength $\ell Q$ of any cube $Q\in \mathcal{D}^\beta$ is $2^k$ for some $k\in \mathbb{Z}$;
\item[(ii)] for $Q,Q'\in \mathcal{D}^\beta$, $Q\cap Q'\in\{Q,Q',\emptyset\}$;
\item[(iii)] for each $k\in \mathbb{Z}$, the family $\mathcal{D}_k^\beta:=\{Q\in \mathcal{D}^\beta:\ell Q=2^k\}$ forms a partition of $\mathbb{R}^d$.
\end{itemize}
\end{definition}
\begin{definition}
We say a collection of dyadic cubes $\mathcal{S}^\beta=\{Q_{j,k}\}_{j,k\in\mathbb{Z}}\subset \mathcal{D}^\beta$ is a sparse family if 
\begin{itemize}
\item[(i)] for each fixed $k$, the family $\{Q_{j,k}\}_{j\in \mathbb{Z}}$ is pairwise disjoint;
\item[(ii)] if $A_k=\cup_{j\in \mathbb{Z}}Q_{j,k}$, then $A_{k+1}\subset A_k$.
%\end{itemize}
%\vskip .1cm
%If moreover, for some $0<\epsilon<1$,
%\begin{itemize}
\item[(iii)] $|A_{k+1}\cap Q_{j.k}|\le \frac 12|Q_{j,k}|$.
%then $\mathcal{S}^\beta=\{Q_{j,k}\}_{j,k\in\mathbb{Z}}\subset \mathcal{D}^\beta$.
\end{itemize}
\end{definition}
Given a sparse family $\mathcal{S}^\beta=\{Q_{j,k}\}_{j,k\in\mathbb{Z}}\subset \mathcal{D}^\beta$, for each $Q_{j,k}\in \mathcal{S}^\beta$, define $E_{Q_{j,k}}:=Q_{j,k}\setminus A_{k+1}$.
Then the sets in the family $\{E_Q\}_{Q\in \mathcal{S}^\beta}$ are pairwise disjoint.
\vskip .2cm
We refer to \cite{HytPerez} for the following.
\begin{lemma}\label{pro:main80}
There are $2^d$ dyadic grids $\mathcal{D}^{\beta}$ such that for any cube $Q\in \mathbb{R}^d$, there exists a cube $R\in \mathcal{D}^\beta$ for some $\beta$ such that $Q\subset R$ and $\ell R\le 6\ell Q$.
\end{lemma}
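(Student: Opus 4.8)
The plan is to exhibit the $2^d$ grids explicitly as tensor products of two one-dimensional grids and to reduce the whole statement to a single one-dimensional separation estimate. For $\beta=(\beta_1,\dots,\beta_d)\in\{0,1\}^d$ I set $\mathcal{D}^\beta=\prod_{i=1}^d \mathcal{G}^{\beta_i}$, where $\mathcal{G}^0$ is the standard dyadic grid on $\mathbb{R}$ and $\mathcal{G}^1$ is its one-third shift, whose level-$k$ intervals are $2^{-k}\big[m+(-1)^k/3,\,m+1+(-1)^k/3\big)$, $m\in\mathbb{Z}$. First I would check that $\mathcal{G}^1$ is a genuine dyadic grid: the only nonobvious axiom is nesting, and a direct computation shows that the level-$k$ interval indexed by $m$ is tiled exactly by the two level-$(k+1)$ intervals indexed by $2m+(-1)^k$ and $2m+(-1)^k+1$; the alternating sign $(-1)^k$ is precisely what makes the shift consistent across scales. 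Taking products, each $\mathcal{D}^\beta$ then satisfies the three axioms of a dyadic grid, and there are exactly $2^d$ of them.

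Next comes the core one-dimensional estimate. Fix a scale $L=2^{-j}$. At this scale the breakpoints of $\mathcal{G}^0$ are the points of $L\mathbb{Z}$, while those of $\mathcal{G}^1$ are the points of $L\mathbb{Z}\pm L/3$; the key elementary fact is that the minimal distance between a breakpoint of the first grid and a breakpoint of the second is exactly $L/3$. Consequently, an interval $I$ with $\ell(I)\le L/3$ cannot simultaneously contain a breakpoint of $\mathcal{G}^0$ and a breakpoint of $\mathcal{G}^1$ in its interior; hence the interior of $I$ contains no breakpoint of at least one of the two grids, and $I$ is contained in a single interval of length $L$ belonging to that grid. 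I expect this separation step to be the heart of the matter; everything else is bookkeeping.

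To conclude, given a cube $Q=\prod_{i=1}^d I_i$ with $\ell(Q)=\ell$, I would choose $L$ to be the smallest dyadic number with $L\ge 3\ell$; then automatically $L<6\ell$, since otherwise $L/2$ would be a smaller dyadic number that is still $\ge 3\ell$. Because $\ell\le L/3$, the one-dimensional estimate applies in each coordinate: for every $i$ there is a choice $\beta_i\in\{0,1\}$ and an interval $J_i\in\mathcal{G}^{\beta_i}$ of length $L$ with $I_i\subset J_i$. Since all $2^d$ sign patterns are available, the cube $R:=\prod_{i=1}^d J_i$ belongs to $\mathcal{D}^\beta$ with $\beta=(\beta_1,\dots,\beta_d)$, satisfies $Q\subset R$, and has $\ell(R)=L<6\ell(Q)$, which is the desired conclusion. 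The only points requiring mild care are the half-open endpoint conventions, which are harmless because the length bound is a strict inequality, and the observation that the argument uses only the single scale $L$, so the alternating sign in the definition of $\mathcal{G}^1$ plays no role beyond guaranteeing that $\mathcal{G}^1$ is a legitimate grid.
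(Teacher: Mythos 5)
Your proof is correct and follows essentially the same route as the paper, which cites Hyt\"onen--P\'erez for this lemma and uses precisely the grids you build: the one-third shifted dyadic grids indexed by $\beta\in\{0,1/3\}^d$ (your alternating shift $(-1)^k/3$ is exactly their $(-1)^k\beta$), reduced coordinatewise to the fact that at each scale $L$ the breakpoints of the standard and shifted grids are $L/3$-separated. One small remark: the endpoint conventions are indeed harmless, but not because the bound $L<6\ell$ is strict; the correct reason is that the two grids' breakpoint sets at a fixed scale are disjoint and $L/3$-separated, so even a closed interval of length $\le L/3$ whose endpoint is a breakpoint of one grid sits strictly inside a half-open interval of the other grid.
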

The $2^d$ dyadic grids in our case will be considered by taking $\beta\in\{0,\frac 13\}^d$.
\subsection{Some properties of weighted Orlicz spaces.}

Let $q\geq 1$ such that  $\Phi \in \mathscr{U}^{q}$. For $\sigma$ a weight on $\mathbb R^d$, let us define on $L^{\Phi}(\sigma)$ the following quantity 
\begin{equation}\label{eq:foanmaxi85mal}
\|f\|_{L_{\sigma}^{\Phi}}:=\int_{\mathbb{R}^{d}}\Phi\left(|f(x)|\right)\sigma(x)dx.\end{equation}
We note that the mapping $f\mapsto \|f\|_{L_{\sigma}^{\Phi}}$ is not in general a norm on $L^{\Phi}(\sigma)$ (see for example \cite{raoren,rao68ren}).
\vskip .1cm
We refer to \cite{djesehb} for the following inequalities between $\|\cdot\|_{L_{\sigma}^{\Phi}}$ and $\|\cdot\|_{L_{\sigma}^{\Phi}}^{lux}$.
\begin{equation}\label{eq:condgitxxedinis}
 \|f\|_{L_{\sigma}^{\Phi}} \lesssim \max\left\{ \|f\|_{L_{\sigma}^{\Phi}}^{lux} ; \left( \|f\|_{L_{\sigma}^{\Phi}}^{lux}\right)^{q}\right\} ~ \text{and}~ \|f\|_{L_{\sigma}^{\Phi}}^{lux} \lesssim \max\left\{ \|f\|_{L_{\sigma}^{\Phi}} ; \left( \|f\|_{L_{\sigma}^{\Phi}}\right)^{\frac{1}{q}}\right\}.  \end{equation}
\vskip .1cm
If in the definition of the weighted maximal function $\mathcal{M}_\sigma$, we restrict the supremum to dyadic cubes in the grid $\mathcal{D}^{\beta}$, we obtain the dyadic weighted maximal function denoted  $\mathcal{M}_{\sigma}^{\mathcal{D}^{\beta}}$.
 \vskip .1cm
 Let us recall the following (see \cite[Theorem 3.4]{jmtafeuseh}).
\begin{theorem}\label{pro:main88}
Let $\Phi\in \mathscr{U}$ and $\sigma$ a weight on $\mathbb{R}^{d}$.
If $\Phi\in \nabla_2$, then there exists a constant $C:=C_{\Phi}>0$ such that for any $f\in L^{\Phi}(\sigma)$,   
\begin{equation}\label{eq:foncinnmaximal}
\int_{\mathbb{R}^{d}}\Phi\left( \mathcal{M}_{\sigma}^{\mathcal{D}^{\beta}}(f)(x)\right)\sigma(x)dx \leq C \int_{\mathbb{R}^{d}}\Phi\left(|f(x)|\right)\sigma(x)dx
.\end{equation}
\end{theorem}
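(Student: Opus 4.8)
The plan is to recognize $\mathcal{M}_{\sigma}^{\mathcal{D}^{\beta}}$ as the dyadic Hardy--Littlewood maximal operator associated with the measure $d\mu:=\sigma\,dx$, run the classical Calder\'on--Zygmund argument in this measure space, and convert the resulting weak-type bound into the Orlicz estimate (\ref{eq:foncinnmaximal}) by means of the $\nabla_{2}$ hypothesis. First I would record the weighted weak-type $(1,1)$ inequality: for $\lambda>0$, select the maximal cubes $Q\in\mathcal{D}^{\beta}$ for which $\frac{1}{\sigma(Q)}\int_{Q}|f|\sigma>\lambda$. Since distinct maximal dyadic cubes are disjoint and every point of $\{\mathcal{M}_{\sigma}^{\mathcal{D}^{\beta}}f>\lambda\}$ lies in exactly one of them, summing the estimates $\sigma(Q)\le\frac{1}{\lambda}\int_{Q}|f|\sigma$ yields $\mu\{\mathcal{M}_{\sigma}^{\mathcal{D}^{\beta}}f>\lambda\}\le\frac{1}{\lambda}\int_{\{\mathcal{M}_{\sigma}^{\mathcal{D}^{\beta}}f>\lambda\}}|f|\,d\mu$. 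A preliminary reduction to $f$ bounded with compact support, followed by monotone convergence on the truncations $\min(|f|,N)\chi_{B(0,N)}$, guarantees that the maximal cubes exist and that all the manipulations below are legitimate.

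Next I would sharpen this by the standard truncation. Writing $f=f\chi_{\{|f|>\lambda/2\}}+f\chi_{\{|f|\le\lambda/2\}}$, the second piece has $\mathcal{M}_{\sigma}^{\mathcal{D}^{\beta}}$ at most $\lambda/2$ pointwise, so $\{\mathcal{M}_{\sigma}^{\mathcal{D}^{\beta}}f>\lambda\}\subset\{\mathcal{M}_{\sigma}^{\mathcal{D}^{\beta}}(f\chi_{\{|f|>\lambda/2\}})>\lambda/2\}$, and the weak-type bound gives
$$\mu\{\mathcal{M}_{\sigma}^{\mathcal{D}^{\beta}}f>\lambda\}\le\frac{2}{\lambda}\int_{\{|f|>\lambda/2\}}|f|\,d\mu.$$
Then I would apply the layer-cake identity $\int\Phi(\mathcal{M}_{\sigma}^{\mathcal{D}^{\beta}}f)\,d\mu=\int_{0}^{\infty}\Phi'(\lambda)\,\mu\{\mathcal{M}_{\sigma}^{\mathcal{D}^{\beta}}f>\lambda\}\,d\lambda$ (valid since $\Phi\in\mathscr{C}^{1}$ with $\Phi(0)=0$), insert the last bound, and use Fubini to integrate in $\lambda$ first, obtaining
$$\int_{\mathbb{R}^{d}}\Phi\left(\mathcal{M}_{\sigma}^{\mathcal{D}^{\beta}}f\right)d\mu\le 2\int_{\mathbb{R}^{d}}|f(x)|\left(\int_{0}^{2|f(x)|}\frac{\Phi'(\lambda)}{\lambda}\,d\lambda\right)d\mu(x).$$

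The decisive step, and the main obstacle, is to control the inner integral, which is exactly where $\nabla_{2}$ enters. Using (\ref{eq:ealent}), namely $\Phi'(\lambda)\approx\Phi(\lambda)/\lambda$, the inner integral is comparable to $\int_{0}^{2|f(x)|}\Phi(\lambda)\lambda^{-2}\,d\lambda$, and the defining inequality (\ref{eq:conditiondedinis}) of $\nabla_{2}$ bounds this by $C\,\Phi(2|f(x)|)/(2|f(x)|)$. Finally, since $\Phi\in\nabla_{2}$ was assumed to lie in $\Delta_{2}$, inequality (\ref{eq:delta2}) gives $\Phi(2|f|)\le K\Phi(|f|)$, so the whole right-hand side collapses to $C'\int_{\mathbb{R}^{d}}|f(x)|\cdot\Phi(|f(x)|)/|f(x)|\,d\mu=C'\int_{\mathbb{R}^{d}}\Phi(|f|)\sigma\,dx$, which is (\ref{eq:foncinnmaximal}). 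The only delicate points are the justification of the layer-cake/Fubini interchange (handled by the monotone convergence reduction above) and verifying that the comparison constants coming from (\ref{eq:ealent}), (\ref{eq:conditiondedinis}) and (\ref{eq:delta2}) depend only on $\Phi$, which they do; hence the final constant $C=C_{\Phi}$ is admissible.
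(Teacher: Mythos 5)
The paper itself does not prove Theorem \ref{pro:main88}: it is recalled from \cite[Theorem 3.4]{jmtafeuseh} ("Let us recall the following..."), so there is no in-paper argument to compare yours against. Your proof is the classical modular-inequality argument for $\nabla_2$: the dyadic weak-type $(1,1)$ bound with respect to the measure $\sigma\,dx$, the truncation of $f$ at height $\lambda/2$, the layer-cake formula and Fubini, and finally (\ref{eq:ealent}) together with the defining inequality (\ref{eq:conditiondedinis}) of $\nabla_2$ and the $\Delta_2$ property to pass from $\Phi(2|f|)$ to $\Phi(|f|)$. This chain of estimates is correct, the constants visibly depend only on $\Phi$, and it is in substance the Kokilashvili--Krbec-type proof that the cited reference relies on; so your route is the expected one.

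One technical justification, however, is off: reducing to $f$ bounded with compact support does \emph{not} guarantee that maximal dyadic cubes exist. If $\sigma$ gives finite mass to a quadrant, a point can lie in arbitrarily large dyadic cubes all of whose $\sigma$-averages of $|f|$ exceed $\lambda$ (take $f=\chi_{[0,1)^d}$ and $\sigma$ integrable: the averages over $[0,2^k)^d$ decrease to a positive limit), so no maximal cube containing that point exists. The weak-type inequality you need is nevertheless true and proved by a scale truncation rather than by your reduction: among the cubes of side length at most $2^N$ whose average exceeds $\lambda$, maximal elements do exist and are pairwise disjoint; summing $\sigma(Q)\le\lambda^{-1}\int_Q|f|\sigma$ over them and letting $N\to\infty$ by monotone convergence gives
\begin{equation*}
\sigma\left(\left\{\mathcal{M}_{\sigma}^{\mathcal{D}^{\beta}}f>\lambda\right\}\right)\le\frac{1}{\lambda}\int_{\left\{\mathcal{M}_{\sigma}^{\mathcal{D}^{\beta}}f>\lambda\right\}}|f(x)|\sigma(x)\,dx .
\end{equation*}
With this repair, the remainder of your argument goes through unchanged.
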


Kokilashvili and Krbec gave in \cite{kokokrbec} a necessary and sufficient condition for the Hardy-Littlewood maximal function $\mathcal{M}$ to be bounded on a Orlicz space. More precisely, they proved that $\mathcal{M}:L^{\Phi}\longrightarrow L^{\Phi}$ boundedly if and only if $\Phi\in \nabla_2$. The condition $\nabla_2$ is then relevant in Theorem \ref{pro:main88}. 
\vskip .2cm
 Let  $\lambda > 0$ and $\overrightarrow{f}:=(f_{1},...,f_{n})\in L^{\Phi_{1}}(\sigma_{1})\times...\times L^{\Phi_{n}}(\sigma_{n})$. We denote by   $\mathcal{D}_{\lambda}^{*}(\overrightarrow{f})$  the set of all maximal dyadic cubes  $Q$ in $\mathcal{D}^{\beta}$ with respect to the inclusion satisfying the condition
\begin{equation}\label{eq:foncinimal}
\prod_{i=1}^{n}\frac{1}{\sigma_{i}(Q)}\int_{Q}|f_{i}(y)|\sigma_{i}(y) dy > \lambda.\end{equation}
The cubes in $\mathcal{D}_{\lambda}^{*}(\overrightarrow{f})$ are pairwise disjoint and moreover,
\begin{equation}\label{eq:fonciaqal}
\left\{x\in \mathbb{R}^{d} : \mathcal{M}_{\overrightarrow{\sigma}}^{\mathcal{D}^{\beta}}(\overrightarrow{f})(x)> \lambda \right\}=\bigcup_{Q \in \mathcal{D}_{\lambda}^{*}(\overrightarrow{f})}Q.\end{equation}
The following estimates are useful for our purpose.
\begin{proposition}\label{pro:main0aqk5}
Let  $\sigma_{1},...,\sigma_{n}$ be weights on $\mathbb{R}^{d}$. Let $\Phi_{1},...,\Phi_{n}\in \widetilde{\mathscr{U}}$,    and  let $\Phi$ be a one-to-one correspondence from  $\mathbb{R}_{+}$ to itself such that $\Phi^{-1}=\Phi_{1}^{-1}\times...\times \Phi_{n}^{-1}$. Define  $\nu_{\overrightarrow{\sigma}}:= \frac{1}{\Phi\left(\prod_{i=1}^{n}\Phi_{i}^{-1}\left(\frac{1}{\sigma_{i}}\right)  \right)}$. Then the following are satisfied.  
\begin{itemize}
\item[(i)] There exists a constant $C_{1}:=C_{n,\Phi, \Phi_{1},...,\Phi_{n}} >0$ such that for any $\lambda > 0$ and for any $1\leq i \leq n$,   $0\not\equiv f_{i}  \in L^{\Phi_{i}}(\sigma_{i})$, 
\begin{equation}\label{eq:foncijhimal}
\nu_{\overrightarrow{\sigma}}\left(\left\{x\in \mathbb{R}^{d} : \mathcal{M}_{\overrightarrow{\sigma}}^{\mathcal{D}^{\beta}}\left(\frac{f_{1}}{\|f_{1}\|_{L_{\sigma_{1}}^{\Phi_{1}}}^{lux}},...,\frac{f_{n}}{\|f_{n}\|_{L_{\sigma_{n}}^{\Phi_{n}}}^{lux}}\right)(x)> \lambda \right\}\right) \leq  \dfrac{C_{1}}{\Phi(\lambda)}.\end{equation}
\item[(ii)] Let $\Psi\in \mathscr{U}$. If the function $t\mapsto \frac{\Psi(t)}{\Phi(t)}$ is increasing on $\mathbb{R}_{+}^{*}$, then there exists a constant $C_{2}:= C_{n,\Psi,\Phi, \Phi_{1},...,\Phi_{n}} > 0$ such that for any $\lambda > 0$ and any   $(f_{1},...,f_{n}) \in L^{\Phi_{1}}(\sigma_{1})\times...\times
 L^{\Phi_{n}}(\sigma_{n})$ with  $f_{i} \not\equiv 0$, for $i=1,...,n$, 
\begin{equation}\label{eq:ineg56a3jsen}
\sum_{R \in \mathcal{D}_{\lambda}^{*}(\overrightarrow{g})} \frac{1}{\Psi\circ \Phi^{-1}\left(\frac{1}{\nu_{\overrightarrow{\sigma}}(R)}\right)} \leq C_{2} \frac{\Phi(\lambda)}{\Psi(\lambda)} \nu_{\overrightarrow{\sigma}}\left( \left\{ x\in \mathbb{R}^{d} : \mathcal{M}^{\mathcal{D}^{\beta}}_{\overrightarrow{\sigma}}(\overrightarrow{g})(x) >  \lambda \right\} \right),
\end{equation} 
where  $\overrightarrow{g}:=(g_{1},...,g_{n})$, with $g_{i}=\frac{f_{i}}{\|f_{i}\|_{L_{\sigma_{i}}^{\Phi_{i}}}^{lux}}.$
\item[(iii)] If for any $1\leq i \leq n$,  $\Phi_{i} \in \nabla_{2}$, then there exists a constant $C_{3}:= C_{\Phi, \Phi_{1},...,\Phi_{n}} > 0$ such that for any $\overrightarrow{f}:=(f_{1},...,f_{n}) \in L^{\Phi_{1}}(\sigma_{1})\times...\times
 L^{\Phi_{n}}(\sigma_{n})$,  
\begin{equation}\label{eq:foaxiqdmal}
\int_{\mathbb{R}^{d}}\Phi\left(\mathcal{M}_{\overrightarrow{\sigma}}^{\mathcal{D}^{\beta}}(\overrightarrow{f})(x)\right)\nu_{\overrightarrow{\sigma}}(x) dx \leq C_{3} \sum_{i=1}^{n} \int_{\mathbb{R}^{d}}\Phi_{i}(|f_{i}(x)|)\sigma_{i}(x)dx.
\end{equation}
\end{itemize}
\end{proposition}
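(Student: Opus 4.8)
The plan is to reduce all three statements to a single pointwise ``weighted Young inequality'' relating $\nu_{\overrightarrow{\sigma}}$, $\Phi$, and the $\Phi_i$. Precisely, I would first establish that there is a constant $C=C_{n,\Phi,\Phi_1,\dots,\Phi_n}>0$ such that for every $x\in\mathbb{R}^d$ and all $t_1,\dots,t_n\ge 0$,
\begin{equation}\label{eq:dagger}
\nu_{\overrightarrow{\sigma}}(x)\,\Phi\left(\prod_{i=1}^n t_i\right)\le C\sum_{i=1}^n\sigma_i(x)\,\Phi_i(t_i).
\end{equation}
Writing $s_i=1/\sigma_i(x)$, so that $\nu_{\overrightarrow{\sigma}}(x)=1/\Phi\left(\prod_{i=1}^n\Phi_i^{-1}(s_i)\right)$, inequality (\ref{eq:dagger}) amounts to $\Phi\left(\prod_i t_i\right)\le C\,\Phi\left(\prod_i\Phi_i^{-1}(s_i)\right)\sum_i\Phi_i(t_i)/s_i$. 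I would derive this from the homogeneous multilinear Young-type inequality $\Phi\left(\prod_i a_i\right)\le\sum_i\Phi_i(a_i)$ (a consequence of $\Phi^{-1}=\Phi_1^{-1}\times\cdots\times\Phi_n^{-1}$ and convexity), after rescaling the $i$-th variable by $\Phi_i^{-1}(s_i)$ and controlling the resulting factors through $\Phi_i\in\Delta'$, the monotonicity of $t\mapsto\Phi(t)/t$ and $t\mapsto\Phi_i^{-1}(t)/t$, and the upper-type bounds available in $\widetilde{\mathscr{U}}$ (Proposition \ref{pro:main55} and Lemma \ref{pro:main80jdf}). I expect this to be the main obstacle: because the arguments $s_i=1/\sigma_i(x)$ are in general distinct, one cannot simply rescale the homogeneous inequality, and reconciling the mismatch between the $s_i$ is exactly what requires the full strength of the $\widetilde{\mathscr{U}}$ hypotheses.

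Granting (\ref{eq:dagger}), part (i) is quick. The cubes of $\mathcal{D}_{\lambda}^{*}(\overrightarrow{g})$ are pairwise disjoint with union $\{\mathcal{M}_{\overrightarrow{\sigma}}^{\mathcal{D}^{\beta}}(\overrightarrow{g})>\lambda\}$. Fixing a maximal cube $R$ and integrating (\ref{eq:dagger}) over $R$ with $t_i=m_{\sigma_i}(g_i,R):=\frac{1}{\sigma_i(R)}\int_R|g_i|\sigma_i$, and using $\prod_i m_{\sigma_i}(g_i,R)>\lambda$ together with the monotonicity of $\Phi$, gives $\Phi(\lambda)\,\nu_{\overrightarrow{\sigma}}(R)\le C\sum_i\Phi_i\left(m_{\sigma_i}(g_i,R)\right)\sigma_i(R)$. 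Jensen's inequality (convexity of $\Phi_i$) yields $\Phi_i\left(m_{\sigma_i}(g_i,R)\right)\sigma_i(R)\le\int_R\Phi_i(g_i)\sigma_i$, so that $\Phi(\lambda)\,\nu_{\overrightarrow{\sigma}}(R)\le C\sum_i\int_R\Phi_i(g_i)\sigma_i$. Summing over the disjoint $R$ and using $\int_{\mathbb{R}^d}\Phi_i(g_i)\sigma_i\le 1$ (from $\|g_i\|_{L_{\sigma_i}^{\Phi_i}}^{lux}=1$ and $\Phi_i\in\Delta_2$) bounds the total by $Cn/\Phi(\lambda)$, which is (\ref{eq:foncijhimal}).

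For part (ii), since $\nu_{\overrightarrow{\sigma}}\left(\{\mathcal{M}_{\overrightarrow{\sigma}}^{\mathcal{D}^{\beta}}(\overrightarrow{g})>\lambda\}\right)=\sum_R\nu_{\overrightarrow{\sigma}}(R)$, it suffices to prove the per-cube estimate $\frac{1}{\Psi\circ\Phi^{-1}\left(1/\nu_{\overrightarrow{\sigma}}(R)\right)}\le C\frac{\Phi(\lambda)}{\Psi(\lambda)}\,\nu_{\overrightarrow{\sigma}}(R)$ for each maximal $R$. The computation of part (i) already supplies $\nu_{\overrightarrow{\sigma}}(R)\le Cn/\Phi(\lambda)$, hence $1/\nu_{\overrightarrow{\sigma}}(R)\gtrsim\Phi(\lambda)$. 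Because $t\mapsto\Psi\circ\Phi^{-1}(t)/t$ is nondecreasing and $\Psi\circ\Phi^{-1}\in\mathscr{U}^{nq}$ by Lemma \ref{pro:main40}, comparing this ratio at $t=1/\nu_{\overrightarrow{\sigma}}(R)$ with its value at $t=\Phi(\lambda)$ (namely $\Psi(\lambda)/\Phi(\lambda)$) and absorbing the constant via the upper-type bound yields the per-cube estimate; summation over $R$ then gives (\ref{eq:ineg56a3jsen}).

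Part (iii) combines (\ref{eq:dagger}) with the one-weight result. The elementary pointwise domination $\mathcal{M}_{\overrightarrow{\sigma}}^{\mathcal{D}^{\beta}}(\overrightarrow{f})(x)\le\prod_{i=1}^n\mathcal{M}_{\sigma_i}^{\mathcal{D}^{\beta}}(f_i)(x)$ holds since each averaged factor is at most the corresponding one-weight maximal function. Applying $\Phi$, then (\ref{eq:dagger}) with $t_i=\mathcal{M}_{\sigma_i}^{\mathcal{D}^{\beta}}(f_i)(x)$, and integrating gives
\[
\int_{\mathbb{R}^d}\Phi\left(\mathcal{M}_{\overrightarrow{\sigma}}^{\mathcal{D}^{\beta}}(\overrightarrow{f})\right)\nu_{\overrightarrow{\sigma}}\le C\sum_{i=1}^n\int_{\mathbb{R}^d}\Phi_i\left(\mathcal{M}_{\sigma_i}^{\mathcal{D}^{\beta}}(f_i)\right)\sigma_i.
\]
This is the one place the hypothesis $\Phi_i\in\nabla_2$ enters: Theorem \ref{pro:main88}, applied to each single weight $\sigma_i$, bounds the right-hand side by $C\sum_i\int_{\mathbb{R}^d}\Phi_i(|f_i|)\sigma_i$, which is (\ref{eq:foaxiqdmal}). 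Thus, once (\ref{eq:dagger}) is secured, all three parts follow using only disjointness, Jensen's inequality, and the monotonicity and upper-type facts already recorded.
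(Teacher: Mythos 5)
Your proposal is correct and takes essentially the same route as the paper's proof: your pointwise inequality $\nu_{\overrightarrow{\sigma}}(x)\,\Phi\left(\prod_{i=1}^n t_i\right)\lesssim\sum_{i=1}^n\sigma_i(x)\,\Phi_i(t_i)$ is exactly the computation the paper carries out inline in parts (i) and (iii), via the $\Delta'$-splitting of Proposition \ref{pro:main55}, the identity $\nu_{\overrightarrow{\sigma}}(x)\,\Phi\left(\prod_{i=1}^n\Phi_i^{-1}\left(\frac{1}{\sigma_i(x)}\right)\right)=1$, the generalized Young inequality, and Lemma \ref{pro:main80jdf}, with Theorem \ref{pro:main88} supplying the $\nabla_2$ step in (iii). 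Your per-cube treatment of (ii) is only a cosmetic rearrangement of the paper's argument with $\widetilde{\Omega}_3$ (Lemma \ref{pro:main40}), resting on the same monotonicity of $t\mapsto\Psi\circ\Phi^{-1}(t)/t$, the estimate from part (i), and the same upper-type absorption of constants.
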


\begin{proof}
$i)$ Let  $\lambda > 0$, and $(f_{1},...,f_{n})\in L^{\Phi_{1}}(\sigma_{1})\times...\times L^{\Phi_{n}}(\sigma_{n})$. Suppose that for any $1\leq i \leq n$,  $f_{i}\not\equiv 0$.  Put $\overrightarrow{g}:=\left(g_{1},...,g_{n}\right)$, where $g_{i}:=\frac{f_{i}}{\|f_{i}\|_{L_{\sigma_{i}}^{\Phi_{i}}}^{lux}},$ for $1\leq i \leq n$. Let  $\mathcal{D}_{\lambda}^{*}(\overrightarrow{g})$ be the set of all maximal cubes $Q$ in $\mathcal D^\beta$ with respect to the inclusion such that 
$$  \prod_{i=1}^{n}\frac{1}{\sigma_{i}(Q)}\int_{Q}|g_{i}(y)|\sigma_{i}(y) dy > \lambda.    $$
We have from (\ref{eq:fonciaqal}) that
$$
E_{\lambda}:=\left\{x\in \mathbb{R}^{d} : \mathcal{M}_{\overrightarrow{\sigma}}^{\mathcal{D}^{\beta}}(\overrightarrow{g})(x)> \lambda \right\}=\bigcup_{Q \in \mathcal{D}_{\lambda}^{*}(\overrightarrow{g})}Q.  $$
Let $Q \in \mathcal{D}_{\lambda}^{*}(\overrightarrow{g})$. Using Proposition \ref{pro:main55} and Lemma \ref{pro:main80jdf}, we obtain
\begin{align*}
\Phi(\lambda)\nu_{\overrightarrow{\sigma}}(Q)
&\lesssim   \int_{Q}\Phi\left(\prod_{i=1}^{n}\frac{\Phi_{i}^{-1}\left(\frac{1}{\sigma_{i}(x)}\right)\times \frac{1}{\sigma_{i}(Q)}\int_{Q}|g_{i}(y)|\sigma_{i}(y) dy}{\Phi_{i}^{-1}\left(\frac{1}{\sigma_{i}(x)}\right)} \right)\nu_{\overrightarrow{\sigma}}(x)dx\\
& \lesssim \int_{Q} \Phi\left(\prod_{i=1}^{n}\Phi_{i}^{-1}\left(\frac{1}{\sigma_{i}(x)}\right)\right)\Phi\left(\prod_{i=1}^{n}\frac{\frac{1}{\sigma_{i}(Q)}\int_{Q}|g_{i}(y)|\sigma_{i}(y) dy}{\Phi_{i}^{-1}\left(\frac{1}{\sigma_{i}(x)}\right)} \right)\nu_{\overrightarrow{\sigma}}(x)dx\\
&= \int_{Q} \Phi\left(\prod_{i=1}^{n}\frac{\frac{1}{\sigma_{i}(Q)}\int_{Q}|g_{i}(y)|\sigma_{i}(y) dy}{\Phi_{i}^{-1}\left(\frac{1}{\sigma_{i}(x)}\right)} \right)dx \\
&\lesssim \sum_{i=1}^{n}\int_{Q} \Phi_{i}\left(\frac{\frac{1}{\sigma_{i}(Q)}\int_{Q}|g_{i}(y)|\sigma_{i}(y) dy}{\Phi_{i}^{-1}\left(\frac{1}{\sigma_{i}(x)}\right)} \right)dx \\
& \lesssim  \sum_{i=1}^{n} \int_{Q}\frac{\Phi_{i}\left(\frac{1}{\sigma_{i}(Q)}\int_{Q}|g_{i}(y)|\sigma_{i}(y) dy \right)}{\Phi_{i}\left(\Phi_{i}^{-1}\left(\frac{1}{\sigma_{i}(x)}\right)\right)} dx \\
&= \sum_{i=1}^{n}\Phi_{i}\left(\frac{1}{\sigma_{i}(Q)}\int_{Q}|g_{i}(y)|\sigma_{i}(y) dy \right)\sigma_{i}(Q) \\ &\lesssim  \sum_{i=1}^{n}\int_{Q}\Phi_{i}(|g_{i}(y)|)\sigma_{i}(y) dy. 
\end{align*}
As the cubes in $\mathcal{D}_{\lambda}^{*}(\overrightarrow{g})$ are pairwise disjoint, we deduce that 
\begin{align*}
\nu_{\overrightarrow{\sigma}}\left(E_{\lambda}\right) &= \nu_{\overrightarrow{\sigma}}\left(\bigcup_{Q \in \mathcal{D}_{\lambda}^{*}(\overrightarrow{g})}Q\right) =\sum_{Q \in \mathcal{D}_{\lambda}^{*}(\overrightarrow{g})}\nu_{\overrightarrow{\sigma}}(Q)\\
&\lesssim \frac{1}{\Phi(\lambda)}\sum_{i=1}^{n}\sum_{Q \in \mathcal{D}_{\lambda}^{*}(\overrightarrow{g})}\int_{Q}\Phi_{i}(|g_{i}(y)|)\sigma_{i}(y) dy \\
&= \frac{1}{\Phi(\lambda)}\sum_{i=1}^{n}\int_{\cup_{Q \in \mathcal{D}_{\lambda}^{*}(\overrightarrow{g})} Q}\Phi_{i}(|g_{i}(y)|)\sigma_{i}(y) dy \\ 
&\lesssim \frac{1}{\Phi(\lambda)}\sum_{i=1}^{n}\int_{\mathbb{R}^{d}}\Phi_{i}(|g_{i}(y)|)\sigma_{i}(y) dy\\
&\lesssim \frac{1}{\Phi(\lambda)}.
\end{align*}
$ii)$ Let us set $$  \widetilde{\Omega}_{3}(t)=\frac{1}{\Psi\circ \Phi^{-1}\left(\frac{1}{t}\right)},~~\forall~t>0 ~ \hspace*{0.25cm} ~\text{and}~\hspace*{0.25cm} ~  \widetilde{\Omega}_{3}(0)=0.     $$  
Since $\Psi \in \mathscr{U}$ and  $t\mapsto \frac{\Psi(t)}{\Phi(t)}$ is increasing on $\mathbb{R}_{+}^{*}$, it follows from Lemma \ref{pro:main40} that $\widetilde{\Omega}_{3} \in \mathscr{U}$.
We then obtain
\begin{align*}
\sum_{R \in \mathcal{D}_{\lambda}^{*}(\overrightarrow{g})}\widetilde{\Omega}_{3}\left(\nu_{\overrightarrow{\sigma}}(R)\right) &\lesssim \widetilde{\Omega}_{3}\left(\sum_{R \in \mathcal{D}_{\lambda}^{*}(\overrightarrow{g})}\nu_{\overrightarrow{\sigma}}(R)\right)\\ 
&= \widetilde{\Omega}_{3}\left(\nu_{\overrightarrow{\sigma}}\left(\bigcup_{R \in \mathcal{D}_{\lambda}^{*}(\overrightarrow{g})} R\right)\right) =\widetilde{\Omega}_{3}\left(\nu_{\overrightarrow{\sigma}}\left( E_{\lambda}\right)\right)
= \frac{\widetilde{\Omega}_{3}\left(\nu_{\overrightarrow{\sigma}}\left( E_{\lambda}\right)\right)}{\nu_{\overrightarrow{\sigma}}\left( E_{\lambda}\right)} \times \nu_{\overrightarrow{\sigma}}\left( E_{\lambda}\right)\\
&\lesssim \frac{\widetilde{\Omega}_{3}\left(\frac{1}{\Phi(\lambda)}\right)}{\frac{1}{\Phi(\lambda)}}\times \nu_{\overrightarrow{\sigma}}\left( E_{\lambda}\right)\\ 
&=\Phi(\lambda)\frac{1}{\Psi \circ \Phi^{-1}\left(\Phi(\lambda)\right)}\times \nu_{\overrightarrow{\sigma}}\left( E_{\lambda}\right)
=\frac{\Phi(\lambda)}{\Psi(\lambda)}\times \nu_{\overrightarrow{\sigma}}\left( E_{\lambda}\right). 
\end{align*}
$iii)$ Using Theorem \ref{pro:main88}, we obtain 
\begin{align*}
\int_{\mathbb{R}^{d}}\Phi\left(\mathcal{M}_{\overrightarrow{\sigma}}^{\mathcal{D}^{\beta}}(\overrightarrow{f})(x)\right)\nu_{\overrightarrow{\sigma}}(x) dx &\lesssim \int_{\mathbb{R}^{d}}\Phi\left(\prod_{i=1}^{n}\mathcal{M}_{\sigma_{i}}^{\mathcal{D}^{\beta}}(f_{i})(x)\right)\nu_{\overrightarrow{\sigma}}(x) dx\\
&=\int_{\mathbb{R}^{d}}\Phi\left(\prod_{i=1}^{n} \frac{ \Phi_{i}^{-1}\left(\frac{1}{\sigma_{i}(x)}\right)\times\mathcal{M}_{\sigma_{i}}^{\mathcal{D}^{\beta}}(f_{i})(x)}{\Phi_{i}^{-1}\left(\frac{1}{\sigma_{i}(x)}\right)}  \right)\nu_{\overrightarrow{\sigma}}(x) dx \\
&\lesssim \int_{\mathbb{R}^{d}}\Phi\left(\prod_{i=1}^{n}\Phi_{i}^{-1}\left(\frac{1}{\sigma_{i}(x)}\right)  \right)\Phi\left(\prod_{i=1}^{n}\frac{\mathcal{M}_{\sigma_{i}}^{\mathcal{D}^{\beta}}(f_{i})(x)}{\Phi_{i}^{-1}\left(\frac{1}{\sigma_{i}(x)}\right)}\right)\nu_{\overrightarrow{\sigma}}(x) dx \\
&=\int_{\mathbb{R}^{d}} \Phi\left(\prod_{i=1}^{n}\frac{\mathcal{M}_{\sigma_{i}}^{\mathcal{D}^{\beta}}(f_{i})(x)}{\Phi_{i}^{-1}\left(\frac{1}{\sigma_{i}(x)}\right)}\right)dx \\
&\lesssim \sum_{i=1}^{n}\int_{\mathbb{R}^{d}} \Phi_{i}\left(\frac{\mathcal{M}_{\sigma_{i}}^{\mathcal{D}^{\beta}}(f_{i})(x)}{\Phi_{i}^{-1}\left(\frac{1}{\sigma_{i}(x)}\right)}\right)dx \\
&\lesssim \sum_{i=1}^{n}\int_{\mathbb{R}^{d}} \frac{\Phi_{i}\left(\mathcal{M}_{\sigma_{i}}^{\mathcal{D}^{\beta}}(f_{i})(x)\right)}{\Phi_{i}\left(\Phi_{i}^{-1}\left(\frac{1}{\sigma_{i}(x)}\right)\right)}dx \\
&=\sum_{i=1}^{n}\int_{\mathbb{R}^{d}}\Phi_{i}\left(\mathcal{M}_{\sigma_{i}}^{\mathcal{D}^{\beta}}(f_{i})(x)\right)\sigma_{i}(x)dx\\
&\lesssim \sum_{i=1}^{n} \int_{\mathbb{R}^{d}}\Phi_{i}(|f_{i}(x)|)\sigma_{i}(x)dx.
\end{align*}
The proof is complete.
\end{proof}

Let $\Psi,\Phi_{1},...,\Phi_{n}\in \mathscr{U}$  and let  $\Phi$ be a one-to-one correspondence from $\mathbb{R}_{+}$ to itself such that $\Phi^{-1}=\Phi_{1}^{-1}\times...\times \Phi_{n}^{-1}$.\\ 
Note that if for any $1\leq i \leq n$, $\sigma_{i}\equiv \sigma$  then  the $\nu_{\overrightarrow{\sigma}}$ reduces to $\sigma$. In this case, the proof of the above proposition can be handled as follows.

\begin{itemize}
\item[\textbullet] The convexity of each $\Phi_{i}$ and the generalized Young's inequality are enough to obtain assertion $(i)$ and the constant $C_{1}$ in the inequality  (\ref{eq:foncijhimal}) is $n$.
\item[\textbullet] Let  $q\geq 1$ be such that $\Psi \in  \mathscr{U}^{q}$. The fact that the function $t\mapsto \frac{\Psi(t)}{\Phi(t)}$ defined $\mathbb{R}_{+}^{*}$ is increasing is sufficient to prove assertion $(ii)$ and the constant $C_{2}$ in (\ref{eq:ineg56a3jsen}) is $C_{q}n^{nq}$.
\item[\textbullet] Assertion $(iii)$ follows from the fact that each $\Phi_i$ satisfies the $\nabla_{2}-$condition, and the generalized Young's inequality.
\end{itemize}
In this last case, one doesn't need the $\Phi_i$s to belong to the class $\widetilde{\mathscr{U}}$. 
In the above case, Proposition \ref{pro:main0aqk5} takes the following form.

\begin{corollary}\label{pro:main0aaqqk5}
Let $\sigma$ be a weight on $\mathbb{R}^{d}$. Let $\Psi,\Phi_{1},...,\Phi_{n}\in \mathscr{U}$, and  let $\Phi$ be a one-to-one correspondence from  $\mathbb{R}_{+}$ to itself such that $\Phi^{-1}=\Phi_{1}^{-1}\times...\times \Phi_{n}^{-1}$. Then the following are satisfied.  
\begin{itemize}
\item[(i)] For any $\lambda > 0$ and any $1\leq i \leq n$,   $0\not\equiv f_{i}  \in L^{\Phi_{i}}(\sigma)$, 
\begin{equation}\label{eq:fjhimal}
\sigma\left(\left\{x\in \mathbb{R}^{d} : \mathcal{M}_{\overrightarrow{\sigma}}^{\mathcal{D}^{\beta}}\left(\frac{f_{1}}{\|f_{1}\|_{L_{\sigma}^{\Phi_{1}}}^{lux}},...,\frac{f_{n}}{\|f_{n}\|_{L_{\sigma}^{\Phi_{n}}}^{lux}}\right)(x)> \lambda \right\}\right) \leq  \dfrac{n}{\Phi(\lambda)}.\end{equation}
\item[(ii)] Let $q\geq 1$ be such that $\Psi \in  \mathscr{U}^{q}$. If  the function $t\mapsto \frac{\Psi(t)}{\Phi(t)}$ is increasing on $\mathbb{R}_{+}^{*}$, then for any $\lambda > 0$ and any $(f_{1},...,f_{n}) \in L^{\Phi_{1}}(\sigma)\times...\times
 L^{\Phi_{n}}(\sigma)$ with  $f_{i} \not\equiv 0$, for $i=1,...,n$,
\begin{equation}\label{eq:inaq6a3jsen}
\sum_{R \in \mathcal{D}_{\lambda}^{*}(\overrightarrow{g})} \frac{1}{\Psi\circ \Phi^{-1}\left(\frac{1}{\sigma(R)}\right)} \leq C_{q}n^{nq} \frac{\Phi(\lambda)}{\Psi(\lambda)} \sigma\left( \left\{ x\in \mathbb{R}^{d} : \mathcal{M}^{\mathcal{D}^{\beta}}_{\overrightarrow{\sigma}}(\overrightarrow{g})(x) >  \lambda \right\} \right),
\end{equation} 
where  $\overrightarrow{g}:=(g_{1},...,g_{n})$, with $g_{i}=\frac{f_{i}}{\|f_{i}\|_{L_{\sigma}^{\Phi_{i}}}^{lux}}.$
\item[(iii)] If for any $1\leq i \leq n$,  $\Phi_{i} \in  \nabla_{2}$, then there exists a  constant $C:= C_{ \Phi_{1},...,\Phi_{n}} > 0$ such that for any $(f_{1},...,f_{n}) \in L^{\Phi_{1}}(\sigma)\times...\times
 L^{\Phi_{n}}(\sigma)$,  
\begin{equation}\label{eq:fodmal}
\int_{\mathbb{R}^{d}}\Phi\left(\mathcal{M}_{\overrightarrow{\sigma}}^{\mathcal{D}^{\beta}}(f_{1},...,f_{n})(x)\right)\sigma(x) dx \leq C \sum_{i=1}^{n} \int_{\mathbb{R}^{d}}\Phi_{i}(|f_{i}(x)|)\sigma(x)dx.
\end{equation}
\end{itemize}
\end{corollary}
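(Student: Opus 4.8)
The plan is to specialize the argument of Proposition \ref{pro:main0aqk5} to the case $\sigma_1=\cdots=\sigma_n=\sigma$, exploiting the fact that here $\nu_{\overrightarrow{\sigma}}$ collapses: since $\Phi^{-1}=\Phi_1^{-1}\times\cdots\times\Phi_n^{-1}$, one has $\prod_{i=1}^n\Phi_i^{-1}(1/\sigma)=\Phi^{-1}(1/\sigma)$, so $\nu_{\overrightarrow{\sigma}}=1/\Phi(\Phi^{-1}(1/\sigma))=\sigma$. Because the $\sigma_i$ now coincide, the pointwise ratios $\Phi_i^{-1}(1/\sigma(x))$ that were separated in the proof of Proposition \ref{pro:main0aqk5} via the $\Delta'$-splitting of Proposition \ref{pro:main55} and via Lemma \ref{pro:main80jdf} no longer appear; they are replaced by the elementary generalized Young inequality
$$\Phi\Bigl(\prod_{i=1}^n a_i\Bigr)\le \sum_{i=1}^n\Phi_i(a_i),\qquad a_i\ge 0,$$
which I would prove directly from the structure of $\mathscr{U}$: writing $s_i=\Phi_i(a_i)$ and $S=\sum_i s_i$, monotonicity of each $\Phi_i^{-1}$ gives $\prod_i a_i=\prod_i\Phi_i^{-1}(s_i)\le\prod_i\Phi_i^{-1}(S)=\Phi^{-1}(S)$, and applying $\Phi$ yields the claim. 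This needs only that the $\Phi_i$ are increasing bijections, so no membership in $\widetilde{\mathscr{U}}$ is required.

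For assertion (i), set $g_i=f_i/\|f_i\|_{L_\sigma^{\Phi_i}}^{lux}$, so that $\int_{\mathbb{R}^d}\Phi_i(|g_i|)\sigma\,dx\le 1$ by the normalization of the Luxemburg norm. On each maximal cube $Q\in\mathcal{D}_\lambda^*(\overrightarrow{g})$ the defining average exceeds $\lambda$, so $\Phi(\lambda)\sigma(Q)\le\Phi\bigl(\prod_i\tfrac{1}{\sigma(Q)}\int_Q|g_i|\sigma\,dy\bigr)\sigma(Q)$; since the averages are constant on $Q$ there is no pointwise product to split. Applying generalized Young and then Jensen's inequality (here the convexity of each $\Phi_i$ enters) bounds the right-hand side by $\sum_i\int_Q\Phi_i(|g_i|)\sigma$. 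Summing over the pairwise disjoint cubes and using $\sigma(E_\lambda)=\sum_Q\sigma(Q)$ together with $\int\Phi_i(|g_i|)\sigma\le1$ gives $\Phi(\lambda)\sigma(E_\lambda)\le n$, i.e. the constant $C_1=n$.

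For assertion (ii), I would introduce $\widetilde{\Omega}_3(t)=1/\Psi\circ\Phi^{-1}(1/t)$ and invoke Lemma \ref{pro:main40}, which (using $\Psi\in\mathscr{U}^q$ and the monotonicity of $\Psi/\Phi$) places $\widetilde{\Omega}_3$ in $\mathscr{U}^{nq}$. In particular $\widetilde{\Omega}_3(t)/t$ is nondecreasing, which makes $\widetilde{\Omega}_3$ superadditive over the disjoint family, so $\sum_R\widetilde{\Omega}_3(\sigma(R))\le\widetilde{\Omega}_3(\sigma(E_\lambda))$. Factoring out $\widetilde{\Omega}_3(\sigma(E_\lambda))/\sigma(E_\lambda)$, then using the bound $\sigma(E_\lambda)\le n/\Phi(\lambda)$ from (i) together with the monotonicity of $\widetilde{\Omega}_3(t)/t$ and the upper type $nq$ (to absorb the factor $n$ into $C_qn^{nq}$), and finally $\widetilde{\Omega}_3(1/\Phi(\lambda))=1/\Psi(\lambda)$, yields the claimed estimate.

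Assertion (iii) is the most transparent: from the pointwise bound $\mathcal{M}_{\overrightarrow{\sigma}}^{\mathcal{D}^\beta}(\overrightarrow{f})\le\prod_i\mathcal{M}_\sigma^{\mathcal{D}^\beta}(f_i)$, generalized Young gives $\Phi(\mathcal{M}_{\overrightarrow{\sigma}}^{\mathcal{D}^\beta}(\overrightarrow{f}))\le\sum_i\Phi_i(\mathcal{M}_\sigma^{\mathcal{D}^\beta}(f_i))$, and integrating against $\sigma$ and applying Theorem \ref{pro:main88} (which needs exactly $\Phi_i\in\nabla_2$) reduces the sum to $\sum_i\int\Phi_i(|f_i|)\sigma$. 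The main point throughout — and essentially the only place care is needed — is the observation that $\nu_{\overrightarrow{\sigma}}=\sigma$ collapses the integrand and lets the generalized Young inequality replace the $\Delta'$/$\widetilde{\mathscr{U}}$ machinery; once this is noticed, every step is routine and, as claimed, the hypothesis $\Phi_i\in\widetilde{\mathscr{U}}$ may be dropped in favour of mere convexity and $\nabla_2$.
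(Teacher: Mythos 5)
Your proposal is correct and takes essentially the same route as the paper: the paper's own justification of this corollary is exactly the remark that when $\sigma_{1}=\cdots=\sigma_{n}=\sigma$ the weight $\nu_{\overrightarrow{\sigma}}$ collapses to $\sigma$, so that convexity of the $\Phi_{i}$ plus the generalized Young inequality give (i) with constant $n$, the monotonicity of $t\mapsto\Psi(t)/\Phi(t)$ (via Lemma \ref{pro:main40} and the function $\widetilde{\Omega}_{3}$) gives (ii) with constant $C_{q}n^{nq}$, and the $\nabla_{2}$-condition together with Theorem \ref{pro:main88} and generalized Young give (iii). Your write-up merely fills in the details the paper leaves implicit (including the short proof of the Young-type inequality $\Phi\left(\prod_{i}a_{i}\right)\leq\sum_{i}\Phi_{i}(a_{i})$), arriving at the same constants.
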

We finish this section with the following useful result.
\begin{proposition}\label{pro:main17jo}
Let $\sigma_{1},...,\sigma_{n},\omega$ be weights on $\mathbb{R}^{d}$,   $\Psi,\Phi_{1},...,\Phi_{n} \in \mathscr{U}$, and  $T : L^{\Phi_{1}}(\sigma_{1})\times...\times L^{\Phi_{n}}(\sigma_{n})\longrightarrow L^{\Psi}(\omega)$ a nonnegative operator. Suppose that there are constants $C_{1},C_{2}>0$ such that for  $1 \leq i \leq n$, $ 0\not\equiv f_{i}\in L^{\Phi_{i}}(\sigma_{i})$,  
\begin{equation}\label{eq:cdinis}
\int_{\mathbb{R}^{d}}\Psi\left(\frac{T(f_{1},...,f_{n})(x)}{\prod_{i=1}^{n}\|f_{i}\|_{L_{\sigma_{i}}^{\Phi_{i}}}^{lux}}\right)\omega(x)dx \leq C_{1}C_{2}.
\end{equation}
If $\Psi \in   \widetilde{\mathscr{U}}$, then there exists a constant  $C_{3}>0$ such that for any  $(f_{1},...,f_{n}) \in L^{\Phi_{1}}(\sigma_{1})\times...\times L^{\Phi_{n}}(\sigma_{n})$, 
\begin{equation}\label{eq:cs}
 \|T(f_{1},...,f_{n})\|_{L_{\omega}^{\Psi}}^{lux} \leq C_{3}\Psi^{-1}\left(C_{2}\right)\prod_{i=1}^{n}\|f_{i}\|_{L_{\sigma_{i}}^{\Phi_{i}}}^{lux}.
\end{equation}
\end{proposition}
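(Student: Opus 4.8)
The plan is to derive the Luxemburg norm bound (\ref{eq:cs}) from the modular bound (\ref{eq:cdinis}) by a single renormalization step. Throughout I fix $(f_{1},\ldots,f_{n})$ with each $f_{i}\not\equiv 0$ (if some $f_{i}\equiv 0$ the factor $\prod_i\|f_i\|_{L_{\sigma_i}^{\Phi_i}}^{lux}$ vanishes and the inequality is degenerate, so nothing is lost by restricting to the generic case, which is the one covered by the hypothesis anyway), and I abbreviate $g:=T(f_{1},\ldots,f_{n})$ and $P:=\prod_{i=1}^{n}\|f_{i}\|_{L_{\sigma_{i}}^{\Phi_{i}}}^{lux}$. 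Since $T$ is nonnegative we have $g\ge 0$, so $\Psi(g/\lambda)$ is well defined for every $\lambda>0$. By the very definition (\ref{eq:in4aq5ale1}) of the Luxemburg norm it suffices to produce a single value $\lambda>0$ for which $\int_{\mathbb{R}^{d}}\Psi(g/\lambda)\,\omega\,dx\le 1$, because this forces $\|g\|_{L_{\omega}^{\Psi}}^{lux}\le\lambda$. I will take $\lambda:=C_{3}\,\Psi^{-1}(C_{2})\,P$ and determine $C_{3}$ at the end.

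First I would write $\dfrac{g}{C_{3}\Psi^{-1}(C_{2})P}=\dfrac{g/P}{C_{3}\Psi^{-1}(C_{2})}$ and apply Lemma \ref{pro:main80jdf}, which is available precisely because $\Psi\in\widetilde{\mathscr{U}}$, in the form $\Psi(s/t)\le C_{\Psi}\,\Psi(s)/\Psi(t)$ pointwise with $s=g(x)/P$ and $t=C_{3}\Psi^{-1}(C_{2})$. Integrating against $\omega$ and then inserting the hypothesis (\ref{eq:cdinis}) yields
\[
\int_{\mathbb{R}^{d}}\Psi\!\left(\frac{g}{C_{3}\Psi^{-1}(C_{2})P}\right)\omega\,dx\le\frac{C_{\Psi}}{\Psi\!\left(C_{3}\Psi^{-1}(C_{2})\right)}\int_{\mathbb{R}^{d}}\Psi(g/P)\,\omega\,dx\le\frac{C_{\Psi}\,C_{1}C_{2}}{\Psi\!\left(C_{3}\Psi^{-1}(C_{2})\right)}.
\]

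The one remaining point, which I regard as the crux of the argument, is to bound $\Psi(C_{3}\Psi^{-1}(C_{2}))$ from below; extracting the homogeneity factor $C_{3}\Psi^{-1}(C_{2})$ from inside $\Psi$ is exactly where the structure of $\widetilde{\mathscr{U}}$ is used, both via Lemma \ref{pro:main80jdf} above and via the monotonicity now. Since $\Psi\in\mathscr{U}^{q}$, the map $t\mapsto\Psi(t)/t$ is nondecreasing on $\mathbb{R}_{+}^{*}$; hence, assuming $C_{3}\ge 1$ and writing $a:=\Psi^{-1}(C_{2})$, we get $\Psi(C_{3}a)/(C_{3}a)\ge\Psi(a)/a$, that is $\Psi(C_{3}\Psi^{-1}(C_{2}))\ge C_{3}\Psi(a)=C_{3}C_{2}$. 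Substituting this into the previous display bounds the integral by $C_{\Psi}C_{1}/C_{3}$, so the choice $C_{3}:=\max\{1,\,C_{\Psi}C_{1}\}$ makes it $\le 1$. By the definition of the Luxemburg norm this gives $\|g\|_{L_{\omega}^{\Psi}}^{lux}\le C_{3}\Psi^{-1}(C_{2})P$, which is precisely (\ref{eq:cs}), with a constant $C_{3}$ depending only on $C_{1}$ and on $\Psi$ through $C_{\Psi}$. Note that no structural property of $T$ beyond nonnegativity and the single modular estimate (\ref{eq:cdinis}) enters the argument.
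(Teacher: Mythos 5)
Your proof is correct and takes essentially the same approach as the paper: fix the candidate value $\lambda=C_{3}\Psi^{-1}(C_{2})\prod_{i=1}^{n}\|f_{i}\|_{L_{\sigma_{i}}^{\Phi_{i}}}^{lux}$, show that the modular at $\lambda$ is at most $1$, and let Lemma \ref{pro:main80jdf} (i.e.\ the hypothesis $\Psi\in\widetilde{\mathscr{U}}$) carry the main estimate. The only difference is bookkeeping: you apply the lemma once to the whole quotient and then bound $\Psi\left(C_{3}\Psi^{-1}(C_{2})\right)\ge C_{3}C_{2}$ via the monotonicity of $t\mapsto\Psi(t)/t$, whereas the paper pulls out $1/C_{3}$ by convexity, splits the product using the $\Delta'$ condition, and applies the lemma to $\Psi\left(1/\Psi^{-1}(C_{2})\right)$ --- the same ingredients arranged slightly differently.
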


\begin{proof}
As  $\Psi \in   \widetilde{\mathscr{U}}$, it follows from Lemma \ref{pro:main80jdf}, that there exists a constant  $C$ such that 
$$\Psi\left(\frac{s}{t}\right)\leq C\frac{\Psi(s)}{\Psi(t)}, ~~ \forall~s,t >0.$$
As $\Psi \in \Delta'$, there exists a constant $C'>0$ such that $$ \Psi\left(st\right)\leq C'\Psi(s)\Psi(t), ~~ \forall~s,t >0.     $$
We set $$C_{3}:=\max\{1; CC'C_{1}\Psi(1)\}.$$
Let $(f_{1},...,f_{n}) \in L^{\Phi_{1}}(\sigma_{1})\times...\times L^{\Phi_{n}}(\sigma_{n})$. Let us suppose that for $1\leq i \leq n$, $f_{i}\not\equiv 0$. As  (\ref{eq:cdinis}) holds, we have    
\begin{align*}
\int_{\mathbb{R}^{d}}\Psi\left( \frac{T(f_{1},...,f_{n})(x)}{C_{3}\Psi^{-1}\left(C_{2}\right)\prod_{i=1}^{n}\|f_{i}\|_{L_{\sigma_{i}}^{\Phi_{i}}}^{lux}}\right)\omega(x)dx 
 &\leq \frac{1}{C_{3}}\int_{\mathbb{R}^{d}}\Psi\left( \frac{T(f_{1},...,f_{n})(x)}{\Psi^{-1}\left(C_{2}\right)\prod_{i=1}^{n}\|f_{i}\|_{L_{\sigma_{i}}^{\Phi_{i}}}^{lux}}\right)\omega(x)dx
 \\
&\leq \frac{1}{CC'C_{1}\Psi(1)}\int_{\mathbb{R}^{d}}\Psi\left( \frac{T(f_{1},...,f_{n})(x)}{\Psi^{-1}\left(C_{2}\right)\prod_{i=1}^{n}\|f_{i}\|_{L_{\sigma_{i}}^{\Phi_{i}}}^{lux}}\right)\omega(x)dx
  \\
&\leq \frac{1}{CC'C_{1}\Psi(1)}\int_{\mathbb{R}^{d}}C'\Psi\left( \frac{1}{\Psi^{-1}\left(C_{2}\right)}\right)\Psi\left( \frac{T(f_{1},...,f_{n})(x)}{\prod_{i=1}^{n}\|f_{i}\|_{L_{\sigma_{i}}^{\Phi_{i}}}^{lux}}\right)\omega(x)dx\\
&\leq \frac{1}{CC'C_{1}\Psi(1)}\int_{\mathbb{R}^{d}} \frac{C'C\Psi(1)}{\Psi\left(\Psi^{-1}\left(C_{2}\right)\right)}\Psi\left( \frac{T(f_{1},...,f_{n})(x)}{\prod_{i=1}^{n}\|f_{i}\|_{L_{\sigma_{i}}^{\Phi_{i}}}^{lux}}\right)\omega(x)dx\\
&= \frac{1}{C_{1}C_{2}}\int_{\mathbb{R}^{d}} \Psi\left( \frac{T(f_{1},...,f_{n})(x)}{\prod_{i=1}^{n}\|f_{i}\|_{L_{\sigma_{i}}^{\Phi_{i}}}^{lux}}\right)\omega(x)dx\\ 
&\leq \frac{1}{C_{1}C_{2}} \times C_{1}C_{2}=1.
\end{align*}
\end{proof}
%%%%%%%%%%%%%%%%%%%%%%%%%%%%%%%%%%%%%%%%%%%%%%%%%%%%%%%%%%%%
\subsection{Example of weights for (\ref{eq:surmaron})}
In the following, we prove that the reverse of (\ref{eq:surmaron}) always holds. We also prove that if each of the involved weights is in the Muckenhoupt class $A_1$, then (\ref{eq:surmaron}) holds.
\begin{proposition}
Let $\sigma_{1},...,\sigma_{n}$ be weights on $\mathbb{R}^{d}$,  and let $\Phi_{1},...,\Phi_{n}\in \widetilde{\mathscr{U}}$. Assume that $\Phi$ is a one-to-one correspondence from $\mathbb{R}_{+}$ to itself such that
 $\Phi^{-1}=\Phi_{1}^{-1}\times...\times \Phi_{n}^{-1}$. Define $\nu_{\overrightarrow{\sigma}}:=  \frac{1}{\Phi\left(\prod_{i=1}^{n}\Phi_{i}^{-1}\left(\frac{1}{\sigma_{i}}\right)  \right)}$. Then the following assertions hold.
\begin{itemize}
\item[(i)] There exists a constant $C_{1}:=C_{n,\Phi_{1},...,\Phi_{n}}>0$ such that 
\begin{equation}\label{eq:dr55im}
\Phi\left(\prod_{i=1}^{n}\Phi_{i}^{-1}\left(\frac{1}{\sigma_{i}(Q)}\right)  \right)\nu_{\overrightarrow{\sigma}}(Q) \leq C_{1},~~ \forall~Q \in \mathcal{Q}.
\end{equation} 
\item[(ii)] If for each $1\leq i \leq n$, $\sigma_{i} \in A_{1}$, then there exists a constant $C_{2}:=C_{n, \Phi_{1},...,\Phi_{n}}>0$ such that
\begin{equation}\label{eq:drim}
\frac{C_{2}}{\sum_{i=1}^{n}[\sigma_{i}]_{A_{1}}}\leq\Phi\left(\prod_{i=1}^{n}\Phi_{i}^{-1}\left(\frac{1}{\sigma_{i}(Q)}\right)  \right)\nu_{\overrightarrow{\sigma}}(Q),~~ \forall~Q \in \mathcal{Q}.
\end{equation} 
\end{itemize} 
 \end{proposition}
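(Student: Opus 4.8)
The plan is to reduce both inequalities to a single elementary observation about the factorization $\Phi^{-1}=\Phi_1^{-1}\times\cdots\times\Phi_n^{-1}$, namely that for any positive reals $u_1,\dots,u_n$ one has
\[
\Phi\Big(\prod_{i=1}^n u_i\Big)\le \sum_{i=1}^n\Phi_i(u_i).
\]
To see this I would set $w=\Phi(\prod_i u_i)$, so that $\prod_i u_i=\Phi^{-1}(w)=\prod_i\Phi_i^{-1}(w)$; since $\prod_i\big(u_i/\Phi_i^{-1}(w)\big)=1$, at least one factor is $\ge 1$, say $u_{i_0}\ge\Phi_{i_0}^{-1}(w)$, whence $w\le\Phi_{i_0}(u_{i_0})\le\sum_i\Phi_i(u_i)$. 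I would also record that $\Phi\in\Delta'$ by Proposition \ref{pro:main55} (each $\Phi_i\in\widetilde{\mathscr U}\subset\Delta'$), so that $\Phi(st)\le C\Phi(s)\Phi(t)$; together with Lemma \ref{pro:main80jdf} applied to each $\Phi_i$, these are the only tools needed. Throughout I abbreviate $P_Q=\prod_i\Phi_i^{-1}(1/\sigma_i(Q))$ and $p(x)=\prod_i\Phi_i^{-1}(1/\sigma_i(x))$, so that $\nu_{\overrightarrow\sigma}(x)=1/\Phi(p(x))$ and the quantity to be controlled is $\Phi(P_Q)\,\nu_{\overrightarrow\sigma}(Q)=\int_Q \Phi(P_Q)/\Phi(p(x))\,dx$.

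For part (i) I would bound the integrand from above. Writing $\Phi(P_Q)=\Phi\big((P_Q/p(x))\,p(x)\big)$ and using $\Delta'$ gives $\Phi(P_Q)/\Phi(p(x))\le C\,\Phi(P_Q/p(x))$. Since $P_Q/p(x)=\prod_i \Phi_i^{-1}(1/\sigma_i(Q))/\Phi_i^{-1}(1/\sigma_i(x))$, the product inequality yields $\Phi(P_Q/p(x))\le\sum_i\Phi_i\big(\Phi_i^{-1}(1/\sigma_i(Q))/\Phi_i^{-1}(1/\sigma_i(x))\big)$, and Lemma \ref{pro:main80jdf} bounds the $i$-th term by $C\,\Phi_i(\Phi_i^{-1}(1/\sigma_i(Q)))/\Phi_i(\Phi_i^{-1}(1/\sigma_i(x)))=C\,\sigma_i(x)/\sigma_i(Q)$. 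Integrating over $Q$ and using $\int_Q\sigma_i(x)\,dx=\sigma_i(Q)$ collapses each term to $1$, giving the bound $C_1=C_{n,\Phi_1,\dots,\Phi_n}$, which settles \eqref{eq:dr55im}.

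For part (ii) I would run the same chain in the opposite direction to bound the integrand from below. Using $\Delta'$ on $\Phi(p(x))=\Phi\big((p(x)/P_Q)\,P_Q\big)$ gives $\Phi(P_Q)/\Phi(p(x))\ge \big(C\,\Phi(p(x)/P_Q)\big)^{-1}$; the product inequality and Lemma \ref{pro:main80jdf} then give $\Phi(p(x)/P_Q)\le C\sum_i \sigma_i(Q)/\sigma_i(x)$. Here the Muckenhoupt hypothesis enters: $\sigma_i\in A_1$ means $\sigma_i(Q)/|Q|\le[\sigma_i]_{A_1}\sigma_i(x)$ for a.e. $x\in Q$, i.e. $\sigma_i(Q)/\sigma_i(x)\le[\sigma_i]_{A_1}|Q|$, so that $\Phi(p(x)/P_Q)\le C\,|Q|\sum_i[\sigma_i]_{A_1}$ and hence $\Phi(P_Q)/\Phi(p(x))\ge (C^2|Q|\sum_i[\sigma_i]_{A_1})^{-1}$ for a.e. $x\in Q$. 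Integrating over $Q$ the factor $|Q|$ cancels and I obtain $\Phi(P_Q)\,\nu_{\overrightarrow\sigma}(Q)\ge C_2/\sum_i[\sigma_i]_{A_1}$ with $C_2=C^{-2}$, which is exactly \eqref{eq:drim}.

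The only genuinely delicate point is the product inequality above: it is the place where the factorization $\Phi^{-1}=\prod_i\Phi_i^{-1}$ is used in an essential way, and it is what lets one pass from the single function $\Phi$ evaluated on a product to a sum of the individual $\Phi_i$'s evaluated on the separate factors. Once it is in hand, both estimates are symmetric applications of $\Delta'$ and Lemma \ref{pro:main80jdf}, the upper bound requiring nothing on the weights and the lower bound requiring only the pointwise $A_1$ estimate $\sigma_i(x)\gtrsim \sigma_i(Q)/|Q|$; I expect no further obstruction.
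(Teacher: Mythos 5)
Your proof is correct and takes essentially the same route as the paper's: both express $\Phi\left(\prod_{i}\Phi_{i}^{-1}\left(1/\sigma_{i}(Q)\right)\right)\nu_{\overrightarrow{\sigma}}(Q)$ as an integral over $Q$, split off the factor $\Phi\left(\prod_{i}\Phi_{i}^{-1}\left(1/\sigma_{i}(x)\right)\right)$ using the $\Delta'$ property of $\Phi$ (Proposition \ref{pro:main55}), reduce via Lemma \ref{pro:main80jdf} and the Young-type product inequality $\Phi\left(\prod_{i}u_{i}\right)\leq\sum_{i}\Phi_{i}(u_{i})$ to the sums $\sum_{i}\sigma_{i}(x)/\sigma_{i}(Q)$ (resp. $\sum_{i}\sigma_{i}(Q)/\sigma_{i}(x)$), and finish part (ii) with the pointwise $A_{1}$ bound $\sigma_{i}(Q)/|Q|\leq[\sigma_{i}]_{A_{1}}\sigma_{i}(x)$. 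The only cosmetic differences are that you apply the product inequality before Lemma \ref{pro:main80jdf} (and helpfully prove it explicitly), whereas the paper first passes to the equivalent inverse-function form $\Phi_{i}^{-1}(s)/\Phi_{i}^{-1}(t)\leq\Phi_{i}^{-1}\left(C_{i}s/t\right)$ and uses the product inequality last.
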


\begin{proof}
$i)$  Let $1\leq i \leq n$. Since $\Phi_{i} \in \widetilde{\mathscr{U}}$, we deduce that $$ \Phi_{i}\left(\frac{s}{t}\right)\leq C_{i}\frac{\Phi_{i}(s)}{\Phi_{i}(t)}, ~~ \forall~s,t >0. 
      $$
It follows that $$  \dfrac{\Phi_{i}^{-1}(s)}{\Phi_{i}^{-1}(t)}\leq \Phi_{i}^{-1}\left(C_{i}\dfrac{s}{t}\right), ~~ \forall~s,t >0. 
       $$
As  $\Phi_{i}\in \Delta'$ for any $1\le 1\le n$, we have that $\Phi\in \Delta'$. 
Let  $Q\in \mathcal{Q}$. We obtain   
\begin{align*}
\Phi\left(\prod_{i=1}^{n}\Phi_{i}^{-1}\left(\frac{1}{\sigma_{i}(Q)}\right)  \right)\nu_{\overrightarrow{\sigma}}(Q)
&= \int_{Q}\Phi\left(\prod_{i=1}^{n}\Phi_{i}^{-1}\left(\frac{1}{\sigma_{i}(Q)}\right)  \right)\nu_{\overrightarrow{\sigma}}(x)dx\\
&= \int_{Q}\Phi\left(\prod_{i=1}^{n}\frac{\Phi_{i}^{-1}\left(\frac{1}{\sigma_{i}(x)}\right)  \times \Phi_{i}^{-1}\left(\frac{1}{\sigma_{i}(Q)}\right)  }{\Phi_{i}^{-1}\left(\frac{1}{\sigma_{i}(x)}\right)}  \right)\nu_{\overrightarrow{\sigma}}(x)dx   \\ 
&\lesssim\int_{Q} \Phi\left(\prod_{i=1}^{n}\Phi_{i}^{-1}\left(\frac{1}{\sigma_{i}(x)}\right)   \right)\Phi\left(\prod_{i=1}^{n}\frac{ \Phi_{i}^{-1}\left(\frac{1}{\sigma_{i}(Q)}\right)  }{\Phi_{i}^{-1}\left(\frac{1}{\sigma_{i}(x)}\right)}  \right)\nu_{\overrightarrow{\sigma}}(x)dx\\
&= \int_{Q} \Phi\left(\prod_{i=1}^{n}\frac{ \Phi_{i}^{-1}\left(\frac{1}{\sigma_{i}(Q)}\right)  }{\Phi_{i}^{-1}\left(\frac{1}{\sigma_{i}(x)}\right)}  \right)dx 
\lesssim \int_{Q} \Phi\left(\prod_{i=1}^{n}\Phi_{i}^{-1}\left(\frac{\frac{1}{\sigma_{i}(Q)}}{\frac{1}{\sigma_{i}(x)}}\right)\right)dx \\ &=\int_{Q}\Phi\left(\prod_{i=1}^{n}\Phi_{i}^{-1}\left(\frac{\sigma_{i}(x)}{\sigma_{i}(Q)}\right)  \right)dx \\
&\lesssim
\int_{Q}\sum_{i=1}^{n}\Phi_{i}\left(\Phi_{i}^{-1}\left(\frac{\sigma_{i}(x)}{\sigma_{i}(Q)}\right)  \right)dx
=\sum_{i=1}^{n}\int_{Q}\frac{\sigma_{i}(x)}{\sigma_{i}(Q)}dx= n.
\end{align*}
$ii)$  
Let us assume that for each  $1\leq i \leq n$, $\sigma_{i} \in A_{1}$. 
Let  $1\leq i \leq n$. As   $\Phi_{i}\in \Delta'$, we have that  $\Phi_{i}^{-1}\in \Delta''$.
Now let  $Q\in \mathcal{Q}$. Let $x\in Q$. We have
\begin{align*}
\frac{1}{|Q|}\frac{\Phi\left(\prod_{i=1}^{n}\Phi_{i}^{-1}\left(\frac{1}{\sigma_{i}(x)}\right)  \right)}{\Phi\left(\prod_{i=1}^{n}\Phi_{i}^{-1}\left(\frac{1}{\sigma_{i}(Q)}\right)  \right)}&= \frac{1}{|Q|}\frac{\Phi\left(\prod_{i=1}^{n}\Phi_{i}^{-1}\left(\frac{1}{\sigma_{i}(Q)}\right)\times\frac{\Phi_{i}^{-1}\left(\frac{1}{\sigma_{i}(x)}\right)  }{\Phi_{i}^{-1}\left(\frac{1}{\sigma_{i}(Q)}\right)  }  \right)}{\Phi\left(\prod_{i=1}^{n}\Phi_{i}^{-1}\left(\frac{1}{\sigma_{i}(Q)}\right)  \right)} \\
&\lesssim \frac{1}{|Q|} \Phi\left(\prod_{i=1}^{n}\frac{\Phi_{i}^{-1}\left(\frac{1}{\sigma_{i}(x)}\right)  }{\Phi_{i}^{-1}\left(\frac{1}{\sigma_{i}(Q)}\right)  }  \right) \lesssim \frac{1}{|Q|} \Phi\left(\prod_{i=1}^{n}\Phi_{i}^{-1}\left(\frac{\sigma_{i}(Q)}{\sigma_{i}(x)}\right)\right)\\
&\lesssim \frac{1}{|Q|} \sum_{i=1}^{n}\Phi_{i}\left(\Phi_{i}^{-1}\left(\frac{\sigma_{i}(Q)}{\sigma_{i}(x)}\right)  \right)= \frac{1}{|Q|} \sum_{i=1}^{n}\frac{\sigma_{i}(Q)}{\sigma_{i}(x)}=  \sum_{i=1}^{n}\frac{\sigma_{i}(Q)}{|Q|}\frac{1}{\sigma_{i}(x)} \\
&\lesssim \sum_{i=1}^{n}\frac{\sigma_{i}(Q)}{|Q|}\left(\inf ess_{x \in Q} \sigma_{i}(x)\right)^{-1} \lesssim \sum_{i=1}^{n}[\sigma_{i}]_{A_{1}}. 
\end{align*}
We deduce that $$  
\frac{1}{|Q|\Phi\left(\prod_{i=1}^{n}\Phi_{i}^{-1}\left(\frac{1}{\sigma_{i}(Q)}\right)  \right)}\lesssim \left(\sum_{i=1}^{n}[\sigma_{i}]_{A_{1}}\right)\frac{1}{\Phi\left(\prod_{i=1}^{n}\Phi_{i}^{-1}\left(\frac{1}{\sigma_{i}(x)}\right)  \right)}=\left(\sum_{i=1}^{n}[\sigma_{i}]_{A_{1}}\right)\nu_{\overrightarrow{\sigma}}(x), ~~ \forall~x\in Q.      $$
It follows that $$ \frac{1}{\sum_{i=1}^{n}[\sigma_{i}]_{A_{1}}} \lesssim  \Phi\left(\prod_{i=1}^{n}\Phi_{i}^{-1}\left(\frac{1}{\sigma_{i}(Q)}\right)  \right)\nu_{\overrightarrow{\sigma}}(Q),~~ \forall~Q \in \mathcal{Q}.    $$  
The proof is complete.
\end{proof}

%%%%%%%%%%%%%%%%%%%%%%%%%%%%%%%%%%%%%%%%%%%%%%%%%%%%%%%%%%%%
\section{Proofs of the results}
\subsection{Proofs of Carleson embedding results}

Let $\sigma$ be a weight on $\mathbb{R}^{d}$, $f$ a measurable function on $\mathbb{R}^{d}$, and $Q\in \mathcal{D}^{\beta}$. 
We will be using the notation  
\begin{equation}\label{eq:s56son}
m_{\sigma}(f,Q):=\frac{1}{\sigma(Q)}\int_{Q}|f(x)|\sigma(x)dx.    
 \end{equation}
 
\begin{proposition}[\cite{raoren}, Corollary 7]\label{pro:main92}
Let  $\Phi \in  \mathscr{U}$, and let $\sigma$ be a weight on  $\mathbb{R}^{d}$. For any cube $Q\in \mathcal{Q}$,
\begin{equation}\label{eq:egali6orinlf}
   \dfrac{1}{\|\chi_{Q}\|_{L_{\sigma}^{\Phi}}^{lux}}= \Phi^{-1}\left(\dfrac{1}{\sigma(Q)}\right).\end{equation}\end{proposition}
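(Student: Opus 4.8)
The plan is to compute the Luxemburg norm of $\chi_Q$ directly from its defining infimum. Substituting $f=\chi_Q$ into the definition (\ref{eq:in4aq5ale1}), the first step is to evaluate the integrand pointwise: since $\Phi$ is a growth function we have $\Phi(0)=0$, so for every $\lambda>0$ the integrand $\Phi\!\left(\chi_Q(x)/\lambda\right)\sigma(x)$ vanishes off $Q$ and equals $\Phi(1/\lambda)\sigma(x)$ on $Q$. Hence the whole integral collapses to
\begin{equation*}
\int_{\mathbb{R}^d}\Phi\!\left(\frac{\chi_Q(x)}{\lambda}\right)\sigma(x)\,dx=\Phi\!\left(\frac{1}{\lambda}\right)\sigma(Q).
\end{equation*}
This reduces the problem from an integral condition to a single scalar inequality, which is the essential simplification.

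Next I would rewrite the admissibility condition $\int\leq 1$ as $\Phi(1/\lambda)\leq 1/\sigma(Q)$, noting that $\sigma(Q)>0$ because $Q$ is non-degenerate and $\sigma$ is a weight. Here I would invoke the fact, recorded in the excerpt, that any element of $\mathscr{U}$ is a homeomorphism of $\mathbb{R}_+$ onto $\mathbb{R}_+$; in particular $\Phi$ is continuous, strictly increasing, and has a continuous strictly increasing inverse $\Phi^{-1}$. Applying $\Phi^{-1}$ to both sides, the condition becomes $1/\lambda\leq \Phi^{-1}(1/\sigma(Q))$, i.e.
\begin{equation*}
\lambda\geq \frac{1}{\Phi^{-1}\!\left(\tfrac{1}{\sigma(Q)}\right)}.
\end{equation*}

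The set of admissible $\lambda$ is therefore exactly the closed half-line $[\,1/\Phi^{-1}(1/\sigma(Q)),\,\infty)$, so its infimum is the left endpoint; continuity of $\Phi$ guarantees the boundary value itself is admissible, so the infimum is attained. This gives $\|\chi_Q\|_{L_\sigma^\Phi}^{lux}=1/\Phi^{-1}(1/\sigma(Q))$, and taking reciprocals yields the claimed identity (\ref{eq:egali6orinlf}). There is essentially no hard step here: the only point requiring minor care is the passage from the inequality on $\Phi(1/\lambda)$ to the inequality on $1/\lambda$, which uses precisely the monotonicity and invertibility of $\Phi$ furnished by membership in $\mathscr{U}$; the rest is a direct evaluation.
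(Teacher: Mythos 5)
Your proof is correct: the paper itself gives no argument for this proposition (it is quoted directly from \cite{raoren}, Corollary 7), and your direct computation from the Luxemburg-norm definition --- using $\Phi(0)=0$ to collapse the integral to $\Phi(1/\lambda)\sigma(Q)$, then the fact that any $\Phi\in\mathscr{U}$ is a homeomorphism of $\mathbb{R}_{+}$ onto $\mathbb{R}_{+}$ to identify the admissible set as $\left[1/\Phi^{-1}\left(1/\sigma(Q)\right),\infty\right)$ --- is exactly the standard verification behind the cited result. The only implicit hypotheses, $0<\sigma(Q)<\infty$, are the usual conventions for a weight on a non-degenerate cube, which you correctly flag.
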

 We first prove Proposition \ref{pro:main16aqyh6}.

 \begin{proof}[Proof of Proposition \ref{pro:main16aqyh6}]
  Let us assume that (\ref{eq:surmaron}) and (\ref{eq:suitc5faqrmaron}) hold. Let
 $R \in \mathcal{D}^{\beta}$. Let  $Q \in \mathcal{D}^{\beta}$ such that  $Q \subset R$.
 Then by Proposition \ref{pro:main92}, we have
 $$ \Phi_{i}^{-1}\left(\frac{1}{\sigma_{i}(R)}\right)=\dfrac{1}{\|\chi_{R}\|_{L_{\sigma_{i}}^{\Phi_{i}}}^{lux}} ,~~\forall~1\leq i\leq n\Rightarrow  \prod_{i=1}^{n}\Phi_{i}^{-1}\left(\frac{1}{\sigma_{i}(R)}\right) =\prod_{i=1}^{n}\frac{1}{\sigma_{i}(Q)}\int_{Q}\frac{\chi_{R}(x)}{\|\chi_{R}\|_{L_{\sigma_{i}}^{\Phi_{i}}}^{lux} }\sigma_{i}(x)dx.   $$
 As $ \Psi\in \mathscr{U}$ and $t\mapsto \frac{\Psi(t)}{\Phi(t)}$ is increasing on $\mathbb{R}_{+}^{*}$, we can deduce with the help of Lemma \ref{pro:main40}, that $\Psi \circ \Phi^{-1} \in \mathscr{U}.$  
 As (\ref{eq:surmaron}) and (\ref{eq:suitc5frmaron}) are satisfied, we then obtain
 \begin{align*}
 \sum_{Q \subset R,  Q \in \mathcal{D}^{\beta}}\lambda_{Q}\Psi \circ \Phi^{-1}\left( \frac{1}{\nu_{\overrightarrow{\sigma}}(R)}\right)
 &\lesssim \sum_{Q \subset R,  Q \in \mathcal{D}^{\beta}}\lambda_{Q}\Psi \circ \Phi^{-1}\left( \Phi\left(\prod_{i=1}^{n}\Phi_{i}^{-1}\left(\frac{1}{\sigma_{i}(R)}\right)  \right)\right)\\
 &= \sum_{Q \subset R,  Q \in \mathcal{D}^{\beta}}\lambda_{Q}\Psi\left(\prod_{i=1}^{n}\frac{1}{\sigma_{i}(Q)}\int_{Q}\frac{\chi_{R}(x)}{\|\chi_{R}\|_{L_{\sigma_{i}}^{\Phi_{i}}}^{lux} }\sigma_{i}(x)dx\right)\\
 &\lesssim 1.
 \end{align*}
 \end{proof}

We next prove Theorem \ref{pro:main16yh6}

\begin{proof}[Proof of Theorem \ref{pro:main16yh6}]
Let us suppose that (\ref{eq:s564c25son}) holds.\\
Let $\mathscr{P}(\mathcal{D}^{\beta})$ be the power set of $\mathcal{D}^{\beta}$. Consider the mapping $\mu$ defined on $\mathscr{P}(\mathcal{D}^{\beta})$ by
 $$  \mu(A)=  \sum_{Q \in \mathcal{D}^{\beta}}\lambda_{Q}\chi_{A}(Q),~~\forall~A \in \mathscr{P}(\mathcal{D}^{\beta}).  $$
By construction, $\mu$ is a counting measure on $\mathscr{P}(\mathcal{D}^{\beta})$ such that $$  \mu(\{Q\})= \lambda_{Q}, ~~\forall~ Q \in \mathcal{D}^{\beta}.        $$
Let $1\leq i \leq n$ and $f_{i} \in L^{\Phi_{i}}(\sigma_{i})$ such that  $f_{i} \not\equiv 0$. Put $g_{i}=\frac{f_{i}}{\|f_{i}\|_{L_{\sigma_{i}}^{\Phi_{i}}}^{lux}}$ and
  $\overrightarrow{g}:=(g_{1},...,g_{n})$.
Let us fix $t> 0$ and consider $\mathcal{D}_{t}(\overrightarrow{g})$ (resp.  $\mathcal{D}_{t}^{*}(\overrightarrow{g})$) the set of dyadic  cubes $Q$  (resp. maximal dyadic cubes $Q$ with respect to the inclusion) such that 
$$\prod_{i=1}^{n}\frac{1}{\sigma_{i}(Q)}\int_{Q}|g_{i}(x)|\sigma_{i}(x)dx> t.$$
As (\ref{eq:s564c25son}) holds and since any element of $\mathcal{D}_{t}(\overrightarrow{g})$ is a subset of a unique element of $\mathcal{D}_{t}^{*}(\overrightarrow{g})$,  it follows from assertion $(ii)$ in Proposition \ref{pro:main0aqk5}, that
\begin{align*}
\mu\left(\mathcal{D}_{t}(\overrightarrow{g})\right)&=\mu\left(\bigcup_{Q \in \mathcal{D}_{t}(\overrightarrow{g})}\{Q\}\right)\\{\tiny } &\leq \mu\left(\bigcup_{R \in \mathcal{D}_{t}^{*}(\overrightarrow{g})}\bigcup_{Q \in \mathcal{D}_{t}(\overrightarrow{g}): Q \subset R}\{Q\}\right) 
\leq \sum_{ R\in \mathcal{D}^{*}_{t}(\overrightarrow{g})} \sum_{ Q \subset R : Q \in \mathcal{D}^{\beta}}\lambda_{Q} \\
&\leq \sum_{ R\in \mathcal{D}^{*}_{t}(\overrightarrow{g})}\frac{\Lambda}{\Psi \circ \Phi^{-1}\left( \frac{1}{\nu_{\overrightarrow{\sigma}}(R)}\right)}
\leq \Lambda C \frac{\Phi(t)}{\Psi(t)} \nu_{\overrightarrow{\sigma}}\left( \left\{ x\in \mathbb{R}^{d} : \mathcal{M}^{\mathcal{D}^{\beta}}_{\overrightarrow{\sigma}}(\overrightarrow{g})(x) >  t \right\}\right).
\end{align*}
As each $\Phi_{i} \in \mathscr{U}$ and $\Phi^{-1}=\Phi_{1}^{-1}\times...\times \Phi_{n}^{-1}$, we deduce that  $\Phi$ is in  $\mathscr{C}^{1}$ and $\Phi'(t)\approx \frac{\Phi(t)}{t}$, for all $ t > 0.$ Indeed, we have 
\begin{align*}
\Phi_{i} \in \mathscr{U},~~\forall~ 1\leq i \leq n &\Rightarrow \Phi_{i}'(t)\approx \frac{\Phi_{i}(t)}{t},~~\forall~t>0, ~~\forall~ 1\leq i \leq n \\
&\Rightarrow \left(\Phi_{i}^{-1}\right)'(t)\approx \frac{\Phi_{i}^{-1}(t)}{t},~~\forall~t>0, ~~\forall~ 1\leq i \leq n \\
&\Rightarrow   \left(\Phi_{1}^{-1}\times...\times \Phi_{n}^{-1}\right)'(t)\approx\sum_{i=1}^{n} \frac{\Phi_{i}^{-1}(t)}{t}\times \prod_{j=1,j\not=i}^{n}\Phi_{j}^{-1}(t),~~\forall~t>0 \\
&\Rightarrow \left(\Phi_{1}^{-1}\times...\times \Phi_{n}^{-1}\right)'(t)\approx\sum_{i=1}^{n} \dfrac{\prod_{i=1}^{n}\Phi_{i}^{-1}(t)}{t} ,~~\forall~t>0 \\
&\Rightarrow \left(\Phi^{-1}\right)'(t)\approx \frac{\Phi^{-1}(t)}{t}, ~~\forall~t>0 \\
&\Rightarrow \Phi'(t)\approx \frac{\Phi(t)}{t} ,~~\forall~t>0.
\end{align*}
It follows that $\frac{\Phi(t)}{\Psi(t)} \approx \frac{\Phi^{\prime}(t)}{\Psi^{\prime}(t)}$, for all $ t > 0.$
We deduce that $$  \mu\left(\mathcal{D}_{t}(\overrightarrow{g})\right) \leq \Lambda C \frac{\Phi'(t)}{\Psi'(t)} \nu_{\overrightarrow{\sigma}}\left( \left\{ x\in \mathbb{R}^{d} : \mathcal{M}^{\mathcal{D}^{\beta}}_{\overrightarrow{\sigma}}(\overrightarrow{g})(x) >  t \right\}\right), ~~\forall~t>0.         $$
If follows from the above and the boundedness of the weighted dyadic maximal function that
\begin{align*}
\sum_{Q\in \mathcal{D}^{\beta}}\lambda_{Q}\Psi\left(\prod_{i=1}^{n}m_{\sigma_{i}}(g_{i},Q)\right)
&=\int_{0}^{\infty}\Psi^{\prime}(t)\mu\left( \left\{ Q\in \mathcal{D}^{\beta} :\prod_{i=1}^{n} m_{\sigma_{i}}(g_{i},Q)   > t \right\}  \right)dt \\ 
&=\int_{0}^{\infty}\Psi^{\prime}(t)\mu\left(\mathcal{D}_{t}(\overrightarrow{g})\right)dt \\
&\leq \int_{0}^{\infty}\Psi^{\prime}(t)\left( \Lambda C\frac{\Phi'(t)}{\Psi'(t)} \nu_{\overrightarrow{\sigma}}\left( \left\{ x\in \mathbb{R}^{d} : \mathcal{M}^{\mathcal{D}^{\beta}}_{\overrightarrow{\sigma}}(\overrightarrow{g})(x) >  t \right\}\right)\right)dt \\
&=\Lambda C \int_{0}^{\infty}\Phi^{\prime}(t)\nu_{\overrightarrow{\sigma}}\left( \left\{ x\in \mathbb{R}^{d} : \mathcal{M}^{\mathcal{D}^{\beta}}_{\overrightarrow{\sigma}}(\overrightarrow{g})(x) >  t \right\}\right)dt \\
&=\Lambda C\int_{\mathbb{R}^{d}}\Phi\left( \mathcal{M}_{\overrightarrow{\sigma}}^{\mathcal{D}^{\beta}}(\overrightarrow{g})(x)\right)\nu_{\overrightarrow{\sigma}}(x) dx \\
&\leq \Lambda C \sum_{i=1}^{n}\int_{\mathbb{R}^{d}}\Phi_{i}\left(|g_{i}(x)|\right)\sigma_{i}(x)dx \\
&\leq n\Lambda C.
\end{align*}
The proof is complete.
\end{proof}

%Let $\sigma_{1},...,\sigma_{n}$ be weights on $\mathbb{R}^{d}$. Let$1< p_{1},...,p_{n},q<\infty$  such that $\frac{1}{p}
%=\frac{1}{p_{1}}+...+\frac{1}{p_{n}}$ and $p \leq q$.
%Let us suppose that $\Psi(t)=t^{q}$, $\Phi(t)=t^{p}$ and $\Phi_{i}(t)=t^{p_{i}}$,  for $1\leq i\leq n$.
%We have in this case $\nu_{\overrightarrow{\sigma}}:=\prod_{i=1}^{n}\sigma_{i}^{p/p_{i}}.$  Under these hypotheses, it was proved in \cite[page 73]{sehbaf}  that  
%$$  \frac{1}{\nu_{\overrightarrow{\sigma}}(Q)} \lesssim \prod_{i=1}^{n}\sigma_{i}(Q)^{p/p_{i}},~~\forall~Q \in \mathcal{D}^{\beta}. 
 %   $$
%Dans ces conditions, les relations (\ref{eq:s564c25son}) et (\ref{eq:suitc5frmaron}) sont \'equivalentes.

Let $\Phi_{1},...,\Phi_{n}\in \mathscr{U}$. Let  $\Phi$ be a one-to-one correspondence from $\mathbb{R}_{+}$ to itself such that $\Phi^{-1}=\Phi_{1}^{-1}\times...\times \Phi_{n}^{-1}$.
If for any $1\leq i \leq n$, $\sigma_{i}\equiv \sigma$, then we know that $\nu_{\overrightarrow{\sigma}}$ is just $\sigma$. In this case, inequalities (\ref{eq:s564c25son}) and (\ref{eq:suitc5frmaron}) are equivalent since the condition (\ref{eq:surmaron}) is satisfied.  Indeed, that condition (\ref{eq:suitc5frmaron}) impliques (\ref{eq:s564c25son}) follows from Proposition \ref{pro:main16aqyh6}. To prove that condition (\ref{eq:s564c25son}) impliques (\ref{eq:suitc5frmaron}), it suffices to replace in the proof of Theorem \ref{pro:main16yh6}, the assertions (ii) and (iii) in Proposition \ref{pro:main0aqk5} by the assertions  (ii ) and (iii) of Corollairy \ref{pro:main0aaqqk5} respectively.
Theorem \ref{pro:main16yh6} then reduces to the following.
 
 \begin{corollary}\label{pro:main16aaqq6}
 Let $\{\lambda_{Q}\}_{Q \in \mathcal{D}^{\beta}}$ be a sequence of positive real numbers, $\sigma$ a weight on $\mathbb{R}^{d}$,  $\Phi_{1},...,\Phi_{n} \in \mathscr{U}\cap \nabla_{2}$, $\Psi\in \mathscr{U}$. Let $\Phi$ be a one-to-one correspondence from $\mathbb{R}_{+}$ to itself such that $\Phi^{-1}=\Phi_{1}^{-1}\times...\times \Phi_{n}^{-1}$, and  $t\mapsto \frac{\Psi(t)}{\Phi(t)}$ is increasing on $\mathbb{R}_{+}^{*}$. Then the following  assertions are equivalent. 
 \begin{itemize}
 \item[(i)] $\{\lambda_{Q}\}_{Q \in \mathcal{D}^{\beta}}$ is a  $(\sigma,\Psi \circ \Phi^{-1})-$Carleson sequence.
 \item[(ii)] There exists a constant $C:=C_{n,\Psi,\Phi_{1},...,\Phi_{n}}>0$ such that for any 
 $0\not\equiv f_{i}\in L^{\Phi_{i}}(\sigma)$ with $i=1,...,n$,
 \begin{equation}\label{eq:suitaqc5frmaron}
  \sum_{Q\in \mathcal{D}^{\beta}}\lambda_{Q}\Psi\left(\prod_{i=1}^{n}m_{\sigma}\left(\frac{f_{i}}{\|f_{i}\|_{L_{\sigma}^{\Phi_{i}}}^{lux}},Q\right)\right) \leq C. 
 \end{equation}
 \end{itemize}
 \end{corollary}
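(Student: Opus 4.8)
The plan is to recognize Corollary \ref{pro:main16aaqq6} as the single-weight specialization of the two preceding results, in which the hypotheses simplify enough to drop the requirement $\Phi_i\in\widetilde{\mathscr U}$. The first step is to record the collapse of the multilinear weight: since $\Phi^{-1}=\Phi_1^{-1}\times\cdots\times\Phi_n^{-1}$, setting $\sigma_i\equiv\sigma$ gives
\begin{equation*}
\nu_{\overrightarrow{\sigma}}=\frac{1}{\Phi\left(\prod_{i=1}^n\Phi_i^{-1}\left(\frac{1}{\sigma}\right)\right)}=\frac{1}{\Phi\left(\Phi^{-1}\left(\frac{1}{\sigma}\right)\right)}=\sigma.
\end{equation*}
Consequently the Carleson-type bound (\ref{eq:s564c25son}) becomes literally the definition of a $(\sigma,\Psi\circ\Phi^{-1})$-Carleson sequence, so assertion (i) and condition (\ref{eq:s564c25son}) are one and the same; it then remains only to match the two implications to Theorem \ref{pro:main16yh6} and Proposition \ref{pro:main16aqyh6}.

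For the implication (i)$\Rightarrow$(ii) the natural move is to quote Theorem \ref{pro:main16yh6} directly. The obstacle is that Theorem \ref{pro:main16yh6} assumes $\Phi_i\in\widetilde{\mathscr U}\cap\nabla_2$, whereas here we are given only $\Phi_i\in\mathscr U\cap\nabla_2$. I would therefore retrace the proof of Theorem \ref{pro:main16yh6} and observe that the stronger hypothesis enters solely through assertions (ii) and (iii) of Proposition \ref{pro:main0aqk5}. Because $\nu_{\overrightarrow{\sigma}}=\sigma$, these are exactly assertions (ii) and (iii) of Corollary \ref{pro:main0aaqqk5}, whose proofs rest only on convexity, the generalized Young inequality, the monotonicity of $t\mapsto\frac{\Psi(t)}{\Phi(t)}$, and the $\nabla_2$ condition --- never on membership in $\widetilde{\mathscr U}$. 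Substituting the corollary for the proposition at the two places where it is used (the distributional estimate and the maximal-function boundedness step) reproduces (\ref{eq:suitaqc5frmaron}) with a constant $C=C_{n,\Psi,\Phi_1,\ldots,\Phi_n}$.

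For the reverse implication (ii)$\Rightarrow$(i) the plan is to invoke Proposition \ref{pro:main16aqyh6}, which already delivers the Carleson bound from the embedding inequality once its auxiliary hypothesis (\ref{eq:surmaron}) is verified. Here that hypothesis is free: with $\sigma_i\equiv\sigma$ one has $\frac{1}{\nu_{\overrightarrow{\sigma}}(Q)}=\frac{1}{\sigma(Q)}=\Phi\left(\prod_{i=1}^n\Phi_i^{-1}\left(\frac{1}{\sigma(Q)}\right)\right)$, so (\ref{eq:surmaron}) holds with constant $C_1=1$, in fact as an equality. Proposition \ref{pro:main16aqyh6} then concludes that $\{\lambda_Q\}_{Q\in\mathcal D^\beta}$ is $(\sigma,\Psi\circ\Phi^{-1})$-Carleson, which is (i).

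In summary, no genuinely new computation is required; the only real point --- and the step I expect to be the main subtlety --- is the hypothesis downgrade from $\widetilde{\mathscr U}$ to $\mathscr U$ in the forward direction. Justifying it cleanly amounts to pinpointing that Proposition \ref{pro:main0aqk5} is used in the proof of Theorem \ref{pro:main16yh6} only via its parts (ii) and (iii), and then citing Corollary \ref{pro:main0aaqqk5} as the single-weight replacement that dispenses with $\widetilde{\mathscr U}$.
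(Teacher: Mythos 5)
Your proposal is correct and takes essentially the same route as the paper: the authors likewise observe that $\sigma_i\equiv\sigma$ forces $\nu_{\overrightarrow{\sigma}}=\sigma$ so that (\ref{eq:surmaron}) holds automatically, deduce (ii)$\Rightarrow$(i) from Proposition \ref{pro:main16aqyh6}, and obtain (i)$\Rightarrow$(ii) by rerunning the proof of Theorem \ref{pro:main16yh6} with assertions (ii) and (iii) of Proposition \ref{pro:main0aqk5} replaced by assertions (ii) and (iii) of Corollary \ref{pro:main0aaqqk5}, which is exactly your mechanism for dropping the $\widetilde{\mathscr U}$ hypothesis. There are no gaps.
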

 
 When $n=1$,  the above corollary is the following (see also \cite{jmtafeuseh}).
 
 \begin{corollary}\label{pro:main16aaqaqq6}
 Let $\{\lambda_{Q}\}_{Q \in \mathcal{D}^{\beta}}$ be a sequence of positive real numbers, $\sigma$ a weight on $\mathbb{R}^{d}$,  $\Phi,\Psi\in \mathscr{U}$ such that  $\Phi\in  \nabla_2$  and     $t\mapsto \frac{\Psi(t)}{\Phi(t)}$ is increasing on $\mathbb{R}_{+}^{*}$. Then the following  assertions are equivalent. 
 \begin{itemize}
 \item[(i)] $\{\lambda_{Q}\}_{Q \in \mathcal{D}^{\beta}}$ is a  $(\sigma,\Psi \circ \Phi^{-1})-$Carleson sequence.
 \item[(ii)] There exists a constant $C:=C_{\Psi,\Phi}>0$ such that for any 
 $0\not\equiv f\in L^{\Phi}(\sigma)$,
 \begin{equation}\label{eq:suiqcaraqmaron}
\sum_{Q\in \mathcal{D}^{\beta}}\lambda_{Q}\Psi\left(m_{\sigma}\left(\frac{f}{\|f\|_{L_{\sigma}^{\Phi}}^{lux}},Q\right)\right) \leq C.\end{equation} \end{itemize}
 \end{corollary}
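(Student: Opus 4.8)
The plan is to obtain this statement as the $n=1$ instance of Corollary \ref{pro:main16aaqq6}. When $n=1$ the relation $\Phi^{-1}=\Phi_1^{-1}$ forces $\Phi=\Phi_1$, so the hypothesis ``$\Phi_1\in\mathscr{U}\cap\nabla_2$'' becomes exactly ``$\Phi\in\mathscr{U}\cap\nabla_2$'', the monotonicity requirement on $t\mapsto\Psi(t)/\Phi(t)$ is unchanged, the single-factor product $\prod_{i=1}^1 m_\sigma(\cdot,Q)$ is just $m_\sigma(\cdot,Q)$, and the weight $\nu_{\overrightarrow{\sigma}}=\frac{1}{\Phi(\Phi^{-1}(1/\sigma))}$ collapses to $\sigma$. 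Thus assertions (i) and (ii) here are verbatim the $n=1$ assertions of Corollary \ref{pro:main16aaqq6}, and the equivalence is inherited.

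For completeness I would indicate how each implication reads once unwound, since both stay within the single-weight machinery already established. For (i)$\Rightarrow$(ii) I would set $g:=f/\|f\|_{L_\sigma^\Phi}^{lux}$, introduce the counting measure $\mu(\{Q\})=\lambda_Q$ as in the proof of Theorem \ref{pro:main16yh6}, and for each fixed $t>0$ bound $\mu(\mathcal{D}_t(g))$ by summing the Carleson estimate over the maximal cubes and applying assertion (ii) of Corollary \ref{pro:main0aaqqk5} (the single-weight form, which needs only $\Phi,\Psi\in\mathscr{U}$ and the monotonicity of $\Psi/\Phi$, and in particular does not require $\Phi\in\widetilde{\mathscr{U}}$). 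Integrating in $t$ against $\Psi'$ by the layer-cake formula, and using $\Phi(t)/\Psi(t)\approx\Phi'(t)/\Psi'(t)$ — valid because every element of $\mathscr{U}$ satisfies $\Phi'\approx\Phi/t$ — would convert the target sum into $\int_{\mathbb{R}^d}\Phi(\mathcal{M}_\sigma^{\mathcal{D}^\beta}(g))\sigma\,dx$, which is controlled by assertion (iii) of Corollary \ref{pro:main0aaqqk5}.

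For the reverse implication (ii)$\Rightarrow$(i) I would invoke Proposition \ref{pro:main16aqyh6} at $n=1$. Its only extra hypothesis, condition (\ref{eq:surmaron}), is automatic here: with $\sigma_i\equiv\sigma$ one has $\nu_{\overrightarrow{\sigma}}=\sigma$, so the inequality reduces to an equality. The mechanism is to test (ii) on $f=\chi_R$ for an arbitrary $R\in\mathcal{D}^\beta$ and to rewrite $1/\|\chi_R\|_{L_\sigma^\Phi}^{lux}=\Phi^{-1}(1/\sigma(R))$ via Proposition \ref{pro:main92}; this produces precisely the $(\sigma,\Psi\circ\Phi^{-1})$-Carleson bound (\ref{eq:suit564c25son}).

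The only step demanding genuine care — hence the main obstacle should one prove the result from scratch rather than quote Corollary \ref{pro:main16aaqq6} — is the use of the $\nabla_2$ hypothesis in the forward direction: it is exactly this condition that makes the weighted dyadic maximal function bounded on $L^\Phi(\sigma)$ (Theorem \ref{pro:main88}), and without it the layer-cake estimate feeding assertion (iii) of Corollary \ref{pro:main0aaqqk5} would not close.
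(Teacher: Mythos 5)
Your proposal is correct and follows the paper's own route: the paper obtains this corollary precisely as the $n=1$ instance of Corollary \ref{pro:main16aaqq6}, which in turn is proved exactly as you describe --- the forward direction by rerunning the proof of Theorem \ref{pro:main16yh6} with assertions (ii) and (iii) of Corollary \ref{pro:main0aaqqk5} (so that $\widetilde{\mathscr{U}}$ is not needed), and the reverse direction via Proposition \ref{pro:main16aqyh6}, with condition (\ref{eq:surmaron}) trivially satisfied since $\nu_{\overrightarrow{\sigma}}=\sigma$. Your remark on the role of $\nabla_2$ (through Theorem \ref{pro:main88}) also matches the paper's use of it.
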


\begin{corollary}\label{pro:main2aq09}
Let $\{\lambda_{Q}\}_{Q \in \mathcal{D}^{\beta}}$ be a sequence of positive real numbers,  $\sigma$ a weight on $\mathbb{R}^{d}$, and $\Phi\in \mathscr{U}\cap \nabla_2$.
Then the following are equivalent. 
\begin{itemize}
\item[(i)] $\{\lambda_{Q}\}_{Q \in \mathcal{D}^{\beta}}$ is a $\sigma-$Carleson sequence.
\item[(ii)] There exists a constant $C>0$ such that for any $0\not\equiv f\in L^{\Phi}(\sigma)$,
\begin{equation}\label{eq:sui56carmaron}
\sum_{Q\in \mathcal{D}^{\beta}}\lambda_{Q}\Phi\left(m_{\sigma}\left(\frac{f}{\|f\|_{L_{\sigma}^{\Phi}}^{lux}},Q\right)\right) \leq C.\end{equation}
\end{itemize}
\end{corollary}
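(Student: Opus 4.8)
The plan is to recognize Corollary~\ref{pro:main2aq09} as nothing more than the diagonal special case $\Psi=\Phi$ of Corollary~\ref{pro:main16aaqaqq6}, so that essentially no new argument is needed beyond checking that the hypotheses and the two conditions specialize correctly. First I would set $\Psi:=\Phi$. Since we are given $\Phi\in\mathscr{U}\cap\nabla_2$, both requirements of Corollary~\ref{pro:main16aaqaqq6}, namely $\Phi\in\nabla_2$ and $\Psi=\Phi\in\mathscr{U}$, are immediately satisfied. The remaining hypothesis of that corollary asks that $t\mapsto\frac{\Psi(t)}{\Phi(t)}$ be increasing on $\mathbb{R}_+^{*}$; with $\Psi=\Phi$ this quotient is identically equal to $1$, hence (weakly) increasing. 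The only point deserving care is that the quotient is constant rather than strictly increasing, but the sole use of this hypothesis, via Lemma~\ref{pro:main40}, is to guarantee that $\Psi\circ\Phi^{-1}\in\mathscr{U}$; here $\Psi\circ\Phi^{-1}$ is the identity map, which belongs to $\mathscr{U}^{1}$ trivially, so no difficulty arises.

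Next I would translate the two equivalent conditions under this substitution. For condition (i), I would observe that $\Psi\circ\Phi^{-1}=\Phi\circ\Phi^{-1}$ is the identity on $\mathbb{R}_+$, so that the $(\sigma,\Psi\circ\Phi^{-1})$-Carleson condition is precisely the $(\sigma,\mathrm{id})$-Carleson condition. Unwinding the defining inequality \eqref{eq:suit564c25son} with the growth function taken to be $t\mapsto t$ gives $\sum_{Q\subset R,\,Q\in\mathcal{D}^{\beta}}\lambda_Q\le\Lambda\,\sigma(R)$ for every $R\in\mathcal{D}^{\beta}$, which is exactly the assertion that $\{\lambda_Q\}_{Q\in\mathcal{D}^{\beta}}$ is a $\sigma$-Carleson sequence. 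For condition (ii), substituting $\Psi=\Phi$ into \eqref{eq:suiqcaraqmaron} reproduces verbatim the inequality \eqref{eq:sui56carmaron}.

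Finally, invoking the equivalence (i)$\Leftrightarrow$(ii) of Corollary~\ref{pro:main16aaqaqq6} with these identifications completes the proof. I do not expect any genuine obstacle: the entire content is already packaged in Corollary~\ref{pro:main16aaqaqq6}, and the work reduces to the bookkeeping above. The most delicate observation, which I would state explicitly, is simply that the constant quotient $\Psi/\Phi\equiv 1$ still falls within the scope of the cited corollary because the resulting composition $\Psi\circ\Phi^{-1}=\mathrm{id}$ lies in $\mathscr{U}$; everything else is a direct specialization.
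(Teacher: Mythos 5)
Your proof is correct and is exactly the derivation the paper intends: Corollary~\ref{pro:main2aq09} is stated as the specialization $\Psi=\Phi$ of Corollary~\ref{pro:main16aaqaqq6}, under which $\Psi\circ\Phi^{-1}=\mathrm{id}$ turns the $(\sigma,\Psi\circ\Phi^{-1})$-Carleson condition into the ordinary $\sigma$-Carleson condition $\sum_{Q\subset R}\lambda_Q\le\Lambda\,\sigma(R)$. Your handling of the constant quotient $\Psi/\Phi\equiv 1$ is also the right observation, since every downstream use of that hypothesis (in Lemma~\ref{pro:main40} and in the weak-type estimate of Corollary~\ref{pro:main0aaqqk5}) only requires the quotients $\Psi\circ\Phi^{-1}(t)/t$ and $\widetilde{\Omega}_{3}(t)/t$ to be nondecreasing, which holds trivially for the identity.
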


%%%%%%%%MPsiPhi%%%%%%%%%%%%%%%%%%%%%%%%%%%%%%%%%%%%%%%%%%%%%
\subsection{Proof of the weighted inequalities for the weighted multilinear maximal function}

We next prove Theorem \ref{pro:maing1}.

\begin{proof}[Proof of Theorem \ref{pro:maing1}]
Let us suppose that for each $1 \leq i \leq n$, $\sigma_{i}$ is doubling and $(\overrightarrow{\sigma}, \omega)\in M_{\overrightarrow{\Phi},\Psi}$.
\vskip .1cm
Let $(f_{1},...,f_{n})\in L^{\Phi_{1}}(\sigma_{1})\times...\times L^{\Phi_{n}}(\sigma_{n})$. Suppose that for $1\leq i \leq n$,  $f_{i}\not\equiv 0$.  Put   $\overrightarrow{g}:=(g_{1},...,g_{n})$, with $g_{i}=\frac{f_{i}}{\|f_{i}\|_{L_{\sigma_{i}}^{\Phi_{i}}}^{lux}}$,  for $1\leq i \leq n$.
As each $\sigma_{i}$ is doubling, it follows from Lemma \ref{pro:main80} that
$$  \mathcal{M}_{\overrightarrow{\sigma}} (\overrightarrow{g}) \lesssim  \sum_{\beta \in \{0,  1/3\}^{d} }   \mathcal{M}_{\overrightarrow{\sigma}}^{\mathcal{D}^{\beta}} (\overrightarrow{g}). 
    $$
 Let $q\geq 1$ such that $\Psi \in \widetilde{\mathscr{U}}^{q}$. As $\Psi$ is convex and of upper-type $q$,  we deduce that
$$ \int_{\mathbb{R}^{d}}\Psi\left( \mathcal{M}_{\overrightarrow{\sigma}}(\overrightarrow{g})(x)\right)\omega(x)dx \lesssim  \sum_{\beta \in \{0,  1/3\}^{d} }  \int_{\mathbb{R}^{d}}\Psi\left( \mathcal{M}_{\overrightarrow{\sigma}}^{\mathcal{D}^{\beta}}(\overrightarrow{g})(x)\right)\omega(x)dx.        $$
It suffices then to prove that for any $\beta \in \{0,  1/3\}^{d}$ fixed,   
$$ \int_{\mathbb{R}^{d}}\Psi\left( \mathcal{M}^{\mathcal{D}^{\beta}}_{\overrightarrow{\sigma}}(\overrightarrow{g})(x)\right)\omega(x)dx  \lesssim [\overrightarrow{\sigma},\omega]_{M_{\overrightarrow{\Phi},\Psi}}.     $$
    
Let $\mathcal{S}^\beta=\{Q_{k,j}\}_{j,k\in \mathbb{Z}}\subset \mathcal{D}^\beta$ be the family of all maximal cubes $Q_{k,j}$ with respect to the inclusion such that
$$   \prod_{i=1}^{n}\frac{1}{\sigma_{i}(Q_{k,j})  }\int_{Q_{k,j}}|g_{i}(x)|\sigma_{i}(x)dx > 2^{k}, ~~\forall~j \in \mathbb{Z}.           $$
Then $\mathcal{S}^\beta$ is a sparse family and
$$  A_{k}:= \left\{ x \in \mathbb{R}^{d} : \mathcal{M}_{\overrightarrow{\sigma}}^{\mathcal{D}^{\beta}} (\overrightarrow{g})(x) > 2^{k}    \right\}=\bigcup_{j\in \mathbb{Z}}Q_{k,j}.       $$
Using that  $\Psi$ is of upper-type $q$, we easily obtain
\begin{align*}
\int_{\mathbb{R}^{d}}\Psi\left( \mathcal{M}_{\overrightarrow{\sigma}}^{\mathcal{D}^{\beta}} \overrightarrow{g}(x)\right)\omega(x)dx 
&\lesssim \sum_{Q\in \mathcal{D}^{\beta}}\lambda_{Q}\Psi\left(\prod_{i=1}^{n}\frac{1}{\sigma_{i}(Q)  }\int_{Q}|g_{i}(x)|\sigma_{i}(x)dx\right),    
\end{align*}
where 
\[ \lambda_{Q} :=
\left\{
\begin{array}{ll} \omega(E_{Q}) & \mbox{ if $Q\in \mathcal{S}^\beta$}\\ 0 & \mbox{ otherwise}.
\end{array}
\right. \]
Let us put $$\nu_{\overrightarrow{\sigma}}:=  \frac{1}{\Phi\left(\prod_{i=1}^{n}\Phi_{i}^{-1}\left(\frac{1}{\sigma_{i}}\right)  \right)}.$$
Let $R \in \mathcal{D}^{\beta}$.
As  $(\overrightarrow{\sigma}, \omega) \in M_{\overrightarrow{\Phi},\Psi}$ and since the $E_{Q}$ are pairwise disjoint, we obtain 
 
$$ \sum_{Q\subset R, Q \in \mathcal{D}^{\beta}}\lambda_{Q}= \sum_{ Q\subset R, Q\in\mathcal{S}^\beta}\omega(E_{Q})  =\omega \left(\bigcup_{Q\subset R, Q\in\mathcal{S}^\beta} E_{Q}\right) \leq \omega(R) \leq \frac{[\overrightarrow{\sigma},\omega]_{M_{\overrightarrow{\Phi},\Psi}}}{\Psi \circ \Phi^{-1}\left( \frac{1}{\nu_{\overrightarrow{\sigma}}(R)}\right)}. $$
Thus by Theorem \ref{pro:main16yh6}, we have 
$$  \sum_{Q\in \mathcal{D}^{\beta}}\lambda_{Q}\Psi\left(\prod_{i=1}^{n}\frac{1}{\sigma_{i}(Q)  }\int_{Q}|g_{i}(x)|\sigma_{i}(x)dx\right) \lesssim [\overrightarrow{\sigma},\omega]_{M_{\overrightarrow{\Phi},\Psi}}.        $$
The proof is complete.
\end{proof}

\begin{proposition}\label{pro:mainaaqwqyh6}
Let $\sigma_{1},...,\sigma_{n},\omega$ be weights on $\mathbb{R}^{d}$, and $\Phi_{1},...,\Phi_{n},\Psi\in \mathscr{U}$. Let $\Phi$ be a one-to-one correspondence from $\mathbb{R}_{+}$ to itself such that 
 $\Phi^{-1}=\Phi_{1}^{-1}\times...\times \Phi_{n}^{-1}$, and $t\mapsto \frac{\Psi(t)}{\Phi(t)}$ is increasing on $\mathbb{R}_{+}^{*}$. Let us put  $\nu_{\overrightarrow{\sigma}}:=  \frac{1}{\Phi\left(\prod_{i=1}^{n}\Phi_{i}^{-1}\left(\frac{1}{\sigma_{i}}\right)  \right)}.$ If (\ref{eq:surmaron}) is satisfied and  $\mathcal{M}_{\overrightarrow{\sigma}} : L^{\Phi_{1}}(\sigma_{1})\times...\times L^{\Phi_{n}}(\sigma_{n})\longrightarrow L^{\Psi}(\omega)$ boundedly, then $(\overrightarrow{\sigma}, \omega) \in M_{\overrightarrow{\Phi},\Psi}$.
\end{proposition}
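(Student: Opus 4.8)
The plan is to test the boundedness hypothesis on characteristic functions of cubes, exactly as in Sawyer's original argument. Fix a cube $Q\in\mathcal{Q}$ and take $f_i=\chi_Q$ for each $i$. Choosing the cube $Q$ itself in the supremum defining $\mathcal{M}_{\overrightarrow{\sigma}}$, one sees at once that for $x\in Q$,
$$\mathcal{M}_{\overrightarrow{\sigma}}(\chi_Q,\ldots,\chi_Q)(x)\ge \prod_{i=1}^n\frac{1}{\sigma_i(Q)}\int_Q\sigma_i(y)\,dy=1,$$
so that $\mathcal{M}_{\overrightarrow{\sigma}}(\chi_Q,\ldots,\chi_Q)\ge \chi_Q$ pointwise. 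By monotonicity of the Luxemburg norm and Proposition \ref{pro:main92}, this gives
$$\frac{1}{\Psi^{-1}\left(\frac{1}{\omega(Q)}\right)}=\|\chi_Q\|_{L_{\omega}^{\Psi}}^{lux}\le \|\mathcal{M}_{\overrightarrow{\sigma}}(\chi_Q,\ldots,\chi_Q)\|_{L_{\omega}^{\Psi}}^{lux}.$$
On the other hand, the boundedness hypothesis together with Proposition \ref{pro:main92} yields
$$\|\mathcal{M}_{\overrightarrow{\sigma}}(\chi_Q,\ldots,\chi_Q)\|_{L_{\omega}^{\Psi}}^{lux}\le C\prod_{i=1}^n\|\chi_Q\|_{L_{\sigma_i}^{\Phi_i}}^{lux}=\frac{C}{\prod_{i=1}^n\Phi_i^{-1}\left(\frac{1}{\sigma_i(Q)}\right)},$$
where $C$ denotes the operator norm. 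Combining the two displays produces the testing inequality
$$\prod_{i=1}^n\Phi_i^{-1}\left(\frac{1}{\sigma_i(Q)}\right)\le C\,\Psi^{-1}\left(\frac{1}{\omega(Q)}\right),\qquad\forall\,Q\in\mathcal{Q}.\qquad(\star)$$

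Next I would feed $(\star)$ into the reverse-type hypothesis (\ref{eq:surmaron}). Writing $P:=\prod_{i=1}^n\Phi_i^{-1}\left(\frac{1}{\sigma_i(Q)}\right)$, condition (\ref{eq:surmaron}) reads $\frac{1}{\nu_{\overrightarrow{\sigma}}(Q)}\le C_1\,\Phi(P)$, and applying the increasing bijection $\Phi^{-1}$ gives $\Phi^{-1}\left(\frac{1}{\nu_{\overrightarrow{\sigma}}(Q)}\right)\le\Phi^{-1}\bigl(C_1\Phi(P)\bigr)$. The one technical point is to absorb the constant $C_1$ through $\Phi^{-1}$; here I would exploit the multiplicative structure $\Phi^{-1}=\Phi_1^{-1}\times\cdots\times\Phi_n^{-1}$ rather than any hypothesis on $\Phi$ itself. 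Since each $\Phi_i\in\mathscr{U}$, the map $t\mapsto \Phi_i(t)/t$ is nondecreasing, whence $t\mapsto\Phi_i^{-1}(t)/t$ is nonincreasing, so that $\Phi_i^{-1}(C_1 t)\le C_1\Phi_i^{-1}(t)$ for $C_1\ge 1$ (and trivially by monotonicity when $C_1<1$). Taking the product over $i$ yields $\Phi^{-1}(C_1 t)\le C_1^n\Phi^{-1}(t)$, hence $\Phi^{-1}\bigl(C_1\Phi(P)\bigr)\le C_1^n P$. Combined with $(\star)$ this gives
$$\Phi^{-1}\left(\frac{1}{\nu_{\overrightarrow{\sigma}}(Q)}\right)\le C_1^n P\le C_1^nC\,\Psi^{-1}\left(\frac{1}{\omega(Q)}\right).$$

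Finally I would apply $\Psi$ to both sides. Let $q\ge 1$ be such that $\Psi\in\mathscr{U}^{q}$; since $\Psi$ is of upper type $q$, I may extract the constant $C_1^nC$ (which we may assume to be $\ge 1$ without loss of generality), obtaining
$$\Psi\circ\Phi^{-1}\left(\frac{1}{\nu_{\overrightarrow{\sigma}}(Q)}\right)\le C_q\left(C_1^nC\right)^{q}\,\Psi\left(\Psi^{-1}\left(\frac{1}{\omega(Q)}\right)\right)=\frac{C_q\left(C_1^nC\right)^{q}}{\omega(Q)}.$$
Multiplying through by $\omega(Q)$ produces a bound independent of $Q$, so that
$$[\overrightarrow{\sigma},\omega]_{M_{\overrightarrow{\Phi},\Psi}}=\sup_{Q\in\mathcal{Q}}\omega(Q)\,\Psi\circ\Phi^{-1}\left(\frac{1}{\nu_{\overrightarrow{\sigma}}(Q)}\right)\le C_q\left(C_1^nC\right)^{q}<\infty,$$
which is precisely the assertion $(\overrightarrow{\sigma},\omega)\in M_{\overrightarrow{\Phi},\Psi}$. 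The only genuinely delicate step is the constant transfer through $\Phi^{-1}$ in the middle paragraph, where the multiplicative factorization of $\Phi^{-1}$ (and not a $\Delta_2$-type property of $\Phi$, which is not assumed) does the work; the remainder is a direct testing computation closed off by the upper-type property of $\Psi$.
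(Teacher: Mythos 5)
Your proof is correct and follows essentially the same route as the paper's: both are testing arguments on characteristic functions of a fixed cube that combine Proposition \ref{pro:main92} with condition (\ref{eq:surmaron}) and close via the upper-type property of $\Psi$. The only cosmetic difference is that you work at the level of Luxemburg norms (deriving the intermediate testing inequality $\prod_{i=1}^n\Phi_i^{-1}(1/\sigma_i(Q))\le C\,\Psi^{-1}(1/\omega(Q))$ and absorbing the constant from (\ref{eq:surmaron}) through the factorization $\Phi^{-1}=\Phi_1^{-1}\times\cdots\times\Phi_n^{-1}$), whereas the paper estimates the modular $\int_R\Psi(\mathcal{M}_{\overrightarrow{\sigma}}(\cdot))\,\omega\,dx$ directly on normalized characteristic functions and absorbs constants via the upper type of $\Psi\circ\Phi^{-1}$.
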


\begin{proof}
Let $R\in \mathcal{Q}$. 
As (\ref{eq:surmaron}) holds and $\mathcal{M}_{\overrightarrow{\sigma}}: L^{\Phi_{1}}(\sigma_{1}) \times...\times L^{\Phi_{n}}(\sigma_{n})\longrightarrow  L^{\Psi}(\omega)    $ boundedly, we have
\begin{align*}
\omega(R)\Psi \circ \Phi^{-1}\left( \frac{1}{\nu_{\overrightarrow{\sigma}}(R)}\right) &= \int_{R}\Psi \circ \Phi^{-1}\left( \frac{1}{\nu_{\overrightarrow{\sigma}}(R)}\right)\omega(x)dx \\
&\lesssim \int_{R}\Psi \circ \Phi^{-1}\left( \Phi\left(\prod_{i=1}^{n}\Phi_{i}^{-1}\left(\frac{1}{\sigma_{i}(R)}\right)\right)\right)\omega(x)dx\\
&=\int_{R}\Psi\left(\prod_{i=1}^{n}\frac{1}{\sigma_{i}(R)}\int_{R}\frac{\chi_{R}(y)}{\|\chi_{R}\|_{L_{\sigma_{i}}^{\Phi_{i}}}^{lux} }\sigma_{i}(y)dy\right)\omega(x)dx\\
&\lesssim \int_{R}\Psi\left( \mathcal{M}_{\overrightarrow{\sigma}}\left(\frac{\chi_{R}}{\|\chi_{R}\|_{L_{\sigma_{1}}^{\Phi_{1}}}^{lux}},...,\frac{\chi_{R}}{\|\chi_{R}\|_{L_{\sigma_{n}}^{\Phi_{n}}}^{lux}}\right)(x)\right)\omega(x)dx\\
&\lesssim 1. 
\end{align*}
\end{proof}

Recall that if for any $1\leq i \leq n$, $\sigma_{i}\equiv \sigma$,
then the weights $\nu_{\overrightarrow{\sigma}}$ and $\sigma$ are identical. In this case, under our hypotheses, (\ref{eq:surmaron}) is always satisfied. Also  $\sigma$ is doubling and so $\mathcal{M}_{\overrightarrow{\sigma}} : L^{\Phi_{1}}(\sigma)\times...\times L^{\Phi_{n}}(\sigma)\longrightarrow L^{\Psi}(\omega)$ boundedly if and only if  $(\overrightarrow{\sigma},\omega)\in M_{\overrightarrow{\Phi},\Psi}$. Indeed, if $\mathcal{M}_{\overrightarrow{\sigma}} : L^{\Phi_{1}}(\sigma)\times...\times L^{\Phi_{n}}(\sigma)\longrightarrow L^{\Psi}(\omega)$ boundedly, then  $(\overrightarrow{\sigma},\omega)\in M_{\overrightarrow{\Phi},\Psi}$ (see Proposition \ref{pro:mainaaqwqyh6}). Conversely, by replacing Theorem \ref{pro:main16yh6} by Corollary \ref{pro:main16aaqq6} in the arguments in the proof of Theorem \ref{pro:maing1}, we obtain that if  $(\overrightarrow{\sigma},\omega)\in M_{\overrightarrow{\Phi},\Psi}$, then $\mathcal{M}_{\overrightarrow{\sigma}} : L^{\Phi_{1}}(\sigma)\times...\times L^{\Phi_{n}}(\sigma)\longrightarrow L^{\Psi}(\omega)$ boundedly.  This proves Corollary \ref{pro:maingp1}.
\vskip .2cm
Let us now prove Theorem \ref{thm:LiSuaqn}.
\begin{proof}[Proof of Theorem \ref{thm:LiSuaqn}]
Suppose that     $\mathcal{M}_{\overrightarrow{\sigma}}$ is bounded from $L^{\Phi_1}(\sigma_1)\times\cdots\times L^{\Phi_n}(\sigma_n)$ to $L^\Psi(\omega)$.\\
Let $R\in \mathcal{Q}$. Then we have 
\begin{align*}
&\omega(R)\left(\prod_{i=1}^n\Psi\circ\Phi_i^{-1}\left(\frac{1}{\sigma_i(R)}\right)\right) \\
&= \left(\prod_{i=1}^n\Psi\circ\Phi_i^{-1}\left(\frac{1}{\sigma_i(R)}\right)\right)\int_{R}\omega(x)dx\\
&=\left(\prod_{i=1}^n\Psi\circ\Phi_i^{-1}\left(\frac{1}{\sigma_i(R)}\right)\right)\int_{R}\Psi\left(\prod_{i=1}^{n}\|\chi_{R}\|_{L_{\sigma_{i}}^{\Phi_{i}}}^{lux} \frac{1}{\sigma_{i}(R)}\int_{R}\frac{\chi_{R}(y)}{\|\chi_{R}\|_{L_{\sigma_{i}}^{\Phi_{i}}}^{lux} }\sigma_{i}(y)dy\right)\omega(x)dx\\
&\lesssim \left(\prod_{i=1}^n\Psi\circ\Phi_i^{-1}\left(\frac{1}{\sigma_i(R)}\right)\right)\times \Psi\left(\prod_{i=1}^{n}\|\chi_{R}\|_{L_{\sigma_{i}}^{\Phi_{i}}}^{lux} \right)\int_{R}\Psi\left( \mathcal{M}_{\overrightarrow{\sigma}}\left(\frac{\chi_{R}}{\|\chi_{R}\|_{L_{\sigma_{1}}^{\Phi_{1}}}^{lux}},...,\frac{\chi_{R}}{\|\chi_{R}\|_{L_{\sigma_{n}}^{\Phi_{n}}}^{lux}}\right)(x)\right)\omega(x)dx\\
&= \left(\prod_{i=1}^n\Psi\circ\Phi_i^{-1}\left(\frac{1}{\sigma_i(R)}\right)\right)\times \Psi\left(\prod_{i=1}^{n}\frac{1}{\Phi_i^{-1}\left(\frac{1}{\sigma_i(R)}\right)}\right)\int_{R}\Psi\left( \mathcal{M}_{\overrightarrow{\sigma}}\left(\frac{\chi_{R}}{\|\chi_{R}\|_{L_{\sigma_{1}}^{\Phi_{1}}}^{lux}},...,\frac{\chi_{R}}{\|\chi_{R}\|_{L_{\sigma_{n}}^{\Phi_{n}}}^{lux}}\right)(x)\right)\omega(x)dx\\
&\lesssim \left(\prod_{i=1}^n\Psi\circ\Phi_i^{-1}\left(\frac{1}{\sigma_i(R)}\right)\right)\times \prod_{i=1}^{n}\Psi\left(\frac{1}{\Phi_i^{-1}\left(\frac{1}{\sigma_i(R)}\right)}\right)\int_{R}\Psi\left( \mathcal{M}_{\overrightarrow{\sigma}}\left(\frac{\chi_{R}}{\|\chi_{R}\|_{L_{\sigma_{1}}^{\Phi_{1}}}^{lux}},...,\frac{\chi_{R}}{\|\chi_{R}\|_{L_{\sigma_{n}}^{\Phi_{n}}}^{lux}}\right)(x)\right)\omega(x)dx\\
&\lesssim \left(\prod_{i=1}^n\Psi\circ\Phi_i^{-1}\left(\frac{1}{\sigma_i(R)}\right)\right)\times \prod_{i=1}^{n}\frac{1}{\Psi\circ\Phi_i^{-1}\left(\frac{1}{\sigma_i(R)}\right)}\int_{R}\Psi\left( \mathcal{M}_{\overrightarrow{\sigma}}\left(\frac{\chi_{R}}{\|\chi_{R}\|_{L_{\sigma_{1}}^{\Phi_{1}}}^{lux}},...,\frac{\chi_{R}}{\|\chi_{R}\|_{L_{\sigma_{n}}^{\Phi_{n}}}^{lux}}\right)(x)\right)\omega(x)dx \\
&\lesssim 1.
\end{align*}
Conversely, assume that  $[ {\vec{\sigma}},\omega]_{K_{\vec {\Phi},\Psi}}$ is finite. Without loss of generality, we can fix $n=2$. \\
Let $(f_{1},f_{2})\in L^{\Phi_{1}}(\sigma_{1})\times L^{\Phi_{2}}(\sigma_{2})$. Suppose that for $ i=1,2$,  $f_{i}\not\equiv 0$ and put $g_{i}=\frac{f_{i}}{\|f_{i}\|_{L_{\sigma_{i}}^{\Phi_{i}}}^{lux}}$.
%Puisque chaque $\sigma_{i}$ satisfait \`a la condition de doublement, nous d\'eduisons que pour presque tout $x\in \mathbb{R}^{d}$, 
%$$ \mathcal{M}_{\overrightarrow{\sigma}} (g_{1},g_{2})(x) \lesssim  \sum_{\beta \in \{0,  1/3\}^{d} }   \mathcal{M}_{\overrightarrow{\sigma}}^{\mathcal{D}^{\beta}} (g_{1},g_{2})(x). 
 %     $$
%Soit $q\geq 1$ tel que  $\Psi \in \widetilde{\mathscr{U}}^{q}$. Puisque $\Psi$ est convexe et de type sup\'erieur $q$, il s'ensuit que:
%$$ \int_{\mathbb{R}^{d}}\Psi\left( \mathcal{M}_{\overrightarrow{\sigma}}(g_{1},g_{2})(x)\right)\omega(x)dx \lesssim \sum_{\beta \in \{0,  1/3\}^{d} }  \int_{\mathbb{R}^{d}}\Psi\left( \mathcal{M}_{\overrightarrow{\sigma}}^{\mathcal{D}^{\beta}}(g_{1},g_{2})(x)\right)\omega(x)dx.        $$
%Fixons $\beta \in \{0,  1/3\}^{d}$. Soit $k\in \mathbb{Z}$.
As in the previous result, we only have to estimate 
$$\int_{\mathbb{R}^{d}}\Psi\left( \mathcal{M}_{\overrightarrow{\sigma}}^{\mathcal{D}^{\beta}}(g_{1},g_{2})(x)\right)\omega(x)dx.$$
Let $\mathcal{S}^\beta=\{Q_{k,j}\}_{j\in \mathbb{Z}}$ be the family of maximal (with respect to inclusion) dyadic cubes such that 
$$   \prod_{i=1}^{2}\frac{1}{\sigma_{i}(Q_{k,j})  }\int_{Q_{k,j}}|g_{i}(x)|\sigma_{i}(x)dx > 2^{k}, ~~\forall~j \in \mathbb{Z}.           $$
We have seen in the previous theorem that $\mathcal{S}^\beta$ is a sparse family.
%Puisque les $Q_{k,j}$ sont deux \`a deux disjoints,  nous d\'eduisons que: 
%$$  A_{k}:= \left\{ x \in \mathbb{R}^{d} : \mathcal{M}_{\overrightarrow{\sigma}}^{\mathcal{D}^{\beta}} (g_{1},g_{2})(x) > 2^{k}    \right\}=\bigcup_{j\in \mathbb{Z}}Q_{k,j}.       $$
%Posons:  $$  E_{Q_{k,j}}:= Q_{k,j}\backslash A_{k+1}, ~~\forall~k,j \in \mathbb{Z}.     $$
%Par construction,  $E_{Q_{k,j}}\cap E_{Q_{p,l}}=\emptyset$,  pour $(k,j)\not=(p,l)$  et de plus
%$$   2^{k}<  \mathcal{M}_{\overrightarrow{\sigma}}^{\mathcal{D}^{\beta}} (g_{1},g_{2})(x)\leq 2^{k+1},  ~~\forall~x\in E_{Q_{k,j}}.            $$
%We deduce that 
%$ \mathbb{R}^{d}=\bigcup_{k,j\in \mathbb{Z}}E_{Q_{k,j}}.$
As  $\Psi$ is of upper-type $q$, we obtain
\begin{align*}
&\int_{\mathbb{R}^{d}}\Psi\left( \mathcal{M}_{\overrightarrow{\sigma}}^{\mathcal{D}^{\beta}}(g_{1},g_{2})(x)\right)\omega(x)dx 
= \sum_{Q\in\mathcal{S}^\beta}\int_{E_Q}\Psi\left( \mathcal{M}_{\overrightarrow{\sigma}}^{\mathcal{D}^{\beta}}(g_{1},g_{2})(x)\right)\omega(x)dx \\
&\lesssim \sum_{Q\in\mathcal{S}^\beta}\Psi\left(\prod_{i=1}^{2}\frac{1}{\sigma_{i}(Q)  }\int_{Q}|g_{i}(x)|\sigma_{i}(x)dx\right)\omega(E_Q)
\lesssim \sum_{Q\in\mathcal{S}^\beta}\prod_{i=1}^{2}\Psi\left(\frac{1}{\sigma_{i}(Q)  }\int_{Q}|g_{i}(x)|\sigma_{i}(x)dx\right)\omega(E_Q)\\
&= \sum_{Q\in \mathcal{S}^\beta}\Psi\left(\frac{1}{\sigma_{1}(Q)  }\int_{Q}|g_{1}(x)|\sigma_{1}(x)dx\right)\Psi\left(\frac{1}{\sigma_{2}(Q)  }\int_{Q}|g_{2}(x)|\sigma_{2}(x)dx\right)\omega(E_Q)\\
&= \sum_{Q\in \mathcal{D}^{\beta}}\lambda_{Q}\Psi\left(\frac{1}{\sigma_{1}(Q)  }\int_{Q}|g_{1}(x)|\sigma_{1}(x)dx\right),    
\end{align*}
where
\[ \lambda_{Q} :=
\left\{
\begin{array}{ll} \omega(E_{Q})\Psi\left(\frac{1}{\sigma_{2}(Q)  }\int_{Q}|g_{2}(x)|\sigma_{2}(x)dx\right) & \mbox{ if $Q\in\mathcal{S}^\beta$}\\ 0 & \mbox{ otherwise}.
\end{array}
\right. \]
Let $R \in \mathcal{D}^{\beta}$ be fixed. We have 
\begin{eqnarray*}  \sum_{Q\subset R, Q \in \mathcal{D}^{\beta}}\lambda_{Q} &=& \sum_{ Q\subset R, Q\in\mathcal{S}^\beta}\Psi\left(\frac{1}{\sigma_{2}(Q)  }\int_{Q}|g_{2}(x)|\sigma_{2}(x)dx\right)\omega(E_Q)\\ &=& \sum_{Q\subset R, Q \in \mathcal{D}^{\beta}}\lambda_{Q}'\Psi\left(\frac{1}{\sigma_{2}(Q)  }\int_{Q}|g_{2}(x)|\sigma_{2}(x)dx\right),    
\end{eqnarray*}
where  

\[ \lambda_{Q}' :=
\left\{
\begin{array}{ll} \omega(E_{Q}) & \mbox{ if $Q\in\mathcal{S}^\beta$}\\ 0 & \mbox{ otherwise}.
\end{array}
\right. \]
As $t\mapsto\frac{\Psi(t)}{\Phi_1(t)}$ is nondecreasing on $\mathbb{R}^{*}_{+}$, we have that $t\mapsto\frac{1}{\Psi\circ\Phi_{1}^{-1}(\frac{1}{t})}$ is also nondecreasing on  $\mathbb{R}^{*}_{+}$. 
Let $B \in \mathcal{D}^{\beta}$ and such that $B\subset R$. 
Since  $[ {\vec{\sigma}},\omega]_{K_{\vec {\Phi},\Psi}}$ is finite and as the $E_{Q}$ are pairwise disjoint, 
we have
\begin{align*}
\sum_{Q\subset B, Q \in \mathcal{D}^{\beta}}\lambda_{Q}'&
= \sum_{ Q\subset B, Q\in\mathcal{S}^\beta}\omega(E_Q)  =\omega \left(\bigcup_{Q\subset B, Q\in\mathcal{S}^\beta} E_Q\right) \\
&\leq \omega(B) \leq \frac{[\overrightarrow{\sigma},\omega]_{K_{\overrightarrow{\Phi},\Psi}}}{\prod_{i=1}^{2}\Psi \circ \Phi_{i}^{-1}\left( \frac{1}{\sigma_{i}(B)}\right)} \leq  \frac{[\overrightarrow{\sigma},\omega]_{K_{\overrightarrow{\Phi},\Psi}}}{\Psi \circ \Phi_{1}^{-1}\left( \frac{1}{\sigma_{1}(R)}\right)}\times \frac{1}{\Psi \circ \Phi_{2}^{-1}\left( \frac{1}{\sigma_{2}(B)}\right)}.
\end{align*}
Hence $\{\lambda_{Q}'\}_{Q \in \mathcal{D}^{\beta}}$ is a $(\sigma_{2}, \Psi \circ \Phi_{2}^{-1})-$Carleson sequence with Carleson constant $$   \Lambda'_{Carl} \lesssim  \frac{[\overrightarrow{\sigma},\omega]_{K_{\overrightarrow{\Phi},\Psi}}}{\Psi \circ \Phi_{1}^{-1}\left( \frac{1}{\sigma_{1}(R)}\right)}.     $$ 
Thus by Theorem \ref{pro:main16aaqaqq6}, we have that 
$$  \sum_{Q\subset R, Q \in \mathcal{D}^{\beta}}\lambda_{Q}= \sum_{Q\subset R, Q \in \mathcal{D}^{\beta}}\lambda_{Q}'\Psi\left(\frac{1}{\sigma_{2}(Q)  }\int_{Q}|g_{2}(x)|\sigma_{2}(x)dx\right) \lesssim   \frac{[\overrightarrow{\sigma},\omega]_{K_{\overrightarrow{\Phi},\Psi}}}{\Psi \circ \Phi_{1}^{-1}\left( \frac{1}{\sigma_{1}(R)}\right)}.  $$
We deduce that $\{\lambda_{Q}\}_{Q \in \mathcal{D}^{\beta}}$ is a $(\sigma_{1}, \Psi \circ \Phi_{1}^{-1})-$Carleson sequence with Carleson constant $$   \Lambda_{Carl} \lesssim [\overrightarrow{\sigma},\omega]_{K_{\overrightarrow{\Phi},\Psi}}.    $$
It follows from Theorem \ref{pro:main16aaqaqq6} that 
$$ \int_{\mathbb{R}^{d}}\Psi\left( \mathcal{M}_{\overrightarrow{\sigma}}^{\mathcal{D}^{\beta}}(g_{1},g_{2})(x)\right)\omega(x)dx \lesssim \sum_{Q\in \mathcal{D}^{\beta}}\lambda_{Q}\Psi\left(\frac{1}{\sigma_{1}(Q)  }\int_{Q}|g_{1}(x)|\sigma_{1}(x)dx\right) \lesssim [\overrightarrow{\sigma},\omega]_{K_{\overrightarrow{\Phi},\Psi}}.        $$
Hence $$ \|\mathcal{M}_{\overrightarrow{\sigma}}(f_{1},f_{2})\|_{L_{\omega}^{\Psi}}^{lux} \lesssim \Psi^{-1}\left( [\overrightarrow{\sigma},\omega]_{K_{\overrightarrow{\Phi},\Psi}}\right)\prod_{i=1}^{2}\|f_{i}\|_{L_{\sigma_{i}}^{\Phi_{i}}}^{lux}.       $$     
The proof is complete.
\end{proof}
%%%%%%%%%%%EndMPsiPhi%%%%%%%%%%%%%%%%%%%%%%%%%%%%%%%%%%%%%%%

\subsection{Proofs of Sawyer-type results}
Let us start by observing that from Lemma \ref{pro:main80} one has the following. 
\begin{equation}\label{eq:of895ua1l}
 \mathcal{M}_{\alpha} (\overrightarrow{\sigma}.\overrightarrow{g}) \leq 6^{(nd-\alpha)} \sum_{\beta \in \{0, 1/3\}^{d} }   \mathcal{M}^{\mathcal{D}^{\beta}}_{\alpha} (\overrightarrow{\sigma}.\overrightarrow{g}).
\end{equation}
It follows from the above and the convexity of the Orlicz functions in consideration that to estimate $\mathcal{M}_\alpha$, it is enough to estimate its dyadic versions $\mathcal{M}_\alpha^{\mathcal D^\beta}$.
\vskip .2cm
%%%%LiSUN%%%%%%%%%%%%%%%%%%%%%%%%%%%%%%%%%%%%%%%%%%%%%%%%%%%
Let us prove Theorem \ref{thm:LiSun}.

\begin{proof}[Proof of Theorem \ref{thm:LiSun}]
We restrict ourself to the bilinear case  as the general case follows the same. We start by proving the following.
\begin{lemma}\label{lem:LiSun1}
Suppose that $0\le \alpha<2d$. Let $\Phi_1,\Phi_2, \Psi\in \mathscr U$ be such that $\Phi_2\in \nabla_2$ ,   $\Psi\in \Delta'$  and  $t\mapsto \frac{\Psi(t)}{\Phi_i(t)}$ is nondecreasing for any i=1,2. Let $\sigma_1,\sigma_2,\omega$ be three weights. Then if $g$ is a function with $\supp g\subset R\in \mathcal D^\beta$ and $\|g\|_{\Phi_2,\sigma_2}^{lux}=1$, then
\Be\label{eq:LiSun}
%\|\chi_R\mathcal {M}_{\alpha}^{\beta}(\chi_R\sigma_1,g\sigma_2)\|_{\Psi,\omega}^{lux}\lesssim \frac{[\vec {\sigma},\omega]_{\mathcal {L}_{\vec \Phi,\Psi}}}{\Psi\circ\Phi_1^{-1}\left(\frac{1}{\sigma_1(R)}\right)}.
\int_R\Psi\left(\mathcal {M}_{\alpha}^{\mathcal D^\beta}(\chi_R\sigma_1,g\sigma_2)(x)\right)\omega(x)dx\lesssim \frac{[\vec {\sigma},\omega]_{ L_{\vec \Phi,\Psi}}}{\Psi\circ\Phi_1^{-1}\left(\frac{1}{\sigma_1(R)}\right)}
\Ee
\end{lemma}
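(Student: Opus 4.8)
The plan is to prove this local testing estimate by a principal-cube linearization of the second entry $g$, reducing matters to the single-weight Carleson embedding theorem (Corollary \ref{pro:main16aaqaqq6}) together with the information encoded in $[\vec{\sigma},\omega]_{L_{\vec{\Phi},\Psi}}$. First I would localize: since $\supp g\subset R$ and $0\le\alpha<2d$, for $x\in R$ any dyadic cube $Q\ni x$ with $Q\supsetneq R$ yields the value $|Q|^{\alpha/d-2}\sigma_1(R)\int_R|g|\sigma_2$, which is nonincreasing in $|Q|$ and hence dominated by its value at $Q=R$. Thus on $R$ one may replace $\mathcal{M}_\alpha^{\mathcal{D}^\beta}(\chi_R\sigma_1,g\sigma_2)$ by the localized supremum over $Q\ni x$ with $Q\subseteq R$, which I denote $\mathcal{M}_\alpha^{\mathcal{D}^\beta,\mathrm{loc}}$, and it suffices to estimate the integral of $\Psi$ of this quantity.

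Next I would run the principal-cube construction for $g$ relative to $\sigma_2$: let $\mathcal{F}$ consist of $R$ together with, recursively, the maximal dyadic $Q\subsetneq F$ with $m_{\sigma_2}(g,Q)>2m_{\sigma_2}(g,F)$. The usual geometric argument shows the principal children of any $F$ carry at most half of $\sigma_2(F)$, so $\mathcal{F}$ is $\sigma_2$-Carleson, namely $\sum_{F'\subseteq P,\,F'\in\mathcal{F}}\sigma_2(F')\le 2\sigma_2(P)$ for every $P\in\mathcal{D}^\beta$; moreover, if $F=\pi(Q)$ is the minimal principal cube containing $Q$, then $m_{\sigma_2}(g,Q)\le 2m_{\sigma_2}(g,F)$. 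Writing $\frac1{|Q|}\int_Q|g|\sigma_2=\frac{\sigma_2(Q)}{|Q|}m_{\sigma_2}(g,Q)$ and using $\chi_R\equiv\chi_F\equiv 1$ on $Q\subseteq F\subseteq R$, this gives the pointwise linearization
$$\mathcal{M}_\alpha^{\mathcal{D}^\beta,\mathrm{loc}}(\chi_R\sigma_1,g\sigma_2)(x)\le 2\sup_{F\in\mathcal{F},\,F\ni x}m_{\sigma_2}(g,F)\,\mathcal{M}_\alpha(\sigma_1\chi_F,\sigma_2\chi_F)(x).$$
Applying $\Psi$ (which lies in $\Delta'$ and is of upper type), bounding the supremum over $F\ni x$ by the sum, splitting the product via $\Delta'$, and integrating against $\omega$, I would reach
$$\int_R\Psi\big(\mathcal{M}_\alpha^{\mathcal{D}^\beta,\mathrm{loc}}(\chi_R\sigma_1,g\sigma_2)\big)\omega\lesssim\sum_{F\in\mathcal{F}}\Psi\big(m_{\sigma_2}(g,F)\big)\int_F\Psi\big(\mathcal{M}_\alpha(\sigma_1\chi_F,\sigma_2\chi_F)\big)\omega.$$
The definition of $[\vec{\sigma},\omega]_{L_{\vec{\Phi},\Psi}}$ bounds the inner integral by $[\vec{\sigma},\omega]_{L_{\vec{\Phi},\Psi}}\big(\Psi\circ\Phi_1^{-1}(1/\sigma_1(F))\,\Psi\circ\Phi_2^{-1}(1/\sigma_2(F))\big)^{-1}$, and since $F\subseteq R$ forces $\Psi\circ\Phi_1^{-1}(1/\sigma_1(F))\ge\Psi\circ\Phi_1^{-1}(1/\sigma_1(R))$, the factor $[\vec{\sigma},\omega]_{L_{\vec{\Phi},\Psi}}/\Psi\circ\Phi_1^{-1}(1/\sigma_1(R))$ can be pulled out of the sum.

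It then remains to show $\sum_{F\in\mathcal{F}}\Psi(m_{\sigma_2}(g,F))/\Psi\circ\Phi_2^{-1}(1/\sigma_2(F))\lesssim 1$, which is precisely a Carleson embedding for the single weight $\sigma_2$ and the pair $(\Phi_2,\Psi)$. Setting $\eta_F:=1/\Psi\circ\Phi_2^{-1}(1/\sigma_2(F))=\widetilde\Omega(\sigma_2(F))$ with $\widetilde\Omega(t)=1/(\Psi\circ\Phi_2^{-1}(1/t))$, Lemma \ref{pro:main40} (with $n=1$, $\Phi=\Phi_2$) gives $\widetilde\Omega\in\mathscr{U}$, so $\widetilde\Omega$ is superadditive and of upper type; combined with the $\sigma_2$-Carleson packing of $\mathcal{F}$ this yields $\sum_{F\subseteq P,\,F\in\mathcal{F}}\eta_F\le\widetilde\Omega\big(\sum_{F\subseteq P,\,F\in\mathcal{F}}\sigma_2(F)\big)\le\widetilde\Omega(2\sigma_2(P))\lesssim\widetilde\Omega(\sigma_2(P))$, i.e. $\{\eta_F\}$ is $(\sigma_2,\Psi\circ\Phi_2^{-1})$-Carleson. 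Since $\Phi_2\in\nabla_2$ and $t\mapsto\Psi(t)/\Phi_2(t)$ is increasing, Corollary \ref{pro:main16aaqaqq6} applied to $g$ with $\|g\|_{\Phi_2,\sigma_2}^{lux}=1$ delivers $\sum_{F\in\mathcal{F}}\eta_F\Psi(m_{\sigma_2}(g,F))\lesssim 1$, and combining the estimates finishes the proof.

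I expect the main obstacle to be the passage from the $\sigma_2$-Carleson packing of the principal cubes to the $(\sigma_2,\Psi\circ\Phi_2^{-1})$-Carleson condition for $\{\eta_F\}$: this is exactly where the superadditivity and upper-type of $\widetilde\Omega$ supplied by Lemma \ref{pro:main40} are indispensable, and where the nonlinearity of the Orlicz functions (as opposed to the power case) must be handled with care. The localization step and the verification that the principal-cube linearization interacts correctly with the fractional factor $|Q|^{\alpha/d}$ and the two distinct weights $\sigma_1,\sigma_2$ are the remaining delicate points.
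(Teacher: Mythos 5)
Your proof is correct, but it takes a genuinely different route from the paper's. The paper runs the level-set (sparse) decomposition on the bilinear maximal function $\mathcal M_\alpha^{\mathcal D^\beta}(\chi_R\sigma_1,g\sigma_2)$ itself: it dominates the integral by $\sum_{Q\subseteq R,\,Q\in\mathcal S}\Psi\bigl(\tfrac{\int_Q\chi_R\sigma_1\int_Q|g|\sigma_2}{|Q|^{2-\alpha/d}}\bigr)\omega(E_Q)$, splits off the top term $Q=R$, which is handled by Orlicz--H\"older duality with the complementary function $\Psi_2$ of $\Phi_2$ (via $t/\Psi_2^{-1}(t)\approx\Phi_2^{-1}(t)$), and for $Q\subsetneq R$ freezes the $g$-averages using $\Delta'$ and shows that the coefficients $\lambda_Q=\Psi\bigl(\tfrac{\int_Q\chi_R\sigma_1\int_Q\chi_R\sigma_2}{|Q|^{2-\alpha/d}}\bigr)\omega(E_Q)$ form a $(\sigma_2,\Psi\circ\Phi_2^{-1})$-Carleson sequence by \emph{testing}: the disjointness of the sets $E_Q$ turns the packing sum into $\int_K\Psi\bigl(\mathcal M_\alpha^{\mathcal D^\beta}(\chi_K\sigma_1,\chi_K\sigma_2)\bigr)\omega$, which the $L_{\vec\Phi,\Psi}$ condition controls. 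You instead build principal (stopping) cubes from $g$ and $\sigma_2$ alone, linearize the maximal function pointwise over those cubes, invoke the testing constant separately on each principal cube $F$, and verify the Carleson property of the purely weight-dependent sequence $\eta_F=1/\Psi\circ\Phi_2^{-1}(1/\sigma_2(F))$ through the superadditivity and upper type of $\widetilde\Omega$ (Lemma \ref{pro:main40}) combined with the stopping-time packing $\sum_{F\subseteq P}\sigma_2(F)\le 2\sigma_2(P)$; both proofs then close with Corollary \ref{pro:main16aaqaqq6} and the same monotonicity $\sigma_1(F)\le\sigma_1(R)$ to extract the factor $1/\Psi\circ\Phi_1^{-1}(1/\sigma_1(R))$. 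What your route buys: the top cube $R$ is treated uniformly as a principal cube, so the complementary-function/H\"older step disappears entirely, and the Carleson verification is internal to the weights rather than mediated by testing integrals. What the paper's route buys: it reuses verbatim the sparse machinery employed elsewhere in the paper (e.g.\ Proposition \ref{prop:LiSunsuffdyabili}) and in the power-weight antecedents of Li--Sun and \cite{sehbaf}, and its testing-based Carleson verification transfers unchanged to the general $n$-linear situation. One shared, harmless wrinkle: you apply Lemma \ref{pro:main40} and Corollary \ref{pro:main16aaqaqq6}, which are stated for $t\mapsto\Psi(t)/\Phi_2(t)$ \emph{increasing}, under the lemma's hypothesis of \emph{nondecreasing}; the paper's own proof makes exactly the same identification.
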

\begin{proof}
Let $a>2^{2d-\alpha}$. To each integer $k$, associate the set $$\Omega_k:=\{x\in \mathbb R^d:a^k<\mathcal M_{\alpha}^{\mathcal D^\beta}(\chi_R\sigma_1,g\sigma_2)(x)\le a^{k+1}\}.$$
There exists a family $\mathcal{S}=\{Q_{k,j}\}_{j\in \mathbb N}$   
of dyadic cubes maximal with respect to the inclusion and such that  $$\frac{1}{|Q_{k,j}|^{2-\frac{\alpha}{d}}}\int_{Q_{k,j}}\chi_R(x)\sigma_1(x)dx\int_{Q_{k,j}}|g(x)|\sigma_2(x)dx>a^k$$
so that $$\Omega_k\subseteq \cup_{j\in \mathbb N}Q_{k,j}.$$
Following the usual arguments and using that $\Psi$ is of some upper-type $q$, we obtain that
\Beas
L_R(g) &:=& \int_R\Psi\left(\mathcal {M}_{\alpha}^{\mathcal D^\beta}(\chi_R\sigma_1,g\sigma_2)(x)\right)\omega(x)dx\\ &\lesssim& a^q\sum_{Q\subseteq R, Q\in \mathcal{S}}\Psi\left(\frac{\int_{Q}\chi_R\sigma_1\int_{Q}|g|\sigma_2}{|Q|^{2-\frac{\alpha}{d}}}\right)\omega(E_Q)\\ &=& a^q\sum_{Q\subset R, Q\in \mathcal{S}}\Psi\left(\frac{\int_{Q}\chi_R\sigma_1\int_{Q}|g|\sigma_2}{|Q|^{2-\frac{\alpha}{d}}}\right)\omega(E_Q)\\ &+& a^q\Psi\left(\frac{\int_{R}\chi_R\sigma_1\int_{R}|g|\sigma_2}{|R|^{2-\frac{\alpha}{d}}}\right)\omega(E_R)\\ &=& a^q(T_1+T_2)
\Eeas
where $$T_1=\sum_{Q\subset R, Q\in \mathcal{S}}\Psi\left(\frac{\int_{Q}\chi_R\sigma_1\int_{Q}|g|\sigma_2}{|Q|^{2-\frac{\alpha}{d}}}\right)\omega(E_Q),$$
and
$$T_2=\Psi\left(\frac{\int_{R}\chi_R\sigma_1\int_{R}|g|\sigma_2}{|R|^{2-\frac{\alpha}{d}}}\right)\omega(E_R).$$
Let us start by estimating the term $T_2$. We denote by $\Psi_2$ the complementary function of $\Phi_2$. We obtain
\Beas
T_2 &=& \Psi\left(\frac{\int_{R}\chi_R\sigma_1\int_{R}|g|\sigma_2}{|R|^{2-\frac{\alpha}{d}}}\right)\omega(E_R)\\ &\lesssim& \Psi\left(\frac{1}{\sigma_2(R)}\int_{R}|g|\sigma_2\right)\int_R\Psi\left(\mathcal {M}_{\alpha}^{\mathcal D^\beta}(\chi_R\sigma_1,\chi_R\sigma_2)\right)\omega(x)dx\\ &\lesssim& \Psi\left(\frac{1}{\sigma_2(R)}\|g\|_{\Phi_2,\sigma_2}^{lux}\|\chi_R\|_{\Psi_2,\sigma_2}^{lux}\right)\int_R\Psi\left(\mathcal {M}_{\alpha}^{\mathcal D^\beta}(\chi_R\sigma_1,\chi_R\sigma_2)\right)\omega(x)dx\\ &\lesssim& \Psi\left(\frac{1}{\sigma_2(R)\Psi_2^{-1}\left(\frac{1}{\sigma_2(R)}\right)}\right)\int_R\Psi\left(\mathcal {M}_{\alpha}^{\mathcal D^\beta}(\chi_R\sigma_1,\chi_R\sigma_2)\right)\omega(x)dx\\ &\lesssim& \Psi\circ\Phi_2^{-1}\left(\frac{1}{\sigma_2(R)}\right)\int_R\Psi\left(\mathcal {M}_{\alpha}^{\mathcal D^\beta}(\chi_R\sigma_1,\chi_R\sigma_2)\right)\omega(x)dx\\ &\lesssim& \frac{[\vec {\sigma},\omega]_{L_{\vec \Phi,\Psi}}}{\Psi\circ\Phi_1^{-1}\left(\frac{1}{\sigma_1(R)}\right)}.
\Eeas
In the above, we have used that $\frac{t}{\Psi_2^{-1}(t)}\approx \Phi_2^{-1}(t)\quad\forall t>0$.
\vskip .2cm
We observe that
\Beas
T_1 &=&\sum_{Q\subset R, Q\in \mathcal{S}}\Psi\left(\frac{\int_{Q}\chi_R\sigma_1\int_{Q}|g|\sigma_2}{|Q|^{2-\frac{\alpha}{d}}}\right)\omega(E_Q)\\ &\lesssim& \sum_{Q\subset R, Q\in \mathcal{S}}\Psi\left(m_{\sigma_2}(|g|,Q)\right)\Psi\left(\frac{\int_{Q}\chi_R\sigma_1\int_{Q}\chi_R\sigma_2}
{|Q|^{2-\frac{\alpha}{d}}}\right)\omega(E_Q)\\ &=& \sum_{Q\in \mathcal D^\beta}\lambda_Q\Psi\left(m_{\sigma_2}(|g|,Q)\right)
\Eeas
where
$$
\lambda_Q:=\left\{ \begin{matrix} \Psi\left(\frac{\int_{Q}\chi_R\sigma_1\int_{Q}\chi_R\sigma_2}
{|Q|^{2-\frac{\alpha}{d}}}\right)\omega(E_Q) &\text{if }& Q\subset R, Q\in \mathcal{S}\\
      0 & \text{ otherwise} .
                                  \end{matrix} \right.
$$
Let us prove that $\{\lambda_Q\}_{Q\in \mathcal D}$ is $(\sigma_2,\Psi\circ\Phi_2^{-1})$-Carleson sequence. Let $K\in \mathcal D^\beta$, $K\subset R$. Then
\Beas
\sum_{Q\in \mathcal D^\beta, Q\subseteq K}\lambda_Q &=& \sum_{Q\in \mathcal D^\beta, Q\subseteq K}\Psi\left(\frac{\int_{Q}\chi_R\sigma_1\int_{Q}\chi_R\sigma_2}
{|Q|^{2-\frac{\alpha}{d}}}\right)\omega(E_Q)\\ &=& \sum_{Q\in \mathcal D^\beta, Q\subseteq K}\int_{E_Q}\Psi\left(\frac{\int_{Q}\chi_R\sigma_1\int_{Q}\chi_R\sigma_2}
{|Q|^{2-\frac{\alpha}{d}}}\right)\omega(x)dx\\ &\le& \int_K\Psi\left(\mathcal {M}_{\alpha}^{\mathcal D^\beta}(\chi_R\sigma_1,\chi_R\sigma_2)(x)\right)\omega(x)dx\\ &\le& \frac{1}{\Psi\circ\Phi_1^{-1}\left(\frac{1}{\sigma_1(K)}\right)}[\vec {\sigma},\omega]_{L_{\vec \Phi,\Psi}}\frac{1}{\Psi\circ\Phi_2^{-1}\left(\frac{1}{\sigma_2(K)}\right)}
\\ &\le& \frac{1}{\Psi\circ\Phi_1^{-1}\left(\frac{1}{\sigma_1(R)}\right)}[\vec {\sigma},\omega]_{L_{\vec \Phi,\Psi}}\frac{1}{\Psi\circ\Phi_2^{-1}\left(\frac{1}{\sigma_2(K)}\right)}
\Eeas
That is $\{\lambda_Q\}_{Q\in \mathcal D}$ is $(\sigma_2,\Psi\circ\Phi_2^{-1})$-Carleson sequence with Carleson constant  $$\Lambda_{Carl}\lesssim \frac{[\vec {\sigma},\omega]_{L_{\vec \Phi,\Psi}}}{\Psi\circ\Phi_1^{-1}\left(\frac{1}{\sigma_1(R)}\right)}.$$ Thus by Corollary \ref{pro:main16aaqaqq6},
$$T_1\lesssim \frac{[\vec {\sigma},\omega]_{L_{\vec \Phi,\Psi}}}{\Psi\circ\Phi_1^{-1}\left(\frac{1}{\sigma_1(R)}\right)}.$$
From the estimates of $T_1$ and $T_2$, we conclude that
$$\int_R\Psi\left(\mathcal {M}_{\alpha}^{\mathcal D^\beta}(\chi_R\sigma_1,g\sigma_2)(x)\right)\omega(x)dx\lesssim \frac{[\vec {\sigma},\omega]_{L_{\vec \Phi,\Psi}}}{\Psi\circ\Phi_1^{-1}\left(\frac{1}{\sigma_1(R)}\right)}.$$
The proof is complete.
\end{proof}
Theorem \ref{thm:LiSun} in the bilinear case is a consequence of its following dyadic version.
\begin{proposition}\label{prop:LiSunsuffdyabili}
%Suppose that $0\le \alpha<2d$, that $1<p_1,p_2<\infty$. Put $\frac{1}{p}=\frac{1}{p_1}+\frac{1}{p_2}$ and let $q\ge \max\{p_1,p_2\}$, and $\sigma_1,\sigma_2,\omega$ be three weights. Then
%\Be\label{eq:LiSunsuffdyabili}
%\|\mathcal {M}_{d,\alpha}(f_1\sigma_1,f_2\sigma_2)\|_{q,\omega}\lesssim [\vec {\sigma},\omega]_{\mathcal S_{\vec P,q}}\|f\|_{p_1,\sigma_1}\|f\|_{p_2,\sigma_2}.
%\Ee
Given $\Phi_i\in \mathscr U\cap\nabla_2$, $i=1,2$, and $\Psi\in \widetilde{\mathscr U}$, suppose that $0\le \alpha<2d$, and $t\mapsto\frac{\Psi(t)}{\Phi_i(t)}$ is nondecreasing for $i=1,2$. Let $\sigma_1,\sigma_2$ and $v$ be weights. Define
$$[ {\vec{\sigma}},v]_{L_{\vec {\Phi},\Psi}}:=\sup_{Q\in \mathcal Q}\left(\prod_{i=1}^2\Psi\circ\Phi_i^{-1}\left(\frac{1}{\sigma_i(Q)}\right)\right)\left(\int_Q\Psi\left(\mathcal {M}_\alpha(\sigma_1\chi_Q,\sigma_2\chi_Q)(x)\right)v(x)dx\right).$$
Then $\mathcal M_\alpha^{\mathcal D^\beta}(\vec{\sigma}.)$ is bounded from $L^{\Phi_1}(\sigma_1)\times L^{\Phi_2}(\sigma_2)$ to $L^\Psi(v)$ if $[ {\vec{\sigma}},v]_{L_{\vec {\Phi},\Psi}}$ is finite. Moreover,
$$\|\mathcal M_\alpha^{\mathcal D^\beta}(\vec{\sigma}.)\|_{\left(\prod_{i=1}^2L^{\Phi_i}(\sigma_i)\right)\rightarrow L^\Psi(v)}\lesssim \Psi^{-1}\left([ {\vec{\sigma}},v]_{L_{\vec {\Phi},\Psi}}\right).$$
\end{proposition}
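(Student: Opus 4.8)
The plan is to derive the proposition from the localized estimate of Lemma~\ref{lem:LiSun1} by a Carleson embedding, after first linearizing the coupled fractional average. By Proposition~\ref{pro:main17jo} (applicable since $\Psi\in\widetilde{\mathscr U}$), it suffices, writing $g_i:=f_i/\|f_i\|_{L_{\sigma_i}^{\Phi_i}}^{lux}$ so that $\|g_i\|_{L_{\sigma_i}^{\Phi_i}}^{lux}=1$, to establish
\Be\label{eq:redsuffbili}
\int_{\mathbb R^d}\Psi\left(\mathcal M_\alpha^{\mathcal D^\beta}(\sigma_1 g_1,\sigma_2 g_2)(x)\right)v(x)\,dx\lesssim [\vec{\sigma},v]_{L_{\vec\Phi,\Psi}}.
\Ee
First I would run the level-set/sparse decomposition already used in the proofs of Theorems~\ref{pro:maing1} and~\ref{thm:LiSuaqn}: fixing $a>2^{2d-\alpha}$ and selecting the maximal dyadic cubes on which $\frac{1}{|Q|^{2-\alpha/d}}\int_Q\sigma_1|g_1|\int_Q\sigma_2|g_2|>a^k$, one obtains a sparse family $\mathcal S\subset\mathcal D^\beta$ for which, using that $\Psi$ is of upper type $q$,
$$\int_{\mathbb R^d}\Psi\left(\mathcal M_\alpha^{\mathcal D^\beta}(\sigma_1 g_1,\sigma_2 g_2)\right)v\lesssim\sum_{Q\in\mathcal S}\Psi\left(\frac{\int_Q\sigma_1|g_1|\int_Q\sigma_2|g_2|}{|Q|^{2-\alpha/d}}\right)v(E_Q).$$

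Next, since $\Psi\in\widetilde{\mathscr U}\subset\Delta'$, I would peel off the first variable. Writing
$$\frac{\int_Q\sigma_1|g_1|\int_Q\sigma_2|g_2|}{|Q|^{2-\alpha/d}}=m_{\sigma_1}(|g_1|,Q)\cdot\frac{\int_Q\sigma_1\chi_Q\int_Q\sigma_2|g_2|}{|Q|^{2-\alpha/d}}$$
and invoking $\Psi(st)\lesssim\Psi(s)\Psi(t)$, the right-hand sum is dominated by $\sum_{Q\in\mathcal D^\beta}\lambda_Q\,\Psi\left(m_{\sigma_1}(|g_1|,Q)\right)$, where $\lambda_Q:=\Psi\left(\frac{\int_Q\sigma_1\chi_Q\int_Q\sigma_2|g_2|}{|Q|^{2-\alpha/d}}\right)v(E_Q)$ for $Q\in\mathcal S$ and $\lambda_Q:=0$ otherwise. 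The key point is that $\{\lambda_Q\}$ is a $(\sigma_1,\Psi\circ\Phi_1^{-1})$-Carleson sequence with constant $\lesssim[\vec{\sigma},v]_{L_{\vec\Phi,\Psi}}$: for fixed $R\in\mathcal D^\beta$ and $Q\subset R$ one has $\int_Q\sigma_1\chi_Q=\int_Q\chi_R\sigma_1$ and $\int_Q\sigma_2|g_2|=\int_Q\sigma_2|g_2\chi_R|$, so the residual factor is at most $\mathcal M_\alpha^{\mathcal D^\beta}(\chi_R\sigma_1,(g_2\chi_R)\sigma_2)(x)$ for every $x\in Q$; as the sets $E_Q$ are pairwise disjoint and contained in $R$, summing and applying Lemma~\ref{lem:LiSun1} to $g=g_2\chi_R$ gives
$$\sum_{Q\subset R,\,Q\in\mathcal D^\beta}\lambda_Q\le\int_R\Psi\left(\mathcal M_\alpha^{\mathcal D^\beta}(\chi_R\sigma_1,(g_2\chi_R)\sigma_2)(x)\right)v(x)\,dx\lesssim\frac{[\vec{\sigma},v]_{L_{\vec\Phi,\Psi}}}{\Psi\circ\Phi_1^{-1}\left(\frac{1}{\sigma_1(R)}\right)}.$$
Here $\|g_2\chi_R\|_{L_{\sigma_2}^{\Phi_2}}^{lux}\le1$, so by homogeneity of $\mathcal M_\alpha$ in its second slot and monotonicity of $\Psi$ the normalization $\|g\|_{L_{\sigma_2}^{\Phi_2}}^{lux}=1$ required by the lemma may be assumed.

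Finally, applying the Carleson embedding of Corollary~\ref{pro:main16aaqaqq6} to $\{\lambda_Q\}$ and the unit-norm function $g_1$ (in the quantitative form used in the proof of Theorem~\ref{thm:LiSuaqn}, where the output is controlled by the Carleson constant) yields $\sum_{Q}\lambda_Q\,\Psi(m_{\sigma_1}(|g_1|,Q))\lesssim[\vec{\sigma},v]_{L_{\vec\Phi,\Psi}}$, which is exactly~\eqref{eq:redsuffbili}. The analytic heart of the argument is Lemma~\ref{lem:LiSun1}, already established; the step I expect to require the most care is the linearization, namely splitting the coupled fractional average so that, after removing $g_1$ via the $\Delta'$ inequality, the residual factor is precisely the average defining $\mathcal M_\alpha(\chi_R\sigma_1,\cdot\,\sigma_2)$ on subcubes, and checking that freezing $\chi_Q$ to $\chi_R$ and truncating $g_2$ to $g_2\chi_R$ on $Q\subset R$ only enlarges the quantities. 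In effect the lemma serves as the inner Carleson embedding in the second variable and Corollary~\ref{pro:main16aaqaqq6} as the outer one in the first, and the two must be chained with matching constants.
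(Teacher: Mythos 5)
Your proof is correct and follows essentially the same route as the paper's own argument: the same level-set/sparse decomposition, the same peeling off of $\Psi\left(m_{\sigma_1}(|g_1|,Q)\right)$ via the $\Delta'$ property, the same verification that $\{\lambda_Q\}$ is a $(\sigma_1,\Psi\circ\Phi_1^{-1})$-Carleson sequence by freezing to $\chi_R$, truncating $g_2$ to $g_2\chi_R$, and applying Lemma \ref{lem:LiSun1}, and the same conclusion by the one-weight Carleson embedding and Proposition \ref{pro:main17jo}. Your explicit treatment of the normalization $\|g_2\chi_R\|_{L_{\sigma_2}^{\Phi_2}}^{lux}\le 1$ (via homogeneity and monotonicity of $\Psi$) is a point the paper passes over silently, and is a welcome clarification.
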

\begin{proof}
Let $(f_1,f_2)\in L^{\Phi_1}(\sigma_1)\times L^{\Phi_2}(\sigma_2)$ with $\|f_1\|_{\Phi_1,\sigma_1}^{lux}=1=\|f_2\|_{\Phi_2,\sigma_2}^{lux}$, Following for example \cite{sehbaf}, we obtain that there is a sparse family $\mathcal{S}^\beta$ such that
\Beas
L(f_1,f_2) &:=& \int_{\mathbb R^d}\Psi\left(\mathcal {M}_{\alpha}^{\mathcal D^\beta}(f_1\sigma_1,f_2\sigma_2)(x)\right)\omega(x)dx\\ &\le& \sum_{Q\in \mathcal{S}^\beta}\Psi\left(\frac{\int_{Q}|f_1|\sigma_1\int_{Q}|f_2|\sigma_2}{|Q|^{2-\frac{\alpha}{d}}}\right)\omega(E_Q)\\ &=& \sum_{Q\in \mathcal{S}^\beta}\Psi\left(m_{\sigma_1}(|f_1|,Q)\right)\Psi\left(\frac{\sigma_1(Q)\int_{Q}|f_2|\sigma_2}{|Q|^{2-\frac{\alpha}{d}}}\right)
\omega(E_Q)\\ &=& \sum_{Q\in \mathcal D^\beta}\lambda_Q\Psi\left(m_{\sigma_1}(|f_1|,Q)\right)
\Eeas
where
$$
\lambda_Q:=\left\{ \begin{matrix} \Psi\left(\frac{\sigma_1(Q)\int_{Q}|f_2|\sigma_2}
{|Q|^{2-\frac{\alpha}{d}}}\right)\omega(E_Q) &\text{if }& Q\in \mathcal{S}^\beta,\\
      0 & \text{ otherwise} .
                                  \end{matrix} \right.
$$
Let us prove that $\{\lambda_Q\}_{Q\in \mathcal D^\beta}$ is $(\sigma_1,\Psi\circ\Phi_1^{-1})$-Carleson sequence.

For  $R\in \mathcal D^\beta$ given, we obtain using Lemma \ref{lem:LiSun1} that
\Beas
\sum_{Q\in \mathcal D^\beta, Q\subseteq R}\lambda_Q &=& \sum_{Q\in \mathcal S^\beta, Q\subseteq R}\Psi\left(\frac{\sigma_1(Q)\int_{Q}|f_2|\sigma_2}
{|Q|^{2-\frac{\alpha}{d}}}\right)\omega(E(Q))\\ &=& \sum_{Q\in \mathcal S^\beta, Q\subseteq R}\Psi\left(\frac{\sigma_1(Q)\int_{Q}\chi_R|f_2|\sigma_2}
{|Q|^{2-\frac{\alpha}{d}}}\right)\omega(E(Q))\\ &\le& \int_R\Psi\left(\mathcal {M}_{\alpha}^{\mathcal D^\beta}(\chi_R\sigma_1,\chi_R|f_2|\sigma_2)(x)\right)\omega(x)dx\\ &\le& [\vec {\sigma},\omega]_{ L_{\vec \Phi,\Psi}}\frac{1}{\Psi\circ\Phi_1^{-1}\left(\frac{1}{\sigma_1(R)}\right)}.
\Eeas
That is $\{\lambda_Q\}_{Q\in \mathcal D}$ is a $(\sigma_1,\Psi\circ\Phi_1^{-1})$-Carlseon sequence with Carleson constant $$\Lambda_{Carl}\lesssim [\vec {\sigma},\omega]_{L_{\vec \Phi,\Psi}}.$$  Thus using Corollary \ref{pro:main2aq09}, we conclude that
$$\int_{\mathbb R^n}\Psi\left(\mathcal {M}_{\alpha}^{\mathcal D^\beta}(f_1\sigma_1,f_2\sigma_2)(x)\right)\omega(x)dx\lesssim [\vec {\sigma},\omega]_{L_{\vec \Phi,\Psi}}.$$
The proof is complete.

\end{proof}
Theorem \ref{thm:LiSun} then follows from the above and Proposition \ref{pro:main17jo}. The proof is complete.
\end{proof}

%%%%%%%%OtherSawyer%%%%%%%%%%%%%%%%%%%%%%%%%%%%%%%%%%%%%%%%%

\begin{proof}[Proof of Theorem \ref{pro:main126aq}]
Let us assume that $(\overrightarrow{\sigma}, \omega) \in S_{\overrightarrow{\Phi},\Psi}^{\alpha}$.
Let $(f_{1},...,f_{n})\in L^{\Phi_{1}}(\sigma_{1})\times...\times L^{\Phi_{n}}(\sigma_{n})$. Suppose that for any $1\leq i \leq n$,  $f_{i}\not\equiv 0$. Define   $\overrightarrow{g}:=(g_{1},...,g_{n})$ with $g_{i}=\frac{f_{i}}{\|f_{i}\|_{L_{\sigma_{i}}^{\Phi_{i}}}^{lux}}$,  for $1\leq i \leq n$. 
%Il d\'ecoule  du lemme \ref{pro:main80} l'in\'egalit\'e %suivante: 
%\begin{equation}\label{eq:of895ua1l}
% \mathcal{M}_{\alpha} (\overrightarrow{\sigma}.\overrightarrow{g}) \leq 6^{(nd-\alpha)} \sum_{\beta \in \{0, 1/3\}^{d} }   \mathcal{M}^{\mathcal{D}^{\beta}}_{\alpha} (\overrightarrow{\sigma}.\overrightarrow{g}).
%\end{equation}
%Soit $q\geq 1$ tel que $\Psi \in \widetilde{\mathscr{U}}^{q}$. Puisque $\Psi$ est convexe  et de type sup\'erieur $q$, de la relation (\ref{eq:of895ua1l}), nous d\'eduisons que: 
% $$ \int_{\mathbb{R}^{d}}\Psi\left( \mathcal{M}_{\alpha}\left(\overrightarrow{\sigma}.\overrightarrow{g}\right)(x)\right)\omega(x)dx \lesssim \sum_{\beta \in \{0,  1/3\}^{d} }  \int_{\mathbb{R}^{d}}\Psi\left( \mathcal{M}_{\alpha}^{\mathcal{D}^{\beta}}\left(\overrightarrow{\sigma}.\overrightarrow{g}\right)(x)\right)\omega(x)dx.        $$
For $\beta \in \left\{0, 1/3\right\}^{d}$ fixed, using the same sparse family $\mathcal{S}^\beta$ as in the case of power functions in \cite{sehbaf}, and that  
    $\Psi\in \Delta'$, we obtain  
\begin{eqnarray*}
\int_{\mathbb{R}^{d}}\Psi\left( \mathcal{M}_{\alpha}^{\mathcal{D}^{\beta}}\left(\overrightarrow{\sigma}.\overrightarrow{g}\right)(x)\right)\omega(x)dx   &\lesssim&  \sum_{Q\in \mathcal{S}^\beta}\Psi\left(\prod_{i=1}^{n}\frac{1}{|Q|^{1-\frac{\alpha}{nd}}}\int_{Q}|g_{i}(x)|\sigma_{i}(x)dx \right)\omega(E_Q)\\
&=& \sum_{Q\in \mathcal{S}^\beta}\Psi\left(\prod_{i=1}^{n}\frac{\sigma_{i}(Q)}{|Q|^{1-\frac{\alpha}{nd}}}\times \frac{1}{\sigma_{i}(Q)}\int_{Q}|g_{i}(x)|\sigma_{i}(x)dx \right)\omega(E_{Q})\\
&=& \sum_{Q\in \mathcal{S}^\beta}\Psi\left(\prod_{i=1}^{n}\frac{\sigma_{i}(Q)}{|Q|^{1-\frac{\alpha}{nd}}}\times m_{\sigma_{i}}(g_{i},Q) \right)\omega(E_Q)\\
&\lesssim&  \sum_{Q\in \mathcal{S}^\beta}\Psi\left(\prod_{i=1}^{n} m_{\sigma_{i}}(g_{i},Q) \right)\Psi\left(\prod_{i=1}^{n}\frac{\sigma_{i}(Q)}{|Q|^{1-\frac{\alpha}{nd}}} \right)\omega(E_Q) \\
&=& \sum_{Q\in \mathcal{S}^\beta}\Psi\left(\prod_{i=1}^{n} m_{\sigma_{i}}(g_{i},Q) \right)\int_{E_Q}\Psi\left(\prod_{i=1}^{n}  \frac{\sigma_{i}(Q)}{|Q|^{1-\frac{\alpha}{nd}}}\right)\omega(x)dx\\
&\lesssim&  \sum_{Q\in \mathcal{S}^\beta}\Psi\left(\prod_{i=1}^{n} m_{\sigma_{i}}(g_{i},Q) \right)\int_{E_Q}\Psi\left(\mathcal{M}_\alpha(\overrightarrow{\sigma}.\overrightarrow{\chi_{Q}} )(x)\right)\omega(x)dx \\
&=& \sum_{Q\in \mathcal{D}^{\beta}}\lambda_{Q}\Psi\left(\prod_{i=1}^{n}m_{\sigma_{i}}(g_{i},Q)\right),
\end{eqnarray*} 
where  
\[ \lambda_{Q} :=
  \left\{
  \begin{array}{ll} \int_{E_Q}\Psi\left(\mathcal{M}_\alpha(\overrightarrow{\sigma}.\overrightarrow{\chi_Q} )(x)\right)\omega(x)dx & \mbox{ if $Q\in \mathcal{S}^\beta$}    \\ 0 & \mbox{  otherwise}.
  \end{array}
  \right. \]
Fix $R \in \mathcal{D}^{\beta}$.
 As the $E_Q$s are pairwise disjoint and $E_Q\subset Q$, using that $(\overrightarrow{\sigma},\omega) \in S_{\overrightarrow{\Phi},\Psi}^{\alpha}$, we obtain
\begin{align*}
 \sum_{Q\subset R : Q \in \mathcal{D}^{\beta}}\lambda_{Q}&= \sum_{Q\subset R, Q\in \mathcal{S}^\beta }\,\int_{E_Q}\Psi\left(\mathcal{M}_\alpha(\overrightarrow{\sigma}.\overrightarrow{\chi_{Q}} )(x)\right)\omega(x)dx \\ &\leq \sum_{Q\subset R, Q\in \mathcal{S}^\beta }\,\int_{E_Q}\Psi\left(\mathcal{M}_\alpha(\overrightarrow{\sigma}.\overrightarrow{\chi_R} )(x)\right)\omega(x)dx \\
  &\leq \int_{R}\Psi\left(\mathcal{M}_\alpha(\overrightarrow{\sigma}.\overrightarrow{\chi_R} )(x)\right)\omega(x)dx\\
 &\leq \frac{[\overrightarrow{\sigma},\omega]_{S_{\overrightarrow{\Phi},\Psi}^{\alpha}}}{\Psi \circ \Phi^{-1}\left( \frac{1}{\nu_{\overrightarrow{\sigma}}(R)}\right)}.
\end{align*}
It then follows from Theorem \ref{pro:main16yh6} that
$$  \sum_{Q\in \mathcal{D}^{\beta}}\lambda_{Q}\Psi\left(\prod_{i=1}^{n}m_{\sigma_{i}}(g_{i},Q)\right)\lesssim [\overrightarrow{\sigma},\omega]_{S_{\overrightarrow{\Phi},\Psi}^{\alpha}}.        $$
Hence
$$\int_{\mathbb{R}^{d}}\Psi\left( \mathcal{M}_{\alpha}^{\mathcal{D}^{\beta}}\left(\overrightarrow{\sigma}.\overrightarrow{g}\right)(x)\right)\omega(x)dx\lesssim [\overrightarrow{\sigma},\omega]_{S_{\overrightarrow{\Phi},\Psi}^{\alpha}}.$$
This is enough to conclude that Theorem \ref{pro:main126aq} is true.
\end{proof}
The following provides a converse of Theorem \ref{pro:main126aq} under the condition (\ref{eq:surmaron}).

\begin{proposition}\label{pro:maiaqwqyh6}
Let $\sigma_{1},...,\sigma_{n},\omega$ be weights on $\mathbb{R}^{d}$, $\Phi_{1},...,\Phi_{n}\in \mathscr{U}$,  and $\Psi \in \widetilde{\mathscr{U}}$. Let $\Phi$ be a one-to-one correspondence from $\mathbb{R}_{+}$ to itself such that 
 $\Phi^{-1}=\Phi_{1}^{-1}\times...\times \Phi_{n}^{-1}$, and $t\mapsto \frac{\Psi(t)}{\Phi(t)}$ is increasing on $\mathbb{R}_{+}^{*}$.  If (\ref{eq:surmaron}) holds and  $\mathcal{M}_{\alpha}(\overrightarrow{\sigma}. ) : L^{\Phi_{1}}(\sigma_{1})\times...\times L^{\Phi_{n}}(\sigma_{n})\longrightarrow L^{\Psi}(\omega)$ boundedly, then $(\overrightarrow{\sigma}, \omega) \in S_{\overrightarrow{\Phi},\Psi}^{\alpha}$.
\end{proposition}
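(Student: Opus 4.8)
The plan is to mimic the argument used for Proposition~\ref{pro:mainaaqwqyh6}, replacing the weighted maximal function by the fractional one and exploiting the positive $n$-homogeneity of $\mathcal{M}_{\alpha}$. Fix a cube $R\in\mathcal{Q}$; by the definition (\ref{eq:deprmim}) of the class $S_{\overrightarrow{\Phi},\Psi}^{\alpha}$ it suffices to bound
$$ \Psi\circ\Phi^{-1}\left(\frac{1}{\nu_{\overrightarrow{\sigma}}(R)}\right)\int_R\Psi\left(\mathcal{M}_{\alpha}(\sigma_{1}\chi_R,\ldots,\sigma_{n}\chi_R)(x)\right)\omega(x)dx $$
by a constant independent of $R$. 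For $1\le i\le n$ I set $g_{i}:=\chi_R/\|\chi_R\|_{L_{\sigma_i}^{\Phi_i}}^{lux}$, so that $\|g_{i}\|_{L_{\sigma_i}^{\Phi_i}}^{lux}=1$, and write $\overrightarrow{g}:=(g_{1},\ldots,g_{n})$.

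First I would control the prefactor. Since $\Psi\in\mathscr{U}$ and $t\mapsto\Psi(t)/\Phi(t)$ is increasing, Lemma~\ref{pro:main40} shows that $\Psi\circ\Phi^{-1}$ is of some finite upper-type; combining this with (\ref{eq:surmaron}) (whose constant may be taken $\ge 1$) and Proposition~\ref{pro:main92} gives
$$ \Psi\circ\Phi^{-1}\left(\frac{1}{\nu_{\overrightarrow{\sigma}}(R)}\right)\lesssim \Psi\circ\Phi^{-1}\left(\Phi\left(\prod_{i=1}^n\Phi_{i}^{-1}\left(\frac{1}{\sigma_{i}(R)}\right)\right)\right)=\Psi\left(\prod_{i=1}^n\frac{1}{\|\chi_R\|_{L_{\sigma_i}^{\Phi_i}}^{lux}}\right). $$

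Next I would use the positive $n$-homogeneity of $\mathcal{M}_{\alpha}$, which yields $\mathcal{M}_{\alpha}(\sigma_{1}\chi_R,\ldots,\sigma_{n}\chi_R)=\left(\prod_{i=1}^n\|\chi_R\|_{L_{\sigma_i}^{\Phi_i}}^{lux}\right)\mathcal{M}_{\alpha}(\overrightarrow{\sigma}.\overrightarrow{g})$, together with $\Psi\in\Delta'$ to pull the scalar factor out of the integrand, and then cancel it against the prefactor above via the elementary inequality $\Psi(a)\Psi(1/a)\lesssim 1$ (a consequence of Lemma~\ref{pro:main80jdf} with $s=1$), applied to $a=\prod_{i=1}^n\|\chi_R\|_{L_{\sigma_i}^{\Phi_i}}^{lux}$. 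This reduces the quantity to be estimated to
$$ \int_R\Psi\left(\mathcal{M}_{\alpha}(\overrightarrow{\sigma}.\overrightarrow{g})(x)\right)\omega(x)dx\le\int_{\mathbb{R}^{d}}\Psi\left(\mathcal{M}_{\alpha}(\overrightarrow{\sigma}.\overrightarrow{g})(x)\right)\omega(x)dx. $$
Finally, the boundedness hypothesis gives $\|\mathcal{M}_{\alpha}(\overrightarrow{\sigma}.\overrightarrow{g})\|_{L_\omega^\Psi}^{lux}\lesssim\prod_{i=1}^n\|g_{i}\|_{L_{\sigma_i}^{\Phi_i}}^{lux}=1$, and converting this Luxemburg-norm bound into a bound on the modular through the first inequality in (\ref{eq:condgitxxedinis}) shows that the last integral is $\lesssim 1$ uniformly in $R$. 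Taking the supremum over $R\in\mathcal{Q}$ then yields $(\overrightarrow{\sigma},\omega)\in S_{\overrightarrow{\Phi},\Psi}^{\alpha}$.

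I expect the only genuinely delicate point to be this last passage: the boundedness is phrased via the Luxemburg norm of $\mathcal{M}_{\alpha}(\overrightarrow{\sigma}.\overrightarrow{g})$, while the class $S_{\overrightarrow{\Phi},\Psi}^{\alpha}$ is defined through the modular $\int_R\Psi(\cdots)\omega$, so the conversion must be carried out with a constant depending only on the upper-type of $\Psi$ and not on $R$, which is precisely what (\ref{eq:condgitxxedinis}) supplies. Everything else---the use of (\ref{eq:surmaron}) with the upper-type of $\Psi\circ\Phi^{-1}$, the homogeneity of $\mathcal{M}_{\alpha}$, and the $\Delta'$-cancellation of the factors $\|\chi_R\|_{L_{\sigma_i}^{\Phi_i}}^{lux}$---is routine bookkeeping entirely parallel to the proof of Proposition~\ref{pro:mainaaqwqyh6}.
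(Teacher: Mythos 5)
Your proposal is correct and follows essentially the same route as the paper's own proof: bound the prefactor via (\ref{eq:surmaron}), Lemma \ref{pro:main80jdf} and Proposition \ref{pro:main92}, then use the homogeneity of $\mathcal{M}_\alpha$, the $\Delta'$-property of $\Psi$, and the boundedness hypothesis (with the Luxemburg-to-modular conversion) to cancel the factors $\Psi\left(\prod_{i=1}^n\|\chi_R\|_{L_{\sigma_i}^{\Phi_i}}^{lux}\right)$. The only difference is cosmetic bookkeeping---you make explicit the upper-type absorption of the constant from (\ref{eq:surmaron}) and the modular conversion via (\ref{eq:condgitxxedinis}), which the paper leaves implicit.
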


\begin{proof}
Let us put $$  \nu_{\overrightarrow{\sigma}}:=  \frac{1}{\Phi\left(\prod_{i=1}^{n}\Phi_{i}^{-1}\left(\frac{1}{\sigma_{i}}\right)  \right)}.    $$
Let $R \in \mathcal{Q}$.  As (\ref{eq:surmaron}) is satisfied, and  $\Psi \in \widetilde{\mathscr{U}}$, it follows from Lemma \ref{pro:main80jdf}, that
\begin{align*}
\Psi \circ \Phi^{-1}\left(\frac{1}{\nu_{\overrightarrow{\sigma}}(R)}\right) &\lesssim \Psi \circ \Phi^{-1}\left(\Phi\left(\prod_{i=1}^{n}\Phi_{i}^{-1}\left(\frac{1}{\sigma_{i}(R)}\right)\right)\right) \\
&=\Psi \left(\prod_{i=1}^{n}\Phi_{i}^{-1}\left(\frac{1}{\sigma_{i}(R)}\right)\right)
=  \Psi\left(\frac{1}{\prod_{i=1}^{n}\|\chi_{R}\|_{L_{\sigma_{i}}^{\Phi_{i}}}^{lux}}\right) \\
&\lesssim \frac{\Psi(1)}{\Psi\left(\prod_{i=1}^{n}\|\chi_{R}\|_{L_{\sigma_{i}}^{\Phi_{i}}}^{lux}\right)}.
\end{align*}
As $\mathcal{M}_{\alpha}(\overrightarrow{\sigma}.) : L^{\Phi_{1}}(\sigma_{1})\times...\times L^{\Phi_{n}}(\sigma_{n})\longrightarrow L^{\Psi}(\omega)$  boundedly, we have
\begin{align*}
\int_R\Psi\left(\mathcal{M}_\alpha(\sigma_{1}\chi_R,...,\sigma_{n}\chi_R )(x)\right)\omega(x)dx&=\int_R\Psi\left(\prod_{i=1}^{n}\|\chi_{R}\|_{L_{\sigma_{i}}^{\Phi_{i}}}^{lux}\times\frac{\mathcal{M}_\alpha(\sigma_{1}\chi_R,...,\sigma_{n}\chi_R)(x)}{\prod_{i=1}^{n}\|\chi_{R}\|_{L_{\sigma_{i}}^{\Phi_{i}}}^{lux}}\right)\omega(x)dx\\
&\lesssim \Psi\left(\prod_{i=1}^{n}\|\chi_{R}\|_{L_{\sigma_{i}}^{\Phi_{i}}}^{lux}\right) \int_R\Psi\left(\frac{\mathcal{M}_\alpha(\sigma_{1}\chi_R,...,\sigma_{n}\chi_R)(x)}{\prod_{i=1}^{n}\|\chi_{R}\|_{L_{\sigma_{i}}^{\Phi_{i}}}^{lux}}\right)\omega(x)dx\\
&\lesssim \Psi\left(\prod_{i=1}^{n}\|\chi_{R}\|_{L_{\sigma_{i}}^{\Phi_{i}}}^{lux}\right).
\end{align*}
It follows that $$  \Psi \circ \Phi^{-1}\left( \frac{1}{\nu_{\overrightarrow{\sigma}}(R)}\right)\int_R\Psi\left(\mathcal{M}_\alpha(\sigma_{1}\chi_R,...,\sigma_{n}\chi_R )(x)\right)\omega(x)dx \lesssim 1.      $$  
\end{proof}

Let  $\Phi_{1},...,\Phi_{n} \in \mathscr{U}$, and let $\Phi$ be a one-to-one correspondence from $\mathbb{R}_{+}$ to itself such that 
 $\Phi^{-1}=\Phi_{1}^{-1}\times...\times \Phi_{n}^{-1}$.
 If for any $1\leq i \leq n$,  $\sigma_{i}\equiv \sigma$, then  $\nu_{\overrightarrow{\sigma}}\equiv \sigma$ and (\ref{eq:surmaron})
holds. In this case, $\mathcal{M}_{\alpha}(\overrightarrow{\sigma}. ) : L^{\Phi_{1}}(\sigma)\times...\times L^{\Phi_{n}}(\sigma)\longrightarrow L^{\Psi}(\omega)$ boundedly if and only if $(\sigma,\omega)\in S_{\overrightarrow{\Phi},\Psi}^{\alpha}$. Indeed, if $\mathcal{M}_{\alpha}(\overrightarrow{\sigma}. ) : L^{\Phi_{1}}(\sigma)\times...\times L^{\Phi_{n}}(\sigma)\longrightarrow L^{\Psi}(\omega)$ boundedly, then  $(\sigma,\omega)\in S_{\overrightarrow{\Phi},\Psi}^{\alpha}$ (see Proposition \ref{pro:maiaqwqyh6}). Conversely, by replacing Theorem \ref{pro:main16yh6} by Corollary \ref{pro:main16aaqq6} in the proof of Theorem \ref{pro:main126aq}, we obtain that if   $(\sigma,\omega)\in S_{\overrightarrow{\Phi},\Psi}^{\alpha}$, then $\mathcal{M}_{\alpha}(\overrightarrow{\sigma}. ) : L^{\Phi_{1}}(\sigma)\times...\times L^{\Phi_{n}}(\sigma)\longrightarrow L^{\Psi}(\omega)$ boundedly. Hence Corollary \ref{pro:main12sasd6} is proved.

\subsection{Proofs of the weighted norm estimates}
Let us recall the following.
\begin{lemma}(\cite[Lemma 2.1]{HytPerez})\label{pro:main063}
Let $f$ be a measurable function on  $\mathbb{R}^{d}$.
The logarithmic maximal function of $f$ defined by
\begin{equation}\label{eq:onc5ax1l}
\mathcal{M}_0(f)(x):=\sup_{Q\in \mathcal{Q}}\exp\left(\frac{\chi_Q(x)}{|Q|}\int_Q\log\left(|f(t)|\right)dt\right),\end{equation}
satisfies 
\begin{equation}\label{eq:onc35o65ax1l}
\|\mathcal{M}_0(f)\|_{L^p}\le c_d^{\frac 1p}\|f\|_{L^p}, ~~\forall~ p>0.\end{equation}
\end{lemma}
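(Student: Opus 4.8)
The plan is to dominate $\mathcal{M}_0$ pointwise by a power of the ordinary Hardy--Littlewood maximal function $\mathcal{M}$ and then invoke the boundedness of $\mathcal{M}$ on $L^{r}$ for a single, well-chosen exponent $r>1$. The starting point is Jensen's inequality for the concave function $\log$: for any cube $Q$ and any $s>0$, applying $\log$ to the average of $|f|^{s}$ gives
\[
\frac{1}{|Q|}\int_{Q}\log|f(t)|\,dt=\frac{1}{s}\cdot\frac{1}{|Q|}\int_{Q}\log|f(t)|^{s}\,dt\le\frac{1}{s}\log\left(\frac{1}{|Q|}\int_{Q}|f(t)|^{s}\,dt\right).
\]
Exponentiating and taking the supremum over cubes $Q$ containing $x$, with $\mathcal{M}$ the Hardy--Littlewood maximal function, yields the pointwise bound
\[
\mathcal{M}_0(f)(x)\le\left(\mathcal{M}(|f|^{s})(x)\right)^{1/s},\qquad\forall\,s>0,\ x\in\mathbb{R}^{d}.
\]

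The crucial point is the choice of $s$. Rather than fixing $s$, I would take $s=p/2$, so that the integrability exponent $r:=p/s=2$ is frozen, independently of $p$. Raising the pointwise inequality to the power $p$ and integrating gives
\[
\|\mathcal{M}_0(f)\|_{L^{p}}^{p}\le\int_{\mathbb{R}^{d}}\left(\mathcal{M}(|f|^{p/2})(x)\right)^{2}\,dx=\|\mathcal{M}(|f|^{p/2})\|_{L^{2}}^{2}.
\]
Since $\mathcal{M}$ is bounded on $L^{2}(\mathbb{R}^{d})$ with a purely dimensional operator norm $C_{d}$, and since $\||f|^{p/2}\|_{L^{2}}^{2}=\|f\|_{L^{p}}^{p}$, one concludes that $\|\mathcal{M}_0(f)\|_{L^{p}}^{p}\le C_{d}^{2}\,\|f\|_{L^{p}}^{p}$, whence $\|\mathcal{M}_0(f)\|_{L^{p}}\le(C_{d}^{2})^{1/p}\|f\|_{L^{p}}$, i.e. the claim holds with $c_{d}=C_{d}^{2}$.

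The main subtlety to watch is precisely the \emph{form} of the constant, $c_{d}^{1/p}$, in which all $p$-dependence is isolated in the single power $1/p$. If one fixed $s$ independently of $p$, the relevant exponent $p/s$ would vary with $p$ and the $L^{p/s}$-operator norm of $\mathcal{M}$ would blow up as $p/s\to 1^{+}$, destroying this clean dependence; choosing $s$ to be a fixed proportion of $p$ below $p$ (here $s=p/2$) keeps the exponent frozen at a value strictly larger than $1$. I would also note that nothing in the argument requires $p>1$: the pointwise domination together with the $L^{2}$-bound is valid for every $p>0$, which is exactly what lets the estimate hold in the full stated range $p>0$.
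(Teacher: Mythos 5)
Your proof is correct and is precisely the argument behind the result the paper invokes: the paper gives no proof of this lemma, citing \cite{HytPerez} (Lemma 2.1), and the proof there is exactly your combination of Jensen's inequality, giving $\mathcal{M}_0(f)\le\left(\mathcal{M}(|f|^{s})\right)^{1/s}$ for every $s>0$, with the frozen choice $s=p/2$ and the dimensional $L^{2}$ bound for the Hardy--Littlewood maximal function, yielding $c_d=C_d^{2}$. Note only that you have (correctly) interpreted the factor $\chi_Q(x)$ in (\ref{eq:onc5ax1l}) as restricting the supremum to cubes containing $x$; under the literal reading, cubes with $x\notin Q$ would contribute $\exp(0)=1$ and the inequality (\ref{eq:onc35o65ax1l}) could not hold, so your reading is the intended one.
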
 
\vskip .2cm
We next prove Theorem \ref{pro:main1aq6}.
\vskip .1cm
 \begin{proof}[Proof of Theorem \ref{pro:main1aq6}]
Let $(f_{1},...,f_{n})\in L^{\Phi_{1}}(\sigma_{1})\times...\times L^{\Phi_{n}}(\sigma_{n})$. Suppose that for any $1\leq i \leq n$,  $f_{i}\not\equiv 0$. Set  $\overrightarrow{g}:=(g_{1},...,g_{n})$ with $g_{i}=\frac{f_{i}}{\|f_{i}\|_{L_{\sigma_{i}}^{\Phi_{i}}}^{lux}}$,  for $1\leq i \leq n$.\\ 
As  $\Psi$ is convex and of upper-type $q$, from (\ref{eq:of895ua1l}), we deduce that we only have to estimate
$$ \int_{\mathbb{R}^{d}}\Psi\left( \mathcal{M}_{\alpha}^{\mathcal{D}^{\beta}}\left(\overrightarrow{\sigma}.\overrightarrow{g}\right)(x)\right)\omega(x)dx.        $$
Let $\beta \in \left\{0, 1/3\right\}^{d}$ be fixed. Following \cite{sehbaf}, we have that there is a sparse family $\mathcal{S}^\beta$ such that for some $0<\epsilon<1$ depending only on $n,d$ and a fixed parameter,
%  $\epsilon:=\left(\frac{2^{nd-\alpha}}{a}\right)^{{{\frac{1}{n-\frac{\alpha}{d}}}}}$,  nous d\'eduisons que: 
\begin{equation}\label{eq:qardsax1l}
|Q| \leq \frac{1}{1-\epsilon}|E_Q|, ~~\forall~~Q\in\mathcal{S}^\beta.\end{equation}
As in Theorem \ref{pro:main126aq}, we obtain   
\begin{equation}\label{eq:qsax1l}
\int_{\mathbb{R}^{d}}\Psi\left( \mathcal{M}_{\alpha}^{\mathcal{D}^{\beta}}\left(\overrightarrow{\sigma}.\overrightarrow{g}\right)(x)\right)\omega(x)dx \lesssim \sum_{Q\in\mathcal{S}^\beta}\Psi\left(\prod_{i=1}^{n} m_{\sigma_{i}}(g_{i},Q) \right)\Psi\left(\prod_{i=1}^{n}\frac{\sigma_{i}(Q)}{|Q|^{1-\frac{\alpha}{nd}}} \right)\omega(E_Q).
\end{equation}
$i)$ Suppose that  $(\overrightarrow{\sigma},\omega) \in B_{\overrightarrow{\Phi},\Psi}^{\alpha}$.\\  
Let $1\leq i \leq n$. Put $$  \Psi_{i}=\Psi\circ \Phi^{-1}\circ \Phi_{i}.      $$       
As $t\mapsto \frac{\Psi(t)}{\Phi(t)}$ is increasing on $\mathbb{R}_{+}^{*}$ and  $\Psi \in \mathscr{U}$, from Lemma \ref{pro:main40}, we deduce that $\Psi\circ \Phi^{-1} \in \mathscr{U}$. In particular, $\Psi\circ \Phi^{-1}$ is a convex growth function. It follows that $\Psi_{i}$ is also a convex growth function.\\
Let  $t>0$. As $\Phi^{-1}=\Phi_{1}^{-1}\times...\times \Phi_{n}^{-1}$, we have
\begin{align*} 
\Psi_{1}^{-1}(t)\times...\times\Psi_{n}^{-1}(t)&= \prod_{i=1}^{n}\Psi_{i}^{-1}(t)=\prod_{i=1}^{n}\left(\Psi\circ \Phi^{-1}\circ \Phi_{i}\right)^{-1}(t)=\prod_{i=1}^{n}\Phi_{i}^{-1}\circ \Phi\circ \Psi^{-1}(t)\\
&= \prod_{i=1}^{n}\Phi_{i}^{-1}\left(\Phi\circ \Psi^{-1}(t)\right)  =  \Phi^{-1}\left(\Phi\circ \Psi^{-1}(t)\right) = \Psi^{-1}(t). 
\end{align*} 
Thus $\Psi^{-1}=\Psi_{1}^{-1}\times...\times\Psi_{n}^{-1}.$
\vskip .1cm
We first observe that
  \begin{align*}
L_{1}&:=\int_{\mathbb{R}^{d}}\Psi\left( \mathcal{M}_{\alpha}^{\mathcal{D}^{\beta}}\left(\overrightarrow{\sigma}.\overrightarrow{g}\right)(x)\right)\omega(x)dx \\ 
& \lesssim\sum_{Q\in\mathcal{S}^\beta}\Psi\left(\prod_{i=1}^{n} m_{\sigma_{i}}(g_{i},Q) \right)\Psi\left(\prod_{i=1}^{n}\frac{\sigma_{i}(Q)}{|Q|^{1-\frac{\alpha}{nd}}} \right)\omega(E_{Q}) \\
& \lesssim\sum_{Q\in\mathcal{S}^\beta}\Psi\left(\prod_{i=1}^{n} m_{\sigma_{i}}(g_{i},Q) \right)\Psi\left(\prod_{i=1}^{n}\frac{\sigma_{i}(Q)}{|Q|^{1-\frac{\alpha}{nd}}} \right)\omega(Q).
\end{align*}
Since $\Phi_{i} \in \widetilde{\mathscr{U}}$, for  $1\leq i \leq n$,  it follows from Lemma \ref{pro:main80jdf} that
\begin{align*}
L_{1}
&\lesssim[\overrightarrow{\sigma},\omega]_{B_{\overrightarrow{\Phi},\Psi}^{\alpha}}\sum_{Q\in\mathcal{S}^\beta}\Psi\left(\prod_{i=1}^{n} \frac{ m_{\sigma_{i}}(g_{i},Q)}{\Phi_{i}^{-1}\left(\frac{1}{|Q|}\exp\left(\frac{1}{|Q|}\int_{Q}\log\frac{1}{\sigma_{i}}\right)\right)} \right)\\
&\lesssim[\overrightarrow{\sigma},\omega]_{B_{\overrightarrow{\Phi},\Psi}^{\alpha}} \sum_{Q\in\mathcal{S}^\beta}\sum_{i=1}^{n}\Psi_{i}\left(\frac{m_{\sigma_{i}}(g_{i},Q)}{\Phi_{i}^{-1}\left(\frac{1}{|Q|}\exp\left(\frac{1}{|Q|}\int_{Q}\log\sigma_{i}^{-1}\right)\right) }\right) \\
&= [\overrightarrow{\sigma},\omega]_{B_{\overrightarrow{\Phi},\Psi}^{\alpha}}\sum_{i=1}^{n}\sum_{Q\in\mathcal{S}^\beta}
\Psi\circ \Phi^{-1}\circ \Phi_{i}\left(\frac{m_{\sigma_{i}}(g_{i},Q)}{\Phi_{i}^{-1}\left(\frac{1}{|Q|}\exp\left(\frac{1}{|Q|}\int_{Q}\log\sigma_{i}^{-1}\right)\right) }\right) \\
&\lesssim[\overrightarrow{\sigma},\omega]_{B_{\overrightarrow{\Phi},\Psi}^{\alpha}}\sum_{i=1}^{n}\Psi\circ \Phi^{-1}\left(\sum_{Q\in\mathcal{S}^\beta} \frac{\Phi_{i}\left(m_{\sigma_{i}}(g_{i},Q)\right)}{\Phi_{i}\circ\Phi_{i}^{-1}\left(\frac{1}{|Q|}\exp\left(\frac{1}{|Q|}\int_{Q}\log\sigma_{i}^{-1}\right)\right)}\right)  \\
& =[\overrightarrow{\sigma},\omega]_{B_{\overrightarrow{\Phi},\Psi}^{\alpha}}\sum_{i=1}^{n}\Psi\circ \Phi^{-1}\left(\sum_{Q\in\mathcal{S}^\beta}|Q|\exp\left(\frac{1}{|Q|}\int_{Q}\log\sigma_{i}\right) \Phi_{i}\left(m_{\sigma_{i}}(g_{i},Q)\right)\right) \\
& =[\overrightarrow{\sigma},\omega]_{B_{\overrightarrow{\Phi},\Psi}^{\alpha}} \sum_{i=1}^{n}\Psi\circ \Phi^{-1}\left(\sum_{Q\in \mathcal{D}^{\beta}} \lambda_{Q}^{i}\Phi_{i}\left(m_{\sigma_{i}}(g_{i},Q)\right)\right),
\end{align*}
where
\[ \lambda_{Q}^{i} :=
\left\{
\begin{array}{ll} |Q|\exp\left(\frac{1}{|Q|}\int_{Q}\log\sigma_{i}\right) & \mbox{ if $Q\in\mathcal{S}^\beta$}    \\ 0 & \mbox{ otherwise}.
\end{array}
\right. \]
Fix $R \in \mathcal{D}^{\beta}$. Following for example \cite{ChenDamian}, one easily obtains with the help of Lemma \ref{pro:main063}, that
$$\sum_{Q\subset R : Q \in \mathcal{D}^{\beta}}\lambda_{Q}^{i}\lesssim \frac{\sigma_{i}(R)}{1-\epsilon}.$$
%\begin{align*}
%\sum_{Q\subset R : Q \in \mathcal{D}^{\beta}}\lambda_{Q}^{i}&=
%\sum_{Q\subset R, Q\in\mathcal{S}^\beta} |Q|\exp\left(\frac{1}{|Q|}\int_{Q}\log\sigma_{i}\right)\\
%&\leq \frac{1}{1-\epsilon}\sum_{Q\subset R, Q\in\mathcal{S}^\beta} |E_Q|\exp\left(\frac{1}{|Q|}\int_{Q}\log\sigma_{i}\right)\\
%&= \frac{1}{1-\epsilon}\sum_{Q\subset R, Q\in\mathcal{S}^\beta}\, \int_{E_Q}\exp\left(\frac{\chi_R(x)}{|Q|}\int_{Q}\log\left(\chi_{R}(y)\sigma_{i}(y)\right)dy\right)dx\\
%&\leq \frac{1}{1-\epsilon}\sum_{Q\subset R, Q\in\mathcal{S}^\beta }\, \int_{E_Q}\mathcal{M}_{log}(\chi_{R}\sigma_{i})(x)dx\\
%&\leq \frac{1}{1-\epsilon}\int_{R}\mathcal{M}_{log}(\chi_{R}\sigma_{i})(x)dx \\
%&\leq \frac{c_{d}}{1-\epsilon}\int_{R}\chi_{R}(x)\sigma_{i}(x)dx=\frac{c_{d}}{1-\epsilon}\sigma_{i}(R).
%\end{align*}
We conclude that $\{\lambda_{Q}^{i}\}_{Q \in \mathcal{D}^{\beta}}$ is a $\sigma_{i}-$Carleson sequence. As 
$\Phi_{i}\in \nabla_{2}$,  it follows from Corollary \ref{pro:main2aq09},   that
$$  \sum_{Q\in \mathcal{D}^{\beta}} \lambda_{Q}^{i} \Phi_{i}\left(\frac{1}{\sigma_{i}(Q)}\int_{Q}|g_{i}(x)|\sigma_{i}(x)dx\right) \lesssim 1.        $$
Consequently, 
$$  \int_{\mathbb{R}^{d}}\Psi\left( \mathcal{M}_{\alpha}^{\mathcal{D}^{\beta}}\left(\overrightarrow{\sigma}.\overrightarrow{g}\right)(x)\right)\omega(x)dx \lesssim  [\overrightarrow{\sigma},\omega]_{B_{\overrightarrow{\Phi},\Psi}^{\alpha}}.$$
%%%%%%%%%%%%%%%%%%%%%%%%%%%%%%%%%%%%%%%%%%%%%%%%%%%%%%%%%%%%
 $ii)$ Suppose that for any $1\leq i \leq n$, $\sigma_{i} \in \mathcal{A}_\infty$ and  $(\overrightarrow{\sigma},\omega) \in A_{\overrightarrow{\Phi},\Psi}^{\alpha}$.\\   
As $\Psi$ is of upper-type $q$ and satisfies the $\Delta'-$condition, we obtain
\begin{align*}
L_{2}&:=\int_{\mathbb{R}^{d}}\Psi\left( \mathcal{M}_{\alpha}^{\mathcal{D}^{\beta}}\left(\overrightarrow{\sigma}.\overrightarrow{g}\right)(x)\right)\omega(x)dx \\  
&\lesssim 
 \sum_{Q\in\mathcal{S}^\beta}\Psi\left(\prod_{i=1}^{n} m_{\sigma_{i}}(g_{i},Q) \right)\Psi\left(\prod_{i=1}^{n}\frac{\sigma_{i}(Q)}{|Q|^{1-\frac{\alpha}{nd}}} \right)\omega(E_Q) \\
& \lesssim \sum_{Q\in\mathcal{S}^\beta}\Psi\left(\prod_{i=1}^{n}\frac{\Phi_{i}^{-1}\left(\frac{1}{\sigma_{i}(Q)}\right)\times m_{\sigma_{i}}(g_{i},Q)}{\Phi_{i}^{-1}\left(\frac{1}{\sigma_{i}(Q)}\right)}  \right)\Psi\left(\prod_{i=1}^{n}\frac{\sigma_{i}(Q)}{|Q|^{1-\frac{\alpha}{nd}}} \right)\omega(Q) \\
& \lesssim  [\overrightarrow{\sigma},\omega]_{A_{\overrightarrow{\Phi},\Psi}^{\alpha}} \sum_{Q\in\mathcal{S}^\beta}\Psi\left(\prod_{i=1}^{n}\frac{ m_{\sigma_{i}}(g_{i},Q)}{\Phi_{i}^{-1}\left(\frac{1}{\sigma_{i}(Q)}\right)}  \right) \\
&\lesssim [\overrightarrow{\sigma},\omega]_{A_{\overrightarrow{\Phi},\Psi}^{\alpha}} \sum_{Q\in\mathcal{S}^\beta}\sum_{i=1}^{n}\Psi_{i}\left(\frac{ m_{\sigma_{i}}(g_{i},Q)}{\Phi_{i}^{-1}\left(\frac{1}{\sigma_{i}(Q)}\right)}  \right) \\
&= [\overrightarrow{\sigma},\omega]_{A_{\overrightarrow{\Phi},\Psi}^{\alpha}} \sum_{i=1}^{n}\sum_{Q\in\mathcal{S}^\beta}\Psi\circ \Phi^{-1}\circ \Phi_{i}\left(\frac{ m_{\sigma_{i}}(g_{i},Q)}{\Phi_{i}^{-1}\left(\frac{1}{\sigma_{i}(Q)}\right)}  \right) \\
&\lesssim [\overrightarrow{\sigma},\omega]_{A_{\overrightarrow{\Phi},\Psi}^{\alpha}} \sum_{i=1}^{n}\Psi\circ \Phi^{-1}\left( \sum_{Q\in\mathcal{S}^\beta} \Phi_{i}\left(\frac{ m_{\sigma_{i}}(g_{i},Q)}{\Phi_{i}^{-1}\left(\frac{1}{\sigma_{i}(Q)}\right)}  \right) \right)\\
&\lesssim [\overrightarrow{\sigma},\omega]_{A_{\overrightarrow{\Phi},\Psi}^{\alpha}} \sum_{i=1}^{n}\Psi\circ \Phi^{-1}\left(\sum_{Q\in\mathcal{S}^\beta} \frac{\Phi_{i}\left( m_{\sigma_{i}}(g_{i},Q)\right)}{\Phi_{i}\left(\Phi_{i}^{-1}\left(\frac{1}{\sigma_{i}(Q)}\right)\right)}  \right)\\
& \lesssim [\overrightarrow{\sigma},\omega]_{A_{\overrightarrow{\Phi},\Psi}^{\alpha}} \sum_{i=1}^{n}\Psi\circ \Phi^{-1}\left(\sum_{Q\in\mathcal{S}^\beta} \sigma_{i}(Q)\Phi_{i}\left( m_{\sigma_{i}}(g_{i},Q)\right)\right)\\
& = [\overrightarrow{\sigma},\omega]_{A_{\overrightarrow{\Phi},\Psi}^{\alpha}} \sum_{i=1}^{n}\Psi\circ \Phi^{-1}\left(\sum_{Q \in \mathcal{D}^{\beta}} \lambda_{Q}^{i}\Phi_{i}\left( m_{\sigma_{i}}(g_{i},Q)\right)\right),
 \end{align*} 
where
\[ \lambda_{Q}^{i} :=
\left\{
\begin{array}{ll} \sigma_{i}(Q) & \mbox{ if $Q\in\mathcal{S}^\beta$}    \\ 0 & \mbox{ otherwise}.
\end{array}
\right. \]
For any $R \in \mathcal{D}^{\beta}$ fixed, it is known (see for example \cite{sehbaf}) that
$$
\sum_{Q\subset R : Q \in \mathcal{D}^{\beta}}\lambda_{Q}^{i}
\lesssim  \frac{[\sigma_{i}]_{\mathcal{A}_\infty}}{1-\epsilon}\sigma_{i}(R).
$$
That is 
 $\{\lambda_{Q}^{i}\}_{Q \in \mathcal{D}^{\beta}}$ is a  $\sigma_{i}-$Carleson sequence.
 As $\Phi_{i}\in \nabla_{2}$, using Corollary \ref{pro:main2aq09},   we deduce that
$$  \sum_{Q\in \mathcal{D}^{\beta}} \lambda_{Q}^{i} \Phi_{i}\left(m_{\sigma_{i}}(g_{i},Q)\right) \lesssim [\sigma_{i}]_{\mathcal{A}_\infty}.        $$
Thus
$$  \int_{\mathbb{R}^{d}}\Psi\left( \mathcal{M}_{\alpha}^{\mathcal{D}^{\beta}}\left(\overrightarrow{\sigma}.\overrightarrow{g}\right)(x)\right)\omega(x)dx \lesssim [\overrightarrow{\sigma},\omega]_{A_{\overrightarrow{\Phi},\Psi}^{\alpha}}\sum_{i=1}^{n}\Psi\circ\Phi^{-1}\left([\sigma_{i}]_{\mathcal{A}_\infty}\right).
       $$ 

%%%%%%%%%%%%%%%%%%%%%%%%%%%%%%%%%%%%%%%%%%%%%%%%%%%%%%%%
       
$iii)$ Suppose $(\overrightarrow{\sigma},\omega) \in \widetilde{A}_{\overrightarrow{\varphi},\Psi}^{\alpha}$ and $\overrightarrow{\sigma} \in W_{\overrightarrow{\Phi},\Psi}$.
Let $1\leq i \leq n$.
As $\varphi_{i}$ is the complementary function of $\Phi_{i}$, we have that $$  t<\Phi_{i}^{-1}(t)\varphi_{i}^{-1}(t)\leq 2t,~~\forall~t>0.       $$
We obtain
\begin{align*}
L_{3}&:=\int_{\mathbb{R}^{d}}\Psi\left( \mathcal{M}_{\alpha}^{\mathcal{D}^{\beta}}\left(\overrightarrow{\sigma}.\overrightarrow{g}\right)(x)\right)\omega(x)dx \\ 
&\lesssim \sum_{Q\in \mathcal{S}^\beta}\Psi\left(\prod_{i=1}^{n} m_{\sigma_{i}}(g_{i},Q) \right)\Psi\left(\prod_{i=1}^{n}\frac{\sigma_{i}(Q)}{|Q|^{1-\frac{\alpha}{nd}}} \right)\omega(E_Q)\\
&=\sum_{Q\in \mathcal{S}^\beta}\Psi\left(\prod_{i=1}^{n} m_{\sigma_{i}}(g_{i},Q) \right)\Psi\left(|Q|^{\frac{\alpha}{d}}\prod_{i=1}^{n}\frac{\sigma_{i}(Q)}{|Q|} \right)\omega(E_Q)\\
&\lesssim \sum_{Q\in \mathcal{S}^\beta}\Psi\left(\prod_{i=1}^{n} m_{\sigma_{i}}(g_{i},Q) \right)\Psi\left(|Q|^{\frac{\alpha}{d}}\prod_{i=1}^{n}\varphi_{i}^{-1}\left(\frac{\sigma_{i}(Q)}{|Q|}\right) \right)\Psi\left(\prod_{i=1}^{n}\Phi_{i}^{-1}\left(\frac{\sigma_{i}(Q)}{|Q|}\right) \right)\omega(Q)\\
&\lesssim [\overrightarrow{\sigma},\omega]_{\widetilde{A}_{\overrightarrow{\varphi},\Psi}^{\alpha}}\sum_{Q\in \mathcal{S}^\beta}\Psi\left(\prod_{i=1}^{n} m_{\sigma_{i}}(g_{i},Q) \right)\Psi\left(\prod_{i=1}^{n}\Phi_{i}^{-1}\left(\frac{\sigma_{i}(Q)}{|Q|}\right) \right)|Q| \\
&\lesssim [\overrightarrow{\sigma},\omega]_{\widetilde{A}_{\overrightarrow{\varphi},\Psi}^{\alpha}}\sum_{Q\in \mathcal{S}^\beta}\Psi\left(\prod_{i=1}^{n} m_{\sigma_{i}}(g_{i},Q) \right)\Psi\left(\prod_{i=1}^{n}\Phi_{i}^{-1}\left(\frac{\sigma_{i}(Q)}{|Q|}\right) \right)|E_Q| \\
&\lesssim [\overrightarrow{\sigma},\omega]_{\widetilde{A}_{\overrightarrow{\varphi},\Psi}^{\alpha}}\sum_{Q\in \mathcal{S}^\beta}\Psi\left(\prod_{i=1}^{n} m_{\sigma_{i}}(g_{i},Q) \right)\int_{E_Q}\Psi\left(\prod_{i=1}^{n}\Phi_{i}^{-1}\left(\mathcal{M}(\sigma_{i}\chi_{Q})(x)\right) \right)dx \\
&=[\overrightarrow{\sigma},\omega]_{\widetilde{A}_{\overrightarrow{\varphi},\Psi}^{\alpha}}\sum_{Q\in \mathcal{D}^{\beta}}\lambda_{Q}\Psi\left(\prod_{i=1}^{n}m_{\sigma_{i}}(g_{i},Q)\right),
\end{align*}
where 
\[ \lambda_{Q} :=
  \left\{
  \begin{array}{ll} \int_{E_Q}\Psi\left(\prod_{i=1}^{n}\Phi_{i}^{-1}\left(\mathcal{M}(\sigma_{i}\chi_{Q})(x)\right) \right)dx & \mbox{ if $Q\in \mathcal{S}^{\beta}$}    \\ 0 & \mbox{otherwise}.
  \end{array}
  \right. \]
  
Recall the notation $$\nu_{\overrightarrow{\sigma}}:=  \frac{1}{\Phi\left(\prod_{i=1}^{n}\Phi_{i}^{-1}\left(\frac{1}{\sigma_{i}}\right)  \right)}.$$
Let us fix $R \in \mathcal{D}^{\beta}$.
As $\overrightarrow{\sigma} \in W_{\overrightarrow{\Phi},\Psi}$, we obtain 
\begin{align*}
 \sum_{Q\subset R : Q \in \mathcal{D}^{\beta}}\lambda_{Q}&= \sum_{Q\subset R, Q \in \mathcal{S}^{\beta} }\,\int_{E_Q}\Psi\left(\prod_{i=1}^{n}\Phi_{i}^{-1}\left(\mathcal{M}(\sigma_{i}\chi_{Q})(x)\right) \right)dx\\
 &\leq \sum_{Q\subset R, Q \in \mathcal{S}^\beta }\,\int_{E_Q}\Psi\left(\prod_{i=1}^{n}\Phi_{i}^{-1}\left(\mathcal{M}(\sigma_{i}\chi_{Q})(x)\right) \right)dx \\
  &\leq \int_{R}\Psi\left(\prod_{i=1}^{n}\Phi_{i}^{-1}\left(\mathcal{M}(\sigma_{i}\chi_{Q})(x)\right) \right)dx\\
 &\leq \frac{[\overrightarrow{\sigma}]_{W_{\overrightarrow{\Phi},\Psi}}}{\Psi \circ \Phi^{-1}\left( \frac{1}{\nu_{\overrightarrow{\sigma}}(R)}\right)}. 
\end{align*}
It follows from Theorem \ref{pro:main16yh6}, that 
$$ \int_{\mathbb{R}^{d}}\Psi\left( \mathcal{M}_{\alpha}^{\mathcal{D}^{\beta}}\left(\overrightarrow{\sigma}.\overrightarrow{g}\right)(x)\right)\omega(x)dx \lesssim [\overrightarrow{\sigma}]_{W_{\overrightarrow{\Phi},\Psi}}[\overrightarrow{\sigma},\omega]_{\widetilde{A}_{\overrightarrow{\varphi},\Psi}^{\alpha}}.        $$
The proof is complete.
\end{proof}
%%%%%%%%%%%%%%%%%%%%%%%%%%%%%%%%%%%%%%%%%%%%%%%%%%%%%%%%%%%%
We next prove Theorem \ref{pro:mainineq4}.
\begin{proof}[Proof of Theorem \ref{pro:mainineq4}]
Suppose that for any $1\leq i \leq n$, $\sigma_{i} \in \mathcal{A}_\infty$ and  $(\overrightarrow{\sigma},\omega) \in A_{\overrightarrow{\Phi},\Psi}^{\alpha}$.\\   
Recall that $\Psi$ is of upper-type $q$ and satisfies the $\Delta'-$condition. Above, we have obtained that
\begin{align*}
L_{2}&:=\int_{\mathbb{R}^{d}}\Psi\left( \mathcal{M}_{\alpha}^{\mathcal{D}^{\beta}}\left(\overrightarrow{\sigma}.\overrightarrow{g}\right)(x)\right)\omega(x)dx \\  
& \lesssim  [\overrightarrow{\sigma},\omega]_{A_{\overrightarrow{\Phi},\Psi}^{\alpha}} \sum_{Q\in\mathcal{S}^\beta}\Psi\left(\prod_{i=1}^{n}\frac{ m_{\sigma_{i}}(g_{i},Q)}{\Phi_{i}^{-1}\left(\frac{1}{\sigma_{i}(Q)}\right)}  \right).
\end{align*}
Hence using that $\Psi\in \widetilde{\mathscr{U}}$, we get
\begin{eqnarray*}
L_2&\lesssim& [\overrightarrow{\sigma},\omega]_{A_{\overrightarrow{\Phi},\Psi}^{\alpha}} \sum_{Q\in\mathcal{S}^\beta}\prod_{i=1}^{n}\frac{\Psi( m_{\sigma_{i}}(g_{i},Q))}{\Psi\circ\Phi_{i}^{-1}\left(\frac{1}{\sigma_{i}(Q)}\right)}   \\
&=& [\overrightarrow{\sigma},\omega]_{A_{\overrightarrow{\Phi},\Psi}^{\alpha}} \prod_{i=1}^{n}\sum_{Q\in\mathcal{S}^\beta}\frac{\Psi( m_{\sigma_{i}}(g_{i},Q))}{\Psi\circ\Phi_{i}^{-1}\left(\frac{1}{\sigma_{i}(Q)}\right)} \\
&=& [\overrightarrow{\sigma},\omega]_{A_{\overrightarrow{\Phi},\Psi}^{\alpha}} \prod_{i=1}^{n}\sum_{Q \in \mathcal{D}^{\beta}} \lambda_{Q}^{i}\Psi\left( m_{\sigma_{i}}(g_{i},Q)\right),
 \end{eqnarray*} 
where
\[ \lambda_{Q}^{i} :=
\left\{
\begin{array}{ll} \frac{1}{\Psi\circ\Phi_{i}^{-1}\left(\frac{1}{\sigma_{i}(Q)}\right)} & \mbox{ if $Q\in\mathcal{S}^\beta$}    \\ 0 & \mbox{ otherwise}.
\end{array}
\right. \]
Define $\widetilde{\Phi}_i$ by $$\widetilde{\Phi}_i(0)=0,\textrm{and}\quad \widetilde{\Phi}_i(t)=\frac{1}{\Psi\circ\Phi_i^{-1}\left(\frac 1t\right)}\quad \forall t>0.$$
Then as $t\longrightarrow \frac{\Psi(t)}{\Phi_i(t)}$ is nondecreasing, we have from \cite[Lemma 3.1]{jmtafeuseh} that $\widetilde{\Phi}_i\in \mathscr{U}^{q_i}$ for some $q_i>1$. Hence from \cite[Page 20]{jmtafeuseh}, we have that for 
any $R \in \mathcal{D}^{\beta}$ fixed, 
$$
\sum_{Q\subset R : Q \in \mathcal{D}^{\beta}}\lambda_{Q}^{i}
\lesssim  [\sigma_{i}]_{\mathcal{A}_\infty}^{q_i}\widetilde{\Phi}_i\left(\sigma_{i}(R)\right).
$$
Using Corollary \ref{pro:main2aq09},   we conclude that
$$  \int_{\mathbb{R}^{d}}\Psi\left( \mathcal{M}_{\alpha}^{\mathcal{D}^{\beta}}\left(\overrightarrow{\sigma}.\overrightarrow{g}\right)(x)\right)\omega(x)dx \lesssim [\overrightarrow{\sigma},\omega]_{A_{\overrightarrow{\Phi},\Psi}^{\alpha}}\prod_{i=1}^{n}[\sigma_{i}]_{\mathcal{A}_\infty}^{q_i}.
       $$ 
       The proof is complete.
\end{proof}

\bibliographystyle{plain}

\end{document}